\documentclass{amsart}
%
\usepackage{enumitem, subcaption, color, mathtools, mathrsfs, amsmath,amssymb,amsthm,color,mathrsfs, geometry, nicefrac, dutchcal, array, appendix}
\usepackage{physics}
\usepackage{parskip}
\usepackage{tcolorbox}
\usepackage{bbm}
\usepackage[linewidth=1pt]{mdframed}
\usepackage{ulem}

\newgeometry{vmargin={35mm}, hmargin={30mm,30mm}}

\usepackage{hyperref}

\newtheorem{theorem}{Theorem}
\newtheorem{definition}[theorem]{Definition}
\newtheorem{proposition}[theorem]{Proposition}
\newtheorem{lemma}[theorem]{Lemma}

\newtheorem{remark}[theorem]{Remark}
\newtheorem{Notation}[theorem]{Notation}
\newtheorem{Convention}[theorem]{Convention}

\newcommand{\nc}{\newcommand}
\nc{\R}{\mathbb{R}}
\nc{\C}{\mathbb{C}}
\nc{\mrm}{\mathrm}
\nc{\mL}{\mrm{L}}
\nc{\mF}{\mrm{F}}
\nc{\mC}{\mrm{C}}
\nc{\mH}{\mrm{H}}
\nc{\mW}{\mrm{W}}
\nc{\mV}{\mrm{V}}
\nc{\mM}{\mrm{M}}
\nc{\mK}{\mrm{K}}
\nc{\mD}{\mrm{D}}
\nc{\mB}{\mrm{B}}
\nc{\mR}{\mrm{R}}
\nc{\mX}{\mrm{X}}
\nc{\mY}{\mrm{Y}}
\nc{\mS}{\mrm{S}}

\nc{\Ec}{\mrm{E_c}}
\nc{\calL}{\mathcal{L}}
\nc{\loc}{\mrm{loc}}
\nc{\comp}{c}
\nc{\supp}{\mrm{supp}}
\nc{\Hardy}{\mathfrak{H}}
\nc{\calH}{\mathcal{H}}
\nc{\ctru}{\mathfrak{u}}
\nc{\ctrv}{\mathfrak{v}}
\nc{\bc}{\boldsymbol{c}}
\nc{\be}{\boldsymbol{e}}
\nc{\br}{\boldsymbol{r}}
\nc{\bs}{\boldsymbol{s}}
\nc{\bt}{\boldsymbol{t}}
\nc{\bw}{\boldsymbol{w}}
\nc{\bx}{\boldsymbol{x}}
\nc{\by}{\boldsymbol{y}}
\nc{\bz}{\boldsymbol{z}}
\nc{\lbr}{\lbrack}
\nc{\rbr}{\rbrack}
\nc{\dsp}{\displaystyle}
\nc{\vphi}{\varphi}

\normalem

\begin{document}
	\title[Analysis of the Coupled Cluster Method]{Analysis of the Single Reference Coupled Cluster Method for Electronic Structure Calculations: The Discrete Coupled Cluster Equations}
	
	\author{Muhammad Hassan}\address{(M. Hassan) Technische Universität München, Department of Mathematics, Boltzmannstrasse 3, Garching 85748, Germany}
	\author{Yvon Maday}\address{(Y. Maday) Sorbonne Université, CNRS, Université Paris Cité, Laboratoire Jacques-Louis Lions (LJLL), F-75005 Paris.}

	\begin{abstract} 
		
  Coupled cluster methods are widely regarded as the gold standard of computational quantum chemistry as they are perceived to offer the best compromise between computational cost and a high-accuracy resolution of the ground state eigenvalue of the electronic Hamiltonian-- an unbounded, self-adjoint operator acting on a Hilbert space of antisymmetric functions that describes electronic properties of molecular systems. The present contribution is the second in a series of two articles where we introduce a new numerical analysis of the single-reference coupled cluster method based on the invertibility of coupled cluster Fr\'echet derivative. In this contribution, we study discretisations of the single-reference coupled cluster equations based on a prior mean-field (Hartree-Fock) calculation. We show that under some structural assumptions on the associated discretisation spaces and assuming that the discretisation is fine enough, the discrete coupled cluster equations are locally well-posed, and we derive a priori and residual-based a posteriori error estimates for the discrete coupled cluster solutions. Preliminary numerical experiments indicate that the structural assumptions that we impose for our analysis can be expected to hold for several small molecules and the theoretical constants that appear in our error estimates are an improvement over those obtained from earlier approaches.
	\end{abstract}
	\subjclass{65N25, 65N30, 65Z05,  81V55, 81V70}
	\keywords{Electronic structure theory, coupled cluster method, numerical analysis, non-linear functions, error estimates}
	\maketitle

	\section{Introduction}
	
The electronic structure problem is one of the most important many-body problems in modern computational physics and chemistry. From understanding the emergent superconducting properties of twisted bilayer graphene at certain magic angles \cite{cao2018unconventional} to explaining the complex mechanisms underlying light-harvesting molecules \cite{curutchet2017quantum}, an incredible range of scientific phenomena are governed by many-body electronic interactions. It is therefore hardly surprising that electronic structure calculations have a wide domain of applications from drug discovery and the creation of new compounds for sustainable energy and green catalysis (see, e.g., \cite{deglmann2015application, dieterich2017opinion, hillisch2015computational, lovelock2022road, manly2001impact}), to the design of so-called quantum materials with exotic magnetic, ferroelectric or superconducting properties (see, e.g., \cite{bauer2020quantum, head2020quantum, si2023electronic}).

A central problem in molecular electronic structure is the numerical computation of the ground state eigenvalue of the electronic Hamiltonian-- an unbounded, self-adjoint operator acting on a Hilbert space of antisymmetric functions. Since the spatial dimension of the Hilbert space grows linearly in the number of electrons $N$, the computational cost of a standard Galerkin discretisation of the electronic Hamiltonian typically scales exponentially in the degrees of freedom. State-of-the-art numerical methods in molecular electronic structure theory are therefore based on a \emph{low-rank, non-linear parametrisation} of the sought-after eigenfunction.

 Coupled cluster (CC) methods are one such class of algorithms, which are based on an \emph{exponential ansatz} for the targeted ground state of the electronic Hamiltonian. More precisely, in the so-called single reference CC method, the sought-after ground state is expressed as the action of an exponential cluster operator-- which is the operator exponential of a linear combination of bounded maps (so-called \emph{excitation operators} defined in Section \ref{sec:3a})-- on a judiciously chosen reference function. Using this ansatz, the eigenvalue problem for the ground state energy of the electronic Hamiltonian can be reformulated as a highly \emph{non-linear} system of equations for the unknown coefficients appearing in the linear combination of excitation operators entering the operator exponential. Approximations to the ground state energy are then obtained by restricting the class of excitation operators that appear inside the exponential, which leads to a hierarchy of computationally more tractable non-linear, root-finding problems. Usually these truncations are done on the basis of the excitation orders (see Appendix \ref{sec:spaces} below) and one thus speaks of CCSD (single and double excitation operators), CCSDT (single, double and triple excitation operators) and so on. In particular, the so-called CCSD(T)\footnote{Here, the (T) emphasises the fact that triple excitation orders are not initially included in the CCSD(T) ansatz and are rather treated perturbatively through a post-processing step.} variant, which can be applied to small and medium-sized molecules at a reasonable computational cost, is widely regarded as the `gold standard' of quantum chemistry~\cite{raghavachari1989fifth}.

Despite being the method of choice for computing ground states of dynamically correlated molecules in the quantum chemical community (see, e.g., \cite{lee1995achieving}), the mathematical analysis of coupled cluster methods is a relatively recent phenomenon. Indeed, the numerical analysis of single-reference coupled cluster was initiated about 15 years ago by Schneider and Rohwedder \cite{MR3021693, MR3110488, Schneider_1}, who introduced, for the first time, the correct functional analytic setting for formulating and analysing the coupled cluster equations. These seminal contributions also pioneered the use of the \emph{strong local monotonicity} property, which is essentially a positivity condition on the Fr\'echet derivative of the coupled cluster function in a neighbourhood of the sought-after CC solution $\bt^*$,  as a means to demonstrate the well-posedness of these equations and derive a priori error estimates. This line of reasoning based on strong local monotonicity was subsequently extended to analyse further coupled cluster variants including the so-called extended coupled cluster method \cite{laestadius2018analysis} and the tailored coupled cluster method~\cite{faulstich2019analysis}. 

As we discuss in more detail in Section \ref{sec:3c} below, a rigorous demonstration of the local monotonicity property for coupled cluster functions requires a rather pessimistic smallness assumption on the sought-after coupled cluster solution $\bt^*$. More precisely, in order to establish strong local monotonicity and the local well-posedness analysis and error estimates derived from this approach, we must assume that $\bt^*$ is in a perturbative regime~$\bt^* \approx 0$. On the other hand, there are situations where the CC method is \emph{known numerically} to yield accurate approximations yet the sought-after solution $\bt^*$ is not in the perturbative regime (see, e.g., Table \ref{table2} below). For such problems, the existing a priori analysis yields error estimates with \emph{negative} pre-factors which makes it difficult to consider extensions towards a posteriori error certification and validation. This is particularly troublesome since coupled cluster calculations are often used for benchmark computations and the calibration of Kohn-Sham density functional theory models which are in turn used to create data sets for large-scale machine-learning driven predictive models.

In order to deal with these difficulties, the present authors have introduced a new numerical analysis of the single-reference coupled cluster method based on directly establishing the invertibility of the Fr\'echet derivative of the coupled cluster function using classical inf-sup arguments. Using this approach, we have shown in a recent contribution \cite{Hassan_CC} that, irrespective of the smallness of $\bt^*$, the continuous (infinite-dimensional) CC equations are always locally well-posed provided that the sought-after ground state is non-degenerate and the chosen reference function is non-orthogonal to the targeted ground state wave-function. Preliminary numerical experiments (see, e.g., Table \ref{table2} below) also indicate an improvement in the pre-factors appearing in the error estimates obtained through our new approach. Of course, the main drawback of this approach is that-- in contrast to the strong local monotonicity methodology-- local well-posedness for \emph{discretisations} of the continuous CC equations does not immediately follow from the continuous inf-sup condition. Indeed, this is a well-known phenomenon in the numerical analysis of linear PDEs such as the Helmholtz equation where a discrete inf-sup condition must be established separately for each given discretisation (see, e.g., \cite[Chapter 4]{Schwab}).

The aim of the current contribution is to extend the new analysis developed in \cite{Hassan_CC} to discretisations of the single-reference coupled cluster equations based on a prior mean-field (Hartree-Fock) calculation. We show that under some structural assumptions on the associated discretisation spaces and assuming that the discretisation is fine enough, the discrete coupled cluster equations are locally well-posed, and we derive a priori and residual-based a posteriori error estimates for the discrete coupled cluster solutions. The structural assumptions that we impose for our analysis are one of two types. The first type corresponds to the so-called Full-CC discretisations and the second type to the so-called excitation rank-truncated CC discretisations (see, e.g., \cite[Chapter 13]{helgaker2014molecular} and Appendix \ref{sec:spaces} below) together with a smallness assumption on a specific operator norm involving the difference of the electronic Hamiltonian and the mean-field operator. Preliminary numerical experiments indicate that this smallness assumption can be expected to hold for several small molecules, and the resulting theoretical constants that appear in our error estimates are an improvement over those obtained from earlier approaches.

The remainder of this article is organised as follows. In Section \ref{sec:2}, we introduce the problem formulation, i.e., the electronic Hamiltonian and the Hilbert spaces on which it acts. In Section~\ref{sec:3a} and \ref{sec:3b}, we introduce excitation operators and cluster operators respectively which are fundamental mathematical objects in the coupled cluster methodology. Next, in Section \ref{sec:3c}, we formulate the infinite-dimensional continuous coupled cluster equations and we briefly recall the existing results on the well-posedness of these equations, including our previous contribution \cite{Hassan_CC}. Subsequently, in Section \ref{sec:3d}, we state the discrete coupled cluster equations whose analysis is the main subject of this contribution. We begin our analysis in Section \ref{sec:4} where we first establish, in Subsections \ref{sec:4a} and \ref{sec:4b}, a number of technical lemmas. Finally, in Section \ref{sec:4c}, we state and prove our main result on the local well-posedness and a priori and residual-based a posteriori error estimates for the discrete coupled cluster equations. Supplementary information related to the Hartree-Fock methodology and the practicality of the assumptions needed for our analysis is given in Appendices \ref{sec:HF} and \ref{sec:spaces} respectively.

	\section{Background and Problem Setting}\label{sec:2}

Computational quantum chemistry is the study of the properties of matter through modelling at the molecular scale, i.e., when matter is viewed as a collection of positively charged nuclei and negatively charged electrons. To formalise the problem setting, we assume that we are given a molecule composed of $M \in \mathbb{N}$ nuclei carrying charges $\{Z_{\alpha}\}_{\alpha =1}^M \subset \mathbb{R}_+$ and located at positions $\{\bold{x}_{\alpha}\}_{\alpha =1}^{M} \subset \mathbb{R}^3$ respectively. We further assume the presence of $N\in \mathbb{N}$ electrons whose spatial coordinates are denoted by $\{\bold{x}_i\}_{i=1}^N \subset \mathbb{R}^3$. Throughout this article, we will assume that the Born-Oppenheimer approximation holds, i.e., we will treat the nuclei as fixed, classical particles and we will focus purely on the quantum mechanical description of the electrons.

In order to describe the behaviour of this system of nuclei and electrons under the Born-Oppenheimer approximation, we require the notion of several function spaces. The following construction is largely a repetition of the one found in our previous contribution \cite{Hassan_CC}, which itself was based on \cite{rohwedder2010analysis}.

\subsection{Function Spaces and Norms}\label{sec:2a}~

To begin with, we denote by $\mL^2(\R^3)$ the space of (equivalence classes of) real-valued square integrable functions of three variables, and we denote by $\mH^1(\R^3)$ the subspace of $\mL^2(\R^3)$ consisting of functions that additionally possess square integrable first derivatives. Both spaces are equipped with their usual inner products. Following the convention in the quantum chemical literature, we will frequently refer to $\mL^2(\R^3)$ and $\mH^1(\R^3)$ as infinite-dimensional \emph{single particle} spaces.

Next, we define the tensor space
\begin{align*}
	\mathcal{L}^2:= \bigotimes_{j=1}^N \mL^2(\R^3),
\end{align*}
which is equipped with an inner product that is constructed by defining first for all elementary tensors $ \mathcal{f}, \mathcal{g} \in \mathcal{L}^2$ with  $\mathcal{f}= \otimes_{j=1}^N \mathcal{f}_j$ and $\mathcal{g}= \otimes_{j=1}^N \mathcal{g}_j$
\begin{equation}\label{eq:inner_product}
	\begin{split}
		\left(\mathcal{f}, \mathcal{g}\right)_{\mathcal{L}^2}:= \prod_{j=1}^N \left(\mathcal{f}_j, \mathcal{g}_j\right)_{\mL^2(\R^3)},
	\end{split}
\end{equation}
and then extending bilinearly for general tensorial elements of $\mathcal{L}^2$. 

It is a consequence of Fubini's theorem that the tensor space $\mathcal{L}^2$ is isometrically isomorphic to the space $\mL^2(\R^{3N})$ of real-valued square integrable functions of $3N$ variables with the associated $\mL^2$-inner product. Thanks to this result, we can define the tensor space $\mathcal{H}^1 \subset \mathcal{L}^2 $ as the closure of $\mathscr{C}_0^{\infty}(\mathbb{R}^{3N})$ in $\mL^2(\R^{3N})$ with respect to the usual gradient-gradient inner product on $\mathbb{R}^{3N}$.

In quantum mechanics, a fundamental distinction is made between so-called \emph{bosonic} and \emph{fermionic} particles, the latter obeying the so-called Pauli-exclusion principle and thus being described in terms of antisymmetric functions. We are therefore obligated to also define tensor spaces of antisymmetric functions. To this end, we first introduce the so-called \emph{antisymmetric projection operator} $\mathbb{P}^{\rm as} \colon \mathcal{L}^2\rightarrow\mathcal{L}^2$ that is defined through the action
\begin{align}\label{eq:anti_proj}
	\forall \mathcal{f} \in \mathcal{L}^2\colon \quad (\mathbb{P}^{\rm as} \mathcal{f})(\bold{x}_1, \ldots, \bold{x}_N) :=\frac{1}{{N!}} \sum_{\pi \in {\mS}(N)} {\rm sgn(\pi)} \mathcal{f}(\bold{x}_{\pi(1)}, \ldots,\bold{x}_{\pi(N)}  ),
\end{align}
where ${\mS}(N)$ denotes the permutation group of order $N$, and ${\rm sgn(\pi)} $ denotes the signature of $\pi~\in~{\mS}(N)$. 

It is easy to establish that $\mathbb{P}^{\rm as}$ is an $\mathcal{L}^2$-orthogonal projection with a closed range. We therefore define the antisymmetric tensor spaces $\widehat{\mathcal{L}}^2 \subset \mathcal{L}^2$ and $\widehat{\mathcal{H}}^1 \subset \mathcal{H}^1$ as
\begin{align*}
	\widehat{\mathcal{L}}^2:= \bigwedge_{j=1}^N \mL^2(\R^3) := \text{\rm ran}\; \mathbb{P}^{\rm as} \quad \text{and} \quad \widehat{\mathcal{H}}^1 := \widehat{\mathcal{L}}^2 \cap \mathcal{H}^1,
\end{align*}
equipped with the $(\cdot, \cdot)_{\mathcal{L}^2}$ and $(\cdot, \cdot)_{\mathcal{H}^1}$ inner products respectively. We also remark that normalised elements of $\widehat{\mathcal{L}}^2$ are known as \emph{wave-functions}, and these are antisymmetric in the sense that for any $\mathcal{f} \in	\widehat{\mathcal{L}}^2 $ we have that
\begin{align*}
	\mathcal{f}(\bold{x}_1, \ldots, \bold{x}_i, \ldots, \bold{x}_j, \ldots \bold{x}_N)= - \mathcal{f}(\bold{x}_1, \ldots, \bold{x}_j, \ldots, \bold{x}_i, \ldots \bold{x}_N) \qquad \forall~ i, j \in \{1, \ldots, N\} \text{ with } i\neq j.
\end{align*}

In the sequel, we will also frequently make use of the dual space of $\widehat{\mathcal{H}}^1$. We therefore denote $\widehat{\mathcal{H}}^{-1}:= \big(\widehat{\mathcal{H}}^1\big)^*$, we equip $\widehat{\mathcal{H}}^{-1} $ with the canonical dual norm, and we write $\langle \cdot, \cdot \rangle_{\widehat{\mathcal{H}}^1, \widehat{\mathcal{H}}^{-1}} $ for the associated duality pairing. {It is important to note that $\widehat{\mathcal{H}}^1 \hookrightarrow \widehat{\mathcal{L}}^2 \hookrightarrow \widehat{\mathcal{H}}^{-1}$ form a Gelfland triple (see, e.g, \cite[Section 2.1.2.4]{Schwab}) so that the $\langle \cdot, \cdot \rangle_{\widehat{\mathcal{H}}^1, \widehat{\mathcal{H}}^{-1}} $ duality pairing can be seen as a continuous extension of the $(\cdot, \cdot)_{\widehat{\mathcal{L}}^2}$-inner product. In particular, for all $\Phi \in \widehat{\mathcal{H}}^1$ and all $\Upsilon \in \widehat{\mathcal{L}}^2$ it holds that
\begin{align}\label{eq:Yvon_duality}
    \langle \Phi, \Upsilon \rangle_{\widehat{\mathcal{H}}^1, \widehat{\mathcal{H}}^{-1}} = (\Phi, \Upsilon)_{\widehat{\mathcal{L}}^2}.
\end{align}
This fact will be used frequently in our analysis.}

Finally, let us remark that higher regularity Sobolev spaces  $\widehat{\mathcal{H}}^k, ~k\in \mathbb{N}$ are defined analogously to~$\widehat{\mathcal{H}}^1$. Let us also briefly comment that, as is common for a numerical analysis of the kind that we undertake in this paper, we have ignored spin variables in the construction of our function spaces (see, e.g., Remark 1 in our previous contribution \cite{Hassan_CC} for a discussion of this point.)

\vspace{2mm}
\subsection{Governing Operators and Problem Statement}\label{sec:2b}~

Throughout this article, we assume that the electronic properties of the molecule that we study are described by the action of a many-body electronic Hamiltonian given by
\begin{equation}\label{eq:Hamiltonian}
	H:= -\frac{1}{2} \sum_{j=1}^N \Delta_{\bold{x}_j} + \sum_{j =1}^{N} \sum_{\alpha =1}^{M} \frac{-Z_{\alpha}}{\vert \bold{x}_{\alpha}- \bold{x}_j\vert }  + \sum_{j =1}^{N} \sum_{i =1}^{j-1}  \frac{1}{\vert \bold{x}_i - \bold{x}_j\vert}\qquad \text{acting on } \widehat{\mathcal{L}}^2 \quad \text{with domain } \widehat{\mathcal{H}}^2.
\end{equation}

The electronic properties of the molecule that we study are functions of the spectrum of the electronic Hamiltonian~$H$, and we are therefore interested in its analysis and computation. It is a classical result (see, e.g.,  \cite[Chapter 6]{lewin2018theorie}) that the operator $H$ is self-adjoint on $\widehat{\mathcal{L}}^2$ with domain~$\widehat{\mathcal{H}}^2$ and form domain~$\widehat{\mathcal{H}}^1$. It is also known that the spectrum of $H$ is bounded below and that $H$ has an essential spectrum $\sigma_{\rm ess}$ of the form $\sigma_{\rm ess}:= [\Sigma, \infty)$ where $\Sigma \in (-\infty, 0]$. Additionally, Zhislin and Sigalov \cite{zhislin1960study, zhislin1965spectrum} have shown that for neutral and positively charged systems, i.e., if $Z:= \sum_{\alpha =1}^M Z_{\alpha} > N-1$, the discrete spectrum of $H$ consists of a countably infinite number of eigenvalues, each with finite multiplicity, accumulating at $\Sigma$. In particular, $H$ possesses a lowest eigenvalue $\mathcal{E}^*_{\rm GS} \in \mathbb{R}$, frequently called the ground state energy, such that
	\begin{subequations}
		\begin{align}
			\label{eq:Ground_State}
			\mathcal{E}_{\rm GS}^*= \min_{0\neq \Psi \in \widehat{\mathcal{H}}^1} \frac{\big\langle\Psi,H \Psi\big\rangle_{\widehat{\mathcal{H}}^1 \times \widehat{\mathcal{H}}^{-1}}}{\Vert \Psi\Vert^2_{\widehat{\mathcal{L}}^2}}.
		\end{align}
	Any function $\Psi^*_{\rm GS} \in \widehat{\mathcal{H}}^1$ that achieves the minimum in Equation \eqref{eq:Ground_State} is called a ground state of $H$ and obviously satisfies
	\begin{align}
		H\Psi_{\rm GS}^* = \mathcal{E}^*_{\rm GS} \Psi^*_{\rm GS}.
	\end{align}
	\end{subequations}

	For the purpose of this article, we will assume that $Z=\sum_{\alpha =1}^M Z_{\alpha} > N-1$. Note that if the ground state eigenvalue $\mathcal{E}^*_{\rm GS}$ is simple (which is not always the case), normalised ground states $\Psi_{\rm GS}^*$ (being elements of a real Hilbert space) are unique up to sign. Consequently, when $\mathcal{E}^*_{\rm GS}$ is simple, we will simply refer to \emph{the} ground state $\Psi_{\rm GS}^*$ with the convention that the sign of $\Psi_{\rm GS}^*$ has been fixed once and for all.

From a functional analysis point of view, the electronic Hamiltonian $H$ possesses certain desirable properties, namely continuity and ellipticity on appropriate Sobolev spaces. More precisely (see, for instance, \cite[Chapter 4]{yserentant2010electronic}),
\begin{itemize}
	\item The electronic Hamiltonian defined through Equation \eqref{eq:Hamiltonian} is bounded as a mapping from $\widehat{\mathcal{H}}^1$ to $\widehat{\mathcal{H}}^{-1}$: \vspace{-3mm}
	\begin{align}\label{eq:contin}
		\forall \Phi, \Psi \in \widehat{\mathcal{H}}^1 \colon \qquad \left \vert \left \langle \Phi, H\Psi\right \rangle_{\widehat{\mathcal{H}}^1 \times \widehat{\mathcal{H}}^{-1}} \right \vert \leq \Big(\frac{1}{2}+ 3\sqrt{N}Z\Big)\Vert \Phi\Vert_{\widehat{\mathcal{H}}^1}\Vert \Psi\Vert_{\widehat{\mathcal{H}}^1};
	\end{align}
	\vspace{-3mm}
	\item The electronic Hamiltonian defined through Equation \eqref{eq:Hamiltonian} satisfies the following ellipticity condition on the Gelfand triple $\widehat{\mathcal{H}}^1 \hookrightarrow \widehat{\mathcal{L}}^2 \hookrightarrow \widehat{\mathcal{H}}^{-1}$:
	\begin{align}\label{eq:ellip}
		\forall \Phi \in \widehat{\mathcal{H}}^1 \colon \qquad \left \langle \Phi, H\Phi\right \rangle_{\widehat{\mathcal{H}}^1 \times \widehat{\mathcal{H}}^{-1}}  \geq \frac{1}{4}\Vert \Phi\Vert_{\widehat{\mathcal{H}}^1}^2 - \Big(9NZ^2 +\frac{1}{4}\Big)\Vert \Phi\Vert^2_{\widehat{\mathcal{L}}^2}.
	\end{align}
\end{itemize}

An important consequence of the above ellipticity estimate is that the electronic Hamiltonian, modified by any suitable shift, defines an invertible operator on $\widehat{\mathcal{H}}^1$. This fact will be of great importance in our analysis in Sections \ref{sec:3} and \ref{sec:4}.

\section{Formulation of the Coupled Cluster Equations}\label{sec:3}

\vspace{2mm}

Post-Hartree Fock numerical methods in quantum chemistry such as the single-reference coupled cluster method are based on a particular decomposition of the single-particle space $\mH^1(\R^3)$ and the $N$-particle space $\widehat{\mathcal{H}}^1$. In order to state the single-reference coupled cluster equations, we first describe this decomposition in more detail.  \vspace{2mm}

\begin{Notation}[Occupied and Virtual Spaces]\label{def:occ_vir}~

Let $\mathscr{R}$ denote an $N$-dimensional subspace of the single-particle space $\mH^1(\R^3)$ and let $\mathscr{R}^{\perp} \subset \mL^2(\R^3)$ denote the $\mL^2(\R^3)$-orthogonal complement of $\mathscr{R}$, i.e.,
\begin{equation*}
    \mathscr{R}^{\perp} := \left\{\phi \in \mL^2(\R^3) \colon (\phi, \psi)_{\mL^2(\R^3)}=0 ~\forall \psi \in \mathscr{R}\right\}.
\end{equation*}

We refer to $\mathscr{R}$ as an occupied space of $\mH^1(\R^3)$, and we refer to $\mathscr{R}^{\perp}$ as a virtual space of $\mH^1(\R^3)$ respectively.
\end{Notation}

\begin{definition}[Reference Determinant for an Occupied Space]\label{def:ref_det}~

    Let $\mathbb{P}^{\rm as} \colon \mathcal{L}^2\rightarrow\mathcal{L}^2$ denote the anti-symmetric projection operator defined through Equation \eqref{eq:anti_proj}, let $\mathscr{R}$ denote an occupied space of $\mH^1(\R^3)$ as introduced in Notation \ref{def:occ_vir}, and let $\mathscr{B}_{\rm occ}:= \{\psi_j\}_{j=1}^N \subset \mathscr{R}$ denote any $\mL^2(\R^3)$-orthonormal basis for $\mathscr{R}$. Then we define the function $\Psi_0 \in \widehat{\mathcal{H}}^1$ as
\begin{align*}
 \Psi_0:= \mathbb{P}^{\rm as} \left(\otimes_{i=1}^N \psi_i\right),
\end{align*}
and we say that $\Psi_0$ is a reference determinant corresponding to the occupied space $\mathscr{R}$.
\end{definition}

An important remark is now in order.\vspace{2mm}

\begin{remark}[Uniqueness up to sign of Reference Determinant]\label{rem:ref_det_unique}~
    Consider Definition \ref{def:ref_det} of a reference determinant $\Psi_0 \in \widehat{\mathcal{H}}^1$ constructed from an $\mL^2(\R^3)$-orthonormal basis $\mathscr{B}_{\rm occ}:= \{\psi_j\}_{j=1}^N $ of some occupied space $\mathscr{R}$. Given now a different $\mL^2(\R^3)$-orthonormal basis $\mathscr{B}_{\rm occ}':= \{\psi_j'\}_{j=1}^N $, there obviously exists a unitary matrix $\bold{U}=[\bold{U}]_{ij}\in \mathbb{R}^{N \times N}$ such that
    \begin{align*}
        \forall j \in \{1, \ldots, N\} \colon \qquad \psi_i' := \sum_{j=1}^N [\bold{U}]_{ij}\psi_j.
    \end{align*}
    
It follows from the definition of the anti-symmetric projection operator $\mathbb{P}^{\rm as} $ that
\begin{align*}
    \mathbb{P}^{\rm as} \left(\otimes_{i=1}^N \psi_i'\right)= \text{\rm det}(\bold{U}) \mathbb{P}^{\rm as} \left(\otimes_{i=1}^N \psi_i\right)= \text{\rm det}(\bold{U}) \Psi_0.
\end{align*}
    Consequently, reference determinants corresponding to a given occupied space $\mathscr{R} \subset \mH^1(\R^3)$ are unique up to sign. In the sequel therefore, given an occupied space $\mathscr{R}$ of the single-particle space $\mH^1(\R^3)$,  we will simply refer to \underline{the} reference determinant $\Psi_{0}$ corresponding to $\mathscr{R}$ with the convention that the sign of $\Psi_{0}$ has been fixed once and for all.
\end{remark}

The motivation for introducing reference determinants is that they allow a specific complementary decomposition of the $N$-particle function space $\widehat{\mathcal{H}}^1$.


\begin{definition}[Complementary Decomposition of $N$-particle Function Space]\label{def:decomp}~

    Let $\mathscr{R}$ denote an occupied space of $\mH^1(\R^3)$ as stated in Notation \ref{def:occ_vir}, let $\Psi_0 \in \widehat{\mathcal{H}}^1$ denote the reference determinant corresponding to $\mathscr{R}$ as defined through Definition \ref{def:ref_det}, and let $\{\Psi_0\}^{\perp} \subset \widehat{\mathcal{L}}^2$ denotes the $\widehat{\mathcal{L}}^2$-orthogonal complement of $\Psi_0$. Then we define the infinite-dimensional subspace $\widehat{\mathcal{H}}^{1, \perp}_{\Psi_0} \subset \widehat{\mathcal{H}}^1$ as
\begin{align*}
\widehat{\mathcal{H}}^{1, \perp}_{\Psi_0}:= \{\Psi_0\}^{\perp} \cap \widehat{\mathcal{H}}^1,
\end{align*}
and we introduce the complementary decomposition of $\widehat{\mathcal{H}}^1$ given by
\begin{equation}\label{eq:decomp}
    \widehat{\mathcal{H}}^{1} = \text{\rm span}\{\Psi_0\} \oplus \widehat{\mathcal{H}}^{1, \perp}_{\Psi_0}.
\end{equation}

Additionally we define $\mathbb{P}_0 \colon \widehat{\mathcal{H}}^1 \rightarrow  \widehat{\mathcal{H}}^1$ and $\mathbb{P}_0^{\perp}:={\rm I}-\mathbb{P}_0 \colon \widehat{\mathcal{H}}^1 \rightarrow  \widehat{\mathcal{H}}^1$ as the $\widehat{\mathcal{L}}^2$-orthogonal projection operator onto $ \text{\rm span} \left\{\Psi_0\right\}$ and $\widehat{\mathcal{H}}^{1, \perp}_{\Psi_0}$ respectively. 
\end{definition}

Another important remark is now in order.\vspace{2mm}

\begin{remark}[Properties of Complementary Decomposition of $N$-particle Function Space]~

Consider Definition \ref{def:decomp} of the $\widehat{\mathcal{L}}^2$-orthogonal projection operators $\mathbb{P}_0 \colon \widehat{\mathcal{H}}^1 \rightarrow  \widehat{\mathcal{H}}^1$ and $\mathbb{P}_0^{\perp}\colon \widehat{\mathcal{H}}^1 \rightarrow  \widehat{\mathcal{H}}^1$. 

The fact that $\mathbb{P}_0$ and $\mathbb{P}^{\perp}_0$ are both bounded operators with respect to the $\Vert \cdot \Vert_{\widehat{\mathcal{H}}^1}$ norm is a consequence of the fact that these operators possess a range and a kernel that are both closed in the $\widehat{\mathcal{H}}^1$ topology. Let us also point out that these projection operators allow us to equivalently express the complementary decomposition \eqref{eq:decomp} as
\begin{align*}
    \widehat{\mathcal{H}}^1=  \text{\rm Ran}\mathbb{P}_0 \oplus \widehat{\mathcal{H}}^{1, \perp}_{\Psi_0}= \text{\rm Ran}\mathbb{P}_0\oplus \text{\rm Ran}\mathbb{P}_0^{\perp}.
\end{align*}
\end{remark}

\subsection{Excitation Operators}\label{sec:3a}~

Our next task is to introduce the notion of so-called \emph{excitation} operators which lie at the core of the coupled cluster methodology. As the first step towards doing so, we will introduce suitable orthonormal bases for the $N$-particle function space $\widehat{\mathcal{H}}^1$ and its subspace $\widehat{\mathcal{H}}^{1, \perp}_{\Psi_0}$ introduced above.

\begin{definition}[Slater Determinant Basis for $N$-particle function space $\widehat{\mathcal{H}}^1$]\label{def:Slater_basis}~

    Let $\mathscr{R}$ and $\mathscr{R}^{\perp}$ denote an occupied and virtual space respectively of $\mH^1(\R^3)$ as stated in Notation \ref{def:occ_vir}, let $\mathscr{B}_{\rm occ}:=\{\psi_j\}_{j={1}}^N \subset \mH^1(\R^3)$ and $\mathscr{B}_{\rm vir}:= \{\psi_j\}_{j={N+1}}^\infty \subset \mH^1(\R^3)$ denote $\mL^2(\R^3)$-orthonormal bases for $\mathscr{R}$ and $\mathscr{R}^{\perp}$ respectively, let $\mathcal{J}_{\infty}^N \subset \mathbb{N}^N$ be the index set defined as
\begin{align*}
	\mathcal{J}_{\infty}^N := \Big\{\boldsymbol{\ell}= (\ell_1, \ell_2, \ldots, \ell_N)\in \mathbb{N}^N \colon \ell_1 < \ell_2 < \ldots < \ell_N\Big\},
\end{align*}
 and let the set $\mathcal{B}_{\otimes}^{\rm ord} \subset \mathcal{H}^1$ be defined as
\begin{align*}
	\mathcal{B}_{\otimes}^{\rm ord}:= \Big\{\Psi_{\bold{k}}:= \psi_{k_1} \otimes \psi_{k_2} \otimes \ldots \otimes \psi_{k_N} \colon \bold{k}= (k_1, \ldots, k_N) \in \mathcal{J}_{\infty}^N \Big\}.
\end{align*}   
 
Then we define an $\widehat{\mathcal{L}}^2$-orthonormal basis for $\widehat{\mathcal{H}}^1$ as
\begin{align*}
	\mathcal{B}_{\wedge}:=& \{\sqrt{N!}\; \mathbb{P}^{\rm as} \Psi \colon ~\Psi \in \mathcal{B}_{\otimes}^{\rm ord}\}\\
	=& \left\{ \Psi_{\bold{k}}(\bold{x}_1, \bold{x}_2, \ldots, \bold{x}_N)=\frac{1}{\sqrt{N!}} \sum_{\pi \in {\mS}(N)} {\rm sgn(\pi)} \otimes_{i=1}^N \psi_{k_i}\big(\bold{x}_{\pi(i)}\big) \colon \hspace{1mm} \bold{k}=(k_1, k_2, \ldots, k_N) \in	\mathcal{J}_{\infty}^N\right\},
\end{align*}
and we refer to the elements of this basis set as Slater determinants.
\end{definition}

Two comments are now in order. First, since $\widehat{\mathcal{H}}^1$ is dense in $\widehat{\mathcal{L}}^2$, the set $\mathcal{B}_{\wedge}$ constructed above is also an $\widehat{\mathcal{L}}^2$-orthonormal basis for $\widehat{\mathcal{L}}^2$. Second, to avoid notational complexity, we will frequently use the following notation in the sequel.
\begin{Notation}[Short-hand Notation for Slater Determinants]\label{not:slater}~

    Consider Definition \ref{def:Slater_basis} of the $\widehat{\mathcal{L}}^2$-orthonormal basis set $\mathcal{B}_{\wedge}$. For simplicity, given $\bold{k} \in \mathcal{J}_{\infty}^N$ and a Slater determinant $\Psi_{\bold{k}} \in \mathcal{B}_{\wedge}$ of the~form
\begin{align*}
	\Psi_{\bold{k}}(\bold{x}_1, \bold{x}_2, \ldots, \bold{x}_N)=\frac{1}{\sqrt{N!}} \sum_{\pi \in {\mS}(N)} {\rm sgn(\pi)} \otimes_{i=1}^N \psi_{k_i}\big(\bold{x}_{\pi(i)}\big),
\end{align*}
we will write the Slater determinant $\Psi_{\bold{k}}$ in the succinct form 
\begin{align*}
	\Psi_{\bold{k}}(\bold{x}_1, \bold{x}_2, \ldots, \bold{x}_N)=\frac{1}{\sqrt{N!}}\text{\rm det} \big(\psi_{k_i}(\bold{x}_j)\big)_{i, j=1}^N.
\end{align*}
\end{Notation}

\begin{definition}[Slater Determinant Basis for $\widehat{\mathcal{H}}^{1, \perp}_{\Psi_0}$]\label{rem:Slater}~

    Let $\mathscr{R}$ denote an occupied space of $\mH^1(\R^3)$ as stated in Notation \ref{def:occ_vir}, let $\mathcal{B}_{\wedge}$ denote an $\widehat{\mathcal{L}}^2$-orthonormal basis set for the $N$-particle function space $\widehat{\mathcal{H}}^1$ as defined through Definition~\ref{def:Slater_basis}, let $\Psi_0 \in \widehat{\mathcal{H}}^1$ denote the reference determinant corresponding to $\mathscr{R}$ as defined through Definition \ref{def:ref_det}, and let the infinite-dimensional subspace $\widehat{\mathcal{H}}^{1, \perp}_{\Psi_0} \subset \widehat{\mathcal{H}}^1$ be defined as in Definition \ref{def:decomp}.
    
    Then we define an $\widehat{\mathcal{L}}^2$-orthonormal basis for $\widehat{\mathcal{H}}_{\Psi_0}^{1, \perp}$ as
\begin{align*}
	\widetilde{\mathcal{B}_{\wedge}}:= \mathcal{B}_{\wedge} \setminus \{\Psi_0\}.
\end{align*}
\end{definition}

Equipped with Definitions \ref{def:Slater_basis} and \ref{rem:Slater} for our $N$-particle basis sets, we are now ready to define the so-called excitation operators. To do so, we will first introduce the notion of excitation index sets.

\begin{definition}[Excitation Index Sets]\label{def:Excitation_Index}~
	
	For each $j \in \{1, \ldots, N\}$ we define the index set $\mathcal{I}_j$ as
	\begin{align*}
		\mathcal{I}_j := \left\{ {{a_1, \ldots, a_j}\choose{\ell_1, \ldots, \ell_j}} \colon \ell_1 < \ldots< \ell_j \in \{1, \ldots, N\} \text{ and } a_1< \ldots < a_j  \in \{N+1, N+2, \ldots\} \right\},
	\end{align*}
	and we say that $\mathcal{I}_j$ is the excitation index set of order $j$. Additionally, we define
	\begin{align*}
		\mathcal{I}:= \bigcup_{j=1}^N \mathcal{I}_j,
	\end{align*}
	and we say that $\mathcal{I}$ is the global excitation index set.
\end{definition}

\begin{definition}[Excitation Operators]\label{def:Excitation_Operator}~
	
	Let $j \in \{1, \ldots, N\}$, let $\mu \in \mathcal{I}_j$ be of the form
	\begin{align*}
		\mu={{a_1, \ldots, a_j}\choose{\ell_1, \ldots, \ell_j}} \colon \ell_1 < \ldots< \ell_j \in \{1, \ldots, N\} \text{ and } a_1< \ldots < a_j  \in \{N+1, N+2, \ldots\},
	\end{align*}
and let $\mathcal{B}_{\wedge}$ denote an $\widehat{\mathcal{L}}^2$-orthonormal basis set for the $N$-particle function space $\widehat{\mathcal{H}}^1$ (and hence also for $\widehat{\mathcal{L}}^2$) as defined through Definition \ref{def:Slater_basis}.

We define the excitation operator $\mathcal{X}_{\mu} \colon \widehat{\mathcal{L}}^2 \rightarrow \widehat{\mathcal{L}}^2$ through its action on the $N$-particle basis set $\mathcal{B}_{\wedge}$. For $\Psi_{\nu}(\bold{x}_1, \ldots, \bold{x}_N) =  \frac{1}{\sqrt{N!}} \text{\rm det}\; \big(\psi_{\nu_j}(\bold{x}_i)\big)_{ i, j=1}^N$, we set 
	\begin{align*}
		\mathcal{X}_{\mu} \Psi_{\nu} = \begin{cases}
			0  &\quad \text{ if } \{\ell_1, \ldots, \ell_j\} \not \subset \{\nu_1, \ldots, \nu_N\},\\
			0   & \quad \text{ if } \exists  a_{m} \in  \{a_1, \ldots, a_j\}  \text{ such that } a_m \in \{\nu_1, \ldots, \nu_N\},\\
			\Psi_{\nu, a} \in \mathcal{B}_{\wedge} & \quad \text{ otherwise},
		\end{cases}
	\end{align*}
	where the determinant $\Psi_{\nu, a} $ is constructed from $\Psi_{\nu}$ by replacing all functions $\psi_{\ell_1}, \ldots, \psi_{\ell_j}$ used to construct $\Psi_{\nu}$ with functions $\psi_{a_1},\ldots \psi_{a_j} $  respectively.
\end{definition}

\begin{definition}[De-excitation Operators]\label{def:De-excitation_Operator}~
	
	Let $j \in \{1, \ldots, N\}$, let $\mu \in \mathcal{I}_j$ be of the form
	\begin{align*}
		\mu={{a_1, \ldots, a_j}\choose{\ell_1, \ldots, \ell_j}} \colon \ell_1 < \ldots< \ell_j \in \{1, \ldots, N\} \text{ and } a_1< \ldots < a_j  \in \{N+1, N+2, \ldots\},
	\end{align*}
	and let $\mathcal{B}_{\wedge}$ denote an $\widehat{\mathcal{L}}^2$-orthonormal basis set for the $N$-particle function space $\widehat{\mathcal{H}}^1$ (and hence also for $\widehat{\mathcal{L}}^2$) as defined through Definition \ref{def:Slater_basis}.
 
	We define the de-excitation operator $\mathcal{X}^{\dagger}_{\mu} \colon \widehat{\mathcal{L}}^2 \rightarrow \widehat{\mathcal{L}}^2$ through its action on the $N$-particle basis set $\mathcal{B}_{\wedge}$. For $\Psi_{\nu}(\bold{x}_1, \ldots, \bold{x}_N) =  \frac{1}{\sqrt{N!}} \text{\rm det}\; \big(\psi_{\nu_j}(\bold{x}_i)\big)_{ i, j=1}^N$, we set 
	\begin{align*}
		\mathcal{X}_{\mu}^{\dagger} \Psi_{\nu} = \begin{cases}
			0  &\quad \text{ if } \{a_1, \ldots, a_j\} \not \subset \{\nu_1, \ldots, \nu_N\},\\
			0   & \quad \text{ if } \exists~\ell_{m} \in  \{\ell_1, \ldots, \ell_j\}  \text{ such that } \ell_m \in \{\nu_1, \ldots, \nu_N\},\\
			\Psi_{\nu}^{a} \in \mathcal{B}_{\wedge} & \quad \text{ otherwise},
		\end{cases}
	\end{align*}
	where the determinant $\Psi_{\nu}^{a}$ is constructed from $\Psi_{\nu}$ by replacing all functions $\psi_{a_1},\ldots \psi_{a_j} $ used to construct $\Psi_{\nu}$ with functions $\psi_{\ell_1}, \ldots, \psi_{\ell_j}$ respectively.
\end{definition}

Consider Definitions \ref{def:Excitation_Operator} and \ref{def:De-excitation_Operator} of excitation and de-excitation operators. Two important comments are now in order.

\begin{itemize}
    \item First, de-excitation operators reverse the action of excitation operators in a specific sense: for any excitation index $\mu$ in the global excitation index $\mathcal{I}$ and any determinant $\Psi_\nu \in \mathcal{B}_{\wedge}$, if $\mathcal{X}_{\mu}\Psi_{\nu} \neq 0$, then it holds that $\mathcal{X}_{\mu}^{\dagger}\mathcal{X}_{\mu} \Psi_\nu = \Psi_\nu$.

    \item Second, we emphasise that the definition of both excitation and de-excitation operators depends a priori on the chosen basis of Slater determinants $\mathcal{B}_{\wedge}$ for the $N$-particle space $\widehat{\mathcal{H}}^1$. This Slater determinant basis $\mathcal{B}_{\wedge}$ itself depends on the basis sets $\mathscr{B}_{\rm occ}$ and $\mathscr{B}_{\rm vir}$ for the occupied space $\mathscr{R}$ and the virtual space $\mathscr{R}^{\perp}$ respectively. This dependency will be the subject of further discussion in the sequel.
\end{itemize}

The following theorem summarises some basic properties of excitation and de-excitation operators.

\begin{theorem}[Properties of Excitation and De-excitation Operators]\label{thm:excitation_operators}~

	Let the excitation index set $\mathcal{I}$ be defined through Definition \ref{def:Excitation_Index}, let $\mathcal{B}_{\wedge}$ denote an $\widehat{\mathcal{L}}^2$-orthonormal basis of Slater determinants for $\widehat{\mathcal{H}}^1$ as defined through Definition \ref{def:Slater_basis}, and let the excitation operators $\{\mathcal{X}_{\mu}\}_{\mu \in \mathcal{I}}$ and de-excitation operators $\{\mathcal{X}^{\dagger}_{\mu}\}_{\mu \in \mathcal{I}}$ be defined through Definitions \ref{def:Excitation_Operator} and \ref{def:De-excitation_Operator} respectively. Then
	\begin{enumerate}

            
         \item For all $\mu \in \mathcal{I}$ the excitation operator $\mathcal{X}_{\mu}\colon \widehat{\mathcal{L}}^2 \rightarrow \widehat{\mathcal{L}}^2$ and de-excitation operator $\mathcal{X}^{\dagger}_{\mu}\colon \widehat{\mathcal{L}}^2 \rightarrow \widehat{\mathcal{L}}^2$ are bounded linear maps\footnote{Note that excitation operators are defined through their action on the basis $\mathcal{B}_{\wedge}$ which, as remarked before, is a complete orthonormal basis for $\widehat{\mathcal{L}}^2$. Thus, we can extend the domain of definition of excitation and de-excitation operators to $\widehat{\mathcal{L}}^2$.}.
         
        \item For all $\mu \in \mathcal{I}$ the de-excitation operator $\mathcal{X}^{\dagger}_{\mu}\colon \widehat{\mathcal{L}}^2 \rightarrow \widehat{\mathcal{L}}^2$ is the $\widehat{\mathcal{L}}^2$-adjoint of the excitation operator $\mathcal{X}_{\mu}\colon \widehat{\mathcal{L}}^2 \rightarrow \widehat{\mathcal{L}}^2$.

        \item For all $\mu, \nu \in \mathcal{I}$, it holds that $\mathcal{X}_{\mu} \mathcal{X}_{\nu}=\mathcal{X}_{\nu} \mathcal{X}_{\mu}$ and $\mathcal{X}^{\dagger}_{\mu} \mathcal{X}^{\dagger}_{\nu}=\mathcal{X}^{\dagger}_{\nu} \mathcal{X}^{\dagger}_{\mu}$ .
		
	\end{enumerate}
\end{theorem}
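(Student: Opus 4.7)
The plan is to prove the four parts in order, with Parts (1)--(3) following from relatively direct case analyses on the $\widehat{\mathcal{L}}^2$-orthonormal Slater basis $\mathcal{B}_{\wedge}$ and Part (4) requiring an additional functional-analytic argument. Throughout, it suffices to verify the identities and estimates on basis pairs from $\mathcal{B}_{\wedge}$ and then extend by bilinearity and by density of the algebraic span of $\mathcal{B}_{\wedge}$ in $\widehat{\mathcal{L}}^2$ and $\widehat{\mathcal{H}}^1$.

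For Part (1), I would fix $\mu = \binom{a_1 \ldots a_j}{\ell_1 \ldots \ell_j} \in \mathcal{I}_j$ and $\nu = \binom{b_1 \ldots b_k}{m_1 \ldots m_k} \in \mathcal{I}_k$ and evaluate $\mathcal{X}_\mu \mathcal{X}_\nu \Psi_\sigma$ and $\mathcal{X}_\nu \mathcal{X}_\mu \Psi_\sigma$ for an arbitrary $\Psi_\sigma \in \mathcal{B}_{\wedge}$ by unfolding Definition \ref{def:Excitation_Operator} twice. A case analysis shows that both compositions either vanish simultaneously (when some occupied/virtual index condition required for the combined double excitation fails, a condition manifestly symmetric in $\mu$ and $\nu$) or both produce the unique Slater determinant obtained from $\Psi_\sigma$ by performing the union of the two index replacements, with signs that agree by the antisymmetry built into the determinantal definition. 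The de-excitation analogue is identical.

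For Part (2), I would compute $(\mathcal{X}_\mu \Psi_\nu, \Psi_\tau)_{\widehat{\mathcal{L}}^2}$ on arbitrary basis pairs. Since $\mathcal{X}_\mu \Psi_\nu$ is either $0$ or the unique basis element $\Psi_{\nu^*}$ obtained by the $\mu$-replacement on $\nu$, the inner product equals $1$ precisely when $\tau = \nu^*$ and vanishes otherwise. Unfolding Definition \ref{def:De-excitation_Operator} shows that $\mathcal{X}_\mu^\dagger \Psi_\tau$ is non-zero and equal to $\Psi_\nu$ in precisely the same situation, giving the matching value for $(\Psi_\nu, \mathcal{X}_\mu^\dagger \Psi_\tau)_{\widehat{\mathcal{L}}^2}$. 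Part (3) then follows by observing that $\mathcal{X}_\mu$ maps each element of $\mathcal{B}_{\wedge}$ either to $0$ or to another basis element, with the latter map injective on its non-vanishing part (its left inverse being $\mathcal{X}_\mu^\dagger$, by (2)); Parseval's identity yields $\|\mathcal{X}_\mu \Psi\|_{\widehat{\mathcal{L}}^2}^2 \leq \|\Psi\|_{\widehat{\mathcal{L}}^2}^2$ for every $\Psi \in \widehat{\mathcal{L}}^2$, and the identical argument applies to $\mathcal{X}_\mu^\dagger$.

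The main obstacle is Part (4): the Slater basis is $\widehat{\mathcal{L}}^2$-orthonormal but not $\widehat{\mathcal{H}}^1$-orthogonal---off-diagonal gradient contributions of the form $(\nabla \psi_{\nu_k}, \nabla \psi_{\nu_k'})_{\mL^2(\R^3)}$ arise between determinants that differ in a single orbital index---so the Parseval argument of Part (3) does not transfer. My plan is to introduce, for each $\psi \in \mH^1(\R^3)$, the associated second-quantised creation and annihilation operators $a^\dagger_\psi$ and $a_\psi$, viewed as maps between antisymmetric $\mathcal{H}^1$-spaces over $N-1$, $N$ and $N+1$ particles, and to factor $\mathcal{X}_\mu$ for $\mu \in \mathcal{I}_j$ as a particle-number-preserving ordered composition of $j$ annihilations $a_{\psi_{\ell_k}}$ followed by $j$ creations $a^\dagger_{\psi_{a_k}}$. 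The key elementary estimate $\|a^\dagger_\psi \Phi\|_{\mathcal{H}^1} \leq C_N \|\psi\|_{\mH^1(\R^3)} \|\Phi\|_{\mathcal{H}^1}$ would follow from the defining identity $a^\dagger_\psi \Phi = \sqrt{N+1}\, \mathbb{P}^{\rm as}(\psi \otimes \Phi)$, the $\mathcal{H}^1$-boundedness of $\mathbb{P}^{\rm as}$ (a consequence of its range and kernel being $\mathcal{H}^1$-closed), and the tensor-product bound $\|\psi \otimes \Phi\|_{\mathcal{H}^1}^2 \leq \|\psi\|_{\mH^1(\R^3)}^2 \|\Phi\|_{\mathcal{H}^1}^2$, which I would verify first on elementary tensors via the Leibniz rule and Fubini (where the computation reduces to comparing four explicit $\mL^2$/gradient products term by term) and then extend by density. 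The annihilation bound follows by duality and Part (2). Composing the $2j$ elementary factors then yields $\|\mathcal{X}_\mu \Psi\|_{\widehat{\mathcal{H}}^1} \leq C_{\mu, N} \|\Psi\|_{\widehat{\mathcal{H}}^1}$ with $C_{\mu, N}$ depending only on $N$ and on the $\mH^1(\R^3)$-norms of the finitely many orbitals $\psi_{a_1}, \ldots, \psi_{a_j}, \psi_{\ell_1}, \ldots, \psi_{\ell_j}$ entering $\mu$, each of which is finite since $\mathscr{B}_{\rm occ} \cup \mathscr{B}_{\rm vir} \subset \mH^1(\R^3)$ by construction.
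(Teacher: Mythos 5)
The paper does not prove this theorem itself; it simply cites Rohwedder and Schneider \cite[Section 2]{MR3021693}, so there is no proof in the text to compare against directly. What can be inferred from this paper's Appendix \ref{sec:gen_excite}, which reproduces the relevant machinery from \cite{MR3021693} to prove an analogous continuity statement for generalised excitation operators, is that Rohwedder's route to $\widehat{\mathcal{H}}^1$-boundedness goes through the equivalent norm $\Vert \cdot \Vert_{\mathscr{R}^N}$ built from the block-diagonal operator $\mathscr{A} := \mathbb{P}_{\mathscr{R}}^*({\rm I}-\Delta)\mathbb{P}_{\mathscr{R}} + (\mathbb{P}_{\mathscr{R}}^{\perp})^*({\rm I}-\Delta)\mathbb{P}_{\mathscr{R}}^{\perp}$, combined with an explicit coefficient-level identity for $\Vert \Phi\Vert^2_{\mathscr{R}^N}$. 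Your Part (4) instead factors $\mathcal{X}_{\mu}$ as a composition of $2j$ one-body creation and annihilation operators and bounds each factor by an elementary tensor-product estimate. That is a genuinely different and arguably more elementary path: it avoids the occupied/virtual-adapted norm entirely and produces an explicit constant depending only on $N$ and the $\mH^1(\R^3)$-norms of the finitely many orbitals appearing in $\mu$, which is exactly what Part (4) asserts (it is Theorem~\ref{thm:1}, not this theorem, that needs a bound uniform in $\mu$, and that stronger claim is where Rohwedder's $\mathscr{R}^N$-norm machinery earns its keep). Parts (1)--(3) are the standard basis-wise arguments and are fine.

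One point needs repair. You assert that the $\mathcal{H}^1$-boundedness of the annihilation operator ``follows by duality and Part (2).'' It does not: if $a^{\dagger}_{\psi}\colon \widehat{\mathcal{H}}^1_{N-1} \to \widehat{\mathcal{H}}^1_{N}$ is bounded, then its $\widehat{\mathcal{L}}^2$-adjoint $a_{\psi}$ is automatically bounded only from $\widehat{\mathcal{H}}^{-1}_{N}$ to $\widehat{\mathcal{H}}^{-1}_{N-1}$, which is a different statement from the $\widehat{\mathcal{H}}^1_{N} \to \widehat{\mathcal{H}}^1_{N-1}$ bound you need to compose the factorisation. Fortunately the gap is easy to close with a direct estimate, and you in fact need less regularity on $\psi$ than for the creation bound: writing $(a_{\psi}\Phi)(\bold{x}_1,\ldots,\bold{x}_{N-1}) = \sqrt{N}\int_{\mathbb{R}^3}\psi(\bold{y})\,\Phi(\bold{x}_1,\ldots,\bold{x}_{N-1},\bold{y})\,d\bold{y}$, the Cauchy--Schwarz inequality in the $\bold{y}$-integral gives $\Vert a_{\psi}\Phi\Vert_{\mathcal{L}^2} \leq \sqrt{N}\,\Vert\psi\Vert_{\mL^2}\Vert\Phi\Vert_{\mathcal{L}^2}$, and since $\nabla_{\bold{x}_j}$ commutes with the contraction for $j \leq N-1$, the same estimate applied to each gradient component yields $\Vert a_{\psi}\Phi\Vert_{\mathcal{H}^1_{N-1}} \leq \sqrt{N}\,\Vert\psi\Vert_{\mL^2}\Vert\Phi\Vert_{\mathcal{H}^1_{N}}$. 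Replacing the duality appeal with this two-line computation makes your Part (4) argument complete.
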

\begin{proof}
See \cite[Section 2]{MR3021693}.
\end{proof}

The primary motivation for introducing the complicated notion of excitation operators is that they provide a systematic means of indexing an $N$-particle Slater determinant basis $\mathcal{B}_{\wedge}$ in terms of their application on the reference determinant $\Psi_0$. This observation is summarised more precisely in the following remark. 

\begin{remark}[Interpretation of $N$-particle Bases in terms of Excited Determinants]\label{rem:excited_determinants}~
	
	Let $\mathscr{R}$ and $\mathscr{R}^{\perp}$ denote an occupied and virtual space respectively of $\mH^1(\R^3)$ as introduced in Notation~\ref{def:occ_vir}, let $\Psi_0$ denote the reference determinant corresponding to $\mathscr{R}$, let the infinite-dimensional subspace $\widehat{\mathcal{H}}^{1, \perp}_{\Psi_0}\subset \widehat{\mathcal{H}}^1$ be defined as in Definition \ref{def:decomp}, let $\mathscr{B}_{\rm occ}$ and $\mathscr{B}_{\rm vir}$ denote $\mL^2(\R^3)$-orthonormal basis sets for $\mathscr{R}$ and $\mathscr{R}^{\perp}$ respectively, and let ${\mathcal{B}_{\wedge}}$ and $\widetilde{\mathcal{B}_{\wedge}}$ denote $\widehat{\mathcal{L}}^2$-orthonormal basis sets for $\widehat{\mathcal{H}}^1$ and $\widehat{\mathcal{H}}^{1, \perp}_{\Psi_0}$ respectively, constructed from $\mathscr{B}_{\rm occ}$ and $\mathscr{B}_{\rm vir}$, as outlined in Definition \ref{def:Slater_basis}. Then it holds that
	 \begin{equation}\label{eq:basis_excited}
			{\mathcal{B}_{\wedge}}=\{\Psi_0\}\cup\left\{\mathcal{X}_\mu \Psi_0 \colon \mu \in \mathcal{I}\right\} \qquad \text{and} \qquad  \widetilde{\mathcal{B}_{\wedge}}=\left\{\mathcal{X}_\mu \Psi_0 \colon \mu \in \mathcal{I}\right\}.
	\end{equation}

	
\end{remark}

\vspace{3mm}

\subsection{Cluster Operators}\label{sec:3b}~

 In view of Remark \ref{rem:excited_determinants}, we see that for any arbitrary function ${\Phi} \in \widehat{\mathcal{H}}^{1}$, there exists a constant $\bs_0 \in \mathbb{R}$ and a sequence $\{\bs_{\mu}\}_{\mu \in \mathcal{I}} \subset \mathbb{R}$ such that 
 \begin{align}\label{eq:cluster_1}
     \Phi = \bs_0 \Psi_0+ \sum_{\mu \in \mathcal{I}} \bs_{\mu} \mathcal{X}_{\mu} \Psi_0,
 \end{align}
where $\{\mathcal{X}_{\mu}\}_{\mu \in \mathcal{I}}$ denote excitation operators defined according to Definition \ref{def:Excitation_Operator}.

Equation \eqref{eq:cluster_1} suggests that any element of the $N$-particle space $\widehat{\mathcal{H}}^{1}$ can be expressed in terms of an infinite weighted summation of excitation operators acting on the reference determinant~$\Psi_0$. Such infinite summations of excitation operators are known as \emph{cluster} operators in the quantum chemistry literature. Of course, while each excitation operator itself is a bounded map on $\widehat{\mathcal{L}}^2$, it does not immediately follow that an infinite summation of excitation operators is also a well-defined mapping on $\widehat{\mathcal{L}}^2$. Fortunately, the following result was proven in \cite[Theorem 2.7]{MR3021693}.

\begin{proposition}[Cluster Operators as Bounded Maps on $\widehat{\mathcal{L}}^2$]\label{prop:Yvon}~
	
	Let the excitation index set $\mathcal{I}$ be defined through Definition \ref{def:Excitation_Index}, let $\mathcal{B}_{\wedge}$ denote an $\widehat{\mathcal{L}}^2$-orthonormal basis of Slater determinants for $\widehat{\mathcal{H}}^1$ as defined in Definition \ref{def:Slater_basis}, let $\Psi_0 \in \mathcal{B}_{\wedge}$ denote the reference determinant as defined through Definition \ref{def:ref_det}, and let the excitation operators $\{\mathcal{X}_{\mu}\}_{\mu \in \mathcal{I}}$ and de-excitation operators $\{\mathcal{X}^{\dagger}_{\mu}\}_{\mu \in \mathcal{I}}$ be defined through Definitions \ref{def:Excitation_Operator} and \ref{def:De-excitation_Operator} respectively. Then
 \begin{enumerate}
     \item  For any square-summable sequence $\bt= \{\bt_{\mu}\}_{\mu \in \mathcal{I}} \in \ell^2(\mathcal{I})$, there exists a unique bounded linear operator $\mathcal{T} \colon\widehat{\mathcal{L}}^2 \rightarrow \widehat{\mathcal{L}}^2$, the so-called \emph{cluster operator} generated by $\bt$, such that 
     \begin{align*}
         \mathcal{T}:= \sum_{\mu \in \mathcal{I}} \bt_{\mu}\mathcal{X}_{\mu}
     \end{align*}
     where the series convergence holds with respect to the $\Vert \cdot \Vert_{\widehat{\mathcal{L}}^2 \to\widehat{\mathcal{L}}^2} $ operator norm. Moreover, there exists a constant $\beta >0$, depending only on $N$, such that
 \begin{align*}
 \Vert \bt\Vert_{\ell^2(\mathcal{I})} \leq \Vert \mathcal{T}  \Vert_{\widehat{\mathcal{L}}^2 \to \widehat{\mathcal{L}}^2} \leq \beta \Vert \bt\Vert_{\ell^2(\mathcal{I})}.
 \end{align*}

\item In particular, for any $\Phi :=\sum_{\mu \in \mathcal{I}} \bt_{\mu} \mathcal{X}_{\mu} \Psi_0\in \{\Psi_0\}^{\perp} \subset \widehat{\mathcal{L}}^2$ there exists a unique   cluster operator $\mathcal{T}\left(\Phi\right) \colon\widehat{\mathcal{L}}^2 \rightarrow \widehat{\mathcal{L}}^2$ such that
 \begin{align*}
         \mathcal{T}\left(\Phi\right)=\sum_{\mu \in \mathcal{I}} \bt_{\mu}\mathcal{X}_{\mu}
     \end{align*}
     where the series convergence again holds with respect to the $\Vert \cdot \Vert_{\widehat{\mathcal{L}}^2 \to\widehat{\mathcal{L}}^2} $ operator norm. Moreover, the mapping $\{\Psi_0\}^{\perp} \ni \Phi \mapsto \mathcal{T}\left(\Phi\right)$ is linear and bounded, i.e., there exists a constant $\beta >0$, depending only on $N$, such that
 \begin{align*}
 \Vert \Phi\Vert_{\widehat{\mathcal{L}}^2} \leq \Vert \mathcal{T}\left(\Phi\right) \Vert_{\widehat{\mathcal{L}}^2 \to \widehat{\mathcal{L}}^2} \leq \beta \Vert \Phi\Vert_{\widehat{\mathcal{L}}^2}.
 \end{align*}

\item  For any square-summable sequence $\bt= \{\bt_{\mu}\}_{\mu \in \mathcal{I}} \in \ell^2(\mathcal{I})$ with corresponding cluster operator $\mathcal{T} \colon\widehat{\mathcal{L}}^2 \rightarrow \widehat{\mathcal{L}}^2$, the $\widehat{\mathcal{L}}^2$-adjoint $\mathcal{T}^{\dagger} \colon\widehat{\mathcal{L}}^2 \rightarrow \widehat{\mathcal{L}}^2$ satisfies
     \begin{align*}
         \mathcal{T}^{\dagger}=\sum_{\mu \in \mathcal{I}} \bt_{\mu}\mathcal{X}_{\mu}^{\dagger}
     \end{align*}
     where the series convergence once again holds with respect to the $\Vert \cdot \Vert_{\widehat{\mathcal{L}}^2 \to\widehat{\mathcal{L}}^2} $ operator norm. 
 \end{enumerate}
 \end{proposition}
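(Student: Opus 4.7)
The plan is to establish parts (1), (2), and (3) in sequence, with the main work concentrated in the upper bound of part (1). For the lower bound in part (1), observe that for any finite $F \subset \mathcal{I}$ the partial sum $\mathcal{T}_F := \sum_{\mu \in F} \bt_\mu \mathcal{X}_\mu$ acts on the reference determinant as $\mathcal{T}_F \Psi_0 = \sum_{\mu \in F} \bt_\mu \mathcal{X}_\mu \Psi_0$; by Remark \ref{rem:excited_determinants}, the family $\{\mathcal{X}_\mu \Psi_0\}_{\mu \in \mathcal{I}} = \widetilde{\mathcal{B}_{\wedge}}$ is $\widehat{\mathcal{L}}^2$-orthonormal, so $\Vert \mathcal{T}_F \Psi_0\Vert^2_{\widehat{\mathcal{L}}^2} = \sum_{\mu \in F} \vert \bt_\mu \vert^2$, which passes to the limit to give $\Vert \bt\Vert_{\ell^2(\mathcal{I})} \leq \Vert \mathcal{T}\Vert_{\widehat{\mathcal{L}}^2 \to \widehat{\mathcal{L}}^2}$ once the full operator has been constructed.

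For the upper bound, expand an arbitrary $\Phi = \sum_\nu c_\nu \Psi_\nu$ in the basis $\mathcal{B}_{\wedge}$ and write $\mathcal{T}_F \Phi = \sum_\lambda d_\lambda \Psi_\lambda$ with $d_\lambda := \sum_{(\mu, \nu) \in A_\lambda^F} c_\nu \bt_\mu$ and $A_\lambda^F := \{(\mu, \nu) \in F \times \mathcal{J}_\infty^N : \mathcal{X}_\mu \Psi_\nu = \Psi_\lambda\}$. The main technical step is the combinatorial bound $\vert A_\lambda\vert \leq \binom{2N}{N}$ uniformly in $\lambda$: inspecting Definition \ref{def:Excitation_Operator}, if $\mathcal{X}_\mu \Psi_\nu = \Psi_\lambda$ with $\mu$ of excitation rank $j$, then the virtual labels $\{a_1, \ldots, a_j\}$ of $\mu$ must be a $j$-subset of the virtual indices appearing in $\Psi_\lambda$ (at most $N$ of them), while the occupied labels $\{\ell_1, \ldots, \ell_j\}$ must be a $j$-subset of the occupied indices \emph{absent} from $\Psi_\lambda$ (again at most $N$), and the pair $(\mu, \lambda)$ then determines $\nu$ uniquely; the Vandermonde identity $\sum_{j=0}^N \binom{N}{j}^2 = \binom{2N}{N}$ yields the bound. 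Applying Cauchy-Schwarz to $d_\lambda$, summing over $\lambda$, and re-indexing by the pair $(\mu, \nu)$ (which determines $\lambda$ via $\Psi_\lambda = \mathcal{X}_\mu \Psi_\nu$), we obtain
\begin{equation*}
\Vert \mathcal{T}_F \Phi\Vert^2_{\widehat{\mathcal{L}}^2} \leq \binom{2N}{N} \sum_{\mu \in F} \vert \bt_\mu \vert^2 \sum_{\nu : \mathcal{X}_\mu \Psi_\nu \neq 0} \vert c_\nu \vert^2 \leq \binom{2N}{N} \Vert \bt\Vert_{\ell^2(\mathcal{I})}^2 \Vert \Phi\Vert_{\widehat{\mathcal{L}}^2}^2,
\end{equation*}
where the final step is Parseval. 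A standard Cauchy argument in the space of bounded linear operators on $\widehat{\mathcal{L}}^2$ then promotes the uniformly bounded finite partial sums $\mathcal{T}_F$ to an operator-norm convergent series with final constant $\beta := \sqrt{\binom{2N}{N}}$, depending only on $N$.

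Part (2) is then a direct restatement of part (1) under the natural isometric identification of $\{\Psi_0\}^\perp \subset \widehat{\mathcal{L}}^2$ with $\ell^2(\mathcal{I})$ furnished by Remark \ref{rem:excited_determinants}, which gives $\Vert \Phi\Vert_{\widehat{\mathcal{L}}^2} = \Vert \bt\Vert_{\ell^2(\mathcal{I})}$. Part (3) follows by taking $\widehat{\mathcal{L}}^2$-adjoints termwise: by part (2) of Theorem \ref{thm:excitation_operators} the $\widehat{\mathcal{L}}^2$-adjoint of each $\mathcal{X}_\mu$ is precisely $\mathcal{X}_\mu^{\dagger}$, and since operator adjunction is an isometry with respect to the operator norm, the operator-norm convergence $\mathcal{T}_F \to \mathcal{T}$ transfers directly to $\mathcal{T}_F^{\dagger} \to \mathcal{T}^{\dagger}$. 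The principal obstacle is the combinatorial cardinality bound on $\vert A_\lambda\vert$, which is precisely the ingredient that forces the $\widehat{\mathcal{L}}^2$-continuity constant of $\mathcal{T}$ to depend only on the particle number $N$ rather than on any discretisation of the single-particle space.
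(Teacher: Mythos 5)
Your proof is correct, and the combinatorial cardinality bound you isolate is precisely the right mechanism. The paper itself does not prove this proposition---it cites \cite[Theorem 2.7]{MR3021693}---so the comparison is with Rohwedder's original argument, and your reconstruction is essentially that argument: expand $\Phi$ in the Slater basis, observe that for a fixed target determinant $\Psi_\lambda$ at most $\sum_j \binom{N}{j}^2 = \binom{2N}{N}$ pairs $(\mu,\nu)$ can satisfy $\mathcal{X}_\mu\Psi_\nu = \Psi_\lambda$ (since the introduced virtuals live among the $\leq N$ virtual slots of $\lambda$, the removed occupieds among the $\leq N$ vacated occupied slots, and $\nu$ is then forced), apply Cauchy--Schwarz per target, and re-index the double sum by the source pair $(\mu,\nu)$, which determines $\lambda$. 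Your identification of this cardinality bound as ``the ingredient that forces the continuity constant to depend only on $N$'' is exactly right; this is what distinguishes cluster operators from generic infinite linear combinations of bounded operators. The lower bound via testing against $\Psi_0$, the reduction of part (2) to part (1) through the isometry $\ell^2(\mathcal{I}) \cong \{\Psi_0\}^\perp$, and the deduction of part (3) from isometry of adjunction in $\mathscr{B}(\widehat{\mathcal{L}}^2)$ are all correct. Two cosmetic remarks: you never formally define $A_\lambda$ (only $A_\lambda^F$), though the intent $A_\lambda := \bigcup_F A_\lambda^F$ is clear; and the definition of $\mathcal{X}_\mu$ in the paper implicitly absorbs any sign from reordering rows of the determinant, so strictly $\mathcal{X}_\mu\Psi_\nu = \pm\Psi_\lambda$---this does not affect either the Cauchy--Schwarz step or the counting, but it is worth stating.
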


Consider Proposition \ref{prop:Yvon} that describes the mapping properties of cluster operators and Definition \ref{def:Excitation_Operator} of the excitation operators $\{\mathcal{X}_{\mu}\}_{\mu \in \mathcal{I}}$. We recall that excitation operators have a priori been defined relative to an $N$-particle Slater determinant basis $\mathcal{B}_{\wedge}$ constructed from a specific choice of occupied space $\mathscr{R}$ as well as $\mL^2(\R^3)$-orthonormal basis sets $\mathscr{B}_{\rm occ}$ and $\mathscr{B}_{\rm vir}$ for the occupied and virtual space $\mathscr{R}$ and $\mathscr{R}^{\perp}$ respectively. At first glance therefore, it might seem that the definition of a cluster operator $\mathcal{T}\left(\Phi\right)$ generated by some $\Phi \in \{\Psi_0\}^{\perp}$ depends not only on the function $\Phi$ but also on the basis sets $\mathscr{B}_{\rm occ}$ and $\mathscr{B}_{\rm vir}$. The following result shows that the definition of the cluster operator $\mathcal{T}\left(\Phi\right)$ is in fact \emph{independent} of the basis sets $\mathscr{B}_{\rm occ}$ and $\mathscr{B}_{\rm vir}$, and thus depends only on the choice of occupied space $\mathscr{R}$.

\begin{proposition}[Basis-Independence of Cluster Operators]\label{prop:excit_unique}~

    Let $\mathscr{R}$ and $\mathscr{R}^{\perp}$ denote an occupied and virtual space of $\mH^1(\R^3)$ as stated in Notation \ref{def:occ_vir}, let $\Psi_0\in \widehat{\mathcal{H}}^1$ denote the reference determinant corresponding to $\mathscr{R}$ as defined through Definition \ref{def:ref_det}, let $\mathscr{B}^1_{\rm occ}:= \{\psi^1_j\}_{j=1}^N$ and $\mathscr{B}^2_{\rm occ}:= \{\psi^2_j\}_{j=1}^N$ denote two different $\mL^2(\R^3)$-orthonormal bases for the occupied space $\mathscr{R}$, let $\mathscr{B}^1_{\rm vir}:= \{\psi^1_j\}_{j=N+1}^\infty$ and $\mathscr{B}^2_{\rm vir}:= \{\psi^2_j\}_{j=N+1}^\infty$ denote two different $\mL^2(\R^3)$-orthonormal bases for the virtual space $\mathscr{R}^{\perp}$, let $\mathcal{B}_{\wedge}^1$ and $\mathcal{B}_{\wedge}^2$ denote the $N$-particle Slater bases for $\widehat{\mathcal{H}}^1$ constructed using the sets $\mathscr{B}^1_{\rm occ} \cup \mathscr{B}^1_{\rm vir}$ and $\mathscr{B}^2_{\rm pcc} \cup \mathscr{B}^2_{\rm vir}$ respectively as defined through Definition \ref{def:Slater_basis}, let the excitation index set $\mathcal{I}$ be defined through Definition \ref{def:Excitation_Index}, let the excitation operators $\{\mathcal{X}^1_{\mu}\}_{\mu \in \mathcal{I}}$ and $\{\mathcal{X}^2_{\mu}\}_{\mu \in \mathcal{I}}$ corresponding to the $N$-particle Slater bases $\mathcal{B}_{\wedge}^1$ and $\mathcal{B}_{\wedge}^2$ respectively be defined through Definition \ref{def:Excitation_Operator}, and let $\Phi \in \{\Psi_0\}^{\perp} \subset \widehat{\mathcal{L}}^2$ be given by
    \begin{align*}
        \Phi = \sum_{\mu \in \mathcal{I}} \bt_{\mu}^1\mathcal{X}^1_{\mu}\Psi_0 =\sum_{\mu \in \mathcal{I}} \bt_{\mu}^2\mathcal{X}^2_{\mu}\Psi_0.
    \end{align*}

Then it holds that
\begin{align*}
    \mathcal{T}\left(\Phi\right)=\sum_{\mu \in \mathcal{I}} \bt_{\mu}^1\mathcal{X}^1_{\mu} =\sum_{\mu \in \mathcal{I}} \bt_{\mu}^2\mathcal{X}^2_{\mu}.
\end{align*}
\end{proposition}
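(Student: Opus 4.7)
The strategy is to establish the identity $\sum_\mu \bt_\mu^1 \mathcal{X}_\mu^1 = \sum_\mu \bt_\mu^2 \mathcal{X}_\mu^2$ as bounded operators on $\widehat{\mathcal{L}}^2$ by reducing the question to a transformation law for excitation operators under changes of orthonormal basis within the occupied and virtual spaces. Since every basis change preserving the decomposition $\mL^2(\R^3) = \mathscr{R} \oplus \mathscr{R}^\perp$ factors through an orthogonal matrix $\mathbf{U} \in \mathbb{R}^{N \times N}$ acting on $\mathscr{R}$ and an orthogonal operator $\mathbf{V}$ acting on $\mathscr{R}^\perp$, and since every subsequent step will be linear in these transformations, it suffices to handle a pure occupied change and a pure virtual change separately.

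The principal reduction is a decomposition by excitation order. For each $j \in \{1, \ldots, N\}$, let $\mathcal{V}_j$ denote the closed linear span in $\widehat{\mathcal{L}}^2$ of $\{\mathcal{X}_\mu \Psi_0 : \mu \in \mathcal{I}_j\}$. The subspace $\mathcal{V}_j$ depends only on the occupied space $\mathscr{R}$, as it coincides with the closed span of Slater determinants built from exactly $N-j$ functions in $\mathscr{R}$ and $j$ functions in $\mathscr{R}^\perp$. Consequently, the decomposition $\Phi = \sum_{j=1}^N \Phi_{(j)}$ with $\Phi_{(j)} \in \mathcal{V}_j$ is the same in both bases, so by linearity together with Proposition \ref{prop:Yvon} the desired identity splits into the $N$ sub-identities
\begin{equation*}
\sum_{\mu \in \mathcal{I}_j} \bt_\mu^1 \mathcal{X}_\mu^1 = \sum_{\mu \in \mathcal{I}_j} \bt_\mu^2 \mathcal{X}_\mu^2 \quad \text{on } \widehat{\mathcal{L}}^2, \qquad j = 1, \ldots, N,
\end{equation*}
each of which can be treated independently.

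For fixed $j$, the key step is to derive a transformation rule of the form
\begin{equation*}
\mathcal{X}_\mu^2 = \sum_{\nu \in \mathcal{I}_j} c_{\mu\nu}(\mathbf{U}, \mathbf{V}) \, \mathcal{X}_\nu^1,
\end{equation*}
where the coefficient matrix $[c_{\mu\nu}]$ is built from signed $j \times j$ minors of $\mathbf{U}$ and $\mathbf{V}$ and corresponds to the induced action of the pair $(\mathbf{U}, \mathbf{V})$ on the $j$-fold exterior powers of $\mathscr{R}$ and $\mathscr{R}^\perp$. Once this rule is available, the expansion of the common element $\Phi_{(j)} = \sum_\mu \bt_\mu^1 \mathcal{X}_\mu^1 \Psi_0 = \sum_\mu \bt_\mu^2 \mathcal{X}_\mu^2 \Psi_0$ in the two bases forces $\{\bt_\mu^2\}$ and $\{\bt_\nu^1\}$ to be related through the inverse of the same matrix $[c_{\mu\nu}]$, and a direct substitution yields the sub-identity.

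I expect the main obstacle to be the derivation of the transformation rule itself, because Definition \ref{def:Excitation_Operator} specifies each $\mathcal{X}_\mu^i$ only through its combinatorial action on its own Slater basis $\mathcal{B}_\wedge^i$, so the identity must be checked by computing both sides on an arbitrary element of the basis $\mathcal{B}_\wedge^1$. I would proceed by induction on $j$. The base case $j = 1$ follows from a direct application of the multilinearity of $\mathbb{P}^{\rm as}$ and the Laplace expansion of the determinant, generalising the sign-change calculation of Remark \ref{rem:ref_det_unique} from a full-block transformation to the replacement of a single orbital. The inductive step uses the commutation relation $\mathcal{X}_\mu \mathcal{X}_\nu = \mathcal{X}_\nu \mathcal{X}_\mu$ from Theorem \ref{thm:excitation_operators}(1) to factor a $j$-fold excitation as the composition of a $(j-1)$-fold and a single excitation acting on disjoint orbital labels, reducing the step to routine bookkeeping with Cauchy--Binet-type identities for the relevant minors.
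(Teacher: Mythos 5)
The paper does not actually prove this proposition; it simply delegates to \cite[Theorem 2.7(ii)]{MR3021693}, so your proof has to be judged on its own terms rather than against a written argument in the text.

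Your overall strategy is sound and is the natural one: reduce to a transformation law for excitation operators under a simultaneous orthogonal change of occupied basis (via $\mathbf{U} \in \mathbb{R}^{N \times N}$) and virtual basis (via $\mathbf{V}$ on $\mathscr{R}^\perp$), exploit that both transformations respect the decomposition $\mL^2(\R^3) = \mathscr{R} \oplus \mathscr{R}^\perp$ so that the rank-$j$ excitation sector is preserved, and match coefficients. The rank decomposition $\Phi = \sum_j \Phi_{(j)}$ with $\Phi_{(j)} \in \mathcal{V}_j$ is indeed basis-independent because $\mathcal{V}_j$ is the $\widehat{\mathcal{L}}^2$-orthogonal sector spanned by determinants with exactly $j$ virtual factors, and this only depends on $\mathscr{R}$. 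The base case of the transformation law and the Cauchy--Binet bookkeeping for higher $j$ are the right ingredients.

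The one genuine gap you should address is convergence. The virtual space $\mathscr{R}^\perp$ is infinite-dimensional, so $\mathbf{V}$ is an infinite orthogonal operator, and the transformation rule $\mathcal{X}^2_\mu = \sum_{\nu \in \mathcal{I}_j} c_{\mu\nu}(\mathbf{U},\mathbf{V})\,\mathcal{X}^1_\nu$ is an infinite series of operators. You need to say in what topology this converges and why it is legitimate to then substitute it into $\sum_\mu \bt^2_\mu \mathcal{X}^2_\mu$ and interchange the order of summation. Both points are handled by Proposition \ref{prop:Yvon}(1), which identifies convergence of a cluster-operator series on $\widehat{\mathcal{L}}^2$ with square-summability of its coefficient sequence, together with the observation that the matrix $[c_{\mu\nu}]$ is (a sub-block of) the $j$-th exterior power of an orthogonal operator and hence an isometry of $\ell^2(\mathcal{I}_j)$. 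Concretely: each row $\{c_{\mu\nu}\}_\nu$ is $\ell^2$-normalized so the series for $\mathcal{X}^2_\mu$ converges in operator norm; and $\{\bt^2_\mu\}_\mu \in \ell^2(\mathcal{I}_j)$ implies $\{\sum_\mu \bt^2_\mu c_{\mu\nu}\}_\nu \in \ell^2(\mathcal{I}_j)$, which both licenses the Fubini-type interchange and guarantees the resulting series $\sum_\nu (\sum_\mu \bt^2_\mu c_{\mu\nu})\mathcal{X}^1_\nu$ defines a bounded operator. Without these observations the manipulation of double infinite sums is formal. A second, smaller point worth one line: the two occupied bases produce reference determinants differing by a factor $\det(\mathbf{U}) = \pm 1$ (Remark \ref{rem:ref_det_unique}); since the statement fixes a single $\Psi_0$, this sign must be absorbed consistently into one of the coefficient sequences, and you should track it rather than assume $\det(\mathbf{U}) = 1$.
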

\begin{proof}
    See \cite[Theorem 2.7 (ii)]{MR3021693}
\end{proof}

Proposition \ref{prop:excit_unique} implies that once the reference determinant $\Psi_0$ constructed from an occupied space $\mathscr{R}$ of $\mH^1_{\rm as}(\mathbb{R}^3)$ is fixed, each element $\Phi \in \{\Psi_0\}^{\perp} \subset\widehat{\mathcal{L}}^2$ uniquely defines a bounded cluster operator $\mathcal{T}\left(\Phi\right) \colon \widehat{\mathcal{L}}^2\rightarrow \widehat{\mathcal{L}}^2$. Of course, in order to have an explicit representation of the cluster operator $\mathcal{T}\left(\Phi\right)$, we require a Slater determinant basis $\mathcal{B}_{\wedge}$ on $\widehat{\mathcal{H}}^1$, which in turn requires a specific choice of bases $\mathscr{B}_{\rm occ}, \mathscr{B}_{\rm vir}$ for the occupied and virtual spaces $\mathscr{R}$ and $\mathscr{R}^{\perp}$ respectively but the above proposition shows that any basis ultimately leads to the same operator. This implies, in particular, that we can define cluster operators by referencing only the occupied space $\mathscr{R}$ and the reference determinant $\Psi_0$ generated by this occupied space. This observation will be crucial for the analysis we present in Section \ref{sec:4}.

Returning to the mapping properties of cluster operators, we notice that our results thus far pertain to~$\widehat{\mathcal{L}}^2$, i.e., the space of anti-symmetric, square-integrable functions of $3N$ variables. On the other hand, the $N$-particle function space for our problem is the Sobolev space $\widehat{\mathcal{H}}^1$. For elements $\Phi \in \widehat{\mathcal{H}}^{1, \perp}_{\Psi_0} =\{\Psi_0\}^{\perp} \cap \widehat{\mathcal{H}}^1$, stronger results on the mapping properties of the cluster operator $\mathcal{T}\left(\Phi\right)$ can be deduced. The following result is the main technical achievement of the seminal contribution \cite{MR3021693}.

\begin{theorem}[Cluster Operators as Bounded Maps on $\widehat{\mathcal{H}}^1$ and $\widehat{\mathcal{H}}^{-1}$]\label{thm:1}~

     Let $\mathscr{R}$ denote an occupied space of $\mH^1(\R^3)$ as stated in Notation \ref{def:occ_vir}, let $\Psi_0\in \widehat{\mathcal{H}}^1$ denote the reference determinant corresponding to $\mathscr{R}$ as defined through Definition \ref{def:ref_det}, let the $N$-particle function space $\widehat{\mathcal{H}}^1$ be decomposed as $\widehat{\mathcal{H}}^1 = \text{\rm span}\{\Psi_0\} \oplus \widehat{\mathcal{H}}^{1, \perp}_{\Psi_0}$ as described in Definition \ref{def:decomp}, let $\Phi \in \widehat{\mathcal{H}}^{1, \perp}$, and let $\mathcal{T}\left(\Phi\right) \colon \widehat{\mathcal{L}}^2\rightarrow \widehat{\mathcal{L}}^2$ denote the cluster operator generated by $\Phi$ as stated in Proposition \ref{prop:Yvon}. Then
     \begin{enumerate}
         \item The cluster operator $\mathcal{T}\left(\Phi\right) \colon \widehat{\mathcal{H}}^1\rightarrow \widehat{\mathcal{H}}^1$ is a bounded linear map, and there exists a constant $\beta_{\mathcal{H}}>0$, depending only on $N$ and $\Vert \Psi_0 \Vert_{\widehat{\mathcal{H}}^1}$, such that
         \begin{align*}
              \Vert \Phi\Vert_{\widehat{\mathcal{H}}^1} \leq \Vert \mathcal{T}\left(\Phi\right) \Vert_{\widehat{\mathcal{H}}^1 \to \widehat{\mathcal{H}}^1} \leq \beta_{\mathcal{H}} \Vert \Phi\Vert_{\widehat
{\mathcal{H}}^1}.
         \end{align*}

          \item The $\widehat{\mathcal{L}}^2$-adjoint cluster operator $\mathcal{T}\left(\Phi\right)^{\dagger} \colon \widehat{\mathcal{H}}^1\rightarrow \widehat{\mathcal{H}}^1$ is also a bounded linear map and there exists a constant $\beta^{\dagger}_{\mathcal{H}}>0$, depending only on $N$ and $\Vert \Psi_0 \Vert_{\widehat{\mathcal{H}}^1}$, such that
         \begin{align*}
              \Vert \mathcal{T}\left(\Phi\right) ^{\dagger}\Vert_{\widehat{\mathcal{H}}^1 \to \widehat{\mathcal{H}}^1} \leq \beta^{\dagger}_{\mathcal{H}} \Vert \Phi\Vert_{\widehat{\mathcal{H}}^1}.
         \end{align*}

          \item  The cluster operator $\mathcal{T}\left(\Phi\right) \colon \widehat{\mathcal{H}}^1\rightarrow \widehat{\mathcal{H}}^1$ can be extended to a bounded map from $\widehat{\mathcal{H}}^{-1}$ to $\widehat{\mathcal{H}}^{-1}$ and it holds that
         \begin{align*}
              \Vert \mathcal{T}\left(\Phi\right) \Vert_{\widehat{\mathcal{H}}^{-1} \to \widehat{\mathcal{H}}^{-1}} =\Vert \mathcal{T}\left(\Phi\right) ^{\dagger}\Vert_{\widehat{\mathcal{H}}^1 \to \widehat{\mathcal{H}}^1}.
         \end{align*}
     \end{enumerate}
\end{theorem}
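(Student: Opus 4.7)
The plan is to handle the three assertions in sequence. The lower bound in item (1) follows immediately from the defining identity $\mathcal{T}(\Phi)\Psi_0 = \Phi$ (with a harmless factor of $\|\Psi_0\|_{\widehat{\mathcal{H}}^1}$ absorbed into the constant if needed), since then the operator norm is bounded below by testing on $\Psi_0$. The substantive content lies in the upper bounds of items (1) and (2); item (3) then follows by duality.

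For the upper bound in item (1), I would exploit the basis-independence guaranteed by Proposition~\ref{prop:excit_unique} to choose the single-particle $\mL^2(\mathbb{R}^3)$-orthonormal basis $\{\psi_k\}_{k \geq 1}$ as an eigenbasis of a suitable positive self-adjoint reference operator on $\mL^2(\mathbb{R}^3)$ with compact resolvent, with eigenvalues $0 < \lambda_1 \leq \lambda_2 \leq \ldots$. A direct computation then shows that the Slater basis $\mathcal{B}_{\wedge}$ diagonalises the $\widehat{\mathcal{H}}^1$-inner product up to a bounded shift: for any $\Psi = \sum_{\mathbf{k}} c_{\mathbf{k}} \Psi_{\mathbf{k}}$,
\[
\|\Psi\|_{\widehat{\mathcal{H}}^1}^2 \;\sim\; \sum_{\mathbf{k}} |c_{\mathbf{k}}|^2 \Bigl(1 + \sum_{j=1}^N \lambda_{k_j}\Bigr).
\]
Next, I would expand an arbitrary $\Psi \in \widehat{\mathcal{H}}^1$ as $\Psi = \bs_0 \Psi_0 + \sum_\nu \bs_\nu \mathcal{X}_\nu \Psi_0$ and write $\mathcal{T}(\Phi)\Psi = \bs_0 \Phi + \sum_{\mu,\nu} \bt_\mu \bs_\nu \mathcal{X}_\mu \mathcal{X}_\nu \Psi_0$, reorganised (using Theorem~\ref{thm:excitation_operators}(1)) by target Slater determinant. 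For each $\mathbf{k}'$, only a combinatorially bounded (in $N$) number of pairs $(\mu,\nu)$ contribute to $\Psi_{\mathbf{k}'}$, and the orbital eigenvalues appearing in $\mu$ together with those appearing in $\nu$ reconstruct a sum controlling $\sum_j \lambda_{k'_j}$. A careful application of Cauchy--Schwarz, splitting this weight between the ``$\mu$-side'' and ``$\nu$-side'', then yields $\|\mathcal{T}(\Phi)\Psi\|_{\widehat{\mathcal{H}}^1} \leq \beta_{\mathcal{H}}\|\Phi\|_{\widehat{\mathcal{H}}^1}\|\Psi\|_{\widehat{\mathcal{H}}^1}$, with the constant depending only on $N$ and $\|\Psi_0\|_{\widehat{\mathcal{H}}^1}$.

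For item (2), the same strategy applies using $\mathcal{T}(\Phi)^\dagger = \sum_\mu \bt_\mu \mathcal{X}_\mu^\dagger$ from Proposition~\ref{prop:Yvon}(3). A simplification here is that $\mathcal{X}_\mu^\dagger \Psi_\nu$ is either zero or a determinant whose orbital-energy sum is \emph{smaller} than that of $\Psi_\nu$, so the weight distribution becomes monotone. Item (3) is then a standard duality argument: the $\widehat{\mathcal{H}}^1$-boundedness of $\mathcal{T}(\Phi)^\dagger$ produces a Banach adjoint with respect to the Gelfand triple $\widehat{\mathcal{H}}^1 \hookrightarrow \widehat{\mathcal{L}}^2 \hookrightarrow \widehat{\mathcal{H}}^{-1}$, defined by $\langle \mathcal{T}(\Phi)\Phi', \Psi\rangle_{\widehat{\mathcal{H}}^{-1}\times\widehat{\mathcal{H}}^1} := \langle \Phi', \mathcal{T}(\Phi)^\dagger \Psi\rangle_{\widehat{\mathcal{H}}^{-1}\times\widehat{\mathcal{H}}^1}$ for $\Phi' \in \widehat{\mathcal{H}}^{-1}$ and $\Psi \in \widehat{\mathcal{H}}^1$; this extends $\mathcal{T}(\Phi)$ from $\widehat{\mathcal{L}}^2$ to $\widehat{\mathcal{H}}^{-1}$ while preserving the operator norm exactly, with consistency on $\widehat{\mathcal{L}}^2 \hookrightarrow \widehat{\mathcal{H}}^{-1}$ ensured by the $\widehat{\mathcal{L}}^2$-adjoint identity in Proposition~\ref{prop:Yvon}(3).

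The principal obstacle is the combinatorial bookkeeping underlying the upper bounds in items (1) and (2). One must simultaneously keep track of all pairs of multi-indices $(\mu,\nu)$ whose composite (de-)excitation produces a given target Slater determinant, \emph{and} distribute the orbital-energy weight $\sum_j \lambda_{k'_j}$ between these two indices so that a product Cauchy--Schwarz bound can close. Without the strategic choice of an eigenfunction basis diagonalising the $\widehat{\mathcal{H}}^1$ norm on $\mathcal{B}_{\wedge}$, this analysis becomes essentially intractable; everything else -- the lower bound, the adjoint estimate, and the duality extension -- is either immediate or formal once the core diagonalised estimate is in hand.
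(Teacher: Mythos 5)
Your item (3) duality argument is correct, and your high-level description of the combinatorial difficulty in items (1) and (2) does reflect what actually has to be done. But the diagonalization you propose as the engine of the proof cannot be carried out, and this is a genuine gap rather than a matter of bookkeeping. Note also that the paper itself gives no proof of this statement --- it cites Rohwedder \cite{MR3021693}; the closest thing to a proof in the paper is Appendix~\ref{sec:gen_excite}, which recapitulates the relevant framework (Definition~\ref{def:Rohwedder_Norm}, Lemma~\ref{lem:Rohwedder}) and adapts it to generalised excitation operators.

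The gap is this. You propose to choose the $\mL^2(\R^3)$-orthonormal single-particle basis $\{\psi_k\}$ as the eigenbasis of a positive self-adjoint operator with compact resolvent and eigenvalues $\lambda_k$, so that $\Vert\Psi\Vert_{\widehat{\mathcal{H}}^1}^2 \sim \sum_{\mathbf{k}}|c_{\mathbf{k}}|^2(1+\sum_j \lambda_{k_j})$. For that equivalence to hold one would need $1+\lambda_k \sim \Vert\psi_k\Vert_{\mH^1}^2$ together with pairwise $\mH^1$-orthogonality of the $\psi_k$, i.e., the $\psi_k$ would have to diagonalise $I-\Delta$ on $\mL^2(\R^3)$. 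But on $\R^3$ the operator $I-\Delta$ has purely continuous spectrum and no $\mL^2$ eigenfunctions at all, while any compact-resolvent replacement (e.g.\ $-\Delta+|x|^2$) induces a strictly stronger norm than $\mH^1$, breaking the equivalence. Moreover, even setting that aside, Proposition~\ref{prop:excit_unique} only permits rotations \emph{within} the occupied space $\mathscr{R}$ and \emph{within} the virtual space $\mathscr{R}^\perp$; it does not allow you to pick an arbitrary global eigenbasis, since the first $N$ functions are constrained to span the fixed subspace $\mathscr{R}$. So the $\mL^2$-orthonormal basis underlying the Slater determinants cannot simultaneously be made $\mH^1$-orthogonal in the way your argument requires.

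What Rohwedder actually does (and what Lemma~\ref{lem:Rohwedder} records) sidesteps both obstructions. One replaces the $\widehat{\mathcal{H}}^1$ norm by the equivalent norm generated by $\mathscr{A} = \mathbb{P}_{\mathscr{R}}^*(I-\Delta)\mathbb{P}_{\mathscr{R}} + (\mathbb{P}_{\mathscr{R}}^\perp)^*(I-\Delta)\mathbb{P}_{\mathscr{R}}^\perp$; the essential point is that the occupied and virtual subspaces become orthogonal in the new inner product $(\cdot,\cdot)_{\mathscr{R}}$, not that the spectrum becomes discrete. One then introduces an \emph{auxiliary} $(\cdot,\cdot)_{\mathscr{R}}$-orthonormal basis $\{\widetilde{\phi}_p\}$ (a basis, not an eigenbasis --- no compact resolvent is needed), keeps the $\mL^2$-orthonormal $\{\psi_q\}$ defining the excitation operators as a second basis, and works with the change-of-basis coefficients $\gamma_{p,q}=(\widetilde{\phi}_p,\psi_q)_{\mathscr{R}}$. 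The key structural input is the vanishing $\gamma_{p,q}=0$ whenever exactly one of $p,q$ is $\leq N$, which is exactly the occupied--virtual block structure built into $\mathscr{A}$. The identity of Lemma~\ref{lem:Rohwedder}(4) then lets one run the Cauchy--Schwarz argument at the level of square-summable coefficient sequences, without any spectral discreteness. Your proposal replaces this two-basis, block-structured mechanism with a single-eigenbasis mechanism that does not exist on $\R^3$ and is incompatible with the fixed occupied space; once that is corrected the rest of your sketch (the combinatorial reorganisation by target determinant, the split Cauchy--Schwarz) is essentially the right strategy. As a minor remark, your lower-bound argument (testing on $\Psi_0$) only yields $\Vert\mathcal{T}(\Phi)\Vert_{\widehat{\mathcal{H}}^1\to\widehat{\mathcal{H}}^1}\geq\Vert\Phi\Vert_{\widehat{\mathcal{H}}^1}/\Vert\Psi_0\Vert_{\widehat{\mathcal{H}}^1}$, which is weaker than the stated bound since $\Vert\Psi_0\Vert_{\widehat{\mathcal{H}}^1}\geq 1$; you flagged this yourself, but it means the constant-$1$ lower bound needs a separate argument.
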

\begin{proof}
    See \cite[Theorem 4.1, Lemma 5.1]{MR3021693}.
\end{proof}

\begin{theorem}[Cluster Operators Form an Algebra]\label{thm:2}~

     Let $\mathscr{R}$ denote an occupied space of $\mH^1(\R^3)$ as stated in Notation \ref{def:occ_vir}, let $\Psi_0\in \widehat{\mathcal{H}}^1$ denote the reference determinant corresponding to $\mathscr{R}$ as defined through Definition \ref{def:ref_det}, let the $N$-particle function space $\widehat{\mathcal{H}}^1$ be decomposed as $\widehat{\mathcal{H}}^1 = \text{\rm span}\{\Psi_0\} \oplus \widehat{\mathcal{H}}^{1, \perp}_{\Psi_0}$ as described in Definition \ref{def:decomp}, and define the set of operators 
     \begin{align*}
			\mathfrak{L}:= \left\{t_0 {\rm I} +  \mathcal{T}\left(\Phi\right) \colon \quad t_0 \in \mathbb{R}, ~ {\Phi} \in  \widehat{\mathcal{H}}^{1, \perp}_{\Psi_0} ~ \text{ and }  ~\mathcal{T}\left(\Phi\right)~\text{ is the cluster operator generated by } \Phi\right\}.
		\end{align*}
		Then the following hold:
		
		\begin{itemize}
			\item The set $\mathfrak{L}$ forms a closed commutative subalgebra in the algebra of bounded linear operators acting from $\widehat{\mathcal{H}}^{1}$ to $\widehat{\mathcal{H}}^{1}$ (and also from $\widehat{\mathcal{H}}^{-1}$ to $\widehat{\mathcal{H}}^{-1}$).
			
			\item The spectrum of any $ \mathcal{L}(\Phi)= t_0{\rm I} +  \mathcal{T}(\Phi) \in \mathfrak{L}$ is exactly $\sigma(\mathcal{L})=\{t_0\}$. In particular any element $ \mathcal{L}(\Phi)= t_0{\rm I} +  \mathcal{T}(\Phi)\in \mathfrak{L}$ with $t_0 \neq 0$ is invertible and the inverse is also an element of $\mathfrak{L}$.
			
			\item Any element in $\mathfrak{L}$ of the form $\mathcal{T}(\Phi) =\sum_{\mu \in \mathcal{I}}  \bt_{\mu} \mathcal{X}_{\mu}$ is nilpotent: it holds that $\mathcal{T}^{N+1}\equiv 0$.
			
			\item The exponential function is a locally $\mathscr{C}^{\infty}$ map on $\mathfrak{L}$, and is also a bijection from the sub-algebra
   \begin{align*}
			\left\{\mathcal{T}\left(\Phi\right) \colon \hspace{2mm}\qquad  {\Phi} \in  \widehat{\mathcal{H}}^{1, \perp}_{\Psi_0} ~ \text{ and }  ~\mathcal{T}\left(\Phi\right)~\text{ is the cluster operator generated by } \Phi\right\}.
		\end{align*}
  to the sub-algebra
  \begin{align*}
			\left\{{\rm I} +  \mathcal{T}\left(\Phi\right) \colon \quad {\Phi} \in  \widehat{\mathcal{H}}^{1, \perp}_{\Psi_0} ~ \text{ and }  ~\mathcal{T}\left(\Phi\right)~\text{ is the cluster operator generated by } \Phi\right\}.
		\end{align*}
   
		\end{itemize}   
\end{theorem}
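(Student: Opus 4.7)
The plan is to dispatch the four bullet points in order, with the combinatorial nilpotence of cluster operators serving as the structural engine that powers the spectral and exponential assertions. Closure of $\mathfrak{L}$ under sums and scalars is immediate from the parametrisation $(t_0, \Phi) \mapsto t_0 \mathrm{I} + \mathcal{T}(\Phi)$ together with linearity of $\Phi \mapsto \mathcal{T}(\Phi)$ from Proposition \ref{prop:Yvon}. For closure under products, I would expand $(t_0 \mathrm{I} + \mathcal{T}(\Phi_1))(s_0 \mathrm{I} + \mathcal{T}(\Phi_2))$ and reduce to showing that $\mathcal{T}(\Phi_1)\mathcal{T}(\Phi_2)$ is itself a cluster operator. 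Applied to $\Psi_0$ the product returns $\widetilde{\Phi}:=\mathcal{T}(\Phi_1)\Phi_2 \in \widehat{\mathcal{H}}^{1,\perp}_{\Psi_0}$, since each $\mathcal{X}_\mu \Psi_0$ is either zero or an excited Slater determinant and hence orthogonal to $\Psi_0$. The cluster operator $\mathcal{T}(\widetilde{\Phi})$ and $\mathcal{T}(\Phi_1)\mathcal{T}(\Phi_2)$ then agree on $\Psi_0$, and since both are (absolutely convergent) linear combinations of the pairwise commuting family $\{\mathcal{X}_\mu\}_{\mu \in \mathcal{I}}$ by Theorem \ref{thm:excitation_operators}(1), and since the $\widehat{\mathcal{L}}^2$-orthonormality of $\{\mathcal{X}_\mu\Psi_0\}_{\mu \in \mathcal{I}}$ makes the coefficients of any cluster operator uniquely determined by its action on $\Psi_0$, the two agree as operators. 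Commutativity of $\mathfrak{L}$ then follows. Closedness in the $\widehat{\mathcal{H}}^1$ operator topology uses the norm equivalence of Theorem \ref{thm:1}: a Cauchy sequence $t_n \mathrm{I} + \mathcal{T}(\Phi_n)$ decomposes into a Cauchy sequence $t_n \in \mathbb{R}$ (apply to $\Psi_0$ and project on $\text{\rm span}\{\Psi_0\}$) and a Cauchy sequence $\Phi_n$ in $\widehat{\mathcal{H}}^{1,\perp}_{\Psi_0}$ (project on the complement and apply the lower bound), so the limit remains in $\mathfrak{L}$; the same argument on $\widehat{\mathcal{H}}^{-1}$ is delivered by Theorem \ref{thm:1}(3).

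The main obstacle is nilpotence, $\mathcal{T}(\Phi)^{N+1} = 0$, which I would establish on the dense spanning set of Slater determinants. Fix $\Psi_\nu \in \mathcal{B}_{\wedge}$ and consider any composition $\mathcal{X}_{\mu_{k}} \cdots \mathcal{X}_{\mu_1}\Psi_\nu$ that does not vanish. By Definition \ref{def:Excitation_Operator}, each $\mathcal{X}_{\mu_j}$ with $\mu_j \in \mathcal{I}_{r_j}$ consumes $r_j$ distinct indices from the occupied block $\{1,\ldots,N\}$ still present in the current determinant and replaces them with indices from the virtual block $\{N+1,N+2,\ldots\}$; crucially, the second vanishing clause of Definition \ref{def:Excitation_Operator} forbids a later excitation from producing a virtual index already inserted by an earlier one. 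Consequently, after at most $N$ non-trivial compositions every one of the original occupied indices has been displaced, and any $(N+1)$-st excitation (whose $\ell$-indices must lie in $\{1,\ldots,N\}$) must vanish. Summing the series expansion
\begin{equation*}
\mathcal{T}(\Phi)^{N+1} \Psi_\nu = \sum_{\mu_1,\ldots,\mu_{N+1} \in \mathcal{I}} \bt_{\mu_1} \cdots \bt_{\mu_{N+1}} \, \mathcal{X}_{\mu_{N+1}}\cdots\mathcal{X}_{\mu_1} \Psi_\nu
\end{equation*}
therefore yields zero for every $\nu$, and nilpotence on all of $\widehat{\mathcal{L}}^2$ follows by density and continuity of $\mathcal{T}(\Phi)$ as guaranteed by Proposition \ref{prop:Yvon}.

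With nilpotence in hand, for any $\mathcal{L}(\Phi) = t_0 \mathrm{I} + \mathcal{T}(\Phi) \in \mathfrak{L}$ and $\lambda \neq t_0$ the truncated Neumann series
\begin{equation*}
(\mathcal{L}(\Phi) - \lambda \mathrm{I})^{-1} = \frac{1}{t_0 - \lambda} \sum_{k=0}^{N} \left(\frac{-\mathcal{T}(\Phi)}{t_0 - \lambda}\right)^{k}
\end{equation*}
provides an explicit bounded inverse lying in $\mathfrak{L}$ by the algebra closure of the first paragraph, while for $\lambda = t_0$ the operator $\mathcal{L}(\Phi) - t_0 \mathrm{I} = \mathcal{T}(\Phi)$ is nilpotent and therefore non-invertible (its kernel contains $\Psi_0$ when $\Phi \neq 0$, and it is the zero operator otherwise). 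This yields $\sigma(\mathcal{L}(\Phi)) = \{t_0\}$ and closure of $\mathfrak{L}$ under inversion. Nilpotence also truncates the exponential and logarithmic series to the polynomials
\begin{equation*}
\exp\bigl(\mathcal{T}(\Phi)\bigr) = \mathrm{I} + \sum_{k=1}^{N} \frac{\mathcal{T}(\Phi)^{k}}{k!}, \qquad \log\bigl(\mathrm{I} + \mathcal{T}(\Phi)\bigr) = \sum_{k=1}^{N} \frac{(-1)^{k+1}}{k}\mathcal{T}(\Phi)^{k},
\end{equation*}
which are $\mathscr{C}^{\infty}$ in $\Phi$ and take values in the required sub-algebras thanks to the product-closure step. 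The formal-power-series identities $\exp \circ \log = \log \circ \exp = \mathrm{id}$, which are valid in any commutative associative algebra in which the relevant expansions truncate, then deliver the announced bijection between $\{\mathcal{T}(\Phi)\}$ and $\{\mathrm{I} + \mathcal{T}(\Phi)\}$.
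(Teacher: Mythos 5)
The paper gives no proof of this theorem---it merely cites \cite[Lemma 5.2]{MR3021693}---so there is nothing in the paper itself to compare against; your reconstruction follows the standard architecture (nilpotence of $\mathcal{T}(\Phi)$ drives the Neumann-series spectral claim and the truncated $\exp$/$\log$ bijection), which is indeed the route taken in the cited reference. Two points in your write-up warrant repair.

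In the product-closure step you assert that $\mathcal{T}(\Phi_1)\mathcal{T}(\Phi_2)$ is ``an absolutely convergent linear combination of the pairwise commuting family $\{\mathcal{X}_\mu\}$.'' This is precisely what needs to be shown, not a starting point: multiplying the two cluster series and regrouping $\mathcal{X}_\mu\mathcal{X}_\nu = \pm\mathcal{X}_\sigma$ by target index $\sigma$ is a rearrangement of a double series, and its legitimacy rests on a combinatorial fact you do not record, namely that for a fixed $\sigma$ of rank $k$ there are exactly $2^k \leq 2^N$ ordered pairs $(\mu,\nu)$ with $\mathcal{X}_\mu\mathcal{X}_\nu = \pm\mathcal{X}_\sigma$; it is this uniform bound, together with Cauchy--Schwarz, that gives $\ell^2$ control of the regrouped coefficients. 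A shorter patch avoids the rearrangement altogether: the bounded operator $A:=\mathcal{T}(\Phi_1)\mathcal{T}(\Phi_2)$ commutes with every $\mathcal{X}_\alpha$ (each factor does, by bounded termwise interchange in its defining series), as does $\mathcal{T}(\widetilde{\Phi})$, and both send $\Psi_0 \mapsto \widetilde{\Phi}$; applying either to $\Psi_\nu=\mathcal{X}_\nu\Psi_0$ and commuting $\mathcal{X}_\nu$ past then shows the two coincide on the Slater basis, hence everywhere by boundedness and density. Separately, the parenthetical ``its kernel contains $\Psi_0$ when $\Phi\neq 0$'' in the spectral paragraph is false: $\mathcal{T}(\Phi)\Psi_0 = \Phi \neq 0$, so $\Psi_0$ is not in the kernel (any fully excited Slater determinant is). This does not damage the conclusion---$\mathcal{T}(\Phi)^{N+1}=0$ with $\mathcal{T}(\Phi)\neq 0$ already rules out invertibility, since powers of an invertible operator are invertible---but the stated justification should be corrected.
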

\begin{proof}
    See \cite[Lemma 5.2]{MR3021693}.
\end{proof}

\vspace{3mm}

\subsection{The Continuous Coupled Cluster Equations and their Analysis}\label{sec:3c}~

Theorems \ref{thm:1} and \ref{thm:2} are the foundation stones of the continuous (infinite-dimensional) coupled cluster equations. Indeed, as a consequence of Theorems \ref{thm:1} and \ref{thm:2}, it becomes possible to prove that for a given reference determinant $\Psi_0$, any intermediately normalised element of the $N$-particle function space $\widehat{\mathcal{H}}^1$, i.e., any $\Phi \in \widehat{\mathcal{H}}^1$ such that $(\Psi_0, \Phi)_{\widehat{\mathcal{L}}^2}=1$, can be parameterised through the action of an exponential cluster operator acting on~$\Psi_0$. 

More precisely, let $\mathscr{R}$ and $\mathscr{R}^{\perp}$ denote an occupied and virtual space of $\mH^1(\R^3)$ as stated in Notation~\ref{def:occ_vir}, let $\Psi_0\in \widehat{\mathcal{H}}^1$ denote the reference determinant corresponding to $\mathscr{R}$ as defined through Definition \ref{def:ref_det}, and let the $N$-particle function space $\widehat{\mathcal{H}}^1$ be decomposed as $\widehat{\mathcal{H}}^1 = \text{\rm span}\{\Psi_0\} \oplus \widehat{\mathcal{H}}^{1, \perp}_{\Psi_0}$ as described in Definition \ref{def:decomp}. Then for any $\Phi \in \widehat{\mathcal{H}}^1$ such that $\left(\Phi, \Psi_0\right)_{\widehat{\mathcal{L}}^2}=1$, there exists a cluster operator $\mathcal{T}\left(\Theta\right)$ generated by some $\Theta\in \widehat{\mathcal{H}}^{1, \perp}_{\Psi_0}$ such that
\begin{align}\label{thm:3}
	\Phi = e^{\mathcal{T}(\Theta)} \Psi_0.
\end{align}

Equation \eqref{thm:3} implies in particular that if the sought-after ground state wave-function $\Psi^*_{\rm GS} \in \widehat{\mathcal{H}}^1$ that solves the minimisation problem \eqref{eq:Ground_State} is intermediately normalised, then it can be written in the form
\begin{align*}
	\Psi^*_{\rm GS} = e^{\mathcal{T}(\Theta^*)}\Psi_0,
\end{align*}
for some cluster operator $\mathcal{T}(\Theta^*)$ generated by some element $\Theta^* \in \widehat{\mathcal{H}}^{1, \perp}_{\Psi_0}$ (see, e.g., \cite{MR3021693}). In other words, the minimisation problem \eqref{eq:Ground_State} can be replaced by an equivalent problem which consists of finding the appropriate cluster operator $\mathcal{T}(\Theta^*)$ that appears in the exponential parametrisation of $\Psi_{\rm GS}^*$. This observation leads directly to the so-called continuous coupled cluster equations.

\vspace{3mm}

\textbf{Continuous Coupled Cluster Equations:}~

 Let $\mathscr{R}$ denote an occupied space of $\mH^1(\R^3)$ as stated in Notation \ref{def:occ_vir}, let $\Psi_0\in \widehat{\mathcal{H}}^1$ denote the reference determinant corresponding to $\mathscr{R}$ as defined through Definition \ref{def:ref_det}, and let the $N$-particle function space $\widehat{\mathcal{H}}^1$ be decomposed as $\widehat{\mathcal{H}}^1 = \text{\rm span}\{\Psi_0\} \oplus \widehat{\mathcal{H}}^{1, \perp}_{\Psi_0}$ as described in Definition~\ref{def:decomp}. We seek a cluster operator $\mathcal{T}(\Theta^*)$ generated by some element $\Theta^*\in \widehat{\mathcal{H}}^{1, \perp}_{\Psi_0}$ according to Proposition~\ref{prop:Yvon} such that for all $\Phi \in \widehat{\mathcal{H}}^{1, \perp}_{\Psi_0}$ it holds that
\begin{equation}\label{eq:CC}
	\left\langle \Phi, e^{-\mathcal{T}(\Theta^*)}H e^{\mathcal{T}(\Theta^*)} \Psi_0\right\rangle_{\widehat{\mathcal{H}}^{1} \times \widehat{\mathcal{H}}^{-1}}	=0.
\end{equation}

Once Equation \eqref{eq:CC} has been solved, the associated coupled cluster energy $\mathcal{E}_{\rm CC}^*$ is given by
\begin{equation}\label{eq:CC_Energy}
	\mathcal{E}_{\rm CC}^* := \left\langle\Psi_0, e^{-\mathcal{T}(\Theta^*)}H e^{\mathcal{T}(\Theta^*)} \Psi_0\right\rangle_{\widehat{\mathcal{H}}^{1} \times \widehat{\mathcal{H}}^{-1}}.
\end{equation}

The continuous coupled cluster equations \eqref{eq:CC} are thus an infinite-dimensional system of non-linear equations in which the unknown is the cluster operator that appears in the exponential parametrisation of the sought-after ground-state eigenfunction $\Psi_{\rm GS}^*$ of the electronic Hamiltonian. In practice, these equations are approximated by considering suitable Galerkin discretisations, which shall be the subject of an extensive discussion in Section \ref{sec:4}. For the remainder of the current section however, we restrict ourselves to a discussion of the continuous coupled cluster equations themselves. 

From the perspective of numerical analysis, the natural question that arises is whether or not the continuous coupled cluster equations are well-posed. To discuss the answer to this question, it is useful to first introduce the infinite-dimensional coupled cluster function.

\vspace{0mm}

\begin{definition}[Continuous Coupled Cluster function]~\label{def:CC}

 Let $\mathscr{R}$ denote an occupied space of $\mH^1(\R^3)$ as stated in Notation \ref{def:occ_vir}, let $\Psi_0\in \widehat{\mathcal{H}}^1$ denote the reference determinant corresponding to $\mathscr{R}$ as defined through Definition \ref{def:ref_det}, let the $N$-particle function space $\widehat{\mathcal{H}}^1$ be decomposed as $\widehat{\mathcal{H}}^1 = \text{\rm span}\{\Psi_0\} \oplus \widehat{\mathcal{H}}^{1, \perp}_{\Psi_0}$ as described in Definition~\ref{def:decomp}, and for any $\Theta \in \widehat{\mathcal{H}}^{1, \perp}_{\Psi_0}$, let $\mathcal{T}(\Theta)$ denote the cluster operator generated by $\Theta$ as described in Proposition \ref{prop:Yvon}. We define the continuous coupled cluster function $\mathcal{f} \colon \widehat{\mathcal{H}}^{1, \perp}_{\Psi_0} \rightarrow \left(\widehat{\mathcal{H}}^{1, \perp}_{\Psi_0}\right)^*$ as the mapping with the property that for all $\Theta, \Phi \in \widehat{\mathcal{H}}^{1, \perp}_{\Psi_0}$ it holds that
\begin{align}\label{eq:12bis}
    \big\langle \Phi, \mathcal{f}(\Theta) \big \rangle_{\widehat{\mathcal{H}}^{1, \perp}_{\Psi_0} \times \left(\widehat{\mathcal{H}}^{1, \perp}_{\Psi_0}\right)^*}:= \left\langle \Phi, e^{-\mathcal{T}(\Theta)}H e^{\mathcal{T}(\Theta)}\Psi_0\right\rangle_{\widehat{\mathcal{H}}^1 \times \widehat{\mathcal{H}}^{-1}}. 
\end{align} 
\end{definition}

Consider the continuous coupled cluster equations \eqref{eq:CC} and Definition \ref{def:CC} of the continuous coupled cluster function. It can readily be checked that if a cluster operator $\mathcal{T}(\Theta^*)$ generated by some element $\Theta^*\in \widehat{\mathcal{H}}^{1, \perp}_{\Psi_0}$ is a solution to the CC equations \eqref{eq:CC}, then $\Theta^*$ is a zero of the coupled cluster function $\mathcal{f} \colon \widehat{\mathcal{H}}^{1, \perp}_{\Psi_0} \rightarrow \left(\widehat{\mathcal{H}}^{1, \perp}_{\Psi_0}\right)^*$. The converse is also true of course, and this justifies the decision to directly study the zeros of the coupled cluster function $\mathcal{f}$ using the usual tools of non-linear numerical analysis.

It turns out that there is a clear relationship between the zeros of the coupled cluster function $\mathcal{f}$ and eigenfunctions of the electronic Hamiltonian $H$. Indeed, as demonstrated in \cite[Theorem 5.3]{MR3021693}, 

\begin{enumerate}
    \item If $\Phi^* \in \widehat{\mathcal{H}}^{1, \perp}_{\Psi_0}$ is a zero of $\mathcal{f}$, then $\Psi^* = e^{\mathcal{T}(\Phi^*)}\Psi_0$ is an intermediately normalised eigenfunction of $H$.

    \item Conversely, if $\Psi^* \in \widehat{\mathcal{H}}^1$ is an intermediately normalised eigenfunction of $H$, then there exists a $\Phi^* \in \widehat{\mathcal{H}}^{1, \perp}_{\Psi_0}$ such that $\Psi^* = e^{\mathcal{T}(\Phi^*)}\Psi_0$ and $\Phi^*$ is a zero of the coupled cluster function $\mathcal{f}$.
\end{enumerate}

In other words, we have a complete classification of the zeros of the coupled cluster function $\mathcal{f}$, namely, that they correspond precisely to intermediately normalised eigenfunctions of the electronic Hamiltonian, and the remaining task is to determine conditions under which these zeros are non-degenerate.

A sufficient condition for non-degeneracy was first given in the seminal articles \cite{MR3021693, MR3110488, Schneider_1} wherein the authors proposed the use of the following \emph{local, strong monotonicity criterion}: let $\Theta^*_{\rm GS} \in \widehat{\mathcal{H}}^{1, \perp}_{\Psi_0}$ denote the zero of the coupled cluster function corresponding to the intermediately normalised ground-state eigenfunction~$\Psi^*_{\rm GS}$ of the electronic Hamiltonian $H$, i.e., let $\Psi_{\rm GS}^* = e^{\mathcal{T}(\Theta^*_{\rm GS})}\Psi_0$, and for any $r>0$, let $\mathbb{B}_{r}(\Theta^*_{\rm GS})$ denote the open ball in $ \widehat{\mathcal{H}}^1$ of radius $r$ centred at $\Theta^*_{\rm GS}$. If there exist two constants $\delta>0$ and $\gamma>0$ such that
\begin{equation}
    \forall ~\Phi, \Upsilon \in \mathbb{B}_{\delta}(\Theta^*_{\rm GS}) \cap \widehat{\mathcal{H}}^{1, \perp}_{\Psi_0}\colon \quad \big\langle \Upsilon - \Phi, \mathcal{f}(\Upsilon)- \mathcal{f}(\Phi)\big\rangle_{\widehat{\mathcal{H}}^{1, \perp}_{\Psi_0} \times \left(\widehat{\mathcal{H}}^{1, \perp}_{\Psi_0}\right)^*} \geq \gamma \Vert \Upsilon -\Phi\Vert^2_{\widehat{\mathcal{H}}^1},
\end{equation}
then $\Theta^*_{\rm GS}$ is a non-degenerate zero of the coupled cluster function (see \cite[Theorem 4.1]{MR3110488}).

An important advantage of this approach is that the strong local monotonicity property is directly inherited by \emph{conforming Galerkin discretisations} of the continuous coupled cluster equations, i.e., by the discrete coupled cluster equations that are solved in practice. Consequently, there is a straightforward path to a priori error estimates for practical discretisations of the continuous coupled cluster equations (see \cite[Theorem 4.1]{MR3110488}). 

On the other hand, an unfortunate feature of the local monotonicity approach is that the theoretical local monotonicity constant that one can actually obtain (see \cite[Theorem 3.4]{MR3110488} or \cite[Theorem 5.7]{Schneider_1}) is of the form 
\begin{align}\label{eq:loc_mon}
    \gamma \approx \Lambda_0 -& \Vert H - \mathcal{E}_{\rm GS}^*\Vert_{\widehat{\mathcal{H}}^1 \to \widehat{\mathcal{H}}^{-1}} \Vert \mathcal{T}(\Theta^*_{\rm GS})-\mathcal{T}(\Theta^*_{\rm GS})^{\dagger}\Vert_{\widehat{\mathcal{H}}^1 \to \widehat{\mathcal{H}}^{1}} \\ \nonumber
    -& \mathcal{O}\left(\Vert \mathcal{T}(\Theta^*_{\rm GS})\Vert^2_{\widehat{\mathcal{H}}^1 \to \widehat{\mathcal{H}}^{1}}+\Vert \mathcal{T}(\Theta^*_{\rm GS})^{\dagger}\Vert^2_{\widehat{\mathcal{H}}^1 \to \widehat{\mathcal{H}}^{1}}\right), 
\end{align}
where $\Lambda_0$ is the coercivity constant of the shifted electronic Hamiltonian $H- \mathcal{E}_{\rm GS}^*$ on $\left\{\Psi_{\rm GS}^*\right\}^{\perp} \cap \widehat{\mathcal{H}}^1$. Even though this coercivity constant is guaranteed to be positive if $H$ possesses a spectral gap, the theoretical constant $\gamma$ as estimated from \eqref{eq:loc_mon} may be negative, even for well-behaved molecular systems (see, for instance, Table \ref{table2} below). 

In order to deal with this difficulty, the present authors proposed a new analysis based on establishing the invertibility of the Fr\'echet derivative of the coupled cluster function using inf-sup arguments. This analysis was the subject of our previous contribution \cite{Hassan_CC}, where we were able to establish, in particular, the following result.

\begin{theorem}[Invertibility of the Coupled Cluster Fr\'echet Derivative]\label{thm:CC_der_inv}~

Let the coupled cluster function $\mathcal{f} \colon \widehat{\mathcal{H}}^{1, \perp}_{\Psi_0} \rightarrow \left(\widehat{\mathcal{H}}^{1, \perp}_{\Psi_0}\right)^* $ be defined through Equation \eqref{eq:12bis} for some choice of occupied space $\mathscr{R}$ and reference determinant $\Psi_0$ corresponding to $\mathscr{R}$, let $\mD\mathcal{f}(\Theta) \colon \widehat{\mathcal{H}}^{1, \perp}_{\Psi_0} \rightarrow \left(\widehat{\mathcal{H}}^{1, \perp}_{\Psi_0}\right)^*$ denote the Fr\'echet derivative of $\mathcal{f}$ at any $\Theta \in \widehat{\mathcal{H}}^{1, \perp}_{\Psi_0}$, and let $\Theta^* \in \widehat{\mathcal{H}}^{1, \perp}_{\Psi_0}$ denote a zero of the coupled cluster function corresponding to an intermediately normalised eigenfunction $\Psi^* \in \widehat{\mathcal{H}}^1$ of the electronic Hamiltonian $H\colon\widehat{\mathcal{H}}^1 \rightarrow \widehat{\mathcal{H}}^{-1}$ with a simple, isolated eigenvalue $\mathcal{E}^*$. Then, the Fr\'echet derivative $\mD\mathcal{f}(\Theta^*)$ is an isomorphism and it holds that
\begin{align*}
	\Vert \mD \mathcal{f}(\Theta^*)^{-1}\Vert_{\left(\widehat{\mathcal{H}}^{1, \perp}_{\Psi_0}\right)^{*} \to \widehat{\mathcal{H}}^{1, \perp}_{\Psi_0}}^{-1} \geq \frac{\Lambda^*}{\beta},
\end{align*}
where $\Lambda^*>0$ is the inf-sup constant of the shifted electronic Hamiltonian $H- \mathcal{E}^*$ on $\left\{\Psi^*\right\}^{\perp} \cap \widehat{\mathcal{H}}^1$, i.e.,
\begin{align*}
    \Lambda^* :=& \inf_{0\neq \Upsilon \in \left\{\Psi^*\right\}^{\perp} \cap \widehat{\mathcal{H}}^1} \; \sup_{0\neq \Phi \in \left\{\Psi^*\right\}^{\perp} \cap \widehat{\mathcal{H}}^1}  \frac{\left\langle \Phi, \left(H-\mathcal{E}^*\right)\Upsilon\right\rangle_{\widehat{\mathcal{H}}^1 \times  \widehat{\mathcal{H}}^{-1}}}{\Vert \Phi\Vert_{\widehat{\mathcal{H}}^1} \Vert \Upsilon\Vert_{\widehat{\mathcal{H}}^1}},\\[0.5em]
    \text{and }~ \beta:=& \Vert \mathbb{P}_0^{\perp}e^{-\mathcal{T}(\Theta^*)}\Vert_{\widehat{\mathcal{H}}^1 \to \widehat{\mathcal{H}}^1} \Vert e^{\mathcal{T}(\Theta^*)^{\dagger}}\Vert_{\widehat{\mathcal{H}}^1 \to \widehat{\mathcal{H}}^1}.
\end{align*}

In particular, $\Theta^*$ is a non-degenerate zero of the coupled cluster function $\mathcal{f}$.
\end{theorem}
\begin{proof}
    See \cite[Theorems 31 and 33, Corollary 32, and Remark 34]{Hassan_CC}.
\end{proof}

\begin{table}[ht]
	\centering
	\begin{tabular}{||c| c| c| c| c||} 
		\hline \hline
		Molecule  &  \shortstack{Monotonicity constant \\$\Gamma$ from Eq. \eqref{eq:loc_mon}} &  $\Vert \mD \mathcal{f}(\Theta^*)^{-1}\Vert^{-1}_{\left(\widehat{\mathcal{H}}^{1, \perp}_{\Psi_0}\right)^{*} \to \widehat{\mathcal{H}}^{1, \perp}_{\Psi_0}}$  & \shortstack{ Inf-Sup constant\\[0.25em] $\nicefrac{\Lambda^*}{\beta}$}\\ [0.5ex] 
		\hline\hline
		{\rm BeH$_2$}&  \hphantom{-}0.0363 & 0.3379 &0.2568\\ 
		{\rm BH$_3$}   & {-0.0950} &  0.3060&0.2081\\
		\rm{HF} & {-0.0083} &0.2995&0.2529\\
		\rm{H$_2$O}    &  \hphantom{-}0.0249 &0.4113&0.2784\\ 
		\rm{LiH}    & {-0.0065} &0.2628&0.2164\\
		\rm{NH$_3$}   & {-0.0325} &0.3576&0.2789\\[1ex] 
		\hline\hline
	\end{tabular}
	\caption{Examples of numerically computed constants for a collection of small molecules at equilibrium geometries. The calculations were performed in STO-6G basis sets with the exception of the HF and LiH molecules for which 6-31G basis sets were used. To simplify calculations, the canonical $\widehat{\mathcal{H}}^1$ norm was replaced with an equivalent norm induced by the mean-field Hartree-Fock operator (see, e.g., \cite{Schneider_1}). The numerical results shown here are taken from the previous contribution \cite{Hassan_CC}.}\label{table2}
\end{table}

Compared to the local monotonicity approach, Theorem \ref{thm:CC_der_inv} provides a sharper estimate of the operator norm of the inverse coupled cluster Fr\'echet derivative (see also Table \ref{table2}). Unfortunately, as is well known from the classical numerical analysis theory for linear PDEs, continuous inf-sup conditions are not, in general, inherited by Galerkin discretisations of the infinite-dimensional problem.  The natural question to ask therefore, is if an analysis in the spirit of Theorem \ref{thm:CC_der_inv} can be carried out for discretisations of the continuous coupled cluster equations \eqref{eq:CC}. This is the main subject of the remainder of this article.

\vspace{1mm}

\subsection{The Discrete Coupled Cluster Equations}\label{sec:3d}~

We begin with a brief, informal description of generic discretisations of the continuous coupled cluster equations \eqref{eq:CC} that are typically considered in the numerical practice. 

The essential idea of these discretisations is to start with some choice $\mathscr{R}$ of an occupied space of $\mH^1(\R^3)$ as stated in Notation \ref{def:occ_vir}. Corresponding to this choice of $\mathscr{R}$, we obviously have a reference determinant $\Psi_0\in \widehat{\mathcal{H}}^1$ (see Definition \ref{def:ref_det}) and a decomposition of the $N$-particle function space $\widehat{\mathcal{H}}^1 = \text{\rm span}\{\Psi_0\} \oplus \widehat{\mathcal{H}}^{1, \perp}_{\Psi_0}$ as described in Definition \ref{def:decomp}. The key discretisation step now consists of introducing a finite-dimensional subspace $\mathcal{V}\subset \widehat{\mathcal{H}}^{1, \perp}_{\Psi_0}$ that is spanned by a finite Slater determinant basis. We then seek a cluster operator $\mathcal{T}(\Theta_{\mathcal{V}})$ generated by some element $\Theta_{\mathcal{V}}\in \mathcal{V} $ such that for all $\Phi_{\mathcal{V}} \in \mathcal{V}$ it holds that
\begin{equation}\label{eq:CC_galerkin}
	\left\langle \Phi_{\mathcal{V}}, e^{-\mathcal{T}(\Theta_{\mathcal{V}})}H e^{\mathcal{T}(\Theta_{\mathcal{V}})} \Psi_0\right\rangle_{\widehat{\mathcal{H}}^{1} \times \widehat{\mathcal{H}}^{-1}}	=0.
\end{equation}

Note that Equation \eqref{eq:CC_galerkin} is simply a Galerkin approximation of the continuous coupled cluster equation \eqref{eq:CC}. The associated discrete coupled cluster energy $\mathcal{E}^*_{\mathcal{V}}$ is therefore defined in analogy with the continuous coupled cluster energy \eqref{eq:CC_Energy} as
\begin{equation}\label{eq:CC_Energy_galerkin}
	\mathcal{E}_{\mathcal{V}}^* := \left\langle\Psi_0, e^{-\mathcal{T}(\Theta_{\mathcal{V}})}H e^{\mathcal{T}(\Theta_{\mathcal{V}})} \Psi_0\right\rangle_{\widehat{\mathcal{H}}^{1} \times \widehat{\mathcal{H}}^{-1}}.
\end{equation}

\begin{remark}[Solving the Discrete Coupled Cluster equations]\label{rem:solve_disc}~

Consider the discrete coupled cluster equations \eqref{eq:CC_galerkin}. A natural question that now arises is how the approximation space $\mathcal{V}$ is typically chosen and the resulting discrete equations formulated in a manner amenable to numerical resolution.

To construct the approximation space $\mathcal{V}$, we typically proceed in two steps:

\begin{enumerate}

\item We first fix an $N$-dimensional occupied space $\mathscr{R} \subset \mH^1(R^3)$. We then introduce a \emph{finite-dimensional subspace} $\mathscr{R}^{\perp}_{\rm approx}$ of the virtual space $\mathscr{R}^{\perp}$ together with a corresponding finite $\mL^2$-orthonormal basis $\mathscr{B}_{\rm vir}^{\rm approx}$. 

\item Next, we introduce $\widetilde{\mathcal{B}}_{\wedge, \rm approx}$ as \underline{some subset} of all possible Slater determinants that can be constructed from $\mathscr{B}_{\rm occ} \cup \mathscr{B}_{\rm vir}^{\rm approx}$. The approximation space $\mathcal{V}$ is now defined as 
\begin{align*}
\mathcal{V}:= \text{\rm span}\widetilde{\mathcal{B}}_{\wedge, \rm approx}.
\end{align*}
\end{enumerate}

Turning now to the practical numerical resolution of the discrete coupled cluster equations \eqref{eq:CC_galerkin}, we observe that, using the notion of excitation index sets and excitation operators introduced in Section~\ref{sec:3a}, we can parameterise the set of Slater determinants  $\widetilde{\mathcal{B}}_{\wedge, \rm approx}$ as
\begin{align*}
    \exists \mathcal{I}_{\mathcal{V}} \subset \mathcal{I} \colon \quad \widetilde{\mathcal{B}}_{\wedge, \rm approx}= \left\{\mathcal{X}_{\mu} \Psi_0\colon ~ \mu \in \mathcal{I}_{\mathcal{V}}\right\}.
\end{align*}
It follows that the sought-after cluster operator $\mathcal{T}(\Theta_{\mathcal{V}})$ generated by $\Theta_{\mathcal{V}} \in {\mathcal{V}}$ can be written as
\begin{align*}
    \mathcal{T}(\Theta_{\mathcal{V}})=\sum_{\mu \in \mathcal{I}_{\mathcal{V}}} \bt^{\mathcal{V}}_{\mu} \mathcal{X}_{\mu},
\end{align*}
for the unknown expansion coefficients $\{\bt^{\mathcal{V}}_{\mu}\}_{\mu \in \mathcal{I}_{\mathcal{V}}}$ of $\Theta_{\mathcal{V}} \in {\mathcal{V}}$. 

Solving the discrete coupled cluster equations \eqref{eq:CC_galerkin} then consists of determining the unknown coefficients $\{\bt^\mathcal{V}_{\mu}\}_{\mu \in \mathcal{I}_{\mathcal{V}}}$ such that for all $\mu \in \mathcal{I}_{\mathcal{V}}$ it holds that
\begin{equation}\label{eq:CC_discrete_2a}
	\left\langle \mathcal{X}_{\mu} \Psi_0, e^{-\mathcal{T}(\Theta_{\mathcal{V}})}H e^{\mathcal{T}(\Theta_{\mathcal{V}})} \Psi_0\right\rangle_{\widehat{\mathcal{H}}^{1} \times \widehat{\mathcal{H}}^{-1}}	=0, \quad \text{ where } ~\mathcal{T}(\Theta_{\mathcal{V}})= \sum_{\mu \in \mathcal{I}_{\mathcal{V}}} \bt^{\mathcal{V}}_{\mu} \mathcal{X}_{\mu}.
\end{equation}
Equation \eqref{eq:CC_discrete_2a} is simply a finite-dimensional system of coupled, non-linear equations that can readily be solved using iterative methods such as a quasi-Newton scheme.

\end{remark}


As Remark \ref{rem:solve_disc} suggests, there are essentially two discretisation parameters that can be tuned to produce different variations of the discrete coupled cluster equations.

\begin{itemize}
    \item We can modify the choice of occupied space $\mathscr{R}$ and the size of the corresponding virtual space approximation $\mathscr{R}^{\perp}_{\rm approx}$. Typically, these spaces are defined as the span of some non-orthogonal functions (known as atomic basis functions in the quantum chemistry literature). In a subsequent step, we then produce $\mL^2$-orthonormal bases $\mathscr{B}_{\rm occ}$ and $\mathscr{B}_{\rm vir}^{\rm approx}$ for $\mathscr{R}$ and $\mathscr{R}^{\perp}_{\rm approx}$ respectively.

    \item We can modify the choice of Slater determinants constructed from $\mathscr{B}_{\rm occ}\cup\mathscr{B}_{\rm vir}^{\rm approx}$ that are used to define $\mathcal{V} \subset \widehat{\mathcal{H}}^1$. In the quantum chemistry literature, Slater determinants are typically characterised according to their excitation rank. The excitation rank of a Slater determinant $\Psi_\mu \in \widehat{\mathcal{H}}^{1, \perp}_{\Psi_0}$ is the natural number $j \in \{1, \ldots, N\}$ such that the excitation index $\mu$ is an element of the $j^{\rm th}$ excitation index set $\mathcal{I}_j$. With this classification, we can, for instance, define $\mathcal{V}$ as the span of only singly and doubly excited Slater determinants constructed from $\mathscr{B}_{\rm occ}\cup\mathscr{B}_{\rm vir}^{\rm approx}$, the resulting discretisation being known as coupled cluster singles and doubles (CCSD). At the other extreme, we can define $\mathcal{V}$ as the span of all possible Slater determinants constructed from $\mathscr{B}_{\rm occ}\cup\mathscr{B}_{\rm vir}^{\rm approx}$, this being called full coupled cluster (Full-CC). 
\end{itemize}

A graphical description of these discretisation parameters and the resulting discrete coupled cluster equations is given in Figure \ref{fig:1}. Note that the continuous coupled cluster equations correspond to the case of taking a complete $\mL^2$-orthonormal basis of $\mH^1(\R^3)$ as the atomic basis and including Slater determinants of all possible excitation ranks in the approximation space. Let us also remark that, despite the simple appearance of Figure \ref{fig:1}, one can consider coupled cluster discretisations in which Slater determinants of different excitation ranks are only partially included, i.e., instead of using, e.g., all doubly excited Slater determinants to construct the approximation space, we can use \emph{some} doubly excited, \emph{some} triply excited, and \emph{some} quadruply excited determinants etc (see, e.g., \cite{lyakh2010adaptive}).

\begin{figure}[ht]
\begin{center}
\includegraphics[width=1\textwidth, trim={0cm, 1cm, 0cm, 1cm},clip=true]{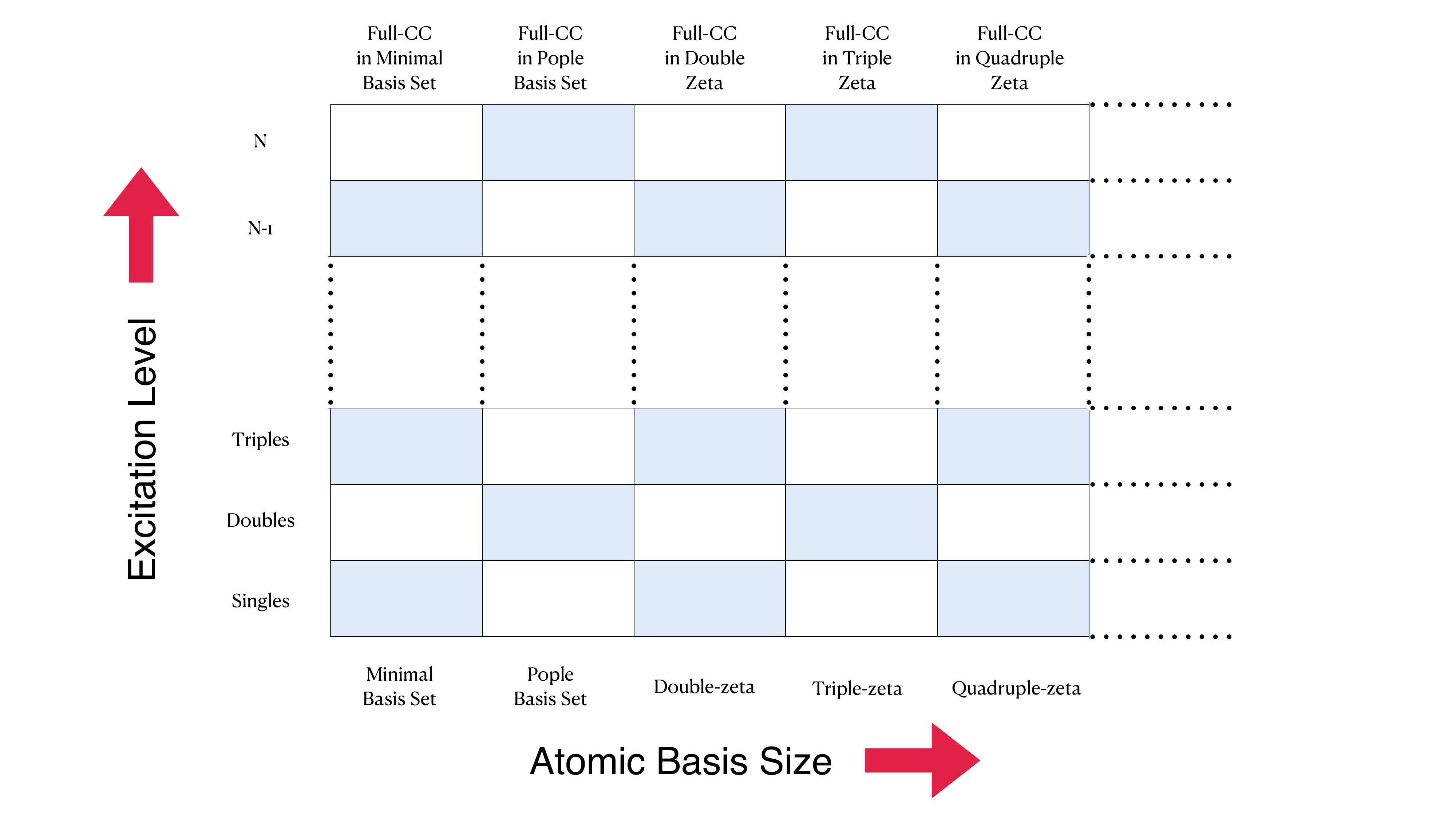}
\end{center}
\caption{Graphical depiction of the coupled cluster discretisation parameters. Informally, the continuous coupled cluster equations should appear at the upper right corner of this graphic.}
\label{fig:1}
\end{figure}

The task that we now confront is to develop a local well-posedness theory and a priori error estimates for the discrete coupled cluster equations \eqref{eq:CC_galerkin}. As is common in the non-linear numerical analysis literature, we will attempt to show that the discrete coupled cluster equations \eqref{eq:CC_galerkin} are locally well-posed provided that the associated approximation space $\mathcal{V}$ is rich enough, which, in the present context means that both the size of the atomic basis and the maximal excitation rank of admissible Slater determinants is sufficiently large. Note that since the coupled cluster methodology is targeted at obtaining the \emph{ground state} energy of the electronic Hamiltonian, we will focus only on solutions of the discrete coupled cluster equations corresponding to a ground state energy approximation.

An asymptotic well-posedness analysis of this nature presents two main difficulties: 

\begin{description}
    \item[Difficulty One] Due to the non-linearity of the coupled cluster function, the primary tool that we have at our disposal for the analysis of the discrete problem \eqref{eq:CC_galerkin} is the inverse function theorem for Banach spaces. As we show in the subsequent Section \ref{sec:4}, in order to apply the inverse function theorem in the present setting, we must establish a \textbf{discrete} inf-sup condition for the Fr\'echet derivative of the coupled cluster function on the approximation space $\mathcal{V}$. At first glance, we can simply attempt to replicate the proof for the continuous inf-sup condition (see \cite{Hassan_CC}). Unfortunately, for an arbitrary trial function $\Phi \in \mathcal{V}$ and a general cluster operator~$\mathcal{T}(\Theta)$ constructed from some $\Theta \in \mathcal{V}$, we have that 
    \begin{align*}
    e^{\mathcal{T}(\Theta)} \Phi \notin \mathcal{V},
    \end{align*}
    which causes the continuous inf-sup argument to break down. Moreover, the `extent' of this~non inclusion is decreased only if $\mathcal{V}$ includes Slater determinants of all possible excitation ranks.
    
    \item[Difficulty Two] In the numerical practice, occupied space $\mathscr{R}$ and the corresponding reference determinant $\Psi_0 \in \widehat{\mathcal{H}}^1$ are obtained from the first $N$ eigenfunctions of some \text{discrete} mean-field operator (so-called canonical orbitals). In other words, given an atomic basis, one first performs a mean-field Hartree-Fock (or Kohn-Sham) calculation \emph{in this atomic basis}, and then uses the resulting eigenfunctions to construct the occupied space and reference determinant. This means however, that as the size of the atomic basis changes, the results of the mean-field calculation and therefore the underlying reference determinant also change.  In particular, since cluster operators are defined relative to a fixed reference determinant, it is not possible to compare two different cluster operators defined with respect to two different reference determinants.    
\end{description}

\vspace{2mm}

Partial solutions to \textbf{Difficulty One} shall be the subject of extensive discussion in the forthcoming Section \ref{sec:4}. In the remainder of the current section, we introduce a formalism which, under some assumptions, will allow us to address \textbf{Difficulty Two}. 

\vspace{3mm}

\begin{Notation}[Sequence of Occupied Spaces and Reference Determinants]\label{def:seq_ref}~

    We denote by $\left\{\mathscr{R}_K\right\}_{K\geq N}$ a sequence of occupied spaces of $\mH^1(\R^3)$ as stated in Notation \ref{def:occ_vir}, and for each $K\geq N$, we denote by $\Psi_{0, K} \in \widehat{\mathcal{H}}^1$ the reference determinant corresponding to $\mathscr{R}_K$ as defined through Definition \ref{def:ref_det}.
\end{Notation}


\begin{Notation}[Sequence of Finite-Dimensional Approximation Spaces]\label{def:V_K}~

   Let the sequence of reference determinants $\{\Psi_{0, K}\}_{K\geq N}$ be constructed as in Notation~\ref{def:seq_ref} and for each $K \geq N$, let $\{\Psi_{0, K}\}^{\perp}$ be the $\widehat{\mathcal{L}}^2$-orthogonal complement of $\Psi_{0, K}$ in $\widehat{\mathcal{L}}^2$. We denote by $\{\widetilde{\mathcal{V}}_K\}_{K\geq N}$ a sequence of finite-dimensional subspaces of $\widehat{\mathcal{H}}^1$ with the following properties:
\begin{align*}
 \forall K\geq N \colon& \quad \widetilde{\mathcal{V}}_K \subset \{\Psi_{0,K}\}^{\perp}\qquad \text{and}\\[0.5em] \forall \Phi \in \widehat{\mathcal{H}}^1, ~ \forall \epsilon > 0, ~~ \exists K_0 \in \mathbb{N}| ~~  \forall K\ge K_0, \exists \Phi_K \in \widetilde{\mathcal{V}}_K, ~c_{0, K} \in \mathbb{R} \colon& \quad \left\Vert \Phi -\left(\Phi_K +c_{0, K}\Psi_{0, K}\right)\right\Vert_{\widehat{\mathcal{H}}^1}< \epsilon.
\end{align*}

Additionally, for notational convenience, we define for all $K\geq N$, the subspace $\mathcal{V}_K \subset \widehat{\mathcal{H}}^1$ as follows:
\begin{equation}
    \label{eqV}
\mathcal{V}_K 
   := \widetilde{\mathcal{V}}_K \oplus \text{\rm span}\{\Psi_{0, K}\}  
\end{equation}

\end{Notation}

Consider Notations \ref{def:seq_ref} and \ref{def:V_K}. A valid question that arises is why we define the sequence of reference determinants and approximation spaces only for indices $K\geq N$. This choice is, in fact, motivated by the observation that in the numerical practice, the approximation spaces $\{\mathcal{V}_K\}_{K \geq N}$ are typically defined as the span of some Slater determinants constructed from $K\geq N$ single-particle functions in $\mH^1(\R^3)$. Indeed, such constructions are the subject of detailed discussions in Appendix \ref{sec:spaces}. To ensure a uniform notation therefore, we define the sequence of reference determinants and approximation spaces only for indices $K\geq N$. Of course, this choice has no bearing on the analysis.




Equipped with Notations \eqref{def:seq_ref} and \eqref{def:V_K}, we can now state a \emph{sequence} of discrete coupled cluster equations whose asymptotic well-posedness will be the main object of study in the sequel. 
\vspace{3mm}

\textbf{Sequence of Discrete Coupled Cluster Equations:}~

 Let the sequence of reference determinants $\{\Psi_{0, K}\}_{K\geq N}$ be constructed as in Notation \ref{def:seq_ref} and let the sequence of finite-dimensional subspaces $\{\widetilde{\mathcal{V}}_K\}_{K\geq N}$ be constructed as in Notation \ref{def:V_K}. For each $K \geq N$, we seek a cluster operator $\mathcal{T}(\Theta^*_K)$ generated by some element $\Theta^*_K\in \widetilde{\mathcal{V}}_K$ such that 
\begin{equation}\label{eq:CC_discrete_2}
\forall \Phi_K \in \widetilde{\mathcal{V}}_K\colon \qquad	\left\langle \Phi_K, e^{-\mathcal{T}(\Theta^*_K)}H e^{\mathcal{T}(\Theta^*_K)} \Psi_{0, K}\right\rangle_{\widehat{\mathcal{H}}^{1} \times \widehat{\mathcal{H}}^{-1}}	=0.
\end{equation}

\begin{Notation}
    Recall Definition \ref{def:CC} of the coupled cluster function $\mathcal{f} \colon \widehat{\mathcal{H}}^{1, \perp}_{\Psi_{0}} \rightarrow \left(\widehat{\mathcal{H}}^{1, \perp}_{\Psi_{0}}\right)^*$. Notice that, according to Equation \eqref{eq:12bis}, $\mathcal{f}$ is defined relative to the chosen reference determinant $\Psi_0 \in \widehat{\mathcal{H}}^1$.

    In order to account for the sequence of reference determinant $\{\Psi_{0, K}\}_{K \geq N} \subset \widehat{\mathcal{H}}^1$ introduced via Notation~\ref{def:seq_ref}), we will, in the sequel, write $\mathcal{f}_K \colon \widehat{\mathcal{H}}^{1, \perp}_{\Psi_{0, K}} \rightarrow \left(\widehat{\mathcal{H}}^{1, \perp}_{\Psi_{0, K}}\right)^*$ to denote the coupled cluster function defined relative to the Slater determinant $\Psi_{0, K}$.
\end{Notation}

\section{Analysis of the Discrete Coupled Cluster Equations}\label{sec:4}

Throughout this section, we assume the settings of Sections \ref{sec:2}-\ref{sec:3}. As mentioned at the end of Section~\ref{sec:3}, we are now concerned with the well-posedness and error analysis of the sequence of discrete coupled cluster equations \eqref{eq:CC_discrete_2}. To do so, we will make use of the classical machinery of non-linear numerical analysis, and, in particular, the abstract framework for analysing such problems introduced by Caloz and Rappaz \cite{MR1470227}. Of particular importance is the following fundamental result which is essentially the inverse function theorem in Banach spaces, adapted for the purpose of non-linear numerical analysis.

\begin{theorem}[Inverse Function Theorem in Banach Spaces]\label{Caloz_Rappaz_2}~
	
	Let $\left(\mathcal{X}, \Vert \cdot \Vert_{\mathcal{X}}\right)$ and $\left(\mathcal{Z}, \Vert \cdot \Vert_{\mathcal{Z}}\right)$ be two Banach spaces, let $\mathscr{G}\colon \mathcal{X}\rightarrow \mathcal{Z}$ be a $\mathscr{C}^1$ mapping, let $\bold{u} \in \mathcal{X}$ be such that the Fr\'echet derivative $\mD \mathcal{G}(\bold{u}) \colon \mathcal{X}\rightarrow \mathcal{Z}$ is an isomorphism and define the following quantities:
	\begin{align*}
		\epsilon :=& \Vert \mathcal{G}(\bold{u})\Vert_{\mathcal{Z}},\\[0.25em]
  \gamma_{\mathcal{G}}:=& \Vert \mD \mathcal{G}(\bold{u})^{-1}\Vert_{\mathcal{Z}\to \mathcal{X}},\\[0.25em]
  \forall\alpha \in \mathbb{R}_+\colon \quad ~\mL(\alpha):=& \sup_{\bold{v} \in \overline{\mathbb{B}_{\alpha}(\bold{u})}} \Vert \mD \mathcal{G}(\bold{u})-\mD \mathcal{G}(\bold{v})\Vert_{\mathcal{X} \to \mathcal{Z}}.
	\end{align*}
	
	Under the assumption that $2\gamma_{\mathcal{G}}\mL(2\gamma_{\mathcal{G}}\epsilon)<1$, the closed ball $\overline{\mathbb{B}_{2\gamma_{\mathcal{G}}\epsilon}(\bold{u})} \subset \mathcal{X}$ contains a unique solution~$\bold{u}^*$ to the equation
	\begin{align}\label{eq:Caloz}
		\mathcal{G}(\bold{v})=0,
	\end{align}
	the Fr\'echet derivative $\mD \mathcal{G}(\bold{u}^*) \colon \mathcal{X}\rightarrow \mathcal{Z}$ is an isomorphism with
	\begin{align*}
		\Vert \mD \mathcal{G}(\bold{u}^*)^{-1}\Vert_{\mathcal{Z}\to \mathcal{X}} \leq 2 \gamma_{\mathcal{G}},
	\end{align*}
	and for all $\bold{v} \in \overline{\mathbb{B}_{2\gamma_{\mathcal{G}}\epsilon}(\bold{u})}$ we have the error estimate
	\begin{align*}
		\Vert \bold{u}^*-\bold{v}\Vert_{\mathcal{X}} \leq 2 \gamma_{\mathcal{G}} \Vert \mathcal{G}(\bold{v})\Vert_{\mathcal{Z}}.
	\end{align*}
\end{theorem}
\begin{proof}
	See \cite[Theorem 2.1]{MR1470227}.
\end{proof}

\begin{remark}
    Consider Theorem \ref{Caloz_Rappaz_2}. As pointed out in \cite[Remark 2.1]{MR1470227}, the uniqueness statement of this result can be improved: For any $\alpha \geq 2 \gamma_{\mathcal{G}}\epsilon$ such that $\gamma_{\mathcal{G}} \mL(\alpha) < 1$, the closed ball $\overline{\mathbb{B}_{\alpha}}$ contains a unique solution to Equation \eqref{eq:Caloz}.
\end{remark}

The main focus of our subsequent analysis will be to demonstrate how, under suitable assumptions, the sequence of discrete coupled cluster equations \eqref{eq:CC_discrete_2} satisfies the hypotheses of Theorem \ref{Caloz_Rappaz_2}. For clarity of exposition, we will proceed in the following steps:
\vspace{2mm}

\begin{enumerate}
	\item First, in Section \ref{sec:4a}, we will prove a technical lemma related to the \emph{consistency} of the discrete coupled cluster equations \eqref{eq:CC_discrete_2}. This step will require imposing certain assumptions on the sequence of reference determinants $\{\Psi_{0, K}\}_{K \in \mathbb{N}}$ constructed as in Notation~\ref{def:seq_ref}. 
	
	\item Second, in Section \ref{sec:4b}, we will identify certain classes of approximation spaces $\{\widetilde{\mV}_K\}_{K\geq N}$ for which the stability of the discrete coupled cluster equations \eqref{eq:CC_discrete_2} can be expected to hold, at least asymptotically.
	
	\item In Section \ref{sec:4c}, we will combine the results of the previous two subsections to complete the sought-after demonstration.
\end{enumerate}


    


\subsection{Technical Lemma Pertaining to Consistency}\label{sec:4a}~

For the purpose of the analysis in this section, we require the following assumptions. 

\vspace{0.5cm}

\begin{mdframed}
	\textbf{Assumption A.I: Uniform Overlap.} We assume that the sequence of reference determinants $\{\Psi_{0, K}\}_{K \geq N}$ constructed as in Notation~\ref{def:seq_ref} have uniformly bounded below overlaps with the exact ground state eigenfunction $\Psi^*_{\rm GS} \in \widehat{\mathcal{H}}^1$ of the electronic Hamiltonian $H$, i.e.,
	\begin{align*}
		\exists C >0  \quad \text{s.t.} \quad \forall K \geq N \colon \qquad \vert \left(\Psi_{0, K}, \Psi_{\rm GS}^*\right)\vert > C.
	\end{align*} 
\end{mdframed}

\vspace{0.5cm} 

\begin{mdframed}
	\textbf{Assumption A.II: Uniform Upper Bound.} We assume that the sequence of reference determinants $\{\Psi_{0, K}\}_{K \geq N}$ constructed as in Notation~\ref{def:seq_ref} is uniformly bounded in $\widehat{\mathcal{H}}^1$.
\end{mdframed}

\vspace{0.5cm}

In addition to the above \textbf{Assumptions A.I} and \textbf{A.II}, we of course also require that the continuous coupled cluster equations \eqref{eq:CC} be locally well-posed for any choice of reference determinant $\Psi_{0, K}$. In view of Theorem \ref{thm:CC_der_inv}, a sufficient condition to achieve this local uniqueness is the following.

\vspace{0.2cm}

\begin{mdframed}
	\textbf{Assumption A.III: Existence of a Spectral Gap.} We assume that the ground state eigenvalue~$\mathcal{E}^*_{\rm GS}$ of the  electronic Hamiltonian is simple.
\end{mdframed}

\vspace{0.2cm} 


Equipped with \textbf{Assumptions A.I, A.II, and A.III}, we now prove the following lemma.

 \vspace{-1mm}
\begin{lemma}[Uniform Boundedness and Approximability of Exact Coupled Cluster Zeros]\label{lem:approx}~
	
	Let the sequence of reference determinants $\{\Psi_{0, K}\}_{K\geq N}$ and the sequence of approximation spaces  $\{\widetilde{\mathcal{V}}_K\}_{K\geq N}$ be constructed as in Notation \ref{def:seq_ref} and Notation \ref{def:V_K} respectively, and assume that {\textbf{Assumptions A.I, A.II, and A.III}} hold.
	
	Additionally, for each $K\geq N$, let $\{\Psi_{0, K}\}^{\perp}$ be the $\widehat{\mathcal{L}}^2$ orthogonal complement of $\Psi_{0, K}$ in $\widehat{\mathcal{L}}^2$, let the infinite-dimensional subspace $\widehat{\mathcal{H}}^{1, \perp}_{\Psi_{0, K}}$ be defined as $\widehat{\mathcal{H}}^{1, \perp}_{\Psi_{0, K}}:= \{\Psi_{0, K}\}^{\perp} \cap \widehat{\mathcal{H}}^1$, let the coupled cluster function $\mathcal{f}_K \colon \widehat{\mathcal{H}}^{1, \perp}_{\Psi_{0, K}} \rightarrow \left(\widehat{\mathcal{H}}^{1, \perp}_{\Psi_{0, K}}\right)^*$ be defined through Equation \eqref{eq:12bis}, where we adopt the convention of using the subscript $K$ to signify the dependency of this coupled cluster function on $\Psi_{0, K}$, and let $\Theta_{{K}, \rm GS}^* \in \widehat{\mathcal{H}}^{1, \perp}_{\Psi_{0, K}}$ denote the zero of the coupled cluster function $\mathcal{f}_K$ corresponding to the $\widehat{\mathcal{L}}^2$-normalised ground state eigenfunction $\Psi^*_{\rm GS}$ of the electronic Hamiltonian, i.e.,  
 \begin{align*}
     e^{\mathcal{T}(\Theta_{K, \rm GS}^*)}\Psi_{0, K}= \frac{1}{\left(\Psi^*_{\rm GS}, \Psi_{0, K}\right)_{\widehat{\mathcal{L}}^2}} \Psi^*_{\rm GS}.
 \end{align*}
	Then the following hold:
 \begin{enumerate}
     \item The sequence of coupled cluster zeros $\{\Theta_{{K}, \rm GS}^*\}_{K \geq N}$ is uniformly bounded in $\widehat{\mathcal{H}}^1$.

     \item There exists a sequence of functions $\{\Upsilon_K\}_{K \in \mathbb{N}}$ with each $\Upsilon_K \in \widetilde{\mathcal{V}}_K$ such that
     \begin{align*}
         \lim_{K \to \infty}  \Vert \Upsilon_K - \Theta_{{K}, \rm GS}^*\Vert_{\widehat{\mathcal{H}}^1}=0.
     \end{align*}
 \end{enumerate}

\end{lemma}
\begin{proof}
 We begin by defining, for each $K \geq N$, the constant 
	\begin{align*}
		c_{0, K}:=\frac{1}{\left(\Psi_{0, K}, \Psi^*_{\rm GS}\right)_{\widehat{\mathcal{L}}^2}} \leq C' < \infty,
	\end{align*}
	where the upper bound holds thanks to \textbf{Assumption A.I}. We can therefore define, for every $K \geq N$, 

	\begin{align*}
		\widetilde{\Psi}^*_{{K}, \rm GS} := c_{0, K}\Psi^*_{\rm GS} - \Psi_{0, K} \qquad \text{and it follows that} \qquad \widetilde{\Psi}^*_{{K}, \rm GS}\in  \widehat{\mathcal{H}}^{1, \perp}_{\Psi_{0, K}}.
	\end{align*}
	
	Next, for every $K \geq N$, we denote by $\mathcal{C}_K(\widetilde{\Psi}^*_{{K}, \rm GS}) \colon \widehat{\mathcal{H}}^1 \rightarrow \widehat{\mathcal{H}}^1$ the cluster operator generated by $\widetilde{\Psi}^*_{{K}, \rm GS} \in \widehat{\mathcal{H}}^{1, \perp}_{\Psi_{0, K}}$. It follows that
	\begin{align}\label{eq:revise_1}
		\left({\rm I}+ \mathcal{C}_K(\widetilde{\Psi}^*_{{K}, \rm GS})\right)\Psi_{0, K}= c_{0, K}\Psi^*_{\rm GS}= e^{\mathcal{T}\left(\Theta_{{K}, \rm GS}^*\right)}\Psi_{0, K}.
	\end{align}

	Using now Theorem \ref{thm:1} on the mapping properties of cluster operators together with the fact that the cluster operators $\mathcal{C}_K(\widetilde{\Psi}^*_{{K}, \rm GS}) $ and $\mathcal{T}(\Theta_{{K}, \rm GS}^*)$ are both constructed from functions in $\widehat{\mathcal{H}}^{1, \perp}_{\Psi_{0, K}}$ with respect to the same reference determinant, we conclude that for every $K \geq N$ it holds that
	\begin{align}\nonumber
		\Vert {\rm I}+ \mathcal{C}_K(\widetilde{\Psi}^*_{{K}, \rm GS}) -e^{\mathcal{T}(\Theta_{{K}, \rm GS}^*)} \Vert_{\widehat{\mathcal{H}}^1 \to \widehat{\mathcal{H}}^1} &\leq \beta_{\widehat{\mathcal{H}}} \left\Vert\left({\rm I}+ \mathcal{C}_K(\widetilde{\Psi}^*_{{K}, \rm GS}) -e^{\mathcal{T}(\Theta_{{K}, \rm GS}^*)}\right)\Psi_{0, K} \right\Vert_{\widehat{\mathcal{H}}^1}= 0, 
		\intertext{and therefore, we have the following operator equality:}
		{\rm I} + \mathcal{C}_K(\widetilde{\Psi}^*_{{K}, \rm GS})&= e^{\mathcal{T}(\Theta_{{K}, \rm GS}^*)}. \label{eq:revise_0}
	\end{align}

	Recalling from Theorem \ref{thm:2} the bijection property of the exponential mapping between certain sub-algebras of cluster operators, a direct calculation involving the Taylor series expansion of the logarithm function together with the nil-potency property of cluster operators allows us to conclude that for all~$K \geq N$, it holds that
	\begin{align}\label{eq:lemma_has_1a}
		\mathcal{T}(\Theta_{{K}, \rm GS}^*)&=\sum_{j=1}^N  (-1)^{j+1}\frac{\mathcal{C}_K(\widetilde{\Psi}^*_{{K}, \rm GS})^j}{j}.
	\end{align}

      The proof of the first assertion of the present lemma now follows by making use of Theorem \ref{thm:1}. Indeed, from Equation \eqref{eq:revise_0} we deduce that
    \begin{align*}
        \Vert \mathcal{C}_K(\widetilde{\Psi}^*_{{K}, \rm GS})\Vert_{\widehat{\mathcal{H}}^1 \to \widehat{\mathcal{H}}^1} = \Vert e^{\mathcal{T}(\Theta_{{K}, \rm GS}^*)} -{\rm I} \Vert_{\widehat{\mathcal{H}}^1 \to \widehat{\mathcal{H}}^1} &\leq \beta_{\widehat{\mathcal{H}}} \Vert e^{\mathcal{T}(\Theta_{{K}, \rm GS}^*)}\Psi_{0, K} -\Psi_{0, K}\Vert_{\widehat{\mathcal{H}}^1}\\
        &=\beta_{\widehat{\mathcal{H}}} \Vert c_{0, K}\Psi^*_{\rm GS}-\Psi_{0, K}\Vert_{\widehat{\mathcal{H}}^1}.
    \end{align*}
Applying \textbf{Assumption A.II}, we immediately obtain that the right-hand side of the above inequality is upper bounded uniformly in $K$. 

In view of Equation \eqref{eq:lemma_has_1a} and taking advantage once again of Theorem \ref{thm:1} we finally obtain the existence of a constant $C>0$ that is independent of $K$ such that
\begin{align*}
\Vert \Theta_{{K}, \rm GS}^*\Vert_{\widehat{\mathcal{H}}^1}=  \Vert \mathcal{T}(\Theta_{{K}, \rm GS}^*)\Psi_{0, K}\Vert_{\widehat{\mathcal{H}}^1} \leq      \Vert \mathcal{T}(\Theta_{{K}, \rm GS}^*)\Vert_{\widehat{\mathcal{H}}^1 \to \widehat{\mathcal{H}}^1} =\left\Vert \sum_{j=1}^N  (-1)^{j+1}\frac{\mathcal{C}_K(\widetilde{\Psi}^*_{{K}, \rm GS})^j}{j}\right\Vert_{\widehat{\mathcal{H}}^1 \to \widehat{\mathcal{H}}^1}\leq M.
    \end{align*}

  It now remains to prove the second assertion of the present lemma. To this end, recall from Notation~\ref{def:V_K} that we have imposed an approximability assumption on the sequence of approximation spaces $\big\{\text{span }\{\Psi_{0, K}\}\oplus \widetilde{\mathcal{V}}_K\big\}_{K \geq N}$. This assumption implies, in particular, the existence a sequence of functions $\{\Phi_K\}_{K \geq N}$ with each $\Phi_K \in \text{span }\{\Psi_{0, K}\}\oplus \widetilde{\mathcal{V}}_K$ such that
    \begin{align}\label{eq:revise_conv_1}
        \lim_{K \to \infty} \Vert \Phi_K - \Psi^*_{\rm GS} \Vert_{\widehat{\mathcal{H}}^1}=0.
    \end{align}

In particular, we may define, for each $K$ sufficiently large,
\begin{align*}
    	d_{0, K}:=\frac{1}{\left(\Psi_{0, K}, \Phi_K\right)_{\widehat{\mathcal{L}}^2}} \leq C''< \infty,
\end{align*}
and \textbf{Assumption A.II} together with the approximability result \eqref{eq:revise_conv_1} implies that 
\begin{align}\label{eq:revise_conv_2}
\lim_{K \to \infty} \vert d_{0, K}-c_{0, K}\vert =0.    
\end{align}
In particular, for every $K$ sufficiently large, it holds that
	\begin{align*}
		\widetilde{\Phi}_{{K}}:= d_{0, K}\Phi_K- \Psi_{0, K}  ~\in  \widehat{\mathcal{H}}^{1, \perp}_{\Psi_{0, K}}.
	\end{align*}
	
	Consequently, for any $K$ large enough, we may denote by $\mathcal{C}_K(\widetilde{\Phi}_{{K}}) \colon \widehat{\mathcal{H}}^1 \rightarrow \widehat{\mathcal{H}}^1$ the cluster operator generated by $\widetilde{\Phi}_{{K}}\in \widetilde{\mathcal{V}}_K\subset \widehat{\mathcal{H}}^{1, \perp}_{\Psi_{0, K}}$. It follows that there exists some $\Upsilon_K \in \widetilde{\mathcal{V}}_K \subset \widehat{\mathcal{H}}^{1, \perp}_{\Psi_{0, K}}$ such that for all $K$ sufficiently large
	\begin{align}\label{eq:revise_2}
		\left({\rm I}+ \mathcal{C}_K(\widetilde{\Phi}_{{K}})\right)\Psi_{0, K}= d_{0, K}\Phi_{K}=e^{\mathcal{T}\left(\Upsilon_K\right)}\Psi_{0, K}.
	\end{align}
	
	Using again Theorem \ref{thm:1} on the mapping properties of cluster operators together with the fact that the cluster operators $\mathcal{C}_K(\widetilde{\Phi}_K) $ and $\mathcal{T}(\Upsilon_K)$ are both constructed from functions in $ \widetilde{\mathcal{V}}_K \subset \widehat{\mathcal{H}}^{1, \perp}_{\Psi_{0, K}}$ with respect to the same reference determinant $\Psi_{0, K}$, we conclude that for every $K$ large enough 
	\begin{align*}
		\Vert {\rm I}+ \mathcal{C}_K(\widetilde{\Phi}_K) -e^{\mathcal{T}(\Upsilon_{K})} \Vert_{\widehat{\mathcal{H}}^1 \to \widehat{\mathcal{H}}^1} &\leq \beta_{\widehat{\mathcal{H}}} \left\Vert\left({\rm I}+ \mathcal{C}_K(\widetilde{\Phi}_{K}) -e^{\mathcal{T}(\Upsilon_{K})}\right)\Psi_{0, K} \right\Vert_{\widehat{\mathcal{H}}^1}= 0, 
		\intertext{and therefore,we have the following operator equality:}
		{\rm I} + \mathcal{C}_K(\widetilde{\Phi}_{{K}})&= e^{\mathcal{T}(\Upsilon_{{K}})}.
	\end{align*}
	As argued above, the bijectivity of the exponential mapping given in Theorem \ref{thm:2} together with Taylor series expansion of the logarithm function and the nil-potency property of cluster operators allows us to conclude that for all~$K$ sufficiently large, it holds that
	\begin{align}\label{eq:lemma_has_1b}
		\mathcal{T}(\Upsilon_{K})&=\sum_{j=1}^N  (-1)^{j+1}\frac{\mathcal{C}_K(\widetilde{\Phi}_{{K}})^j}{j}.
	\end{align}

    Notice now that since $\widetilde{\Psi}^*_{{K}, \rm GS}$ and $\Upsilon_{K}$ are both elements of $\widehat{\mathcal{H}}^{1, \perp}_{\Psi_{0, K}}$, Theorem \ref{thm:1} implies that
    \begin{align*}
       \left \Vert \Theta_{{K}, \rm GS}^*- \Upsilon_{K}\right\Vert_{\widehat{\mathcal{H}}^1}=\left \Vert \mathcal{T}(\Theta_{{K}, \rm GS}^*)\Psi_{0, K}- \mathcal{T}(\Upsilon_{K}) \Psi_{0, K}\right\Vert_{\widehat{\mathcal{H}}^1}\leq   \left \Vert \mathcal{T}(\Theta_{{K}, \rm GS}^*)- \mathcal{T}(\Upsilon_{K})\right\Vert_{\widehat{\mathcal{H}}^1 \to \widehat{\mathcal{H}}^1}.
    \end{align*}
    
    We therefore deduce from Equations \eqref{eq:lemma_has_1a} and \eqref{eq:lemma_has_1b} that it suffices to show that
    \begin{align*}
        \lim_{K \to \infty}\Vert \mathcal{C}_K(\widetilde{\Psi}^*_{{K}, \rm GS})-\mathcal{C}_K(\widetilde{\Phi}_{{K}})\Vert_{\widehat{\mathcal{H}}^1 \to \widehat{\mathcal{H}}^1}= 0.
    \end{align*}

To this end, we appeal once again to Theorem \ref{thm:1} and make use of Equations \eqref{eq:revise_1} and \eqref{eq:revise_2} to deduce that for all $K$ sufficiently large
\begin{align*}
    \Vert \mathcal{C}_K(\widetilde{\Psi}^*_{{K}, \rm GS})-\mathcal{C}_K(\widetilde{\Phi}_{{K}})\Vert_{\widehat{\mathcal{H}}^1 \to \widehat{\mathcal{H}}^1} &\leq \beta_{\widehat{\mathcal{H}}} \Vert  \mathcal{C}_K(\widetilde{\Psi}^*_{{K}, \rm GS})\Psi_{0, K}-\mathcal{C}_K(\widetilde{\Phi}_{{K}})\Psi_{0, K}\Vert_{\widehat{\mathcal{H}}^1}\\[0.5em]
    &=\beta_{\widehat{\mathcal{H}}} \left\Vert  \big({\rm I}+\mathcal{C}_K(\widetilde{\Psi}^*_{{K}, \rm GS})\big)\Psi_{0, K}-\big( {\rm I}+\mathcal{C}_K(\widetilde{\Phi}_{{K}}) \big)\Psi_{0, K} \right\Vert_{\widehat{\mathcal{H}}^1}\\[0.5em]
    &=\beta_{\widehat{\mathcal{H}}} \left\Vert  c_{0, K}\Psi^*_{\rm GS}-d_{0, K}\Phi_{K} \right\Vert_{\widehat{\mathcal{H}}^1}.
\end{align*}
Making use of the convergence results \eqref{eq:revise_conv_1} and \eqref{eq:revise_conv_2} therefore yields the required result and completes the proof.
\end{proof}

\subsection{Technical Lemmas Pertaining to Stability}\label{sec:4b}~

Throughout this subsection, we assume the setting of Section \ref{sec:4a}. Our goal now is to identify certain classes of finite-dimensional approximation spaces $\{\widetilde{\mathcal{V}}_{K}\}_{K\geq N}$, defined as in Notation \ref{def:V_K}, for which we can reasonably establish a specific stability property of the discrete coupled cluster equations \eqref{eq:CC_discrete_2}. Essentially, we wish to show that the Fr\'echet derivative $\mD \mathcal{f}_K(\Theta_{K, \rm GS}^*)$ at the ground state zero $\Theta^*_{K, \rm GS}$ of the coupled cluster function $\mathcal{f}_{K} \colon \widehat{\mathcal{H}}^{1, \perp}_{\Psi_{0, K}} \rightarrow (\widehat{\mathcal{H}}^{1, \perp}_{\Psi_{0, K}})^*$ satisfies a discrete inf-sup condition on $\widetilde{\mathcal{V}}_K$, i.e.,
\begin{align}\label{eq:ideal_inf-sup}
	\exists \gamma >0,  \forall K\geq N  \colon ~\inf_{\Upsilon_K \in \widetilde{\mathcal{V}}_K} \sup_{\Phi_K \in \widetilde{\mathcal{V}}_K} \frac{\left\langle  \Upsilon_K, \mD\mathcal{f}_K(\Theta^*_{K, \rm GS})\Phi_K\right\rangle_{\widehat{\mathcal{H}}^1\times \mathcal{H}^{-1}} }{\Vert \Upsilon_K\Vert_{\widehat{\mathcal{H}}^1}\Vert \Phi_K\Vert_{\widehat{\mathcal{H}}^1}} \geq \gamma.
\end{align}

\vspace{3mm}

\begin{remark}\label{rem:discrete_inf_sup_adjoint}
Let us briefly note here that the discrete inf-sup condition \eqref{eq:ideal_inf-sup}, establishes, in fact, that the discrete \underline{adjoint} mapping $\left(\mD\mathcal{f}_K(\Theta^*_{K, \rm GS})\right)^{*}\colon \widetilde{\mathcal{V}}_K\rightarrow \widetilde{\mathcal{V}}_K^*$ is a bounded below mapping, i.e.,
\begin{align*}
    \inf_{\Upsilon_K \in \widetilde{\mathcal{V}}_K} \frac{\Vert \left(\mD\mathcal{f}_K(\Theta^*_{K, \rm GS})\right)^{*} \Upsilon_K\Vert_{\widetilde{\mathcal{V}}_K^*}}{\Vert \Upsilon_K\Vert_{\widehat{\mathcal{H}}^1}} \ge \gamma.
\end{align*}
Since the approximation space $\widetilde{\mathcal{V}}_K$ is finite-dimensional, we deduce that $\left(\mD\mathcal{f}_K(\Theta^*_{K, \rm GS})\right)^{*}\colon \widetilde{\mathcal{V}}_K\rightarrow \widetilde{\mathcal{V}}_K^*$ is a bijection, and therefore so is $\mD\mathcal{f}_K(\Theta^*_{K, \rm GS}) \colon \widetilde{\mathcal{V}}_K\rightarrow \widetilde{\mathcal{V}}_K^*$.
\end{remark}

Unfortunately, establishing a discrete inf-sup condition of the form \eqref{eq:ideal_inf-sup} for arbitrary choices of approximation spaces $\{\widetilde{\mathcal{V}}_{K}\}_{K\geq N}$ and all $K\geq N$ seems out of reach; indeed, we strongly suspect that the condition \eqref{eq:ideal_inf-sup} does not hold in complete generality without further assumptions. On the other hand, if we impose more structure on the choice of the approximation spaces $\{\widetilde{V}_{K}\}_{K\geq N}$, and we restrict ourselves to the asymptotic regime, i.e., when $K$ is sufficiently large (meaning that $\widetilde{\mathcal{V}}_K$ is sufficiently rich), then it becomes possible to establish certain results.

In the current article, we will focus on two classes of finite-dimensional approximation spaces, each with a different additional structure (beyond the definition given in Notation \ref{def:V_K} and \textbf{Assumptions A.I-A.III}). Roughly speaking, the two classes that we consider correspond to the so-called Full-CC approximation spaces and more general discretised CC equations arising from an initial mean-field calculation, and the additional structure that we impose are natural abstractions of the core properties of these two types of coupled cluster approximation spaces (see Appendix \ref{sec:spaces} for a detailed discussion of this point). 

The remainder of this subsection is organised as follows. We will first specify the two types of additional structure that we impose on the finite-dimensional approximation spaces $\{\widetilde{V}_{K}\}_{K\geq N}$ and all $K\geq N$; we will refer to these conditions as \textbf{Structure Assumption B.I} and \textbf{Structure Assumption B.II}. We will then state precisely the discrete inf-sup condition that we require for our analysis and we will show how \textbf{Structure Assumption B.I} or \textbf{Structure Assumption B.II} allows us to prove this result. A detailed discussion pertaining to our earlier claim that \textbf{Structure Assumption B.I} and \textbf{B.II} are satisfied by Full-CC approximation spaces and more general discretised CC equations arising from an initial mean-field calculation respectively, is postponed to Appendix \ref{sec:spaces} although we briefly address this point following our proofs.



\vspace{0.5cm}

\begin{mdframed}
	\textbf{Structure Assumption B.I:} We assume that the sequence of approximation spaces $\{\widetilde{\mathcal{V}}_K\}_{K \in \mathbb{N}}$ constructed as in Notation~\ref{def:V_K} are both \textbf{excitation and de-excitation complete}, i.e., for any $K \geq N$ and any function $\Upsilon_K \in \widetilde{\mathcal{V}}_K \subset \widehat{\mathcal{H}}^{1, \perp}_{\Psi_{0, K}}$, the cluster operator $\mathcal{T}(\Upsilon_K)$ has the property that
	\begin{align}\label{eq:excitation_complete}
		\forall \Phi_K \in {\mathcal{V}}_K=\widetilde{\mathcal{V}}_K\oplus \text{span}\{\Psi_{0, K}\}\colon& \qquad  \mathcal{T}(\Upsilon_K) \Phi_K  \in \widetilde{\mathcal{V}}_K \quad \text{and} \quad \mathcal{T}(\Upsilon_K)^{\dagger} \Phi_K  \in \mathcal{V}_K.
	\end{align} 
\end{mdframed}

\vspace{0.5cm} 

\begin{mdframed}
	\textbf{Structure Assumption B.II:} We assume that the sequence of approximation spaces $\{\mathcal{V}_K\}_{K \geq N}$ constructed as in Notation~\ref{def:V_K} satisfy the following properties: For each $K\geq N$, there exists a symmetric, bounded linear operator $\mathcal{F}_K\colon \widehat{\mathcal{H}}^1\rightarrow \widehat{\mathcal{H}}^{-1}$ and a finite-dimensional subspace $\mathcal{W}_K \subset \widehat{\mathcal{H}}^1$ such that
	\begin{enumerate}
		\item We have $\mathcal{V}_K=\widetilde{\mathcal{V}}_K \oplus \text{span}\{\Psi_{0, K}\} \subset \mathcal{W}_K$, and for any function $\Upsilon_K \in \widetilde{\mathcal{V}}_K\subset \widehat{\mathcal{H}}^{1, \perp}_{\Psi_{0, K}}$, the cluster operator $\mathcal{T}(\Upsilon_K)$ has the property that 
		\begin{align}\label{eq:excitation_complete_2}
			\forall \Phi_K \in  {\mathcal{V}}_K\colon \qquad  \mathcal{T}(\Upsilon_K) \Phi_K  \in \mathcal{W}_K \quad \text{and} \quad \mathcal{T}(\Upsilon_K)^{\dagger} \Phi_K  \in {\mathcal{V}}_K.
		\end{align} 
		
		\item Introducing the restricted operator $\widetilde{\mathcal{F}}_K \colon \mathcal{W}_K \rightarrow \mathcal{W}_K^*$ defined as
		\begin{align}\label{eq:restriction}
			\forall \Phi_K, \Upsilon_K \in \mathcal{W}_K \colon \quad \langle \Upsilon_K, \widetilde{\mathcal{F}}_K\Phi_K \rangle_{\mathcal{W}_K \times \mathcal{W}_K^*}:=\langle \Upsilon_K, \mathcal{F}_K\Phi_K \rangle_{\widehat{\mathcal{H}}^1 \times \widehat{\mathcal{H}}^{-1}},
		\end{align}
        the approximation space ${\mathcal{V}}_K$ is an invariant subspace of $\widetilde{\mathcal{F}}_K$ in the sense that
        \begin{align}\label{eq:Yvon_invariant}
      \forall \Phi_K \in \mathcal{V}_K, ~\forall \Upsilon_K^{\perp} \in \mathcal{V}_K^{\perp}\cap \mathcal{W}_K\colon \qquad       \langle \Upsilon_K^{\perp}, \widetilde{\mathcal{F}}_K\Phi_K \rangle_{\mathcal{W}_K \times \mathcal{W}_K^*}=0.
        \end{align}
        Here, $\mathcal{V}_K^{\perp}= \{\Phi \in \widehat{\mathcal{L}}^2\colon (\Phi, \Upsilon_K)_{\widehat{\mathcal{L}}^2}=0 ~\forall \Upsilon_K \in \mathcal{V}_K\}$.
        
		\item The operator $\mathcal{F}_K$ has continuity constant independent of $K$, and there exists $\Lambda_0 \in \mathbb{R}$ such that $\mathcal{F}_K-\Lambda_0$ is coercive on $\mathcal{W}_K$ with coercivity constant independent of $K$, i.e., $\exists \widetilde{\gamma}>0$ independent of $K$ such that
		\begin{align*}
			\forall \Phi_K  \in \mathcal{W}_K &\colon \quad \Big\langle\Phi_K, (\mathcal{F}_K-\Lambda_0)\Phi_K  \Big\rangle_{\widehat{\mathcal{H}}^1 \times \widehat{\mathcal{H}}^{-1}} \geq \widetilde{\gamma} \Vert \Phi_K\Vert_{\widehat{\mathcal{H}}^1}^2.
		\end{align*}
		
		\item For each $K\geq N$ the operator $\mathcal{U}_K:= H-\mathcal{F}_K$ is a bounded linear mapping from $\widehat{\mathcal{H}}^1$ to $\widehat{\mathcal{L}}^2$ with continuity constant uniformly bounded in $K$ and small in the following sense: denoting by $(\cdot, \cdot)_{\mathcal{F}_K}$ and $\Vert \cdot \Vert_{\mathcal{F}_K}$ the inner product and norm generated by the restricted operator $\widetilde{\mathcal{F}}_K \colon \mathcal{W}_K \rightarrow \mathcal{W}_K^*$, by $\mathbb{P}_{\mathcal{F}_K}\colon \mathcal{W}_K \rightarrow \mathcal{W}_K$ the $( \cdot, \cdot )_{\mathcal{F}_K}$-orthogonal projection operator onto $\mathcal{V}_K$, by $\widetilde{\mathcal{U}}_K \colon \mathcal{W}_K \rightarrow \mathcal{W}_K$ the restriction of the operator $\mathcal{U}_K$, by $\Psi_{{\rm GS}, K}^* \in \mathcal{W}_K$ the normalised ground state eigenfunction of $H$ in $\mathcal{W}_K$, and by $\Theta^{\Pi}_{K, \rm GS} \in \widetilde{\mathcal{V}}_K$, the best approximation with respect to the $\widehat{\mathcal{H}}^1$-inner product of the ground state coupled cluster zero $\Theta_{K, \rm GS}^* \in \widehat{\mathcal{H}}^{1, \perp}_{\Psi_{0, K}}$, we impose the condition that
		\begin{align*}
			\left \Vert \left({\rm I}-\mathbb{P}_{\mathcal{F}_K}\right)\widetilde{\mathcal{U}}_K \mathbb{P}_{\mathcal{F}_K}\right\Vert_{\mathcal{F}_K \to \widehat{\mathcal{L}}^2} &<  \frac{1}{2}\min_{\substack{\Phi_K \in \mathcal{W}_K\\ \Phi_K \in \mathcal{V}_K^{\perp}}} \frac{\vert\langle \Phi_K, (\mathcal{F}_K-\Lambda_0) \Phi_K\rangle_{\widehat{\mathcal{H}}^1 \times \widehat{\mathcal{H}}^{-1}} \vert^{\frac{1}{2}}}{\Vert \Phi_K \Vert_{\widehat{\mathcal{L}}^2}}\\ 
			&\times\min_{\Phi_K \in \{\Psi_{{\rm GS}, K}^*\}^{\perp}\cap\mathcal{W}_K}  \frac{\langle \Phi_K, (H-\mathcal{E}_{\rm GS^*}) \Phi_K\rangle_{\widehat{\mathcal{H}}^1 \times \widehat{\mathcal{H}}^{-1}}}{\Vert \Phi_K \Vert_{\mathcal{F}_K}^2}\\ 
			&\times \left \Vert \left({\rm I}-\mathbb{P}_{\mathcal{F}_K}\right){e^{\mathcal{T}(\Theta^{\Pi}_{K, \rm GS})}}\mathbb{P}_{\mathcal{F}_K}{e^{-\mathcal{T}(\Theta^{\Pi}_{K, \rm GS})}} \mathbb{P}_{\mathcal{F}_K}\right\Vert_{\mathcal{F}_K \to \mathcal{F}_K}^{-1}.
		\end{align*}
	\end{enumerate}
	
\end{mdframed}

\begin{remark}[Preliminary Remarks on the Structure Assumptions]~

A few comments concerning the above structure assumptions are now in order. 

\begin{itemize}
    \item First, it can readily be seen that \textbf{Structure Assumption B.I} is a strong constraint that is not satisfied by the excitation rank-truncated coupled cluster schemes such as CCSD or CCSDT etc. Indeed, in the numerical practice, the only class of coupled cluster approximation spaces that satisfy such a constraint are the Full-CC equations in a finite basis. 

This shortcoming is addressed by the weaker \textbf{Structure Assumption B.II}. We will show later that this alternative assumption is in fact a natural abstraction of combined mean-field excitation-rank truncated coupled cluster equations together with an additional smallness assumption on the operator norm of the mean-field operator. While the smallness assumption is problem-dependent and unlikely to be universally satisfied, preliminary numerical results (shown in Appendix \ref{sec:spaces}) indicate that it is indeed satisfied by the small molecules that we have tested.

\item Second, the idea to split the electronic Hamiltonian as a sum of the mean-field operator and the fluctuation potential is not new in the numerical analysis of the coupled cluster method. Indeed, in the contribution \cite{MR3110488}, as an alternative to Estimate \eqref{eq:loc_mon} of the local monotonicity constant, the authors use this splitting to express the similarity-transformed Hamiltonian in the form 
\begin{align*}
\forall \Theta_K \in \widehat{\mathcal{H}}^{1, \perp}_{\Psi_{0, K}}\colon \qquad&	e^{-\mathcal{T}(\Theta_K)}He^{\mathcal{T}(\Theta_K)}= (\mathcal{F}_K-\Lambda_0)\mathcal{T}(\Theta_K) + \widetilde{U}_{\Theta_K},\\
 \intertext{where}
 \widetilde{U}_{\Theta_K}:=~& e^{-\mathcal{T}(\Theta_K)}He^{\mathcal{T}(\Theta_K)}-(\mathcal{F}_K-\Lambda_0)\mathcal{T}(\Theta_K).
\end{align*}
Using this splitting, the authors show that the Fr\'echet derivative $\mD\mathcal{f}_k(\Theta^*_{K, \rm GS}) \colon \widehat{\mathcal{H}}^{1, \perp}_{\Psi_{0, K}} \rightarrow \big(\widehat{\mathcal{H}}^{1, \perp}_{\Psi_{0, K}}\big)^*$ is a coercive operator provided that the local Lipschitz constant of the mapping $\Theta_K \mapsto \widetilde{U}_{\Theta_K}$ at $\Theta_K = \Theta^*_{K, \rm GS}$ is smaller than the coercivity constant $\widetilde{\gamma}$ of $\mathcal{F}_K -\Lambda_0$ (see Point~3 in \textbf{Structure Assumption B.II} above). 

As we shall see in the subsequent analysis, the approach in the present contribution is different. Unfortunately, it does not seem straightforward to test numerically the validity of the smallness assumption on the local Lipschitz constant of $\widetilde{U}_{\Theta_K}$, and we are therefore unable to perform a comparative study with the approach taken in the present contribution.
\end{itemize}

\end{remark}

\begin{figure}[ht]
	\begin{center}
		\includegraphics[width=1\textwidth, trim={0cm, 1cm, 0cm, 1cm},clip=true]{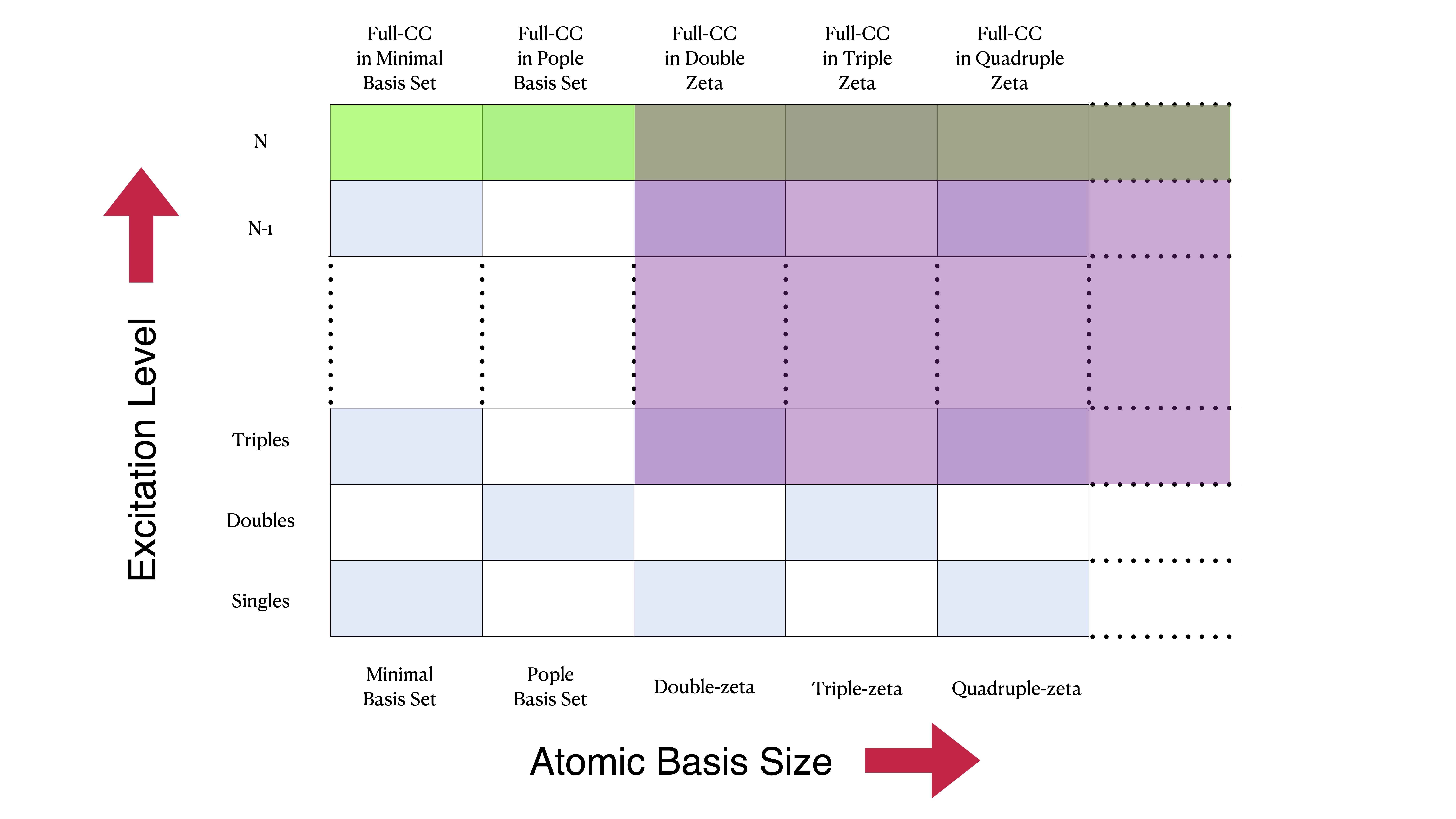}
	\end{center}
	\caption{Graphical depiction of region of validity of \textbf{Structure Assumption B.I} (in green) and \textbf{Structure Assumption B.II} (in magenta). Note that the exact size of the magenta region of validity, i.e., whether it begins at the double-zeta level or quadruple-zeta level etc., or whether it includes quadruples or triples etc., depends on the properties of the mean-field operator (c.f., the proof of Lemma \ref{lem:inf-sup}).}
	\label{fig:2}
\end{figure}

Equipped with either \textbf{Structure Assumption B.I} or \textbf{Structure Assumption B.II}, we are now ready to state the precise discrete inf-sup condition that we wish to prove.

\vspace{5mm}
\begin{lemma}[Discrete inf-sup condition on Fr\'echet Derivative of Coupled Cluster Function]\label{lem:inf-sup}~
	
	Let the sequence of reference determinants $\{\Psi_{0, K}\}_{K\geq N}$ and the sequence of approximation spaces  $\{\widetilde{\mathcal{V}}_K\}_{K\geq N}$ be constructed as in Notation \ref{def:seq_ref} and Notation \ref{def:V_K} respectively, and assume that {\textbf{Assumptions A.I-A.III}} hold.
	
	Additionally, for each $K\geq N$, let $\{\Psi_{0, K}\}^{\perp}$ be the $\widehat{\mathcal{L}}^2$ orthogonal complement of $\Psi_{0, K}$ in $\widehat{\mathcal{L}}^2$, let the infinite-dimensional subspace $\widehat{\mathcal{H}}^{1, \perp}_{\Psi_{0, K}}$ be defined as $\widehat{\mathcal{H}}^{1, \perp}_{\Psi_{0, K}}:= \{\Psi_{0, K}\}^{\perp} \cap \widehat{\mathcal{H}}^1$, let the coupled cluster function $\mathcal{f}_K \colon \widehat{\mathcal{H}}^{1, \perp}_{\Psi_{0, K}} \rightarrow \left(\widehat{\mathcal{H}}^{1, \perp}_{\Psi_{0, K}}\right)^*$ be defined through Equation \eqref{eq:12bis}, where we adopt the convention of using the subscript $K$ to signify the dependency of this coupled cluster function on $\Psi_{0, K}$, let $\Theta_{{K}, \rm GS}^*\in \widehat{\mathcal{H}}^{1, \perp}_{\Psi_{0, K}}$ denote the zero of the coupled cluster function $\mathcal{f}_K$ corresponding to the $\widehat{\mathcal{L}}^2$-normalised ground state eigenfunction $\Psi^*_{\rm GS}$ of the electronic Hamiltonian, i.e.,  
 \begin{align*}
     e^{\mathcal{T}(\Theta_{K, \rm GS}^*)}\Psi_{0, K}= \frac{1}{\left(\Psi^*_{\rm GS}, \Psi_{0, K}\right)_{\widehat{\mathcal{L}}^2}} \Psi^*_{\rm GS},
 \end{align*}
 and let $\mD \mathcal{f}_K(\Theta^*_{K, \rm GS}) \colon \widehat{\mathcal{H}}^{1, \perp}_{\Psi_{0, K}} \rightarrow \left(\widehat{\mathcal{H}}^{1, \perp}_{\Psi_{0, K}}\right)^*$ denote the Fr\'echet derivative of the coupled cluster function $\mathcal{f}_K$ evaluated at $\Theta^*_{K, \rm GS}$.
	
	Under the constraint that either \textbf{Structure Assumption B.I} or \textbf{Structure Assumption B.II} holds, we have the existence of a $K_0\geq N$  and a constant $\gamma >0$ such that
	\begin{align}\label{eq:inf-sup_old}
		\forall K\geq K_0  \colon ~\inf_{\Upsilon_K \in \widetilde{\mathcal{V}}_K} \sup_{\Phi_K \in \widetilde{\mathcal{V}}_K} \frac{\left\langle  \Upsilon_K, \mD\mathcal{f}_K(\Theta^*_{K, \rm GS})\Phi_K\right\rangle_{\widehat{\mathcal{H}}^{1, \perp}_{\Psi_{0, K}}\times \left(\widehat{\mathcal{H}}^{1, \perp}_{\Psi_{0, K}}\right)^*}}{\Vert \Upsilon_K\Vert_{\widehat{\mathcal{H}}^1}\Vert \Phi_K\Vert_{\widehat{\mathcal{H}}^1}} \geq \gamma.
	\end{align}    
\end{lemma}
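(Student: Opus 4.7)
The plan is to derive an explicit form for $\mD\mathcal{f}_K(\Theta^*_{K,\rm GS})$ and then to convert the desired discrete inf-sup into a controlled perturbation of the continuous inf-sup bound supplied by Theorem \ref{thm:CC_der_inv}. Using the commutativity of cluster operators (Theorem \ref{thm:2}) together with the identity $e^{-\mathcal{T}(\Theta^*_{K,\rm GS})}He^{\mathcal{T}(\Theta^*_{K,\rm GS})}\Psi_{0,K}=\mathcal{E}^*_{\rm GS}\Psi_{0,K}$, which follows from $\Theta^*_{K,\rm GS}$ being a zero of $\mathcal{f}_K$, one obtains for all $\Phi_K,\Upsilon_K\in\widetilde{\mathcal{V}}_K$ the identity
\begin{equation*}
\bigl\langle\Phi_K,\mD\mathcal{f}_K(\Theta^*_{K,\rm GS})\Upsilon_K\bigr\rangle=\bigl\langle\Phi_K,e^{-\mathcal{T}(\Theta^*_{K,\rm GS})}(H-\mathcal{E}^*_{\rm GS})e^{\mathcal{T}(\Theta^*_{K,\rm GS})}\Upsilon_K\bigr\rangle.
\end{equation*}
In the continuous proof, the inf-sup is attained (up to the factor $\beta$ in Theorem \ref{thm:CC_der_inv}) by $\Phi=\mathbb{P}_0^{\perp}e^{-\mathcal{T}(\Theta^*_{K,\rm GS})^{\dagger}}\Upsilon_K$; the discrete challenge is that neither this $\Phi$ nor $\Theta^*_{K,\rm GS}$ itself lies in $\widetilde{\mathcal{V}}_K$. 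Both issues are addressed by replacing $\Theta^*_{K,\rm GS}$ with its $\widehat{\mathcal{H}}^1$-best approximation $\Theta^{\Pi}_{K,\rm GS}\in\widetilde{\mathcal{V}}_K$, which satisfies $\Vert\Theta^{\Pi}_{K,\rm GS}-\Theta^*_{K,\rm GS}\Vert_{\widehat{\mathcal{H}}^1}\to 0$ as $K\to\infty$ by Lemma \ref{lem:approx}.

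Under Structure Assumption B.I, the candidate test function $\Phi_K:=\mathbb{P}_0^{\perp}e^{-\mathcal{T}(\Theta^{\Pi}_{K,\rm GS})^{\dagger}}\Upsilon_K$ lies in $\widetilde{\mathcal{V}}_K$: the de-excitation closure property \eqref{eq:excitation_complete} combined with the nilpotency of cluster operators (Theorem \ref{thm:2}) implies that $e^{-\mathcal{T}(\Theta^{\Pi}_{K,\rm GS})^{\dagger}}$ sends $\widetilde{\mathcal{V}}_K$ into $\widetilde{\mathcal{V}}_K\oplus\text{span}\{\Psi_{0,K}\}$, and the projector $\mathbb{P}_0^{\perp}$ strips off the reference component. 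Inserting $e^{\mathcal{T}(\Theta^{\Pi}_{K,\rm GS})}e^{-\mathcal{T}(\Theta^{\Pi}_{K,\rm GS})}$ in the bilinear form yields a leading term that essentially reproduces the continuous inf-sup at $\Theta^{\Pi}_{K,\rm GS}$, controlled from below by a constant converging to $\Lambda^*/\beta$, plus remainder terms bounded by products of cluster-operator $\widehat{\mathcal{H}}^1\to\widehat{\mathcal{H}}^1$ norms evaluated at $\Theta^*_{K,\rm GS}-\Theta^{\Pi}_{K,\rm GS}$, which vanish as $K\to\infty$ by Theorem \ref{thm:1} and Lemma \ref{lem:approx}.

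Under Structure Assumption B.II the above $\Phi_K$ no longer lies in $\widetilde{\mathcal{V}}_K$, so I would instead set $\Phi_K:=\mathbb{P}_0^{\perp}\mathbb{P}_{\mathcal{F}_K}e^{-\mathcal{T}(\Theta^{\Pi}_{K,\rm GS})^{\dagger}}\Upsilon_K$, which belongs to $\widetilde{\mathcal{V}}_K$ by the invariance of $\widetilde{\mathcal{V}}_K\oplus\text{span}\{\Psi_{0,K}\}$ under $\mathbb{P}_{\mathcal{F}_K}$. Splitting $H-\mathcal{E}^*_{\rm GS}=\mathcal{F}_K+(\mathcal{U}_K-\mathcal{E}^*_{\rm GS})$ in the bilinear form, the coercivity of $\widetilde{\mathcal{F}}_K$ on $\mathcal{W}_K$ combined with the invariance of the discrete subspace under $\widetilde{\mathcal{F}}_K$ produces a positive leading term; the contribution of the projection defect $({\rm I}-\mathbb{P}_{\mathcal{F}_K})e^{\pm\mathcal{T}(\Theta^{\Pi}_{K,\rm GS})}\mathbb{P}_{\mathcal{F}_K}$ and of $\widetilde{\mathcal{U}}_K$ is then absorbed by the smallness condition in item (4) of B.II, whose three factors (a Poincaré-type $\widehat{\mathcal{L}}^2$-to-$\mathcal{F}_K$ constant on $\mathcal{V}_K^{\perp}\cap\mathcal{W}_K$, the continuous inf-sup of $H-\mathcal{E}^*_{\rm GS}$ on $\{\Psi^*_{{\rm GS},K}\}^{\perp}\cap\mathcal{W}_K$, and the projection-defect norm) multiply to exactly what is needed. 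The passage back from $\Theta^{\Pi}_{K,\rm GS}$ to $\Theta^*_{K,\rm GS}$ again uses Lemma \ref{lem:approx} and the continuity of cluster operators from Theorem \ref{thm:1}.

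The main obstacle in both cases is showing that the strictly positive leading term is not overwhelmed by the accumulated perturbations \emph{uniformly} in $K\geq K_0$. Under B.I this reduces to a single linear error in $\Vert\Theta^{\Pi}_{K,\rm GS}-\Theta^*_{K,\rm GS}\Vert_{\widehat{\mathcal{H}}^1}$, for which Lemma \ref{lem:approx} suffices directly. Under B.II the balance is considerably more delicate, because the argument must simultaneously control a projection defect that does \emph{not} vanish with $K$ --- the elaborate smallness condition in item (4) is precisely what is demanded of the splitting $H=\mathcal{F}_K+\mathcal{U}_K$ to guarantee the required domination --- and verifying that the product of the three a priori unrelated quantities in that bound remains non-degenerate in the limit $K\to\infty$ is the technical heart of the proof.
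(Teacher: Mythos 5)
Your overall scaffolding matches the paper's: reduce via Lemma \ref{lem:inf-sup_aux} to the inf-sup at the best approximation $\Theta^{\Pi}_{K,\rm GS}$, construct an explicit test function landing in $\widetilde{\mathcal{V}}_K$ thanks to the structural assumption, split off a coercive leading term, and show the remainder vanishes asymptotically using Lemma \ref{lem:approx}. But the test function you write down is not the one the paper uses, and this is not a cosmetic difference --- it breaks the coercivity of the leading term in a way that would reintroduce the very smallness assumption the paper is trying to avoid.

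You propose $\Phi_K := \mathbb{P}_{0,K}^{\perp}\,e^{-\mathcal{T}(\Theta^{\Pi}_{K,\rm GS})^{\dagger}}\Upsilon_K$ (and its $\mathbb{P}_{\mathcal{F}_K}$-projected analogue under B.II). The paper's test function carries an extra factor: $\Phi_K := \mathbb{P}_{0,K}^{\perp}\,e^{-\mathcal{T}(\Theta^{\Pi}_{K,\rm GS})}e^{-\mathcal{T}(\Theta^{\Pi}_{K,\rm GS})^{\dagger}}\Upsilon_K$. The extra $e^{-\mathcal{T}(\Theta^{\Pi}_{K,\rm GS})}$ is essential. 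When one expands $\mathbb{P}_{0,K}^{\perp}={\rm I}-\mathbb{P}_{0,K}$ inside the bilinear form, the identity part cancels against the outer $e^{\mathcal{T}(\Theta^{\Pi}_{K,\rm GS})}$, and the leading term becomes
\begin{equation*}
\bigl\langle e^{-\mathcal{T}(\Theta^{\Pi}_{K,\rm GS})^{\dagger}}\Upsilon_K,\,(H-\mathcal{E}^*_{\rm GS})\,e^{-\mathcal{T}(\Theta^{\Pi}_{K,\rm GS})^{\dagger}}\Upsilon_K\bigr\rangle,
\end{equation*}
a genuine quadratic form to which the spectral-gap coercivity of $H-\mathcal{E}^*_{\rm GS}$ on $\{\Psi^*_{\rm GS}\}^{\perp}$ applies directly. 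With your $\Phi_K$, the $e^{\mathcal{T}(\Theta^{\Pi}_{K,\rm GS})}$ does not cancel, and the leading term is instead
\begin{equation*}
\bigl\langle e^{-\mathcal{T}(\Theta^{\Pi}_{K,\rm GS})^{\dagger}}\Upsilon_K,\,(H-\mathcal{E}^*_{\rm GS})\,e^{\mathcal{T}(\Theta^{\Pi}_{K,\rm GS})}e^{-\mathcal{T}(\Theta^{\Pi}_{K,\rm GS})^{\dagger}}\Upsilon_K\bigr\rangle,
\end{equation*}
which is not a quadratic form and is not manifestly non-negative. The only way to salvage it is to write $e^{\mathcal{T}(\Theta^{\Pi}_{K,\rm GS})} = {\rm I} + (e^{\mathcal{T}(\Theta^{\Pi}_{K,\rm GS})}-{\rm I})$ and absorb the perturbation; but since $\Theta^{\Pi}_{K,\rm GS}\to\Theta^*_{\rm GS}\neq 0$ as $K\to\infty$, the perturbation does \emph{not} vanish, and you are back to requiring smallness of the cluster amplitudes --- precisely the local-monotonicity constant of Equation \eqref{eq:loc_mon} that can be negative (see Table \ref{table2}). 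Your phrase about ``inserting $e^{\mathcal{T}(\Theta^{\Pi}_{K,\rm GS})}e^{-\mathcal{T}(\Theta^{\Pi}_{K,\rm GS})}$'' is a no-op: inserting the identity at any location of your test function leaves it unchanged, so it cannot produce the cancellation you allude to. The cancellation must already be built into the test function.

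The same issue propagates to the B.II branch, where it is compounded by a second problem: since $e^{-\mathcal{T}(\Theta^{\Pi}_{K,\rm GS})^{\dagger}}\Upsilon_K \in \mathcal{V}_K$ already (de-excitation closure of B.II), the projector $\mathbb{P}_{\mathcal{F}_K}$ in your $\Phi_K$ acts as the identity and does nothing, so your B.II test function silently collapses to your B.I one. With the correct test function $\mathbb{P}_{0,K}^{\perp}\mathbb{P}_{\mathcal{F}_K}e^{-\mathcal{T}(\Theta^{\Pi}_{K,\rm GS})}e^{-\mathcal{T}(\Theta^{\Pi}_{K,\rm GS})^{\dagger}}\Upsilon_K$, the $\mathbb{P}_{\mathcal{F}_K}$ is genuinely needed (because $e^{-\mathcal{T}(\Theta^{\Pi}_{K,\rm GS})}$ maps $\mathcal{V}_K$ only into $\mathcal{W}_K$), and the argument then hinges on a nontrivial identity that your sketch omits entirely: $\bigl(\mathbb{P}_{\mathcal{F}_K}e^{\mathcal{T}(\Theta^{\Pi}_{K,\rm GS})}\mathbb{P}_{\mathcal{F}_K}\bigr)^{-1} = \mathbb{P}_{\mathcal{F}_K}e^{-\mathcal{T}(\Theta^{\Pi}_{K,\rm GS})}\mathbb{P}_{\mathcal{F}_K}$ on $\mathcal{V}_K$, proved using the de-excitation closure and the simultaneous $\widehat{\mathcal{L}}^2$- and $(\cdot,\cdot)_{\mathcal{F}_K}$-orthogonality of $\mathbb{P}_{\mathcal{F}_K}$. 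Without this projected-inverse identity, the projection defect $\mathbb{P}^{\perp}_{\mathcal{F}_K}e^{\mathcal{T}}\mathbb{P}_{\mathcal{F}_K}e^{-\mathcal{T}}\mathbb{P}_{\mathcal{F}_K}$ in item (4) of B.II cannot even be assembled in the first place.
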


We will provide separate proofs for each of the two structural assumptions. Before proceeding to these proofs however, let us first demonstrate a simple, auxiliary result which will considerably simplify our arguments.

\begin{lemma}[Auxiliary result to simplify the proof of Lemma \ref{lem:inf-sup}]\label{lem:inf-sup_aux}~
	
	Consider the setting and statement of Lemma \ref{lem:inf-sup}, let $\mathcal{E}^*_{\rm GS}$ denote the ground state energy of the electronic Hamiltonian $H \colon \widehat{\mathcal{H}}^1\rightarrow \widehat{\mathcal{H}}^{-1}$, and for all $K\geq N$ let $\Theta^{\Pi}_{K, \rm GS}\in \widetilde{\mathcal{V}}_K$ denote the best approximation with respect to the $\widehat{\mathcal{H}}^1$-inner product of the exact ground state zero $\Theta_{K, \rm GS}^* \in \widehat{\mathcal{H}}^{1, \perp}_{\Psi_{0, K}}$ of the coupled cluster function $\mathcal{f}_K\colon \widehat{\mathcal{H}}^{1, \perp}_{\Psi_{0, K}} \rightarrow \left(\widehat{\mathcal{H}}^{1, \perp}_{\Psi_{0, K}}\right)^*$. Then the discrete inf-sup condition \eqref{eq:inf-sup_old} holds if there exists $\widehat{K}_0\geq N$ and a constant $\widehat{\gamma} >0$ such that
	\begin{align}\label{eq:inf-sup}
		\forall K\geq \widehat{K}_0  \colon ~\inf_{\Upsilon_K \in \widetilde{\mathcal{V}}_K} \sup_{\Phi_K \in \widetilde{\mathcal{V}}_K} \frac{\left\langle  \Upsilon_K,  e^{-\mathcal{T}(\Theta^{\Pi}_{K, \rm GS})}(H-\mathcal{E}^*_{\rm GS}) e^{\mathcal{T}(\Theta^{\Pi}_{K, \rm GS})}\Phi_K\right\rangle_{\widehat{\mathcal{H}}^1\times \mathcal{H}^{-1}} }{\Vert \Upsilon_K\Vert_{\widehat{\mathcal{H}}^1}\Vert \Phi_K\Vert_{\widehat{\mathcal{H}}^1}} > \widehat{\gamma}.
	\end{align} 
\end{lemma}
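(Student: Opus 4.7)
The proof plan is to reduce the target inf-sup condition \eqref{eq:inf-sup_old} to the assumed condition \eqref{eq:inf-sup} by recognising that the Fr\'echet derivative $\mD\mathcal{f}_K(\Theta^*_{K,\rm GS})$ has an explicit closed form that differs from the operator in \eqref{eq:inf-sup} only by a small (in $K$) perturbation. First, I would compute $\mD\mathcal{f}_K(\Theta^*_{K,\rm GS})$ directly. Since $\Theta \mapsto \mathcal{T}(\Theta)$ is linear (Proposition \ref{prop:Yvon}) and cluster operators commute (Theorem \ref{thm:2}), the chain rule applied to $\mathcal{f}_K(\Theta)=\langle\,\cdot\,,e^{-\mathcal{T}(\Theta)}H e^{\mathcal{T}(\Theta)}\Psi_{0,K}\rangle$ together with the identity $\mathcal{T}(\Phi_K)\Psi_{0,K}=\Phi_K$ gives, for every $\Phi_K\in\widehat{\mathcal{H}}^{1,\perp}_{\Psi_{0,K}}$,
\begin{equation*}
\mD\mathcal{f}_K(\Theta^*_{K,\rm GS})\Phi_K \;=\; \mathbb{P}_{0,K}^{\perp}\,e^{-\mathcal{T}(\Theta^*_{K,\rm GS})}\bigl(H-\mathcal{E}^*_{\rm GS}\bigr)e^{\mathcal{T}(\Theta^*_{K,\rm GS})}\Phi_K,
\end{equation*}
where I have used that $e^{\mathcal{T}(\Theta^*_{K,\rm GS})}\Psi_{0,K}$ is proportional to the eigenfunction $\Psi^*_{\rm GS}$ so that $e^{-\mathcal{T}(\Theta^*_{K,\rm GS})}H e^{\mathcal{T}(\Theta^*_{K,\rm GS})}\Psi_{0,K}=\mathcal{E}^*_{\rm GS}\Psi_{0,K}$, which cancels the second term of the commutator.

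Next, I would show that the operator norm (as a map from $\widehat{\mathcal{H}}^{1,\perp}_{\Psi_{0,K}}$ to its dual) of
\begin{equation*}
\mathcal{R}_K \;:=\; e^{-\mathcal{T}(\Theta^*_{K,\rm GS})}\bigl(H-\mathcal{E}^*_{\rm GS}\bigr)e^{\mathcal{T}(\Theta^*_{K,\rm GS})} \;-\; e^{-\mathcal{T}(\Theta^{\Pi}_{K,\rm GS})}\bigl(H-\mathcal{E}^*_{\rm GS}\bigr)e^{\mathcal{T}(\Theta^{\Pi}_{K,\rm GS})}
\end{equation*}
tends to zero as $K\to\infty$. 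This is a straightforward perturbation estimate: by adding and subtracting the mixed term $e^{-\mathcal{T}(\Theta^*_{K,\rm GS})}(H-\mathcal{E}^*_{\rm GS})e^{\mathcal{T}(\Theta^{\Pi}_{K,\rm GS})}$ and using the continuity of $H-\mathcal{E}^*_{\rm GS}$ from $\widehat{\mathcal{H}}^1$ to $\widehat{\mathcal{H}}^{-1}$ (Equation \eqref{eq:contin}) together with the local Lipschitz continuity of the exponential map $\Theta\mapsto e^{\mathcal{T}(\Theta)}\in\mathcal{L}(\widehat{\mathcal{H}}^1,\widehat{\mathcal{H}}^1)$ guaranteed by Theorem \ref{thm:2} (the exponential is $\mathscr{C}^{\infty}$ on $\mathfrak{L}$), one obtains
\begin{equation*}
\Vert \mathcal{R}_K\Vert_{\widehat{\mathcal{H}}^1\to\widehat{\mathcal{H}}^{-1}} \;\leq\; C\,\bigl\Vert \Theta^*_{K,\rm GS}-\Theta^{\Pi}_{K,\rm GS}\bigr\Vert_{\widehat{\mathcal{H}}^1},
\end{equation*}
where the constant $C$ is uniform in $K$ because Lemma \ref{lem:convergence} ensures that $\{\Theta^*_{K,\rm GS}\}_{K\geq N}$ is $\widehat{\mathcal{H}}^1$-bounded and $\{\Theta^{\Pi}_{K,\rm GS}\}_{K\geq N}$ is a best approximation to it, hence also bounded. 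Applying Lemma \ref{lem:approx} with the specific choice $\Theta^{\Pi}_{K,\rm GS}$ (the best $\widehat{\mathcal{H}}^1$-projection) then yields $\Vert \Theta^*_{K,\rm GS}-\Theta^{\Pi}_{K,\rm GS}\Vert_{\widehat{\mathcal{H}}^1}\to 0$, so $\Vert\mathcal{R}_K\Vert\to 0$.

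Finally, I would invoke a classical perturbation argument for inf-sup conditions: given the assumed bound \eqref{eq:inf-sup} with constant $\widehat\gamma>0$ valid for $K\geq\widehat{K}_0$, and given $\Vert\mathcal{R}_K\Vert_{\widehat{\mathcal{H}}^1\to\widehat{\mathcal{H}}^{-1}}<\widehat\gamma/2$ for all $K$ larger than some $K_1\geq\widehat{K}_0$, the triangle inequality in the sup over $\Phi_K\in\widetilde{\mathcal{V}}_K$ gives \eqref{eq:inf-sup_old} with $K_0:=K_1$ and $\gamma:=\widehat\gamma/2$. The only delicate point in this whole argument is the uniformity in $K$ of the Lipschitz constant controlling the exponential map, but this follows from the closed-form expression \eqref{eq:lemma_has_1}-\eqref{eq:lemma_has_2} appearing in the proof of Lemma \ref{lem:convergence}, combined with the nilpotency statement of Theorem \ref{thm:2} (which makes every exponential a finite polynomial of cluster operators of uniformly controlled degree $\leq N$), and the uniform $\widehat{\mathcal{H}}^1\to\widehat{\mathcal{H}}^1$ continuity constants from Theorem \ref{thm:1} depending only on $N$ and the uniformly bounded $\Vert\Psi_{0,K}\Vert_{\widehat{\mathcal{H}}^1}$ provided by \textbf{Assumption A.II}.
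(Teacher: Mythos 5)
Your proposal is correct and follows essentially the same route as the paper: both replace the Fr\'echet derivative $\mD\mathcal{f}_K(\Theta^*_{K,\rm GS})$ by the quadratic form $\Upsilon\mapsto\langle\Upsilon, e^{-\mathcal{T}(\Theta^*_{K,\rm GS})}(H-\mathcal{E}^*_{\rm GS})e^{\mathcal{T}(\Theta^*_{K,\rm GS})}\Phi\rangle$ (the paper cites Corollary 27 of \cite{Hassan_CC} for this identity, which you re-derive via the chain rule, the eigenfunction relation, and commutativity of cluster operators), and then use the $\widehat{\mathcal{H}}^1$-convergence $\Theta^{\Pi}_{K,\rm GS}\to\Theta^*_{K,\rm GS}$ from Lemmas \ref{lem:convergence} and \ref{lem:approx} together with the uniform $\widehat{\mathcal{H}}^1$-continuity of the exponential cluster map (Theorems \ref{thm:1} and \ref{thm:2}) to perturb from $\Theta^*_{K,\rm GS}$ to $\Theta^{\Pi}_{K,\rm GS}$. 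The only cosmetic differences are that you make the remainder operator $\mathcal{R}_K$ and the final inf-sup perturbation step fully explicit, and you correctly point to Lemma \ref{lem:approx} for the best-approximation convergence where the paper cites Lemma \ref{lem:convergence} slightly imprecisely.
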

\begin{proof}
	Let $K \geq N$ be fixed. It has been demonstrated in detail in \cite[Corollary 27]{Hassan_CC} that the Fr\'echet derivative $\mD \mathcal{f}_K(\Theta^*_{K, \rm GS}) \colon \widehat{\mathcal{H}}^{1, \perp}_{\Psi_{0, K}} \rightarrow \left(\widehat{\mathcal{H}}^{1, \perp}_{\Psi_{0, K}}\right)^*$ of the coupled cluster function $\mathcal{f}_K\colon \widehat{\mathcal{H}}^{1, \perp}_{\Psi_{0, K}} \rightarrow \left(\widehat{\mathcal{H}}^{1, \perp}_{\Psi_{0, K}}\right)^*$ evaluated at $\Theta^*_{K, \rm GS}$ has the expression
	\begin{align*}
		\forall \Upsilon, \Phi\in \widehat{\mathcal{H}}^{1, \perp}_{\Psi_{0, K}}\colon \quad \left\langle  \Upsilon, \mD\mathcal{f}_K(\Theta^*_{K, \rm GS})\Phi\right\rangle_{\widehat{\mathcal{H}}^{1, \perp}_{\Psi_{0, K}} \times \left(\widehat{\mathcal{H}}^{1, \perp}_{\Psi_{0, K}} \right)^*} = \langle\Upsilon, e^{-\mathcal{T}(\Theta^*_{K, \rm GS})}(H-\mathcal{E}^*_{\rm GS}) e^{\mathcal{T}(\Theta^*_{K, \rm GS})}\Phi\rangle_{\widehat{\mathcal{H}}^1\times \widehat{\mathcal{H}}^{-1}}.
	\end{align*}

	On the other hand, Lemma \ref{lem:approx} implies that the best approximation $\Theta^{\Pi}_{K, \rm GS}\in \widetilde{\mathcal{V}}_K$ of the exact ground state zero $\Theta_{K, \rm GS}^* \in \widehat{\mathcal{H}}^{1, \perp}_{\Psi_{0, K}}$ satisfies
	\begin{align}\label{eq:Yvon_33}
		\lim_{K \to \infty}\Vert \Theta^{\Pi}_{K, \rm GS} -\Theta_{K, \rm GS}^* \Vert_{\widehat{\mathcal{H}}^1}=0.
	\end{align}
	
	Recalling therefore from Theorem \ref{thm:1} that the mapping $\widehat{\mathcal{H}}^{1, \perp}_{\Psi_{0, K}} \ni \Theta \mapsto \mathcal{T}(\Theta)$ is bounded with continuity constant depending only on $N$ and $\Vert \Psi_{0, K}\Vert_{\widehat{\mathcal{H}}^1}$, and using the fact that the exponential is a $\mathscr{C}^{\infty}$ mapping on the algebra of cluster operators (see Theorem \ref{thm:2}) yields the required result.
\end{proof}

In view of Lemma \ref{lem:inf-sup_aux}, it suffices to establish the validity of the estimate \eqref{eq:inf-sup} in order to prove the discrete inf-sup Lemma \ref{lem:inf-sup}. The primary motivation for proceeding in this fashion is that the underlying exponential cluster operators appearing in Estimate \eqref{eq:inf-sup} are constructed from functions in the finite-dimensional approximation spaces $\{\widetilde{\mathcal{V}}_K\}_{K\geq N}$. Consequently, we can immediately take advantage of the relations \eqref{eq:excitation_complete} and \eqref{eq:excitation_complete_2} appearing in \textbf{Structure Assumption B.I} and \textbf{Structure Assumption B.II} respectively.

\vspace{2mm}
We begin with the simpler case of \textbf{Structure Assumption B.I}. Since this proof closely resembles that of \cite[Theorem 42]{Hassan_CC}, we shall be succinct in our arguments.

\vspace{3mm}
\begin{proof}[Proof of Lemma \ref{lem:inf-sup} for \textbf{Structure Assumption B.I}]~
	
	We will establish the validity of the estimate \eqref{eq:inf-sup}. To this end, let $K\geq N$ be fixed, let $\mathbb{P}_{0, K} \colon \widehat{\mathcal{H}}^1 \rightarrow \widehat{\mathcal{H}}^1$ denote the $\widehat{\mathcal{L}}^2$-orthogonal projector onto span$\{\Psi_{0, K}\}$ and let $\mathbb{P}_{0, K}^{\perp} \colon \widehat{\mathcal{H}}^1 \rightarrow \widehat{\mathcal{H}}^1$ denote its complement, i.e., $\mathbb{P}_{0, K}^{\perp}:={\rm I}-\mathbb{P}_{0, K}$. 
	
	Then for any arbitrary $\Upsilon_K \in \widetilde{\mathcal{V}}_K \subset \widehat{\mathcal{H}}^{1, \perp}_{\Psi_{0, K}} $, we may define $\Phi_K \in  \widehat{\mathcal{H}}^{1, \perp}_{\Psi_{0, K}}$ as
	\begin{align}\label{eq:lemma_inf-sup_0}
		\Phi_K:= \mathbb{P}_{0, K}^{\perp}e^{-\mathcal{T}(\Theta^{\Pi}_{K, \rm GS})} e^{-\mathcal{T}(\Theta^{\Pi}_{K, \rm GS})^{\dagger}}\Upsilon_K,
	\end{align} 
 where, as in Lemma \ref{lem:inf-sup_aux}, $\Theta^{\Pi}_{K, \rm GS}\in \widetilde{\mathcal{V}}_K$ denotes the best approximation with respect to the $\widehat{\mathcal{H}}^1$-inner product of the exact ground state zero $\Theta_{K, \rm GS}^* \in \widehat{\mathcal{H}}^{1, \perp}_{\Psi_{0, K}}$ of the coupled cluster function $\mathcal{f}_K\colon \widehat{\mathcal{H}}^{1, \perp}_{\Psi_{0, K}} \rightarrow \left(\widehat{\mathcal{H}}^{1, \perp}_{\Psi_{0, K}}\right)^*$.
 
	We claim that in fact $\Phi_K\in \widetilde{\mathcal{V}}_K$. Indeed, it follows directly from \textbf{Structure Assumption B.I} that $e^{-\mathcal{T}(\Theta^{\Pi}_{K, \rm GS})} e^{-\mathcal{T}(\Theta^{\Pi}_{K, \rm GS})^{\dagger}}\Upsilon_K\in {\mathcal{V}}_K$. Using therefore the fact that $\widetilde{\mathcal{V}}_K$ is $\widehat{\mathcal{L}}^2$-orthogonal to $\text{span}\{\Psi_{0, K}\}$ by construction (recall Notation \ref{def:V_K}) proves the claim.
	
	We can thus deduce that (c.f., Estimate \eqref{eq:inf-sup})
	\begin{align}\nonumber
		&\left\langle  \Upsilon_K,  e^{-\mathcal{T}(\Theta^{\Pi}_{K, \rm GS})} (H-\mathcal{E}^*_{\rm GS})e^{\mathcal{T}(\Theta^{\Pi}_{K, \rm GS})}\Phi_K\right\rangle_{\widehat{\mathcal{H}}^1\times \mathcal{H}^{-1}}\\ \nonumber
		=&  \left\langle  \Upsilon_K,  e^{-\mathcal{T}(\Theta^{\Pi}_{K, \rm GS})}(H-\mathcal{E}^*_{\rm GS}) e^{\mathcal{T}(\Theta^{\Pi}_{K, \rm GS})}\mathbb{P}_{0, K}^{\perp}e^{-\mathcal{T}(\Theta^{\Pi}_{K, \rm GS})} e^{-\mathcal{T}(\Theta^{\Pi}_{K, \rm GS})^{\dagger}}\Upsilon_K\right\rangle_{\widehat{\mathcal{H}}^1\times \mathcal{H}^{-1}}\\ \nonumber
		=&\underbrace{\left\langle  \Upsilon_K,  e^{-\mathcal{T}(\Theta^{\Pi}_{K, \rm GS})}(H-\mathcal{E}^*_{\rm GS}) e^{-\mathcal{T}(\Theta^{\Pi}_{K, \rm GS})^{\dagger}}\Upsilon_K\right\rangle_{\widehat{\mathcal{H}}^1\times \mathcal{H}^{-1}}}_{:=\rm (I)}\\ \label{eq:forgot}
		-& \underbrace{\left\langle  \Upsilon_K,  e^{-\mathcal{T}(\Theta^{\Pi}_{K, \rm GS})}(H-\mathcal{E}^*_{\rm GS}) e^{\mathcal{T}(\Theta^{\Pi}_{K, \rm GS})}\mathbb{P}_{0, K}e^{-\mathcal{T}(\Theta^{\Pi}_{K, \rm GS})} e^{-\mathcal{T}(\Theta^{\Pi}_{K, \rm GS})^{\dagger}}\Upsilon_K\right\rangle_{\widehat{\mathcal{H}}^1\times \mathcal{H}^{-1}}}_{:=\rm (II)}, 
	\end{align}
	where we remind the reader that, as in Lemma \ref{lem:inf-sup_aux}, $\mathcal{E}^*_{\rm GS}$ denotes the ground state eigenvalue of the electronic Hamiltonian $H \colon \widehat{\mathcal{H}}^1\rightarrow \widehat{\mathcal{H}}^{-1}$.

	Let us first consider the term (II). A straightforward calculation reveals that
	\begin{align}\nonumber
		{\rm (II)}&=   \left(\Psi_{0, K}, e^{-\mathcal{T}(\Theta^{\Pi}_{K, \rm GS})} e^{-\mathcal{T}(\Theta^{\Pi}_{K, \rm GS})^{\dagger}}\Upsilon_K\right)_{\widehat{\mathcal{L}}^2}\left\langle  \Upsilon_K,  e^{-\mathcal{T}(\Theta^{\Pi}_{K, \rm GS})}(H-\mathcal{E}^*_{\rm GS}) e^{\mathcal{T}(\Theta^{\Pi}_{K, \rm GS})}\Psi_{0, K}\right\rangle_{\widehat{\mathcal{H}}^1\times \mathcal{H}^{-1}}\\[0.5em] \label{eq:lemma_inf-sup_1}
		&\leq \Vert \Psi_{0, K}\Vert_{\widehat{\mathcal{H}}^1} \Vert e^{-\mathcal{T}(\Theta^{\Pi}_{K, \rm GS})}\Vert_{\widehat{\mathcal{H}}^1 \to \widehat{\mathcal{H}}^1} \Vert e^{-\mathcal{T}(\Theta^{\Pi}_{K, \rm GS})^{\dagger}}\Upsilon_K\Vert^2_{\widehat{\mathcal{H}}^1} \Vert (H-\mathcal{E}^*_{\rm GS}) e^{\mathcal{T}(\Theta^{\Pi}_{K, \rm GS})}\Psi_{0, K}\Vert_{\widehat{\mathcal{H}}^{-1}}.
	\end{align}
	
	Note that since $\Theta_{K, \rm GS}^* $ is by definition the ground state zero of the coupled cluster function $\mathcal{f}_K\colon \widehat{\mathcal{H}}^{1, \perp}_{\Psi_{0, K}} \rightarrow \left(\widehat{\mathcal{H}}^{1, \perp}_{\Psi_{0, K}}\right)^*$, it follows that $H e^{\mathcal{T}(\Theta_{K, \rm GS}^*)} \Psi_{0, K} = \mathcal{E}^*_{\rm GS} e^{\mathcal{T}(\Theta_{K, \rm GS}^*)} \Psi_{0, K}$ (see, e.g., \cite[Theorem 5.3]{MR3021693}) and we therefore have
	\begin{align}\label{eq:lemma_inf-sup_2}
		\Vert (H-\mathcal{E}^*_{\rm GS}) e^{\mathcal{T}(\Theta^{\Pi}_{K, \rm GS})}\Psi_{0, K}\Vert_{\widehat{\mathcal{H}}^{-1}} \leq \Vert (H-\mathcal{E}^*_{\rm GS}) \left(e^{\mathcal{T}(\Theta^{\Pi}_{K, \rm GS})}-e^{\mathcal{T}(\Theta_{K, \rm GS}^*)}\right)\Psi_{0, K}\Vert_{\widehat{\mathcal{H}}^{-1}}.
	\end{align}


	{On the one hand, Lemma \ref{lem:approx} implies that the best approximation $\Theta^{\Pi}_{K, \rm GS}\in \widetilde{\mathcal{V}}_K$ of the exact ground state zero $\Theta_{K, \rm GS}^* \in \widehat{\mathcal{H}}^{1, \perp}_{\Psi_{0, K}}$ satisfies $\lim_{K \to \infty}\Vert \Theta^{\Pi}_{K, \rm GS} -\Theta_{K, \rm GS}^* \Vert_{\widehat{\mathcal{H}}^1}=0$ (recall Equation \eqref{eq:Yvon_33}).} On the other hand, we can recall from Theorem \ref{thm:1} that the mapping $\widehat{\mathcal{H}}^{1, \perp}_{\Psi_{0, K}} \ni \Theta \mapsto \mathcal{T}(\Theta)$ is bounded with continuity constant depending only on $N$ and $\Vert \Psi_{0, K}\Vert_{\widehat{\mathcal{H}}^1}$. Since the exponential is a $\mathscr{C}^{\infty}$ mapping on the algebra of cluster operators (see Theorem \ref{thm:2}), we can take advantage of \textbf{Assumption A.II} and thus deduce from Inequalities \eqref{eq:lemma_inf-sup_1} and \eqref{eq:lemma_inf-sup_2} that there exists a constant ${\epsilon}_K^{\rm (II)}\geq0$ such that 
	\begin{equation}\label{eq:lemma_inf-sup_3}
		{\rm (II)}\leq {\epsilon}_K^{\rm (II)}\Vert e^{-\mathcal{T}(\Theta^{\Pi}_{K, \rm GS})^{\dagger}}\Upsilon_K\Vert_{\widehat{\mathcal{H}}^1}^2 \quad \text{with} \quad \lim_{K \to \infty}{\epsilon}_K^{\rm (II)}=0.
	\end{equation}
	
	It remains to estimate the term (I). To do so, let us first write
	\begin{align*}
		{\rm (I)}=&\left\langle  e^{-\mathcal{T}(\Theta^{\Pi}_{K, \rm GS})^{\dagger}}\Upsilon_K,  (H-\mathcal{E}^*_{\rm GS}) e^{-\mathcal{T}(\Theta^{\Pi}_{K, \rm GS})^{\dagger}}\Upsilon_K\right\rangle_{\widehat{\mathcal{H}}^1\times \mathcal{H}^{-1}}.
	\end{align*}
	Notice that thanks to \textbf{Assumption A.III}, the shifted Hamiltonian $H-\mathcal{E}_{\rm GS}^* \colon \widehat{\mathcal{H}}^1 \rightarrow  \widehat{\mathcal{H}}^{-1}$ is coercive on $\{\Psi^*_{\rm GS}\}^{\perp}\cap \widehat{\mathcal{H}}^1$ with a coercivity constant that we denote by $\gamma_{\rm GS}^*>0$, where we remind the reader that $\Psi^*_{\rm GS} \in \widehat{\mathcal{H}}^1$ is the $\widehat{\mathcal{L}}^2$-normalised ground state eigenfunction of the electronic Hamiltonian $H$. 
	
	In order to take advantage of this coercivity result, let us first observe that the function $e^{-\mathcal{T}(\Theta^*_{K, \rm GS})^{\dagger}}\Upsilon_K \in {\mathcal{V}}_K$ and is $\widehat{\mathcal{L}}^2$-orthogonal to the ground state $\Psi^*_{\rm GS}$. Indeed, since the ground state zero $\Theta_{K, \rm GS}^*$ of the coupled cluster function $\mathcal{f}_K$ satisfies by definition that $e^{\mathcal{T}(\Theta_{K, \rm GS}^*)} \Psi_{0, K}= \frac{1}{(\Psi_{0, K}, \Psi_{\rm GS}^*)_{\widehat{\mathcal{L}}^2} }\Psi_{\rm GS}^*$, we deduce that
	\begin{align*}
		\frac{1}{(\Psi_{0, K}, \Psi_{\rm GS}^*)_{\widehat{\mathcal{L}}^2}}\left(e^{-\mathcal{T}(\Theta^*_{K, \rm GS})^{\dagger}}\Upsilon_K, \Psi^*_{\rm GS}\right)_{\widehat{\mathcal{L}}^2}=\left(e^{-\mathcal{T}(\Theta^*_{K, \rm GS})^{\dagger}}\Upsilon_K, e^{\mathcal{T}(\Theta_{K, \rm GS}^*)} \Psi_{0, K}\right)_{\widehat{\mathcal{L}}^2}= \left(\Upsilon_K, \Psi_{0, K}\right)_{\widehat{\mathcal{L}}^2}=0,
	\end{align*}
	where the last step follows since $\Upsilon_K \in \widetilde{\mathcal{V}}_K$, which is $\widehat{\mathcal{L}}^2$-orthogonal to $\Psi_{0, K}$ (see Notation~\ref{def:V_K}).
	
	Consequently, using again the fact that $\lim_{K \to \infty}\Vert \Theta^{\Pi}_{K, \rm GS} -\Theta_{K, \rm GS}^* \Vert_{\widehat{\mathcal{H}}^1}=0$ (see Lemma \ref{lem:approx}), we deduce, for all $K$ sufficiently large, the existence of a constant $\gamma_{\rm GS}^{(K)}>0$ such that 
	\begin{equation}\label{eq:lemma_inf-sup_4}
		{\rm (I)}\geq \gamma_{\rm GS}^{(K)} \Vert e^{-\mathcal{T}(\Theta^{\Pi}_{K, \rm GS})^{\dagger}}\Upsilon_K\Vert_{\widehat{\mathcal{H}}^1}^2 \quad \text{and} \quad \lim_{K \to \infty}\gamma_{\rm GS}^{(K)}= \gamma_{\rm GS}^*.
	\end{equation}
	
	Combining now Inequalities \eqref{eq:lemma_inf-sup_3} and \eqref{eq:lemma_inf-sup_4} and recalling our choice of test function $\Phi_K$ given by Equation \eqref{eq:lemma_inf-sup_0}, we see that we have shown the existence of constant $\widetilde{\gamma}>0$ and $\widehat{K}_0$ such that for all $K\geq \widehat{K}_0$ and all $\Upsilon_K \in \widetilde{\mathcal{V}}_K$ it holds that
	\begin{align*}
		\sup_{\Phi_K \in \widetilde{\mathcal{V}}_K} \frac{\left\langle  \Upsilon_K,  e^{-\mathcal{T}(\Theta^{\Pi}_{K, \rm GS})}(H-\mathcal{E}^*_{\rm GS}) e^{\mathcal{T}(\Theta^{\Pi}_{K, \rm GS})}\Phi_K\right\rangle_{\widehat{\mathcal{H}}^1\times \mathcal{H}^{-1}} }{\Vert \Upsilon_K\Vert_{\widehat{\mathcal{H}}^1}\Vert \Phi_K\Vert_{\widehat{\mathcal{H}}^1}}&\geq \widetilde{\gamma}\frac{\Vert e^{-\mathcal{T}(\Theta^{\Pi}_{K, \rm GS})^{\dagger}}\Upsilon_K\Vert_{\widehat{\mathcal{H}}^1}^2}{\Vert \Upsilon_K\Vert_{\widehat{\mathcal{H}}^1}\Vert \mathbb{P}_{0, K}^{\perp}e^{-\mathcal{T}(\Theta^{\Pi}_{K, \rm GS})} e^{-\mathcal{T}(\Theta^{\Pi}_{K, \rm GS})^{\dagger}}\Upsilon_K\Vert_{\widehat{\mathcal{H}}^1}}\\[1em]
		&\geq\widetilde{\gamma}\frac{1}{\Vert e^{\mathcal{T}(\Theta^{\Pi}_{K, \rm GS})^{\dagger}}\Vert_{\widehat{\mathcal{H}}^1 \to \widehat{\mathcal{H}}^1}\Vert \mathbb{P}_{0, K}^{\perp}e^{-\mathcal{T}(\Theta^{\Pi}_{K, \rm GS})}\Vert_{\widehat{\mathcal{H}}^1 \to \widehat{\mathcal{H}}^1}}.
	\end{align*}
 
	Taking now the infimum over all $\Upsilon_K \in \widetilde{\mathcal{V}}_K$ therefore completes the proof. 
	
\end{proof}

\begin{remark}[Method of proof of Lemma \ref{lem:inf-sup} for \textbf{Structure Assumption B.I} ]\label{rem:B.I}~
	
	A few comments on the above proof are now in order. First, notice that the main reason for introducing \textbf{Structure Assumption B.I} was to allow us to construct an appropriate test function belonging to the finite-dimensional approximation space $\widetilde{\mathcal{V}}_K$. Second, many of our subsequent arguments relied on the approximability of the exact ground state zero $\Theta^*_{K, \rm GS} \in \widehat{\mathcal{H}}^{1, \perp}_{\Psi_{0, K}}$ of the coupled cluster function in the approximation space $\widetilde{\mathcal{V}}_K$; indeed, the same approximability property was used in the proof of Lemma~\ref{lem:inf-sup_aux}. Finally, we used the coercivity of the shifted electronic Hamiltonian $H-\mathcal{E}^*_{\rm FCI}$ on $\{\Psi^*_{\rm GS}\}^{\perp}\cap \widehat{\mathcal{H}}^1$ where $\Psi^*_{\rm GS}$ denotes the $\widehat{\mathcal{L}}^2$-normalised ground state eigenfunction of $H \colon \widehat{\mathcal{H}}^1\rightarrow \widehat{\mathcal{H}}^{-1}$ with corresponding eigenvalue $\mathcal{E}^*_{\rm GS}$. In particular, the question of whether the discrete inf-sup condition \eqref{eq:inf-sup} holds for a particular choice of finite-dimensional approximation space $\widetilde{\mathcal{V}}_K$ depends on how well the exact ground state coupled cluster zero $\Theta^*_{K, \rm GS} $ is approximated in $\widetilde{\mathcal{V}}_K$. Finally, let us point out that the numerator $\widetilde{\gamma}>0$ in the discrete inf-sup constant that we derived has the property that $\lim_{K \to \infty}\widetilde{\gamma}=\gamma_{\rm GS}^*$, where $\gamma^*_{\rm GS}$ denotes the coercivity constant of the shifted Hamiltonian $H-\mathcal{E}_{\rm GS}^* \colon \widehat{\mathcal{H}}^1 \rightarrow  \widehat{\mathcal{H}}^{-1}$ on $\{\Psi^*_{\rm GS}\}^{\perp}\cap \widehat{\mathcal{H}}^1$. 
\end{remark}

As mentioned previously, the so-called Full-CC approximation spaces satisfy \textbf{Structure Assumption B.I}. This is the subject of further discussion in Appendix \ref{sec:spaces}.

We next turn our attention to the proof of Lemma \ref{lem:inf-sup} in the case of our second structural assumption. This proof is considerably more technical. \vspace{3mm}

\begin{proof}[Proof of Lemma \ref{lem:inf-sup} for \textbf{Structure Assumption B.II}]~
	
	As before, we will make use of Lemma \ref{lem:inf-sup_aux} and establish the validity of the estimate \eqref{eq:inf-sup}. To this end, for any $K\geq N$ we denote by $\mathbb{P}_{0, K} \colon \widehat{\mathcal{L}}^2 \rightarrow \widehat{\mathcal{L}}^2$ the $\widehat{\mathcal{L}}^2$-orthogonal projector onto span$\{\Psi_{0, K}\}$ and by $\mathbb{P}_{0, K}^{\perp} \colon \widehat{\mathcal{H}}^1 \rightarrow \widehat{\mathcal{H}}^1$ its complement, i.e., $\mathbb{P}_{0, K}^{\perp}:={\rm I}-\mathbb{P}_{0, K}$. Note that since $\mathbb{P}_{0, K}$ has finite-dimensional range, both $\mathbb{P}_{0, K}$ and $\mathbb{P}_{0, K}^{\perp}$ extend to bounded linear mappings from $\widehat{\mathcal{H}}^1$ to $\widehat{\mathcal{H}}^1$. It will also be useful to recall from Notation \eqref{def:V_K} that for each $K\geq N$, we denote $\mathcal{V}_K=\widetilde{V}_K \oplus \text{span}\{\Psi_{0, K}\}$. \\
	
	We begin our proof by demonstrating two auxiliary results that follow directly from \textbf{Structure Assumption B.II}. The first of these minor results is the observation that for each $K \geq N$, the shifted mean-field operator $\mathcal{F}_K -\Lambda_0 \colon \widehat{\mathcal{H}}^1 \rightarrow \widehat{\mathcal{H}}^{-1}$ induces an inner-product, denoted $(\cdot, \cdot)_{\mathcal{F}_K}$, on the finite-dimensional space $\mathcal{W}_K \subset \widehat{\mathcal{H}}^1$ through the relation
	\begin{align}
		\forall K\geq N, ~ \forall \Upsilon_K, \Phi_K \in \mathcal{W}_K \colon \qquad (\Upsilon_K, \Phi_K)_{\mathcal{F}_K}:= \big\langle \Upsilon_K, (\mathcal{F}_K-\Lambda_0)\Phi_K\big\rangle_{\widehat{\mathcal{H}}^1 \times \widehat{\mathcal{H}}^{-1}}.
	\end{align}
	Indeed, this is simply a consequence of the symmetry of $\mathcal{F}_K$ and its coercivity on $\mathcal{W}_K$. More importantly however, the fact that both the continuity constant and coercivity constant of $\mathcal{F}_K$ are uniformly bounded in $K$ immediately implies that the norm $\Vert \cdot \Vert_{\mathcal{F}_K}$ induced by this new inner-product $(\cdot, \cdot)_{\mathcal{F}_K}$ on the finite-dimensional space $\mathcal{W}_K$ is equivalent to the usual $\widehat{\mathcal{H}}^1$ norm with equivalence constants independent of $K$.
	
	Consider now, for any $K\geq N$, the inner-product $(\cdot, \cdot)_{\mathcal{F}_K} \colon \mathcal{W}_K \times \mathcal{W}_K \rightarrow \mathbb{R}$ and recall that the approximation space $\mathcal{V}_K \subset \mathcal{W}_K$ by assumption. Let us denote by $\mathbb{P}_{\mathcal{F}_K} \colon \mathcal{W}_K \rightarrow \mathcal{W}_K$, the $(\cdot, \cdot)_{\mathcal{F}_K}$-orthogonal projection operator onto ${\mathcal{V}}_K$ and let us denote by $\mathbb{P}^{\perp}_{\mathcal{F}_K} \colon \mathcal{W}_K \rightarrow \mathcal{W}_K$, its complement, i.e., $\mathbb{P}^{\perp}_{\mathcal{F}_K}  ={\rm I}-\mathbb{P}_{\mathcal{F}_K} $. We now claim that these projection operators $\mathbb{P}_{\mathcal{F}_K}$ and $\mathbb{P}^{\perp}_{\mathcal{F}_K}$ are, in fact, $\widehat{\mathcal{L}}^2$-orthogonal.
	
	To see this, first recall the restricted operator $\widetilde{\mathcal{F}}_K \colon \mathcal{W}_K \rightarrow \mathcal{W}_K^*$ defined through Equation \eqref{eq:restriction}. Next, observe that since $\mathcal{W}_K$ is finite-dimensional, the Riesz representation theorem implies the existence of an isomorphism $\mathcal{i}_K\colon \mathcal{W}_K^* \to \mathcal{W}_K$ such that for all $\Phi_K, \Upsilon_K \in \mathcal{W}_K$ it holds that
    \begin{align*}
     \langle\Upsilon_K,  \mathcal{F}_K\Phi_K\rangle_{\widehat{\mathcal{H}}^{1} \times \widehat{\mathcal{H}}^{-1}}  =\langle \Upsilon_K, \widetilde{\mathcal{F}}_K\Phi_K\rangle_{\mathcal{W}_K \times \mathcal{W}_K^*}= (\Upsilon_K, \mathcal{i}_K\widetilde{\mathcal{F}}_K\Phi_K)_{\widehat{\mathcal{L}}^2}.
    \end{align*}
    Notice that since $\mathcal{F}_K \colon \widehat{\mathcal{H}}^1 \rightarrow \widehat{\mathcal{H}}^{-1}$ is symmetric, the mapping $\mathcal{i}_K\widetilde{\mathcal{F}}_K \colon \mathcal{W}_K \rightarrow \mathcal{W}_K$ is symmetric as well. We claim that the mapping $\mathcal{i}_K\widetilde{\mathcal{F}}_K -\Lambda_0 \colon \mathcal{W}_K \rightarrow \mathcal{W}_K$ is also invertible. To see this, it suffices to appeal to the coercivity hypothesis in \textbf{Structure Assumption B.II} and note that for all $\Phi_K \in \mathcal{W}_K$ we have
\begin{align*}
    (\Phi_K, (\mathcal{i}_K\widetilde{\mathcal{F}}_K -\Lambda_0)\Phi_K)_{\widehat{\mathcal{L}}^2}&=(\Phi_K, \mathcal{i}_K\widetilde{\mathcal{F}}_K \Phi_K)_{\widehat{\mathcal{L}}^2}-\Lambda_0(\Phi_K,\Phi_K)_{\widehat{\mathcal{L}}^2}\\
    &= \langle\Phi_K, {\mathcal{F}}_K \Phi_K\rangle_{\widehat{\mathcal{H}}^1\times \widehat{\mathcal{H}}^{-1}} -  \Lambda_0(\Phi_K,\Phi_K)_{\widehat{\mathcal{L}}^2}\\
    &=\langle\Phi_K, ({\mathcal{F}}_K -\Lambda_0)\Phi_K\rangle_{\widehat{\mathcal{H}}^1\times \widehat{\mathcal{H}}^{-1}}\\
    &\geq \widetilde{\gamma} \Vert \Phi_K\Vert^2_{\widehat{\mathcal{H}}^1}\geq\widetilde{\gamma} \Vert \Phi_K\Vert^2_{\widehat{\mathcal{L}}^2}. 
\end{align*}

Next, recall from \textbf{Structure Assumption B.II} that the approximation space $\mathcal{V}_K$ is an invariant subspace of $\widetilde{\mathcal{F}}_K$ in the sense of Equation \eqref{eq:Yvon_invariant}. This implies that for all $\Phi_K \in \mathcal{V}_K$ and all $\Upsilon_K^\perp \in \mathcal{V}_K^{\perp} \cap \mathcal{W}_K$:
\begin{align*}
       0=\langle \Upsilon_K^{\perp}, \widetilde{\mathcal{F}}_K\Phi_K \rangle_{\mathcal{W}_K \times \mathcal{W}_K^*}= (\Upsilon_K^\perp, \mathcal{i}_K\widetilde{\mathcal{F}}_K\Phi_K)_{\widehat{\mathcal{L}}^2}.
        \end{align*}
    Consequently, $\mathcal{V}_K$ is also an invariant subspace of the mapping $\mathcal{i}_K\widetilde{\mathcal{F}}_K \colon \mathcal{W}_K \rightarrow \mathcal{W}_K$, i.e., 
    \begin{align*}
\text{Range}\big(\mathcal{i}_K\widetilde{\mathcal{F}}_K\vert_{\mathcal{V}_K}\big) \subset \mathcal{V}_K.
    \end{align*}
Since $\mathcal{i}_K\widetilde{\mathcal{F}}_K -\Lambda_0\colon \mathcal{W}_K \rightarrow \mathcal{W}_K$ is bijective, we immediately deduce that the restricted mapping
\begin{align*}
    \mathcal{i}_K\widetilde{\mathcal{F}}_K -\Lambda_0&\colon \mathcal{V}_K \rightarrow \mathcal{V}_K \quad \text{is invertible}.
\end{align*}
Next, observe that the $(\cdot, \cdot)_{\mathcal{F}_K}$-orthogonal projector $\mathbb{P}_{\mathcal{F}_K}$ onto ${\mathcal{V}}_K$ satisfies for all $\Upsilon_K \in \mathcal{W}_K, \Phi_K \in \mathcal{V}_K$
\begin{align*}
    0=\langle\Upsilon_K-\mathbb{P}_{\mathcal{F}_K}\Upsilon_K,  ({\mathcal{F}}_K -\Lambda_0)\Phi_K\rangle_{\widehat{\mathcal{H}}^1 \times \widehat{\mathcal{H}}^{-1}}= \big(\Upsilon_K-\mathbb{P}_{\mathcal{F}_K}\Upsilon_K,  (\mathcal{i}_K\widetilde{\mathcal{F}}_K -\Lambda_0)\Phi_K\big)_{\widehat{\mathcal{L}}^2}.
\end{align*}
But $\mathcal{i}_K\widetilde{\mathcal{F}}_K -\Lambda_0 \colon \mathcal{V}_K \rightarrow \mathcal{V}_K$ is invertible. Consequently, for all $\Upsilon_K \in \mathcal{W}_K, \widetilde{\Phi}_K \in \mathcal{V}_K$ we have
\begin{align}\label{eq:Yvon_L2_orthogonality}
    0=\big(\Upsilon_K-\mathbb{P}_{\mathcal{F}_K}\Upsilon_K,  \widetilde{\Phi}_K\big)_{\widehat{\mathcal{L}}^2}.
\end{align}
Thus, $\mathbb{P}_{\mathcal{F}_K}$ coincides with the $\widehat{\mathcal{L}}^2$-orthogonal projector from $\mathcal{W}_K$ onto $\mathcal{V}_K$.

	Equipped with the above two auxiliary results, let us now return to our goal of showing the validity of Estimate \eqref{eq:inf-sup} from  Lemma \ref{lem:inf-sup_aux}. For any arbitrary $\Upsilon_K \in \widetilde{\mathcal{V}}_K \subset \widehat{\mathcal{H}}^{1, \perp}_{\Psi_{0, K}} $, let us now choose the test function $\Phi_K \in  \widetilde{\mathcal{V}}_K$ as
	\begin{align}\label{eq:lemma_inf-sup_5}
		\Phi_K:= \mathbb{P}_{0, K}^{\perp}\mathbb{P}_{\mathcal{F}_K}e^{-\mathcal{T}(\Theta^{\Pi}_{K, \rm GS})} e^{-\mathcal{T}(\Theta^{\Pi}_{K, \rm GS})^{\dagger}}\Upsilon_K.
	\end{align} 
  where, as in Lemma \ref{lem:inf-sup_aux}, $\Theta^{\Pi}_{K, \rm GS}\in \widetilde{\mathcal{V}}_K$ denotes the best approximation with respect to the $\widehat{\mathcal{H}}^1$ inner product of the exact ground state zero $\Theta_{K, \rm GS}^* \in \widehat{\mathcal{H}}^{1, \perp}_{\Psi_{0, K}}$ of the coupled cluster function $\mathcal{f}_K\colon \widehat{\mathcal{H}}^{1, \perp}_{\Psi_{0, K}} \rightarrow \left(\widehat{\mathcal{H}}^{1, \perp}_{\Psi_{0, K}}\right)^*$.
  
	Note that $\Phi_K \in \widetilde{\mathcal{V}}_K$ is well-defined since by \textbf{Structural Assumption B.II}, the function \[e^{-\mathcal{T}(\Theta^{\Pi}_{K, \rm GS})} e^{-\mathcal{T}(\Theta^{\Pi}_{K, \rm GS})^{\dagger}}\Upsilon_K \in \mathcal{W}_K,\] and applying the projection operator $\mathbb{P}_{\mathcal{F}_K}$ yields an element of $\mathcal{V}_K$.
	
	It follows that for any arbitrary $\Upsilon_K \in \widetilde{\mathcal{V}}_K$, with the above choice of test function $\Phi_K \in \widetilde{\mathcal{V}}_K$, we have that (c.f., Estimate \eqref{eq:inf-sup})
	\begin{align}\nonumber
		&\left\langle  \Upsilon_K,  e^{-\mathcal{T}(\Theta^{\Pi}_{K, \rm GS})} (H-\mathcal{E}^*_{\rm GS})e^{\mathcal{T}(\Theta^{\Pi}_{K, \rm GS})}\Phi_K\right\rangle_{\widehat{\mathcal{H}}^1\times \mathcal{H}^{-1}}\\ \nonumber
		=&  \left\langle  \Upsilon_K,  e^{-\mathcal{T}(\Theta^{\Pi}_{K, \rm GS})}(H-\mathcal{E}^*_{\rm GS}) e^{\mathcal{T}(\Theta^{\Pi}_{K, \rm GS})}\mathbb{P}_{0, K}^{\perp}\mathbb{P}_{\mathcal{F}_K}e^{-\mathcal{T}(\Theta^{\Pi}_{K, \rm GS})} e^{-\mathcal{T}(\Theta^{\Pi}_{K, \rm GS})^{\dagger}}\Upsilon_K\right\rangle_{\widehat{\mathcal{H}}^1\times \mathcal{H}^{-1}}\\ \nonumber
		=&  \left\langle  \Upsilon_K,  e^{-\mathcal{T}(\Theta^{\Pi}_{K, \rm GS})}(H-\mathcal{E}^*_{\rm GS}) e^{\mathcal{T}(\Theta^{\Pi}_{K, \rm GS})}\mathbb{P}_{\mathcal{F}_K}e^{-\mathcal{T}(\Theta^{\Pi}_{K, \rm GS})} e^{-\mathcal{T}(\Theta^{\Pi}_{K, \rm GS})^{\dagger}}\Upsilon_K\right\rangle_{\widehat{\mathcal{H}}^1\times \mathcal{H}^{-1}}\\ \nonumber
		-&\left\langle  \Upsilon_K,  e^{-\mathcal{T}(\Theta^{\Pi}_{K, \rm GS})}(H-\mathcal{E}^*_{\rm GS}) e^{\mathcal{T}(\Theta^{\Pi}_{K, \rm GS})}\mathbb{P}_{0, K}\mathbb{P}_{\mathcal{F}_K}e^{-\mathcal{T}(\Theta^{\Pi}_{K, \rm GS})} e^{-\mathcal{T}(\Theta^{\Pi}_{K, \rm GS})^{\dagger}}\Upsilon_K\right\rangle_{\widehat{\mathcal{H}}^1\times \mathcal{H}^{-1}}\\ \nonumber
		=&\underbrace{\left\langle  \Upsilon_K, e^{-\mathcal{T}(\Theta^{\Pi}_{K, \rm GS})} (H-\mathcal{E}^*_{\rm GS}) \mathbb{P}_{\mathcal{F}_K}e^{\mathcal{T}(\Theta^{\Pi}_{K, \rm GS})}\mathbb{P}_{\mathcal{F}_K}e^{-\mathcal{T}(\Theta^{\Pi}_{K, \rm GS})} e^{-\mathcal{T}(\Theta^{\Pi}_{K, \rm GS})^{\dagger}}\Upsilon_K\right\rangle_{\widehat{\mathcal{H}}^1\times \mathcal{H}^{-1}}}_{:=\rm (I*)}\\  \label{eq:lemma_inf-sup_6}
		+& \underbrace{\left\langle  \Upsilon_K,  e^{-\mathcal{T}(\Theta^{\Pi}_{K, \rm GS})}(H-\mathcal{E}^*_{\rm GS}) \mathbb{P}^{\perp}_{\mathcal{F}_K}e^{\mathcal{T}(\Theta^{\Pi}_{K, \rm GS})}\mathbb{P}_{\mathcal{F}_K}e^{-\mathcal{T}(\Theta^{\Pi}_{K, \rm GS})} e^{-\mathcal{T}(\Theta^{\Pi}_{K, \rm GS})^{\dagger}}\Upsilon_K\right\rangle_{\widehat{\mathcal{H}}^1\times \mathcal{H}^{-1}}}_{:=\rm (II*)}\\ \nonumber
		-& \underbrace{\left\langle  \Upsilon_K,  e^{-\mathcal{T}(\Theta^{\Pi}_{K, \rm GS})}(H-\mathcal{E}^*_{\rm GS}) e^{\mathcal{T}(\Theta^{\Pi}_{K, \rm GS})}\mathbb{P}_{0, K}\mathbb{P}_{\mathcal{F}_K}e^{-\mathcal{T}(\Theta^{\Pi}_{K, \rm GS})} e^{-\mathcal{T}(\Theta^{\Pi}_{K, \rm GS})^{\dagger}}\Upsilon_K\right\rangle_{\widehat{\mathcal{H}}^1\times \mathcal{H}^{-1}}}_{:= \rm (III*)},
	\end{align}
	where we remind the reader that, as in Lemma \ref{lem:inf-sup_aux}, $\mathcal{E}^*_{\rm GS}$ denotes the ground state eigenvalue of the electronic Hamiltonian $H \colon \widehat{\mathcal{H}}^1\rightarrow \widehat{\mathcal{H}}^{-1}$.\vspace{2mm}
	
	Intuitively, the key idea of the remainder of this proof is to argue that the term (I*) is positive while the terms (II*) and (III*) are `small'-- at least asymptotically. We proceed one term at a time. \vspace{0mm}
	
	\textbf{Estimation of the term (I*)}~
	
	We begin by considering the term (I*). As a first step, we claim that
    \begin{align}\nonumber
		\mathbb{P}_{\mathcal{F}_K}e^{\mathcal{T}(\Theta^{\Pi}_{K, \rm GS})}\mathbb{P}_{\mathcal{F}_K}&\colon \mathcal{V}_K \rightarrow \mathcal{V}_K ~\text{ is an invertible mapping with }\\[0.5em] \label{eq:lemma_inf-sup_7}
		\left(\mathbb{P}_{\mathcal{F}_K}e^{\mathcal{T}(\Theta^{\Pi}_{K, \rm GS})}\mathbb{P}_{\mathcal{F}_K} \right)^{-1} &= \mathbb{P}_{\mathcal{F}_K}e^{-\mathcal{T}(\Theta^{\Pi}_{K, \rm GS})}\mathbb{P}_{\mathcal{F}_K}.
	\end{align}
 
	Indeed, a direct calculation shows that
	\begin{align}\nonumber
		\left(\mathbb{P}_{\mathcal{F}_K}e^{\mathcal{T}(\Theta^{\Pi}_{K, \rm GS})}\mathbb{P}_{\mathcal{F}_K}\right)   \left(\mathbb{P}_{\mathcal{F}_K}e^{-\mathcal{T}(\Theta^{\Pi}_{K, \rm GS})}\mathbb{P}_{\mathcal{F}_K}\right)&=  \mathbb{P}_{\mathcal{F}_K}e^{\mathcal{T}(\Theta^{\Pi}_{K, \rm GS})}\mathbb{P}_{\mathcal{F}_K}e^{-\mathcal{T}(\Theta^{\Pi}_{K, \rm GS})}\mathbb{P}_{\mathcal{F}_K}\\ \label{eq:lemma_inf-sup_8}
		&= \mathbb{P}_{\mathcal{F}_K}e^{\mathcal{T}(\Theta^{\Pi}_{K, \rm GS})}e^{-\mathcal{T}(\Theta^{\Pi}_{K, \rm GS})}\mathbb{P}_{\mathcal{F}_K}\\ \nonumber
		&- \underbrace{\mathbb{P}_{\mathcal{F}_K}e^{\mathcal{T}(\Theta^{\Pi}_{K, \rm GS})}\mathbb{P}^{\perp}_{\mathcal{F}_K}e^{-\mathcal{T}(\Theta^{\Pi}_{K, \rm GS})}\mathbb{P}_{\mathcal{F}_K}}_{:= \rm (IA*)}.
	\end{align}
 
	We claim that the term (IA*) is zero. Indeed, suppose there exists $\widehat{\Phi}_K \in \mathcal{V}_K$ such that \[\mathbb{P}_{\mathcal{F}_K}e^{\mathcal{T}(\Theta^{\Pi}_{K, \rm GS})}\mathbb{P}^{\perp}_{\mathcal{F}_K}e^{-\mathcal{T}(\Theta^{\Pi}_{K, \rm GS})}\mathbb{P}_{\mathcal{F}_K} \widehat{\Phi}_K =\mathbb{P}_{\mathcal{F}_K}e^{\mathcal{T}(\Theta^{\Pi}_{K, \rm GS})}\mathbb{P}^{\perp}_{\mathcal{F}_K}e^{-\mathcal{T}(\Theta^{\Pi}_{K, \rm GS})} \widehat{\Phi}_K\neq 0.\] It follows that there must exist some $\widehat{\Upsilon}_K \in \mathcal{V}_K$ such that 
	\begin{align*}
		0\neq \left(\widehat{\Upsilon}_K, \mathbb{P}_{\mathcal{F}_K}e^{\mathcal{T}(\Theta^{\Pi}_{K, \rm GS})}\mathbb{P}^{\perp}_{\mathcal{F}_K}e^{-\mathcal{T}(\Theta^{\Pi}_{K, \rm GS})}\widehat{\Phi}_K\right)_{\widehat{\mathcal{L}}^2}= \left(e^{\mathcal{T}(\Theta^{\Pi}_{K, \rm GS})^{\dagger}}\widehat{\Upsilon}_K, \mathbb{P}^{\perp}_{\mathcal{F}_K}e^{-\mathcal{T}(\Theta^{\Pi}_{K, \rm GS})}\widehat{\Phi}_K\right)_{\widehat{\mathcal{L}}^2}.
	\end{align*}
	But thanks to \textbf{Structure Assumption B.II}, it holds that $e^{\mathcal{T}(\Theta^{\Pi}_{K, \rm GS})^{\dagger}}\widehat{\Upsilon}_K \in \mathcal{V}_K$. And since, the projection operator $\mathbb{P}^{\perp}_{\mathcal{F}_K}$ is $\widehat{\mathcal{L}}^2$-orthogonal, the inner product on the right must be zero. Recalling Equation \eqref{eq:lemma_inf-sup_8}, we see that our claim concerning the invertibility of the projected exponential cluster operator over $\mathcal{V}_K$ readily follows.
	
	Let us now return to our earlier aim of estimating the term (I*) appearing in Equation \eqref{eq:lemma_inf-sup_6}. Using the invertibility result \eqref{eq:lemma_inf-sup_7} and taking advantage of the fact that $ e^{-\mathcal{T}(\Theta^{\Pi}_{K, \rm GS})^{\dagger}}\Upsilon_K \in \mathcal{V}_K$, which is again a consequence of \textbf{Structure Assumption B.II}, we deduce that
	\begin{align*}
		{\rm (I*)}&= \left\langle  \Upsilon_K, e^{-\mathcal{T}(\Theta^{\Pi}_{K, \rm GS})} (H-\mathcal{E}^*_{\rm GS}) \mathbb{P}_{\mathcal{F}_K}e^{\mathcal{T}(\Theta^{\Pi}_{K, \rm GS})}\mathbb{P}_{\mathcal{F}_K}e^{-\mathcal{T}(\Theta^{\Pi}_{K, \rm GS})} e^{-\mathcal{T}(\Theta^{\Pi}_{K, \rm GS})^{\dagger}}\Upsilon_K\right\rangle_{\widehat{\mathcal{H}}^1\times \mathcal{H}^{-1}}\\
		&=\left\langle  \Upsilon_K, e^{-\mathcal{T}(\Theta^{\Pi}_{K, \rm GS})} (H-\mathcal{E}^*_{\rm GS}) \mathbb{P}_{\mathcal{F}_K}e^{\mathcal{T}(\Theta^{\Pi}_{K, \rm GS})}\mathbb{P}_{\mathcal{F}_K}e^{-\mathcal{T}(\Theta^{\Pi}_{K, \rm GS})} \mathbb{P}_{\mathcal{F}_K}e^{-\mathcal{T}(\Theta^{\Pi}_{K, \rm GS})^{\dagger}}\Upsilon_K\right\rangle_{\widehat{\mathcal{H}}^1\times \mathcal{H}^{-1}}\\
		&=\left\langle  \Upsilon_K, e^{-\mathcal{T}(\Theta^{\Pi}_{K, \rm GS})} (H-\mathcal{E}^*_{\rm GS}) \mathbb{P}_{\mathcal{F}_K}e^{-\mathcal{T}(\Theta^{\Pi}_{K, \rm GS})^{\dagger}}\Upsilon_K\right\rangle_{\widehat{\mathcal{H}}^1\times \mathcal{H}^{-1}}\\
		&=\left\langle  e^{-\mathcal{T}(\Theta^{\Pi}_{K, \rm GS})^{\dagger}}\Upsilon_K, (H-\mathcal{E}^*_{\rm GS})e^{-\mathcal{T}(\Theta^{\Pi}_{K, \rm GS})^{\dagger}}\Upsilon_K\right\rangle_{\widehat{\mathcal{H}}^1\times \mathcal{H}^{-1}}.
	\end{align*}
	
	Next, let $(\mathcal{E}^*_{{\rm GS}, K}, \Psi_{{\rm GS}, K}^*)$ denote a ground state (normalised) eigenpair of the electronic Hamiltonian in the finite-dimensional approximation space $\mathcal{W}_K$. Recall that the approximation spaces $\{\mathcal{V}_K\}_{K\geq N}$ are dense in $\widehat{\mathcal{H}}^1$ by definition (see Notation \ref{def:V_K}), and that \textbf{Structure Assumption B.II} imposes that for each $K\geq N$, we have $\mathcal{V}_K \subset \mathcal{W}_K$. We can therefore conclude that the sequence of finite-dimensional spaces $\{\mathcal{W}_K\}_{K\geq N}$ are also dense in $\widehat{\mathcal{H}}^1$. Consequently, in view of \textbf{Assumption A.III}, standard approximability results for linear eigenvalue problems (see, e.g., \cite[Example 5.9]{MR0716134}) imply that the eigenvalue $\mathcal{E}^*_{{\rm GS}, K}$ must be simple for sufficiently large $K\geq N$, and with the correct choice of normalisation, $\Psi_{{\rm GS}, K}^* \to \Psi_{{\rm GS}}^*$ as $K \to \infty$ in the $\widehat{\mathcal{H}}^1$ sense. This implies in particular that for all $K$ sufficiently large, the shifted Hamiltonian $H-\mathcal{E}_{\rm GS}^*$ is coercive on $\{\Psi_{{\rm GS}, K}^*\}^{\perp}\cap \mathcal{W}_K$ under the equivalent $\Vert \cdot \Vert_{\mathcal{F}_K}$ norm with uniformly bounded (in $K$) coercivity constant $\Gamma^*_{\rm GS} > 0$.

    Next, notice that the convergence result from Lemma \ref{lem:approx} implies that
	\begin{align*}
		\lim_{K \to \infty} \left\Vert \frac{1}{\big(\Psi_{{\rm GS}, K}^*, \Psi_{0, K}\big)_{\widehat{\mathcal{L}}}}\Psi_{{\rm GS}, K}^* - e^{\mathcal{T}(\Theta^{\Pi}_{K, \rm GS})}\Psi_{0, K}\right\Vert_{\widehat{\mathcal{L}}^2}&=0.
	\end{align*}
	Since, additionally, the test function $e^{-\mathcal{T}(\Theta^{\Pi}_{K, \rm GS})^{\dagger}}\Upsilon_K$ appearing in the simplification of the term~(I*) is $\widehat{\mathcal{L}}^2$-orthogonal to $e^{\mathcal{T}(\Theta^{\Pi}_{K, \rm GS})}\Psi_{0, K}$ (recall that $\Upsilon_K \in \widetilde{\mathcal{V}}_K \perp \Psi_{0, K}$), we can deduce the existence of a constant $\Gamma^{\mathcal{F}_K}_{\rm GS}>0$ such that for all $K\geq N$ sufficiently large
	\begin{equation}\label{eq:lemma_inf-sup_9}
		{\rm (I*)}\geq \Gamma^{\mathcal{F}_K}_{\rm GS} \Vert e^{-\mathcal{T}(\Theta^{\Pi}_{K, \rm GS})^{\dagger}}\Upsilon_K\Vert_{\mathcal{F}_K}^2 \quad \text{and} \quad \lim_{K \to \infty}\Gamma^{\mathcal{F}_K}_{\rm GS}= \Gamma^{*}_{\rm GS}.
	\end{equation}
	
	\vspace{1mm}
	
	\textbf{Estimation of the term (II*)}~
	
	We now turn our attention to the term (II*). We begin by using once again \textbf{Structure Assumption B.II} and employing the splitting of the electronic Hamiltonian $H$ given by 
	\begin{align*}
		\forall K\geq N \colon \quad  H= \mathcal{F}_K + \mathcal{U}_K \quad \text{where }~ \mathcal{U}_K:= H-\mathcal{F}_K \colon \widehat{\mathcal{H}}^1 \rightarrow \widehat{\mathcal{L}}^2 ~\text{is a bounded linear operator}.
	\end{align*}
	Employing this splitting allows us to write the term (II*) as
	\begin{align*}
		{\rm (II*)} &=\left\langle  \Upsilon_K,  e^{-\mathcal{T}(\Theta^{\Pi}_{K, \rm GS})}(H-\mathcal{E}^*_{\rm GS}) \mathbb{P}^{\perp}_{\mathcal{F}_K}e^{\mathcal{T}(\Theta^{\Pi}_{K, \rm GS})}\mathbb{P}_{\mathcal{F}_K}e^{-\mathcal{T}(\Theta^{\Pi}_{K, \rm GS})} e^{-\mathcal{T}(\Theta^{\Pi}_{K, \rm GS})^{\dagger}}\Upsilon_K\right\rangle_{\widehat{\mathcal{H}}^1\times \mathcal{H}^{-1}}\\
		&=\underbrace{\left\langle  \Upsilon_K,  e^{-\mathcal{T}(\Theta^{\Pi}_{K, \rm GS})}(\mathcal{F}_K-\mathcal{E}^*_{\rm GS}) \mathbb{P}^{\perp}_{\mathcal{F}_K}e^{\mathcal{T}(\Theta^{\Pi}_{K, \rm GS})}\mathbb{P}_{\mathcal{F}_K}e^{-\mathcal{T}(\Theta^{\Pi}_{K, \rm GS})} e^{-\mathcal{T}(\Theta^{\Pi}_{K, \rm GS})^{\dagger}}\Upsilon_K\right\rangle_{\widehat{\mathcal{H}}^1\times \mathcal{H}^{-1}}}_{= \rm (II**)}\\ 
		&+\left\langle  \Upsilon_K,  e^{-\mathcal{T}(\Theta^{\Pi}_{K, \rm GS})}\mathcal{U}_K \mathbb{P}^{\perp}_{\mathcal{F}_K}e^{\mathcal{T}(\Theta^{\Pi}_{K, \rm GS})}\mathbb{P}_{\mathcal{F}_K}e^{-\mathcal{T}(\Theta^{\Pi}_{K, \rm GS})} e^{-\mathcal{T}(\Theta^{\Pi}_{K, \rm GS})^{\dagger}}\Upsilon_K\right\rangle_{\widehat{\mathcal{H}}^1\times \mathcal{H}^{-1}}.
	\end{align*}
	We claim that the term (II**) is in fact zero. To see this, note that \textbf{Structure Assumption B.II} implies that $e^{-\mathcal{T}(\Theta^{\Pi}_{K, \rm GS})^{\dagger}}\Upsilon_K \in \mathcal{V}_K$. Since $\mathbb{P}_{\mathcal{F}_K}$ is the $( \cdot, \cdot )_{\mathcal{F}_K}$-orthogonal projector onto $\mathcal{V}_K$, we can deduce that
    \begin{align*}
       {\rm(II**)} &= (\Lambda_0-\mathcal{E}^*_{\rm GS}) \left\langle  e^{-\mathcal{T}(\Theta^{\Pi}_{K, \rm GS})^{\dagger}}\Upsilon_K,\mathbb{P}^{\perp}_{\mathcal{F}_K}e^{\mathcal{T}(\Theta^{\Pi}_{K, \rm GS})}\mathbb{P}_{\mathcal{F}_K}e^{-\mathcal{T}(\Theta^{\Pi}_{K, \rm GS})} e^{-\mathcal{T}(\Theta^{\Pi}_{K, \rm GS})^{\dagger}}\Upsilon_K\right\rangle_{\widehat{\mathcal{H}}^1\times \mathcal{H}^{-1}}\\
       &=(\Lambda_0-\mathcal{E}^*_{\rm GS}) \left(  e^{-\mathcal{T}(\Theta^{\Pi}_{K, \rm GS})^{\dagger}}\Upsilon_K,\mathbb{P}^{\perp}_{\mathcal{F}_K}e^{\mathcal{T}(\Theta^{\Pi}_{K, \rm GS})}\mathbb{P}_{\mathcal{F}_K}e^{-\mathcal{T}(\Theta^{\Pi}_{K, \rm GS})} e^{-\mathcal{T}(\Theta^{\Pi}_{K, \rm GS})^{\dagger}}\Upsilon_K\right)_{\widehat{\mathcal{L}}^2},
    \end{align*}
    where the second step follows from Equation \eqref{eq:Yvon_duality}. Since the projection operator $\mathbb{P}_{\mathcal{F}_K}$ is also $( \cdot, \cdot )_{\widehat{\mathcal{L}}^2}$-orthogonal (recall Equation \eqref{eq:Yvon_L2_orthogonality}), we obtain that (II**)$=0$ as claimed, and hence,
    \begin{align*}
		{\rm (II*)} = \left\langle  \Upsilon_K,  e^{-\mathcal{T}(\Theta^{\Pi}_{K, \rm GS})}\mathcal{U}_K \mathbb{P}^{\perp}_{\mathcal{F}_K}e^{\mathcal{T}(\Theta^{\Pi}_{K, \rm GS})}\mathbb{P}_{\mathcal{F}_K}e^{-\mathcal{T}(\Theta^{\Pi}_{K, \rm GS})} e^{-\mathcal{T}(\Theta^{\Pi}_{K, \rm GS})^{\dagger}}\Upsilon_K\right\rangle_{\widehat{\mathcal{H}}^1\times \mathcal{H}^{-1}}. 
	\end{align*}
	
	Appealing once again to Equation \eqref{eq:Yvon_duality}, a direct calculation now yields
	\begin{align*}
		{\rm (II*)} &\geq - \left \Vert \mathbb{P}^{\perp}_{\mathcal{F}_K}\mathcal{U}_K  e^{-\mathcal{T}(\Theta^{\Pi}_{K, \rm GS})^{\dagger}}\Upsilon_K \right\Vert_{\widehat{\mathcal{L}}^2} \left \Vert\mathbb{P}^{\perp}_{\mathcal{F}_K}e^{\mathcal{T}(\Theta^{\Pi}_{K, \rm GS})}\mathbb{P}_{\mathcal{F}_K}e^{-\mathcal{T}(\Theta^{\Pi}_{K, \rm GS})} e^{-\mathcal{T}(\Theta^{\Pi}_{K, \rm GS})^{\dagger}}\Upsilon_K \right\Vert_{\widehat{\mathcal{L}}^2}\\[0.5em]
		&\geq -\left \Vert \mathbb{P}^{\perp}_{\mathcal{F}_K}\widetilde{\mathcal{U}}_K \mathbb{P}_{\mathcal{F}_K}\right\Vert_{\mathcal{F}_K \to \widehat{\mathcal{L}}^2} \left \Vert e^{-\mathcal{T}(\Theta^{\Pi}_{K, \rm GS})^{\dagger}}\Upsilon_K \right\Vert_{\mathcal{F}_K} \left \Vert \mathbb{P}^{\perp}_{\mathcal{F}_K}e^{\mathcal{T}(\Theta^{\Pi}_{K, \rm GS})}\mathbb{P}_{\mathcal{F}_K}e^{-\mathcal{T}(\Theta^{\Pi}_{K, \rm GS})} e^{-\mathcal{T}(\Theta^{\Pi}_{K, \rm GS})^{\dagger}}\Upsilon_K\right\Vert_{\widehat{\mathcal{L}}^2},
	\end{align*}
	where $\widetilde{\mathcal{U}}_K \colon \mathcal{W}_K \rightarrow \mathcal{W}_K$ denotes the restriction of the formally infinite-dimensional operator $\mathcal{U}_K \colon \widehat{\mathcal{H}}_1 \rightarrow \widehat{\mathcal{L}}_2$ to the finite-dimensional space $\mathcal{W}_K$, and we have denoted by $\Vert \cdot \Vert_{\mathcal{F}_K \to \widehat{\mathcal{L}}^2}$, the operator norm on the space of bounded linear operators acting from $\mathcal{W}_K$, equipped with the $\widehat{\mathcal{H}}^1$-equivalent $\Vert \cdot \Vert_{\mathcal{F}_K}$ norm to the space $\mathcal{W}_K$ equipped with the usual $\Vert \cdot \Vert_{\widehat{\mathcal{L}}^2}$ norm. Notice that this is permitted since the function $e^{-\mathcal{T}(\Theta^{\Pi}_{K, \rm GS})^{\dagger}}\Upsilon_K \in \mathcal{V}_K \subset\mathcal{W}_K$ for all $K\geq N$ as a consequence of \textbf{Structure Assumption B.II}.

	To proceed, let us consider an arbitrary element $\widehat{\Phi}_K \in \mathcal{W}_K$ for any $K\geq N$. It is easy to see that we can write
	\begin{align}\label{eq:lemma_inf-sup_10}
		\left \Vert \mathbb{P}^{\perp}_{\mathcal{F}_K}\widehat{\Phi}_K\right\Vert^2_{\widehat{\mathcal{L}}^2} \leq \frac{1}{\Lambda_{\min}^{\mathcal{F}_K}} \left \Vert \mathbb{P}^{\perp}_{\mathcal{F}_K}\widehat{\Phi}_K\right\Vert^2_{\mathcal{F}_K},
	\end{align}
	where $\Lambda_{\min}^{\mathcal{F}_K} >0$ denotes the minimum of the spectrum of the shifted, restricted mean-field operator $\widetilde{\mathcal{F}}_K-\Lambda_0$ in the subspace $\text{ran}\;\mathbb{P}^{\perp}_{\mathcal{F}_K}$, which can be expressed as
 \begin{align*}
		\Lambda_{\rm min}^{\mathcal{F}_K}= \min_{\substack{\Phi_K \in \mathcal{W}_K\\ \Phi_K \in \mathcal{V}_K^{\perp}}} \frac{\langle \Phi_K, (\mathcal{F}_K-\Lambda_0) \Phi_K\rangle_{\widehat{\mathcal{H}}^1 \times \widehat{\mathcal{H}}^{-1}}}{\Vert \Phi_K \Vert_{\widehat{\mathcal{L}}^2}^2 }, \quad \text{and we have that}\quad\mathcal{W}_K = \underbrace{\text{ran}\;\mathbb{P}_{\mathcal{F}_K}}_{:= \mathcal{V}_K} \oplus \text{ ran}\;\mathbb{P}^{\perp}_{\mathcal{F}_K}.
	\end{align*}
	
	Returning now to the term (II*) and using Inequality \eqref{eq:lemma_inf-sup_10} together with some elementary simplifications, we deduce that for all $K\geq N$ it holds that
	\begin{equation}\label{eq:lemma_inf-sup_11}
		{\rm (II*)} \geq - \frac{\left \Vert \mathbb{P}^{\perp}_{\mathcal{F}_K}\widetilde{\mathcal{U}}_K \mathbb{P}_{\mathcal{F}_K}\right\Vert_{\mathcal{F}_K \to \widehat{\mathcal{L}}^2} \left \Vert \mathbb{P}^{\perp}_{\mathcal{F}_K}{e^{\mathcal{T}(\Theta^{\Pi}_{K, \rm GS})}}\mathbb{P}_{\mathcal{F}_K}{e^{-\mathcal{T}(\Theta^{\Pi}_{K, \rm GS})}} \mathbb{P}_{\mathcal{F}_K}\right\Vert_{\mathcal{F}_K \to \mathcal{F}_K}}{\sqrt{\Lambda_{\min}^{\mathcal{F}_K} }}\left \Vert e^{-\mathcal{T}(\Theta^{\Pi}_{K, \rm GS})^{\dagger}}\Upsilon_K \right\Vert^2_{\mathcal{F}_K}.
	\end{equation}
	
	\vspace{2mm}
	
	\textbf{Estimation of the term (III*)}~
	
	It remains to estimate the final term (III*). To do so, we will recall the estimation of the very similar term (II) appearing in Equation \eqref{eq:forgot} from the earlier proof of Lemma \ref{lem:inf-sup} under \textbf{Structure Assumption B.I}. Following exactly the same arguments as before (which we do not repeat for the sake of brevity), we deduce for all $K\geq N$ sufficiently large, the existence of a constant $\epsilon_K^{\rm (III)}$ such that
	\begin{equation}\label{eq:lemma_inf-sup_12}
		{\rm (III*)}\geq - \epsilon_K^{(\rm III)} \left \Vert e^{-\mathcal{T}(\Theta^{\Pi}_{K, \rm GS})^{\dagger}}\Upsilon_K \right\Vert^2_{\mathcal{F}_K}  \quad \text{and} \quad \lim_{K \to \infty}{\epsilon}_K^{\rm (III)}=0.
	\end{equation}

	Combining now Equation \eqref{eq:lemma_inf-sup_6} with Inequalities \eqref{eq:lemma_inf-sup_9}, \eqref{eq:lemma_inf-sup_11}, and \eqref{eq:lemma_inf-sup_12}, recalling our choice of test function $\Phi_K$ given by Equation \eqref{eq:lemma_inf-sup_5}, and making use of the assumption on the continuity constant of $\mathcal{U}_K$ from \textbf{Structure Assumption B.II}, we see that we have shown the existence of a constant $\widetilde{\gamma}>0$ and $\widehat{K}_0$ such that for all $K\geq \widehat{K}_0\geq N$ and all $\Upsilon_K \in \widetilde{\mathcal{V}}_K$ it holds that\vspace{2mm}
	\begin{align*}
		\sup_{\Phi_K \in \widetilde{\mathcal{V}}_K} &\frac{\left\langle  \Upsilon_K,  e^{-\mathcal{T}(\Theta^{\Pi}_{K, \rm GS})}(H-\mathcal{E}^*_{\rm GS}) e^{\mathcal{T}(\Theta^{\Pi}_{K, \rm GS})}\Phi_K\right\rangle_{\widehat{\mathcal{H}}^1\times \mathcal{H}^{-1}} }{\Vert \Upsilon_K\Vert_{\mathcal{F}_K}\Vert \Phi_K\Vert_{\mathcal{F}_K}}\\[1em] 
		&\geq \widetilde{\gamma}\frac{\Vert e^{-\mathcal{T}(\Theta^{\Pi}_{K, \rm GS})^{\dagger}}\Upsilon_K\Vert_{\mathcal{F}_K}^2}{\Vert \Upsilon_K\Vert_{\mathcal{F}_K}\Vert \mathbb{P}_{0, K}^{\perp}\mathbb{P}_{\mathcal{F}_K}e^{-\mathcal{T}(\Theta^{\Pi}_{K, \rm GS})} e^{-\mathcal{T}(\Theta^{\Pi}_{K, \rm GS})^{\dagger}}\Upsilon_K\Vert_{\mathcal{F}_K}}\\[1em]
		&= \widetilde{\gamma}\frac{\Vert \mathbb{P}_{\mathcal{F}_K} e^{-\mathcal{T}(\Theta^{\Pi}_{K, \rm GS})^{\dagger}}\Upsilon_K\Vert_{\mathcal{F}_K}}{\Vert \Upsilon_K\Vert_{\mathcal{F}_K}} \frac{\Vert e^{-\mathcal{T}(\Theta^{\Pi}_{K, \rm GS})^{\dagger}}\Upsilon_K\Vert_{\mathcal{F}_K}}{\Vert \mathbb{P}_{0, K}^{\perp}\mathbb{P}_{\mathcal{F}_K}e^{-\mathcal{T}(\Theta^{\Pi}_{K, \rm GS})} e^{-\mathcal{T}(\Theta^{\Pi}_{K, \rm GS})^{\dagger}}\Upsilon_K\Vert_{\mathcal{F}_K}}\\[1em]
		&\geq \widetilde{\gamma}\frac{1}{\Vert \mathbb{P}_{\mathcal{F}_K}e^{\mathcal{T}(\Theta^{\Pi}_{K, \rm GS})^{\dagger}}\mathbb{P}_{\mathcal{F}_K}\Vert_{\mathcal{F}_K \to \mathcal{F}_K} }\frac{ \Vert e^{-\mathcal{T}(\Theta^{\Pi}_{K, \rm GS})^{\dagger}}\Upsilon_K\Vert_{\mathcal{F}_K}}{\Vert \mathbb{P}_{0, K}^{\perp}\mathbb{P}_{\mathcal{F}_K}e^{-\mathcal{T}(\Theta^{\Pi}_{K, \rm GS})}\mathbb{P}_{\mathcal{F}_K}\Vert_{\mathcal{F}_K \to \mathcal{F}_K}\Vert  e^{-\mathcal{T}(\Theta^{\Pi}_{K, \rm GS})^{\dagger}}\Upsilon_K\Vert_{\mathcal{F}_K}}\\
		&=\widetilde{\gamma}\frac{1}{\Vert \mathbb{P}_{\mathcal{F}_K}e^{\mathcal{T}(\Theta^{\Pi}_{K, \rm GS})^{\dagger}}\mathbb{P}_{\mathcal{F}_K}\Vert_{\mathcal{W}_K \to \mathcal{W}_K}\Vert \mathbb{P}_{0, K}^{\perp} \mathbb{P}_{\mathcal{F}_K}e^{-\mathcal{T}(\Theta^{\Pi}_{K, \rm GS})} \mathbb{P}_{\mathcal{F}_K}\Vert_{\mathcal{F}_K \to \mathcal{F}_K}}.
	\end{align*}
	
	Here, the second to last step uses the invertibility of the projected exponential cluster operator, i.e., $\mathbb{P}_{\mathcal{F}_K}e^{\mathcal{T}(\Theta^{\Pi}_{K, \rm GS})^{\dagger}}\mathbb{P}_{\mathcal{F}_K}$ as a mapping from the approximation space $\mathcal{V}_K$ to $\mathcal{V}_K$. This invertibility is a consequence of the fact that $e^{\mathcal{T}(\Theta^{\Pi}_{K, \rm GS})^{\dagger}} \Phi_K \in \mathcal{V}_K$ for any $\Phi_K \in \mathcal{V}_K$, which itself follows from \textbf{Structure Assumption B.II}.
	
	Taking now the infimum over all $\Upsilon_K \in \widetilde{\mathcal{V}}_K$ completes the proof. 
	
\end{proof}

A number of remarks on the above proof of Lemma \ref{lem:inf-sup} for \textbf{Structure Assumption B.II} are in order.

\begin{remark}[The role of \textbf{Structure Assumption B.II} in the proof of Lemma \ref{lem:inf-sup}]\label{rem:B.II(1)}~
	
	Consider the above proof of Lemma \ref{lem:inf-sup}. It is instructive to briefly discuss exactly where each condition appearing in \textbf{Structure Assumption B.II} is used in our proof. 
	
	\begin{itemize}
		\item The first point in  \textbf{Structure Assumption B.II}, pertaining to the closedness of the approximation space $\mathcal{V}_K$ under the action of excitation and de-excitation operators is used to ensure that the chosen test function $\Phi_K$, which is constructed by applying certain exponential cluster operators to an element $\Upsilon_K \in \mathcal{V}_K$ and then taking a projection, is indeed an element of the approximation space $\mathcal{V}_K$. Additionally, the so-called de-excitation completeness condition, i.e., $e^{-\mathcal{T}(\Theta^{\Pi}_{K, \rm GS})^{\dagger}}\Upsilon_K \in \mathcal{V}_K$ for all $\Upsilon_K$ is used to argue the invertibility of the exponential cluster operator in the approximation space, which is a crucial step in our proof.
		
		\item The fact that the approximation space $\mathcal{V}_K$ is an invariant subspace of the restricted mean-field operator $\widetilde{\mathcal{F}}_K \colon \mathcal{W}_K\rightarrow \mathcal{W}_K$ has two uses. On the one hand, it implies that the projection operator $\mathbb{P}_{\mathcal{F}_K}$ onto $\mathcal{V}_K$, which is a priori defined with respect to the inner product generated by the mean-field operator $\mathcal{F}_K$, is simultaneously $\widehat{\mathcal{L}}^2$-orthogonal. On the other hand, this invariance property plays a key role in our estimate of the term {\rm(II*)} as it allows us to (eventually) conclude that the term {\rm(II*)} is small.
		
		\item The fact that the mean-field operator $\mathcal{F}_K-\Lambda_0$ is symmetric and coercive on the finite-dimensional space $\mathcal{W}_K$ with coercivity and continuity constants independent of $K$ allows us to introduce a new inner-product $(\cdot, \cdot)_{\mathcal{F}_K}$ on $\mathcal{W}_K$ which is equivalent to the usual $\widehat{\mathcal{H}}^1$ inner product. This allows us (see above) to construct a projection operator $\mathbb{P}_{\mathcal{F}_K}$ which is simultaneously $\widehat{\mathcal{L}}^2$-orthogonal and $(\cdot, \cdot)_{\mathcal{F}_K}$ orthogonal. These simultaneous orthogonalities aid us in estimating the term {\rm(II*)} using, what basically amount to, spectral arguments. 
		
		\item Finally, the splitting of the electronic Hamiltonian $H$ in terms of the mean-field operator $\mathcal{F}_K$ and $\mathcal{U}_K \colon \widehat{\mathcal{H}}^1 \rightarrow \widehat{\mathcal{L}}^2$ is used crucially in the estimate of term {\rm(II*)}. The smallness assumption on the continuity constant of $\mathcal{U}_K$ is used to argue that our final bound on the term {\rm (II*)} is indeed small.
	\end{itemize}
\end{remark}
\vspace{0mm}

\begin{remark}[The role of other assumptions in the proof of Lemma \ref{lem:inf-sup}]\label{rem:B.II(2)}~
	
	Consider again the above proof of Lemma \ref{lem:inf-sup}. In addition to \textbf{Structure Assumption B.II}, we also use \textbf{Assumptions A.I-A.III} and of course the density of the approximation spaces $\{\mathcal{V}_K\}_{K\geq N}$ in $\widehat{\mathcal{H}}^1$. These results are used to make asymptotic arguments pertaining to the approximability of the exact ground state zero $\Theta^*_{K, \rm GS} \in \widehat{\mathcal{H}}^{1, \perp}_{\Psi_{0, K}}$ of the coupled cluster function in the approximation space $\widetilde{\mathcal{V}}_K$. More precisely, we use these assumptions to argue that the term {\rm (I*)} appearing in the above proof is indeed positive and that the term {\rm (III*)} goes to zero asymptotically as $K \to \infty$.
	
	\item From a close study of the method of our proof, it can be deduced that the density of the approximation spaces is not a necessary condition. Indeed, for a given choice of approximation space $\mathcal{V}_K$, the question of whether the discrete inf-sup condition \eqref{eq:inf-sup} holds depends (in addition to \textbf{Structure Assumption B.II}) on how well the exact ground state coupled cluster zero $\Theta^*_{K, \rm GS} $ is approximated in~$\widetilde{\mathcal{V}}_K$.
	
\end{remark}

Having completed our second proof of Lemma \ref{lem:inf-sup}, let us finally turn to the question of how restrictive \textbf{Structure Assumption B.II} really is.

\begin{remark}[Restrictiveness of \textbf{Structure Assumption B.II} ]\label{rem:B.II(3)}~
	
	As we discuss in more detail in Appendix \ref{sec:spaces}, the first two points of this assumption are satisfied by standard coupled cluster discretisations based on an initial Hartree-Fock computation in a finite basis with the so-called $N$-particle Hartree-Fock operator (defined below in Appendix \ref{sec:HF}) playing the role of the mean-field operator $\mathcal{F}_K \colon \widehat{\mathcal{H}}^1 \rightarrow \widehat{\mathcal{H}}^{-1}$ and it's lowest eigenvalue $\Lambda_0$ being used to create the shift. We emphasise that this is true not just for coupled cluster methods based on the canonical Hartree-Fock orbitals but even for rotated orbitals as long as the rotations maintain the $\mL^2$-orthogonality of occupied and virtual spaces. 
	
	The third point is also satisfied by such Hartree-Fock discretisations, provided that we assume convergence of the discrete Hartree-Fock ground state towards some limiting Slater determinant in the complete basis set limit. As far as we are aware, this complete basis set convergence of the Hartree-Fock method has not been rigorously established in the literature although it seems reasonable to assume.
	
	The fourth condition in \textbf{Structure Assumption B.II} is by far the most restrictive. Indeed, while it can be shown (again, see Appendix \ref{sec:HF} below) that the $N$-particle Hartree-Fock operator $\mathcal{F}_K$ can be used to split the electronic Hamiltonian $H$ as $H= \mathcal{F}_K + \mathcal{U}_K$ with $\mathcal{U}_K \colon \widehat{\mathcal{H}}^1 \rightarrow \widehat{\mathcal{L}}^2$, the smallness of the continuity constant of $\mathcal{U}_K$ is problem-dependent and not a universal property. To be more precise, recalling the above proof of Lemma \ref{lem:inf-sup} for \textbf{Structure Assumption B.II}, we see that the precise condition that must be satisfied is (c.f., Point 4) in \textbf{Structure Assumption B.II})
	\begin{equation}\label{eq:lemma_inf-sup_13}
		\left \Vert \mathbb{P}^{\perp}_{\mathcal{F}_K}\widetilde{\mathcal{U}}_K \mathbb{P}_{\mathcal{F}_K}\right\Vert_{\mathcal{F}_K \to \widehat{\mathcal{L}}^2}< \frac{1}{2}\frac{\sqrt{\Lambda_{\min}^{\mathcal{F}_K}}\;\Gamma^{*}_{\rm GS}}{\left \Vert \mathbb{P}^{\perp}_{\mathcal{F}_K}{e^{\mathcal{T}(\Theta^{\Pi}_{K, \rm GS})}}\mathbb{P}_{\mathcal{F}_K}{e^{-\mathcal{T}(\Theta^{\Pi}_{K, \rm GS})}} \mathbb{P}_{\mathcal{F}_K}\right\Vert_{\mathcal{F}_K \to \mathcal{F}_K}},
	\end{equation}
	where we remind the reader that the constant $\Gamma_{\rm GS}^*$ is the coercivity constant of the shifted electronic Hamiltonian  $H-\mathcal{E}_{\rm GS}^*$ on the finite-dimensional space $\{\Psi_{{\rm GS}, K}^*\}^{\perp}\cap \mathcal{W}_K$, while the constant $\Lambda_{\rm min}^{\mathcal{F}_K}>0$ denotes the minimum of the spectrum of the shifted, restricted mean-field operator $\widetilde{\mathcal{F}}_K-\Lambda_0$ in the range of the projection operator $\mathbb{P}^{\perp}_{\mathcal{F}_K} \colon \mathcal{W}_K \rightarrow \mathcal{W}_K$, i.e.,
	\begin{align*}
		\Lambda_{\rm min}^{\mathcal{F}_K}= \min_{\substack{\Phi_K \in \mathcal{W}_K\\ \Phi_K \in \mathcal{V}_K^{\perp}}} \frac{\langle \Phi_K, (\mathcal{F}_K-\Lambda_0) \Phi_K\rangle_{\widehat{\mathcal{H}}^1 \times \widehat{\mathcal{H}}^{-1}}}{\Vert \Phi_K \Vert_{\widehat{\mathcal{L}}^2}^2 }.
	\end{align*}
	
	The coercivity constant $\Gamma_{\rm GS}^*$ depends on the spectral gap of the specific molecular system being studied while the operator norm appearing in Inequality \eqref{eq:lemma_inf-sup_13} depends on the norm of the exact ground state coupled cluster zero $\Theta_{K, \rm GS}$ which is also molecule-dependent. Moreover, even though we have assumed the density of the approximation spaces $\{\mathcal{V}_K\}_{K\geq N}$ in $\widehat{\mathcal{H}}^1$, it is \underline{not the case} that the constant $\Lambda_{\rm min}^{\mathcal{F}_K} \to \infty$ as $K \to \infty$. This is due to the fact that the Laplacian, considered as an operator on the unbounded domain $\mathbb{R}^3$, has an essential spectrum consisting of $[0, \infty)$. Thus any mean-field operator ${\mathcal{F}}_K \colon \widehat{\mathcal{H}}^1 \rightarrow \widehat{\mathcal{H}}^{-1}$ constructed according to the stipulations of \textbf{Structure Assumption B.II} (and, in particular, the $N$-particle Hartree-Fock operator) will also posses an essential spectrum that contains the interval $[0, \infty)$. Having said this, for the specific case of the $N$-particle Hartree-Fock operator and the so-called truncated coupled cluster equations which are based on excitation-rank truncations (such as CCSD, CCSDT etc.), lower bounds can be derived for $\Lambda_{\rm min}^{\mathcal{F}_K}$ in terms of the so-called Hartree-Fock excitation energies. In particular, thanks to the density of the approximation spaces $\{\mathcal{V}_K\}_{K \geq N}$ in $\widehat{\mathcal{H}}^1$, we can deduce that for $K$ sufficiently large, a simple upper bound for $\Lambda_{\rm min}^{\mathcal{F}_K}$ is given by the sum of the first $N$ excitation energies of the $N$-particle Hartree-Fock operator $\mathcal{F}_K$, which is typically a large quantity (see Appendix \ref{sec:spaces}).
	
	The upshot of the above discussion is that, for an arbitrary molecular system, we are, unfortunately, unable to guarantee that the continuity constant of the operator $\mathcal{U}_K$ is sufficiently small, even in the asymptotic limit $K\to\infty$. We are therefore also unable to guarantee unconditional validity of the discrete inf-sup estimate \eqref{eq:inf-sup_old} in the asymptotic limit $K \to\infty$ for arbitrary molecules. Nevertheless, as we discuss in the sequel, our numerical results indicate that the smallness condition \eqref{eq:lemma_inf-sup_13} is often met for small molecules, particularly if the approximation space $\mathcal{V}_K$ is sufficiently rich.
\end{remark}

\vspace{2mm}

\subsection{Main Results on the Well-posedness of the Discrete Coupled Cluster Equations}\label{sec:4c}~

Throughout this subsection, we assume the setting of Sections \ref{sec:4a} and \ref{sec:4b}. Equipped with the technical lemmas that we have proved in these section, we are now ready to state and prove the main result of the present article.

\begin{theorem}[Local Well-posedness of the Discrete Coupled Cluster Equations]\label{thm:Galerkin_truncated_CC}~
	
	Let the sequence of reference determinants $\{\Psi_{0, K}\}_{K\geq N}$ and the sequence of approximation spaces  $\{\widetilde{\mathcal{V}}_K\}_{K\geq N}$ be constructed as in Notation \ref{def:seq_ref} and Notation \ref{def:V_K} respectively, and assume that {\textbf{Assumptions A.I-A.III}} and either \textbf{Structure Assumption B.I} or \textbf{Structure Assumption B.II} holds.
	
	Additionally, for each $K\geq N$, let $\{\Psi_{0, K}\}^{\perp}$ be the $\widehat{\mathcal{L}}^2$ orthogonal complement of $\Psi_{0, K}$ in $\widehat{\mathcal{L}}^2$, let the infinite-dimensional subspace $\widehat{\mathcal{H}}^{1, \perp}_{\Psi_{0, K}}$ be defined as $\widehat{\mathcal{H}}^{1, \perp}_{\Psi_{0, K}}:= \{\Psi_{0, K}\}^{\perp} \cap \widehat{\mathcal{H}}^1$, let the coupled cluster function $\mathcal{f}_K \colon \widehat{\mathcal{H}}^{1, \perp}_{\Psi_{0, K}} \rightarrow \left(\widehat{\mathcal{H}}^{1, \perp}_{\Psi_{0, K}}\right)^*$ be defined through Equations~\eqref{eq:12bis}, and let $\Theta_{{K}, \rm GS}^*$ denote the zero of the coupled cluster function $\mathcal{f}_K$ corresponding to the ground state eigenfunction $\Psi^*_{\rm GS} \in \widehat{\mathcal{H}}^1$ of the electronic Hamiltonian, i.e.,
 \begin{align*}
     e^{\mathcal{T}(\Theta_{K, \rm GS}^*)}\Psi_{0, K}= \frac{1}{\left(\Psi^*_{\rm GS}, \Psi_{0, K}\right)_{\widehat{\mathcal{L}}^2}} \Psi^*_{\rm GS}.
 \end{align*}

	
	Consider now for each $K\geq N$, the discrete coupled cluster problem which consists of seeking $\Theta_K \in \widetilde{\mathcal{V}}_K$ such that (c.f., Equation \eqref{eq:CC_discrete_2})
	\begin{equation}\label{eq:CC_discrete_2*}
		\forall \Phi_K \in \widetilde{\mathcal{V}}_K\colon \qquad \left\langle \Phi_K, \mathcal{f}_K(\Theta_K)\right\rangle_{\widehat{\mathcal{H}}^{1, \perp}_{\Psi_{0, K}} \times \left(\widehat{\mathcal{H}}^{1, \perp}_{\Psi_{0, K}}\right)^*}=0.
	\end{equation}
	
	There exists $\delta > 0$ and $\overline{{K}_0^*}\geq N$ such that for all $K\geq \overline{K_0^*}$, there exists a unique  $\Theta_K \in \widetilde{\mathcal{V}}_K \cap \mathbb{B}_{\delta}(\Theta^*_{K, \rm GS})$ that solves the discrete coupled cluster equation \eqref{eq:CC_discrete_2*}, and we have the quasi-optimality result
	\begin{align}\label{eq:quasi_optimality}
		\exists C>0, ~ \forall K\geq \overline{K_0^*} \colon \qquad \Vert \Theta_K - \Theta^*_{K, \rm GS}\Vert_{\widehat{\mathcal{H}}^1} &\leq  C \inf_{\Phi_K \in \widetilde{\mathcal{V}}_K} \Vert \Phi_K - \Theta^*_{K, \rm GS}\Vert_{\widehat{\mathcal{H}}^1}, \\
		\intertext{as well as the residual-based error estimate}
		\forall K\geq \overline{K_0^*} \colon \qquad  \Vert \Theta_K - \Theta^*_{K, \rm GS}\Vert_{\widehat{\mathcal{H}}^1} &\leq  2 \Vert \mD \mathcal{f}_K\left(\Theta_K\right)^{-1}\Vert_{\left(\widehat{\mathcal{H}}^{1, \perp}_{\Psi_{0, K}}\right)^*\to \widehat{\mathcal{H}}^{1, \perp}_{\Psi_{0, K}}}\; \Vert \mathcal{f}_K\left(\Theta_K\right)\Vert_{\left(\widehat{\mathcal{H}}^{1, \perp}_{\Psi_{0, K}}\right)^*}. \label{eq:residual_error}
	\end{align}
\end{theorem}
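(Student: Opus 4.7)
The plan is to apply the inverse function theorem (Theorem \ref{Caloz_Rappaz_2}) to the restricted discrete coupled cluster function
\[
\mathcal{G}_K \colon \widetilde{\mathcal{V}}_K \rightarrow \widetilde{\mathcal{V}}_K^{*}, \qquad \langle \Phi_K, \mathcal{G}_K(\Theta_K)\rangle_{\widetilde{\mathcal{V}}_K \times \widetilde{\mathcal{V}}_K^*} := \langle \Phi_K, \mathcal{f}_K(\Theta_K)\rangle_{\widehat{\mathcal{H}}^{1, \perp}_{\Psi_{0, K}}\times (\widehat{\mathcal{H}}^{1, \perp}_{\Psi_{0, K}})^*}
\]
around the linearisation point $\bold{u} := \Theta^{\Pi}_{K, \rm GS} \in \widetilde{\mathcal{V}}_K$, i.e.\ the $\widehat{\mathcal{H}}^1$-best approximation of the exact ground state zero $\Theta^*_{K, \rm GS}$ in $\widetilde{\mathcal{V}}_K$. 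The three quantities in Theorem \ref{Caloz_Rappaz_2} must then be controlled uniformly in $K$ for $K$ sufficiently large: the norm $\gamma_{\mathcal{G}}$ of the inverse Fréchet derivative, the residual $\epsilon = \Vert \mathcal{G}_K(\bold{u})\Vert$, and the local Lipschitz modulus $\mL(\alpha)$ of the derivative.

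First I would handle the invertibility. The continuous Fréchet derivative $\mD \mathcal{f}_K(\Theta^{\Pi}_{K,\rm GS})$ has the closed-form expression recalled in Lemma \ref{lem:inf-sup_aux}, and Lemma \ref{lem:inf-sup} together with Lemma \ref{lem:inf-sup_aux} provide a discrete inf-sup constant $\widehat{\gamma} > 0$, uniform in $K \geq \widehat{K}_0$, for the bilinear form
$(\Upsilon_K,\Phi_K)\mapsto \langle \Upsilon_K, e^{-\mathcal{T}(\Theta^{\Pi}_{K, \rm GS})}(H-\mathcal{E}^*_{\rm GS})e^{\mathcal{T}(\Theta^{\Pi}_{K, \rm GS})}\Phi_K\rangle$ on $\widetilde{\mathcal{V}}_K \times \widetilde{\mathcal{V}}_K$. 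Since $\widetilde{\mathcal{V}}_K$ is finite-dimensional, this discrete inf-sup condition is equivalent to invertibility of $\mD \mathcal{G}_K(\bold{u})$ with $\gamma_{\mathcal{G}} \leq 1/\widehat{\gamma}$, yielding a uniform bound.

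Next I would bound the consistency error $\epsilon = \Vert \mathcal{G}_K(\Theta^{\Pi}_{K, \rm GS})\Vert_{\widetilde{\mathcal{V}}_K^*}$. Since $\mathcal{f}_K(\Theta^*_{K, \rm GS}) = 0$ by definition of $\Theta^*_{K, \rm GS}$, and since $\mathcal{f}_K$ is $\mathscr{C}^{\infty}$ between the appropriate Banach spaces (this follows from Theorem \ref{thm:2} together with the continuity of the multiplication by $H$ from $\widehat{\mathcal{H}}^1$ to $\widehat{\mathcal{H}}^{-1}$), a standard fundamental theorem of calculus argument gives
\[
\Vert \mathcal{f}_K(\Theta^{\Pi}_{K, \rm GS})\Vert_{(\widehat{\mathcal{H}}^{1, \perp}_{\Psi_{0, K}})^*} \leq C_{\rm Lip} \, \Vert \Theta^{\Pi}_{K, \rm GS} - \Theta^*_{K, \rm GS}\Vert_{\widehat{\mathcal{H}}^1},
\]
with $C_{\rm Lip}$ bounded uniformly for $K$ large thanks to Lemma \ref{lem:convergence} and \textbf{Assumptions A.I--A.III}. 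Lemma \ref{lem:approx} then ensures $\Vert \Theta^{\Pi}_{K, \rm GS} - \Theta^*_{K, \rm GS}\Vert_{\widehat{\mathcal{H}}^1} \to 0$ as $K \to \infty$, so $\epsilon \to 0$. Similarly, for the Lipschitz modulus, one writes $\mD \mathcal{f}_K(\Theta^{\Pi}_{K,\rm GS}) - \mD\mathcal{f}_K(\Upsilon)$ as an integral of the second derivative and exploits the algebra/exponential properties of Theorem \ref{thm:2} to obtain a uniform bound $\mL(\alpha) \leq \widetilde{C}\alpha$ for $\alpha$ small and $K$ large. The smallness condition $2\gamma_{\mathcal{G}}\mL(2\gamma_{\mathcal{G}}\epsilon) < 1$ in Theorem \ref{Caloz_Rappaz_2} then reduces to $4 \widetilde{C}\epsilon/\widehat{\gamma}^2 < 1$, which holds for $K$ sufficiently large, say $K \geq \overline{K_0^*}$.

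Finally, applying Theorem \ref{Caloz_Rappaz_2} yields for every $K \geq \overline{K_0^*}$ a unique solution $\Theta_K \in \overline{\mathbb{B}_{2\gamma_{\mathcal{G}}\epsilon}(\Theta^{\Pi}_{K, \rm GS})} \subset \widetilde{\mathcal{V}}_K$ and the estimate $\Vert \Theta_K - \Theta^{\Pi}_{K, \rm GS}\Vert \leq 2\gamma_{\mathcal{G}}\epsilon$. Combined with the consistency bound above and a triangle inequality, this gives the quasi-optimality \eqref{eq:quasi_optimality}: choosing $\delta := 2\gamma_{\mathcal{G}}\epsilon + \Vert \Theta^{\Pi}_{K, \rm GS} - \Theta^*_{K, \rm GS}\Vert$ places $\Theta_K$ in $\mathbb{B}_{\delta}(\Theta^*_{K, \rm GS})$, and local uniqueness in $\widetilde{\mathcal{V}}_K \cap \mathbb{B}_{\delta}(\Theta^*_{K, \rm GS})$ follows from the uniqueness assertion of Theorem \ref{Caloz_Rappaz_2} (for $K$ large, $\delta$ is smaller than the Caloz--Rappaz radius). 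The residual-based estimate \eqref{eq:residual_error} is obtained by re-applying Theorem \ref{Caloz_Rappaz_2} centred at $\Theta_K$ itself: the discrete inf-sup argument, which only requires closeness to $\Theta^*_{K,\rm GS}$, transfers to $\mD \mathcal{G}_K(\Theta_K)$ and gives $\Vert \mD \mathcal{G}_K(\Theta_K)^{-1}\Vert \leq 2\gamma_{\mathcal{G}}$, after which the error-estimate clause of Theorem \ref{Caloz_Rappaz_2} applied with $\bold{v} = \Theta_K$ and $\bold{u}^* = \Theta^*_{K,\rm GS}$ yields the claim. The main obstacle here is verifying that the discrete inf-sup at $\Theta^{\Pi}_{K,\rm GS}$ transfers to $\Theta_K$ with a uniform constant; this is handled by the continuity of $\Theta \mapsto \mD\mathcal{f}_K(\Theta)$ combined with the uniform bound on $\Vert \Theta_K - \Theta^*_{K,\rm GS}\Vert$, at the cost of possibly enlarging $\overline{K_0^*}$.
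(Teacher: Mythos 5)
Your quasi-optimality argument takes a genuinely different route from the paper: you restrict the coupled cluster function to a map $\mathcal{G}_K\colon \widetilde{\mathcal{V}}_K \to \widetilde{\mathcal{V}}_K^*$ and center the inverse function theorem at the best approximation $\Theta^{\Pi}_{K,\rm GS} \in \widetilde{\mathcal{V}}_K$, whereas the paper keeps $\mathcal{G}_K$ as a map on the \emph{infinite-dimensional} space $\widehat{\mathcal{H}}^{1,\perp}_{\Psi_{0,K}}$ (built via Fortin-type projectors $\Pi_K^{\rm L}, \Pi_K^{\rm R}$ from the bilinear form $\mathcal{b}_K$) and centers at the exact zero $\Theta^*_{K,\rm GS}$. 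Your approach is the more classical ``Brezzi--Rappaz--Raviart'' version and is somewhat leaner — it avoids having to verify that a zero of the auxiliary map actually lies in $\widetilde{\mathcal{V}}_K$. It buys nothing in the error constants (both routes go through Lemma~\ref{lem:approx}, the uniform discrete inf-sup, and the uniform Lipschitz modulus), and the paper's route has the modest advantage that $\mD\mathcal{G}_K(\Theta^*_{K,\rm GS})=\mD\mathcal{f}_K(\Theta^*_{K,\rm GS})$, so the isomorphism property at the center comes directly from Theorem~\ref{thm:CC_der_inv} with no discrete-to-continuous bridge needed there. One small imprecision in your route: the bilinear form $(\Upsilon_K,\Phi_K)\mapsto \langle \Upsilon_K, e^{-\mathcal{T}(\Theta^{\Pi}_{K,\rm GS})}(H-\mathcal{E}^*_{\rm GS})e^{\mathcal{T}(\Theta^{\Pi}_{K,\rm GS})}\Phi_K\rangle$ that Lemma~\ref{lem:inf-sup} controls is \emph{not} $\mD\mathcal{f}_K(\Theta^{\Pi}_{K,\rm GS})$; that clean similarity-transform expression equals the Fr\'echet derivative only at a zero of $\mathcal{f}_K$, and $\Theta^{\Pi}_{K,\rm GS}$ is not a zero. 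What Lemma~\ref{lem:inf-sup} (via Lemma~\ref{lem:inf-sup_aux}) actually yields is the discrete inf-sup for $\mD\mathcal{f}_K(\Theta^*_{K,\rm GS})$; you then need the Lipschitz continuity of $\Theta\mapsto\mD\mathcal{f}_K(\Theta)$ together with $\Vert\Theta^{\Pi}_{K,\rm GS}-\Theta^*_{K,\rm GS}\Vert\to 0$ to transfer it to $\mD\mathcal{f}_K(\Theta^{\Pi}_{K,\rm GS})$ restricted to $\widetilde{\mathcal{V}}_K$. This is easily fixable but should be said.

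The residual estimate argument, as written, has a genuine gap. You propose to re-apply Theorem~\ref{Caloz_Rappaz_2} to the \emph{discrete} map $\mathcal{G}_K\colon \widetilde{\mathcal{V}}_K\to\widetilde{\mathcal{V}}_K^*$ centered at $\Theta_K$, with ``$\bold{u}^*=\Theta^*_{K,\rm GS}$.'' But $\Theta^*_{K,\rm GS}\notin\widetilde{\mathcal{V}}_K$ in general, so it is not in the domain of your $\mathcal{G}_K$; moreover the unique zero of $\mathcal{G}_K$ near $\Theta_K$ is $\Theta_K$ itself, which gives a vacuous estimate. Estimate~\eqref{eq:residual_error} involves the operator norm of the \emph{infinite-dimensional} inverse $\mD\mathcal{f}_K(\Theta_K)^{-1}$ on $\left(\widehat{\mathcal{H}}^{1,\perp}_{\Psi_{0,K}}\right)^*\to\widehat{\mathcal{H}}^{1,\perp}_{\Psi_{0,K}}$ and the infinite-dimensional residual $\Vert\mathcal{f}_K(\Theta_K)\Vert$ — quantities that cannot be extracted from the discrete inf-sup. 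The paper gets this by applying Theorem~\ref{Caloz_Rappaz_2} to $\mathcal{f}_K$ \emph{on the full space} $\widehat{\mathcal{H}}^{1,\perp}_{\Psi_{0,K}}$, centered at $\Theta_K$: invertibility of $\mD\mathcal{f}_K(\Theta_K)$ comes from Theorem~\ref{thm:CC_der_inv} at $\Theta^*_{K,\rm GS}$ plus Lipschitz continuity and $\Vert\Theta_K-\Theta^*_{K,\rm GS}\Vert\to 0$; consistency $\Vert\mathcal{f}_K(\Theta_K)\Vert\to 0$ comes from a mean-value estimate using $\mathcal{f}_K(\Theta^*_{K,\rm GS})=0$; the unique zero of $\mathcal{f}_K$ near $\Theta_K$ is then identified with $\Theta^*_{K,\rm GS}$, and the error clause of Theorem~\ref{Caloz_Rappaz_2} gives~\eqref{eq:residual_error}. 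Replace the last paragraph of your proposal with this argument and the proof is complete.
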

\begin{proof}
	The proof that we construct is based on the arguments developed by Caloz and Rappaz \cite[Theorems 6.1, 6.2, and 7.1]{MR1470227} for conforming Galerkin discretisations of non-linear equations but adapted to the present setting. Throughout this proof, we will assume the setting of, and use results from, Sections \ref{sec:4a} and \ref{sec:4b}. It will be useful, in particular, to recall that $\Psi_0^* \in \widehat{\mathcal{H}}^1$ denotes the limiting reference determinant defined in \textbf{Assumption A.II}. Moreover, denoting by $\{\Psi_{0}^*\}^{\perp}$, the $\widehat{\mathcal{L}}^2$ orthogonal complement of $\Psi_{0}^*$ in $\widehat{\mathcal{L}}^2$ and defining the infinite-dimensional subspace $\widehat{\mathcal{H}}^{1, \perp}_{\Psi_{0}^*}$ as $\widehat{\mathcal{H}}^{1, \perp}_{\Psi_{0}^*}:= \{\Psi_{0}^*\}^{\perp} \cap \widehat{\mathcal{H}}^1$, we can introduce the coupled cluster function $\mathcal{f} \colon \widehat{\mathcal{H}}^{1, \perp}_{\Psi_{0}^*} \rightarrow \left(\widehat{\mathcal{H}}^{1, \perp}_{\Psi_{0}^*}\right)^*$ defined through Equation~\eqref{eq:12bis}. In this context, we denote by $\Theta_{\rm GS}^*$ the zero of the coupled cluster function $\mathcal{f}$ corresponding to the ground state eigenfunction of the electronic Hamiltonian.
	
	We begin by defining for every $K\geq N$, the bilinear form $\mathcal{b}_K \colon \widehat{\mathcal{H}}^{1, \perp}_{\Psi_{0, K}} \times \widehat{\mathcal{H}}^{1, \perp}_{\Psi_{0, K}} \rightarrow \mathbb{R}$ given by
	\begin{align}\label{eq:bilinear}
		\forall \Phi, \Upsilon \in \widehat{\mathcal{H}}^{1, \perp}_{\Psi_{0, K}} \colon \qquad \mathcal{b}_K(\Phi, \Upsilon) = \langle \Upsilon, \mD \mathcal{f}_K(\Theta_{K, \rm GS}^*)\Phi\rangle_{\widehat{\mathcal{H}}^{1, \perp}_{\Psi_{0, K}} \times \left(\widehat{\mathcal{H}}^{1, \perp}_{\Psi_{0, K}}\right)^*},
	\end{align}
	where we remind the reader that $ \mD \mathcal{f}_K(\Theta_{K, \rm GS}^*) \colon \widehat{\mathcal{H}}^{1, \perp}_{\Psi_{0, K}}  \rightarrow \left(\widehat{\mathcal{H}}^{1, \perp}_{\Psi_{0, K}} \right)^*$ is the Fr\'echet derivative of the coupled cluster function $\mathcal{f}_k$ at $\Theta_{K, \rm GS}^*$. 
	
	It follows directly from Lemma \ref{lem:inf-sup} that there exists $K_0 \geq N$ such that for all $K\geq K_0$, the bilinear form $\mathcal{b}_K$ satisfies a discrete inf-sup condition on the approximation space $\widetilde{\mathcal{V}}_K$. 
	
	Consequently, for any $K\geq K_0$ we can introduce projection operators $\Pi_K^{\rm L}\colon \widehat{\mathcal{H}}^{1, \perp}_{\Psi_{0, K}} \rightarrow \widetilde{\mathcal{V}}_K$ and $\Pi_K^{\rm R}\colon$ $\widehat{\mathcal{H}}^{1, \perp}_{\Psi_{0, K}} \rightarrow \widetilde{\mathcal{V}}_K$ through the relations
	\begin{align}\label{eq:proof_proj_1}
		\text{For any }\Phi \in \widehat{\mathcal{H}}^{1, \perp}_{\Psi_{0, K}}, ~\forall \Upsilon_K \in \widetilde{\mathcal{V}}_K \colon \qquad \mathcal{b}_K\left(\Phi - \Pi_K^{\rm L}\Phi, \Upsilon_K\right)=0,\\[0.5em]
		\text{For any }\Upsilon \in \widehat{\mathcal{H}}^{1, \perp}_{\Psi_{0, K}}, ~\forall \Phi_K \in \widetilde{\mathcal{V}}_K \colon \qquad \mathcal{b}_K\left(\Phi_K, \Upsilon-\Pi_K^{\rm R}\Upsilon\right)=0. \label{eq:proof_proj_2}
	\end{align}
	Note that both projection operators are well-defined thanks to the validity of the discrete inf-sup condition.
	
	Equipped with the projections operators $\Pi_K^{\rm L}, \Pi_K^{\rm R}$, we next define for each $K\geq K_0$, the non-linear mapping $\mathcal{G}_{K} \colon \widehat{\mathcal{H}}^{1, \perp}_{\Psi_{0, K}}  \rightarrow (\widehat{\mathcal{H}}^{1, \perp}_{\Psi_{0, K}} )^{*}$ given by
	\begin{equation}\label{eq:discrete_cc_func}
		\forall \Phi, \Upsilon \in \widehat{\mathcal{H}}^{1, \perp}_{\Psi_{0, K}} \colon \quad \left\langle \Upsilon, \mathcal{G}_K(\Phi)\right\rangle_{\widehat{\mathcal{H}}^{1, \perp}_{\Psi_{0, K}}\times \left(\widehat{\mathcal{H}}^{1, \perp}_{\Psi_{0, K}}\right)^*}\;:=  \left\langle \Pi_K^{\rm R}\Upsilon, \mathcal{f}_K(\Phi)\right\rangle_{\widehat{\mathcal{H}}^{1, \perp}_{\Psi_{0, K}}\times \left(\widehat{\mathcal{H}}^{1, \perp}_{\Psi_{0, K}}\right)^*}\; + \mathcal{b}_K \left(\Phi, \Upsilon-\Pi_K^{\rm R}\Upsilon\right).
	\end{equation}
	
	We now claim that for any $K\geq K_0$, the function $\Theta_K \in \widetilde{\mathcal{V}}_K$ solves the discrete coupled cluster equations~\eqref{eq:CC_discrete_2*} if and only if $\Theta_K$ solves the infinite-dimensional non-linear equation
	\begin{align}\label{eq:equivalent}
		\forall \Upsilon \in \widehat{\mathcal{H}}^{1, \perp}_{\Psi_{0, K}} \colon\qquad \left\langle \Upsilon, \mathcal{G}_K(\Theta_K)\right\rangle_{\widehat{\mathcal{H}}^{1, \perp}_{\Psi_{0, K}}\times \left(\widehat{\mathcal{H}}^{1, \perp}_{\Psi_{0, K}}\right)^*}=0.
	\end{align}
	
	Indeed, assume $\Theta_K \in \widetilde{\mathcal{V}}_K$ solves the discrete coupled cluster equations \eqref{eq:CC_discrete_2*}. Then it follows directly from the definition of the projection operator $\Pi_{K}^{\rm R}$ given by Equation \eqref{eq:proof_proj_2} that $\Theta_K$ also solves Equation~\eqref{eq:equivalent}. Conversely, let $\Theta_K \in \widehat{\mathcal{H}}^{1, \perp}_{\Psi_{0, K}} $ be a solution to Equation \eqref{eq:equivalent}. We must show that $\Theta_K \in \widetilde{\mathcal{V}}_K$. 
	
	To this end, notice that Equations \eqref{eq:discrete_cc_func} and \eqref{eq:equivalent} together imply that for all $\Upsilon \in \widehat{\mathcal{H}}^{1, \perp}_{\Psi_{0, K}}$ it holds that
	\begin{align*}
		0= \left\langle \Upsilon- \Pi_K^{\rm R}\Upsilon, \mathcal{G}_K(\Theta_K)\right\rangle_{\widehat{\mathcal{H}}^{1, \perp}_{\Psi_{0, K}} \times \left(\widehat{\mathcal{H}}^{1, \perp}_{\Psi_{0, K}}\right)^*}=\mathcal{b}_K(\Theta_K, \Upsilon - \Pi_K^{\rm R}\Upsilon).
	\end{align*}
	Thanks to Definitions \eqref{eq:proof_proj_1} and \eqref{eq:proof_proj_2} of the projection operators $\Pi_K^{\rm L}$ and $\Pi_K^{\rm R}$ respectively, we can then deduce that for all $\Upsilon \in \widehat{\mathcal{H}}^{1, \perp}_{\Psi_{0, K}}$ it holds that
	\begin{align*}
		0=\mathcal{b}_K(\Theta_K, \Upsilon - \Pi_K^{\rm R}\Upsilon)=\mathcal{b}_K(\Theta_K-\Pi_K^{\rm L}\Theta_K, \Upsilon).
	\end{align*}
	Recalling now the definition of bilinear form $\mathcal{b}_K\colon \colon \widehat{\mathcal{H}}^{1, \perp}_{\Psi_{0, K}} \times \widehat{\mathcal{H}}^{1, \perp}_{\Psi_{0, K}} \rightarrow \mathbb{R}$ given by Equation \eqref{eq:bilinear} and using the fact that the Fr\'echet derivative  $ \mD \mathcal{f}_K(\Theta_{K, \rm GS}^*) \colon \widehat{\mathcal{H}}^{1, \perp}_{\Psi_{0, K}}  \rightarrow \left(\widehat{\mathcal{H}}^{1, \perp}_{\Psi_{0, K}} \right)^*$ is an isomorphism (see Theorem \ref{thm:CC_der_inv}), we conclude that $\Theta_K=\Pi_K^{\rm L}\Phi$, or equivalently, $\Theta_K \in \widetilde{\mathcal{V}}_K$ as claimed. In particular, $\Theta_K\in \widetilde{\mathcal{V}}_K$ is a solution to the discrete coupled cluster equations \eqref{eq:CC_discrete_2*}.
	
	\vspace{3mm}
	We will now analyse the (local) well-posedness of the equivalent Equation \eqref{eq:equivalent}. To do so, we first prove four properties of the sequence of non-linear functions $\{\mathcal{G}_K\}_{K\geq K_0}$. \vspace{3mm}

	\noindent \textbf{Claim One:} The mapping $\widehat{\mathcal{H}}^{1, \perp}_{\Psi_{0, K}} \ni \Theta \mapsto \mathcal{G}_K(\Theta)$ is of class $\mathscr{C}^{1}$ for all $K\geq K_0$.

	To demonstrate this first claim, we use the fact (see, e.g., \cite[Proposition 26]{Hassan_CC}) that for any $K\geq N$ the coupled cluster function $\mathcal{f}_K \colon \widehat{\mathcal{H}}^{1, \perp}_{\Psi_{0, K}}\rightarrow \left(\widehat{\mathcal{H}}^{1, \perp}_{\Psi_{0, K}}\right)^*$ is a $\mathscr{C}^{\infty}$ mapping. It then follows directly from Equation \eqref{eq:discrete_cc_func} that $\mathcal{G}_K$ is also a $\mathscr{C}^{\infty}$ mapping for any $K\geq K_0$. In particular, for any $\Theta \in \widehat{\mathcal{H}}^{1, \perp}_{\Psi_{0, K}}$ the Fr\'echet derivative $\mD\mathcal{G}_K(\Theta)\colon  \widehat{\mathcal{H}}^{1, \perp}_{\Psi_{0, K}} \rightarrow \left(\widehat{\mathcal{H}}^{1, \perp}_{\Psi_{0, K}}\right)^*$ is given by
	\begin{align}\label{eq:Frechet_G}
		\forall \Phi, \Upsilon \in \widehat{\mathcal{H}}^{1, \perp}_{\Psi_{0, K}} \colon \quad \left\langle \Upsilon, \mD \mathcal{G}_K(\Theta)\Phi\right\rangle_{\widehat{\mathcal{H}}^{1, \perp}_{\Psi_{0, K}} \times \left(\widehat{\mathcal{H}}^{1, \perp}_{\Psi_{0, K}}\right)^*}:=&  \left\langle \Pi_K^{\rm R}\Upsilon, \mD \mathcal{f}_K(\Theta)\Phi\right\rangle_{\widehat{\mathcal{H}}^{1, \perp}_{\Psi_{0, K}} \times\left(\widehat{\mathcal{H}}^{1, \perp}_{\Psi_{0, K}}\right)^*}\\
  +& \mathcal{b}_K \left(\Phi, \Upsilon-\Pi_K^{\rm R}\Upsilon\right). \nonumber
	\end{align}

	\vspace{3mm}

	\noindent \textbf{Claim Two:} There exists $\delta_0 >0$ and $L_0> 0$ such that for all $K \geq K_0$ it holds that 
	\begin{align*}
		\forall \Theta \in \widehat{\mathcal{H}}^{1, \perp}_{\Psi_{0, K}}  \cap \mathbb{B}_{\delta_0}(\Theta_{K, \rm GS}^*) \colon \quad \Vert \mD \mathcal{G}_K(\Theta_{K, \rm GS}^*) - \mD \mathcal{G}_K(\Theta)\Vert_{\widehat{\mathcal{H}}^{1, \perp}_{\Psi_{0, K}}\to \left(\widehat{\mathcal{H}}^{1, \perp}_{\Psi_{0, K}}\right)^*} \leq L_0 \Vert \Theta_{K, \rm GS}^*- \Theta\Vert_{\widehat{\mathcal{H}}^1}.
	\end{align*}
	
	To demonstrate the second claim, we begin by using Equation \eqref{eq:Frechet_G} to deduce that for all $K\geq K_0$ and all $\Theta, \Upsilon, \Phi \in \widehat{\mathcal{H}}^{1, \perp}_{\Psi_{0, K}}$ it holds that
	\begin{align}\nonumber
		\left\langle \Upsilon, \left(\mD \mathcal{G}_K(\Theta_{K, \rm GS}^*)-\mD \mathcal{G}_K(\Theta)\right)\Phi\right\rangle_{\widehat{\mathcal{H}}^{1, \perp}_{\Psi_{0, K}} \times \left(\widehat{\mathcal{H}}^{1, \perp}_{\Psi_{0, K}}\right)^*}&=  \left\langle \Pi_K^{\rm R}\Upsilon, \left(\mD \mathcal{f}_K(\Theta_{K, \rm GS}^*) -\mD \mathcal{f}_K(\Theta)\right)\Phi\right\rangle_{\widehat{\mathcal{H}}^{1, \perp}_{\Psi_{0, K}} \times \left(\widehat{\mathcal{H}}^{1, \perp}_{\Psi_{0, K}}\right)^*}\\[0.5em]
		&\leq \Vert \Pi_K^{\rm R}\Upsilon\Vert_{\widehat{\mathcal{H}}^{1}} \Vert \left(\mD \mathcal{f}_K(\Theta_{K, \rm GS}^*) -\mD \mathcal{f}_K(\Theta)\right)\Phi\Vert_{\left(\widehat{\mathcal{H}}^{1, \perp}_{\Psi_{0, K}}\right)^*}. \label{eq:main_thm_1}
	\end{align}
	
	To proceed, we must bound the term $\Vert \Pi_K^{\rm R}\Upsilon\Vert_{\widehat{\mathcal{H}}^{1}}$. To this end, we observe that thanks to Equation~\eqref{eq:proof_proj_1}, for every $K\geq K_0$ and any $\Upsilon \in \widehat{\mathcal{H}}^{1, \perp}_{\Psi_{0, K}}$, the function $\Pi_K^{\rm R}\Upsilon$ is a solution to the following adjoint problem:
	\begin{align*}
		\forall \Phi_K \in \widetilde{\mathcal{V}}_K \colon \quad \mathcal{b}_K\left(\Phi_K, \Pi_K^{\rm R}\Upsilon\right)=\mathcal{b}_K\left(\Phi_K, \Upsilon\right).
	\end{align*}

	Recalling the definition of the bilinear form $\mathcal{b}_{K}$ given by Equation \eqref{eq:bilinear} and using Lemma \ref{lem:inf-sup}, which implies that the Fr\'echet derivative $\mD \mathcal{f}_K(\Theta_{K, \rm GS}^*) $ satisfies a discrete inf-sup condition on $\widetilde{\mathcal{V}}_K$ with constant $\gamma >0$, a simple calculation yields that
	\begin{align*}
		\forall K\geq K_0, ~\forall \Upsilon \in  \widehat{\mathcal{H}}^{1, \perp}_{\Psi_{0, K}} \colon \qquad    \Vert \Pi_K^{\rm R}\Upsilon\Vert_{\widehat{\mathcal{H}}^{1}} \leq \frac{\Vert \mD \mathcal{f}_K(\Theta_{K, \rm GS}^*)\Vert_{\widehat{\mathcal{H}}^{1, \perp}_{\Psi_{0, K}} \to \left(\widehat{\mathcal{H}}^{1, \perp}_{\Psi_{0, K}}\right)^*} }{\gamma}\Vert \Upsilon\Vert_{\widehat{\mathcal{H}}^{1}}.
	\end{align*}
 
	Using now the expression for the Fr\'echet derivative $\mD \mathcal{f}_K(\Theta_{K, \rm GS}^*) \colon \widehat{\mathcal{H}}^{1, \perp}_{\Psi_{0, K}}\rightarrow \left(\widehat{\mathcal{H}}^{1, \perp}_{\Psi_{0, K}}\right)^*$ found in \cite[Proposition 26]{Hassan_CC}, we further deduce that for all $K\geq K_0$ and all $\Upsilon \in  \widehat{\mathcal{H}}^{1, \perp}_{\Psi_{0, K}} $ it holds that
	\begin{align*}
		\Vert \Pi_K^{\rm R}\Upsilon\Vert_{\widehat{\mathcal{H}}^{1}} \leq \frac{\Vert e^{-\mathcal{T}(\Theta_{K, \rm GS}^*)}\Vert_{\widehat{\mathcal{H}}^{-1} \to \widehat{\mathcal{H}}^{-1}} \Vert H-\mathcal{E}^*_{\rm GS}\Vert_{\widehat{\mathcal{H}}^{1} \to \widehat{\mathcal{H}}^{-1}}\Vert e^{\mathcal{T}(\Theta_{K, \rm GS}^*)}\Vert_{\widehat{\mathcal{H}}^{1} \to \widehat{\mathcal{H}}^{1}}}{\gamma}\Vert \Upsilon\Vert_{\widehat{\mathcal{H}}^{1}}.
	\end{align*}
 
	Finally, making use of the uniform boundedness of the sequence of coupled cluster zeros $\{\Theta^*_{K, \rm GS}\}_{K \geq N}$ given by Lemma \ref{lem:approx}, we deduce the existence of a constant $\breve{\beta}>0$ such that for all $K\geq K_0$ and all $\Upsilon \in  \widehat{\mathcal{H}}^{1, \perp}_{\Psi_{0, K}} $ it holds that
	\begin{align}\label{eq:main_thm_2}
		\Vert \Pi_K^{\rm R}\Upsilon\Vert_{\widehat{\mathcal{H}}^{1}} \leq \frac{\breve{\beta}}{\gamma}\Vert \Upsilon\Vert_{\widehat{\mathcal{H}}^{1}},
	\end{align}
	where we emphasise that both $\breve{\beta}, \gamma >0$ are independent of $K$.
	
	In view of Inequalities \eqref{eq:main_thm_1} and \eqref{eq:main_thm_2}, it remains to prove that for every $K\geq K_0$ the Fr\'echet derivative $\mD \mathcal{f}_K$ of the coupled cluster function $\mathcal{f}_K$ is locally Lipschitz continuous in a neighbourhood around $\Theta_{K, \rm GS}^*$ with Lipschitz constant uniformly bounded in $K$. While this local Lipschitz continuity has been established in prior results (see, e.g., \cite[Proposition 26]{Hassan_CC}), the Lipschitz constants obtained therein have not been shown to be uniform in $K$. We now state a brief argument to bridge this gap. 
 
 To do so, we will make use of the expression for the Fr\'echet derivative of the coupled cluster function found in our previous contribution \cite[Proposition 26]{Hassan_CC}. Using this expression, we deduce that for all $K\geq K_0$ and all $\Theta, \Phi \in \widehat{\mathcal{H}}^{1, \perp}_{\Psi_{0, K}}$ it holds that
	\begin{align*}
		&\Vert \left(\mD \mathcal{f}_K(\Theta_{K, \rm GS}^*) -\mD \mathcal{f}_K(\Theta)\right)\Phi\Vert_{\left(\widehat{\mathcal{H}}^{1, \perp}_{\Psi_{0, K}}\right)^*}\\
        \leq &\Vert \left(\mD \mathcal{f}_K(\Theta_{K, \rm GS}^*) -\mD \mathcal{f}_K(\Theta)\right)\Phi\Vert_{\widehat{\mathcal{H}}^{-1}}
		\\[0.25em]
  \leq& \left \Vert e^{-\mathcal{T}(\Theta_{K, \rm GS}^*)}\comm\Big{H}{\mathcal{T}(\Phi) } e^{\mathcal{T}(\Theta_{K, \rm GS}^*)}\Psi_{0, K}-e^{-\mathcal{T}(\Theta)}\comm\Big{H}{\mathcal{T}(\Phi)} e^{\mathcal{T}(\Theta)}\Psi_{0, K}\right \Vert_{\widehat{\mathcal{H}}^{-1}},
	\end{align*}
	where $\mathcal{T}(\Phi), \mathcal{T}(\Theta)$ denote, as usual the cluster operators generated by $\Phi, \Theta \in \widehat{\mathcal{H}}^{1, \perp}_{\Psi_{0, K}}$ respectively, and $[\cdot, \cdot]$ denotes the usual commutator. 
	
	Adding and subtracting suitable terms now yields
	\begin{align}\label{eq:main_thm_1*}
		\Vert \left(\mD \mathcal{f}_K(\Theta_{K, \rm GS}^*) -\mD \mathcal{f}_K(\Theta)\right)\Phi\Vert_{\widehat{\mathcal{H}}^{-1}}
		\leq& \underbrace{\left \Vert  \left(e^{-\mathcal{T}(\Theta_{K, \rm GS}^*)} -e^{-\mathcal{T}(\Theta)}\right)\comm\Big{H}{\mathcal{T}(\Phi) } e^{\mathcal{T}(\Theta_{K, \rm GS}^*)}\Psi_{0, K}\right \Vert_{\widehat{\mathcal{H}}^{-1}}}_{:=\rm (I)}\\[0.5em]
		+&\underbrace{\left \Vert  e^{-\mathcal{T}(\Theta)}\comm\Big{H}{\mathcal{T}(\Phi) } \left(e^{\mathcal{T}(\Theta_{K, \rm GS}^*)}-e^{-\mathcal{T}(\Theta)}\right)\Psi_{0, K}\right \Vert_{\widehat{\mathcal{H}}^{-1}}}_{:=\rm (II)}. \nonumber
	\end{align}
	
	Let us first consider the term (I). We immediately observe that
	\begin{align}\label{eq:main_thm_2*}
		{\rm (I)} \leq \underbrace{\left \Vert  e^{-\mathcal{T}(\Theta_{K, \rm GS}^*)} -e^{-\mathcal{T}(\Theta)}\right \Vert_{\widehat{\mathcal{H}}^{-1} \to \widehat{\mathcal{H}}^{-1} }}_{:= \rm (IA)} \; \underbrace{\left \Vert\comm\Big{H}{\mathcal{T}(\Phi) }\right \Vert_{\widehat{\mathcal{H}}^{1} \to \widehat{\mathcal{H}}^{-1}}}_{:= \rm (IB)}  \; \underbrace{\left \Vert  e^{\mathcal{T}(\Theta_{K, \rm GS}^*)}\Psi_{0, K}\right \Vert_{\widehat{\mathcal{H}}^{1}}}_{:= \rm (IC)}.
	\end{align}
 
	We proceed term-by-term beginning with (IC). We first notice that thanks to Theorem \ref{thm:1} concerning the continuity properties of cluster operators, there exists $\beta_{\widehat{\mathcal{H}}}>0$ depending only on $N$ and $\Vert \Psi_{0, K}\Vert_{\widehat{\mathcal{H}}^1}$ such that
	\begin{align}\label{eq:main_thm_3}
		{\rm (IC)}=\left \Vert  e^{\mathcal{T}(\Theta_{K, \rm GS}^*)}\Psi_{0, K}\right \Vert_{\widehat{\mathcal{H}}^{1}}\leq e^{\beta_{\widehat{\mathcal{H}}} \Vert \Theta_{K, \rm GS}^*\Vert_{\widehat{\mathcal{H}}^{1}} }\Vert \Psi_{0, K}\Vert_{\widehat{\mathcal{H}}^{1}}.
	\end{align}
	
	Using again Theorem \ref{thm:1} and appealing to the continuity of the electronic Hamiltonian $H \colon \widehat{\mathcal{H}}^{1} \rightarrow \widehat{\mathcal{H}}^{-1}$, we further deduce the existence of a constant $\widehat{\beta}>0$ depending only on $N$, $\Vert \Psi_{0, K}\Vert_{\widehat{\mathcal{H}}^1}$ and the continuity constant of $H$ such that 
	\begin{align}
		{\rm (IB)} =\left \Vert\comm\Big{H}{\mathcal{T}(\Phi) }\right \Vert_{\widehat{\mathcal{H}}^{1} \to \widehat{\mathcal{H}}^{-1}} \leq \widehat{\beta} \Vert \Phi \Vert_{\widehat{\mathcal{H}}^{1}}.
	\end{align}
	
	Finally, relying on the series expansion of the exponential, we deduce the existence of a constant $\overline{\beta}_K>0$ depending only $\Vert \mathcal{T}(\Theta^*_{K, \rm GS})\Vert_{\widehat{\mathcal{H}}^{-1} \to \widehat{\mathcal{H}}^{-1}}$ such that for all $\Theta \in \widehat{\mathcal{H}}^{1, \perp}_{\Psi_{0, K}}$ that satisfy
	\begin{align} \label{eq:main_thm_4}
		\Vert \mathcal{T}(\Theta) - \mathcal{T}(\Theta_{K, \rm GS}^*)\Vert_{\widehat{\mathcal{H}}^{-1} \to \widehat{\mathcal{H}}^{-1}} &< 1,\\
		\intertext{it holds that}
		{\rm (IA)}= \left \Vert  e^{-\mathcal{T}(\Theta_{K, \rm GS}^*)} -e^{-\mathcal{T}(\Theta)}\right \Vert_{\widehat{\mathcal{H}}^{-1} \to \widehat{\mathcal{H}}^{-1} } &\leq \overline{\beta}_K \Vert \mathcal{T}(\Theta) - \mathcal{T}(\Theta_{K, \rm GS}^*)\Vert_{\widehat{\mathcal{H}}^{1}}. \label{eq:main_thm_5}
	\end{align}
 
 
	On the other hand, the linearity and continuity properties of cluster operators given by Theorem~\ref{thm:1} imply that for any $\Theta \in \widehat{\mathcal{H}}^{1, \perp}_{\Psi_{0, K}}$ it holds that
	\begin{align}\label{eq:main_thm_6}
		\Vert \mathcal{T}(\Theta) - \mathcal{T}(\Theta_{K, \rm GS}^*)\Vert_{\widehat{\mathcal{H}}^{-1} \to \widehat{\mathcal{H}}^{-1}} \leq \beta_{\widehat{\mathcal{H}}}\Vert \Theta - \Theta_{K, \rm GS}^*\Vert_{\widehat{\mathcal{H}}^{1}},
	\end{align}
 where, as stated earlier, $\beta_{\widehat{\mathcal{H}}}$ depends only on $N$ and $\Psi_{0, K}$.
 
	In view of Inequalities \eqref{eq:main_thm_2*}-\eqref{eq:main_thm_6}, we deduce that for any $K\geq K_0$ and all $\Theta \in \widehat{\mathcal{H}}^{1, \perp}_{\Psi_{0, K}} \cap \mathbb{B}_{1/\beta_{\widehat{\mathcal{H}}^1}}(\Theta_{K, \rm GS}^*)$ it holds that
	\begin{equation*}
		{\rm (I)}\leq \beta_{\widehat{\mathcal{H}}} \overline{\beta}_K \widehat{\beta}\;  e^{\beta_{\widehat{\mathcal{H}}}\Vert \Theta_{K, \rm GS}^*\Vert_{\widehat{\mathcal{H}}^{1} }}\;\Vert \Psi_{0, K}\Vert_{\widehat{\mathcal{H}}^{1}}\Vert \Phi \Vert_{\widehat{\mathcal{H}}^{1}}\Vert \Theta - \Theta_{K, \rm GS}^*\Vert_{\widehat{\mathcal{H}}^{1}},
	\end{equation*}
    where we emphasise that the constants $\beta_{\widehat{\mathcal{H}}^1}, \overline{\beta}_K, \widehat{\beta}>0$ all depend only on $N$, $\Vert \Psi_{0, K}\Vert_{\widehat{\mathcal{H}}^1}$ and $\Vert \Theta_{K, \rm GS}^*\Vert_{\widehat{\mathcal{H}}^{1} }$. Appealing now to the uniform boundedness of the sequence $\{\Theta^*_{K, \rm GS}\}_{K\geq N}$ as given in Lemma \ref{lem:approx}, we finally deduce the existence of a constant $L_{\rm (I)}>0$ that is independent of $K$ such that for all $K \geq K_0$ and all $\Theta \in \widehat{\mathcal{H}}^{1, \perp}_{\Psi_{0, K}} \cap \mathbb{B}_{1/\beta_{\widehat{\mathcal{H}}^1}}(\Theta_{K, \rm GS}^*)$ it holds that
    \begin{align*}
        {\rm (I)}\leq L_{\rm (I)}\Vert \Theta - \Theta_{K, \rm GS}^*\Vert_{\widehat{\mathcal{H}}^{1}}.
    \end{align*}

	A very similar argument can be used to bound the term (II) appearing in Inequality \eqref{eq:main_thm_1*} so that Inequality \eqref{eq:main_thm_1*} allows us to deduce the required local Lipschitz continuity of the Fr\'echet derivative $\mD \mathcal{f}_K$ at $\Theta_{K, \rm GS}^*$ with Lipschitz constant uniformly bounded in $K$.

	\vspace{3mm}
	
	\noindent \textbf{Claim Three:} The sequence of non-linear maps $\{\mathcal{G}_K\}_{K\geq K_0}$ defined through Equation \eqref{eq:discrete_cc_func} satisfy
	\begin{align*}
		\lim_{K \to \infty} \Vert \mathcal{G}_K(\Theta_{K, \rm GS}^*) \Vert_{(\widehat{\mathcal{H}}^{1, \perp}_{\Psi_{0, K}} )^{*}}=0.
	\end{align*}
	
	To demonstrate this claim, we will use the definition of the non-linear mapping $\mathcal{G}_K\colon \widehat{\mathcal{H}}^{1, \perp}_{\Psi_{0, K}}  \rightarrow (\widehat{\mathcal{H}}^{1, \perp}_{\Psi_{0, K}} )^{*}$ given by Equation \eqref{eq:discrete_cc_func} together with the fact that $\Theta_{K, \rm GS}^* \in \widehat{\mathcal{H}}^{1, \perp}_{\Psi_{0, K}}$ is by definition, a zero of the coupled cluster function $\mathcal{f}_K \colon \widehat{\mathcal{H}}^{1, \perp}_{\Psi_{0, K}}  \rightarrow (\widehat{\mathcal{H}}^{1, \perp}_{\Psi_{0, K}} )^{*}$. Combining these two facts yields that for all $K \geq K_0$ and all $\Upsilon \in \widehat{\mathcal{H}}^{1, \perp}_{\Psi_{0, K}}$ we have that
	\begin{align}\nonumber
		\left\langle \Upsilon, \mathcal{G}_K(\Theta_{K, \rm GS}^*)\right\rangle_{\widehat{\mathcal{H}}^{1, \perp}_{\Psi_{0, K}}  \times \left(\widehat{\mathcal{H}}^{1, \perp}_{\Psi_{0, K}} \right)^*}&= \mathcal{b}_K \left(\Theta_{K, \rm GS}^*, \Upsilon-\Pi_K^{\rm R}\Upsilon\right)\\ \label{eq:main_thm_7}
  &=\mathcal{b}_K \left(\Theta_{K, \rm GS}^* - \Pi_K^{\rm L}\Theta_{K, \rm GS}^*, \Upsilon\right)\\[0.25em] 
		&\leq \Vert \mD \mathcal{f}_K(\Theta_{K, \rm GS}^*)\Vert_{\widehat{\mathcal{H}}^{1, \perp}_{\Psi_{0, K}} \to \left(\widehat{\mathcal{H}}^{1, \perp}_{\Psi_{0, K}}\right)^*}  \Vert \Theta_{K, \rm GS}^*- \Pi_K^{\rm L}\Theta_{K, \rm GS}^*\Vert_{\widehat{\mathcal{H}}^1} \Vert \Upsilon  \Vert_{\widehat{\mathcal{H}}^1}. \nonumber
	\end{align}
	where the second step follows from the definition of the projection operators $\Pi_K^L$ and $\Pi_K^R$ given by Equations \eqref{eq:proof_proj_1}  and \eqref{eq:proof_proj_2} respectively, and the third step follows from the definition of the bilinear form $\mathcal{b}_K\colon \widehat{\mathcal{H}}^{1, \perp}_{\Psi_{0, K}}  \times \widehat{\mathcal{H}}^{1, \perp}_{\Psi_{0, K}}  \rightarrow \mathbb{R}$ given by Equation~\eqref{eq:bilinear}.
	
	We first focus on bounding the term involving the projection operator $\Pi_K^{\rm L}$. To this end, we recall that by assumption, the Fr\'echet derivative $\mD \mathcal{f}_K(\Theta_{K, \rm GS}^*) $-- and hence the bilinear form $\mathcal{b}_K$-- satisfies a discrete inf-sup condition on $\widetilde{\mathcal{V}}_K$. Consequently, recalling further the definition of the projection operator $\Pi_K^{\rm L}$ given by Equation \eqref{eq:proof_proj_1}, we can use the well-known C\'ea's lemma to deduce that for all $K \geq K_0$ and any $\Phi \in \widehat{\mathcal{H}}^{1, \perp}_{\Psi_{0, K}}$ it holds that
	\begin{align}\label{eq:main_thm_8}
		\Vert \Theta_{K, \rm GS}^*- \Pi_K^{\rm L}\Theta_{K, \rm GS}^*\Vert_{\widehat{\mathcal{H}}^1} \leq \left(1 + \frac{\Vert \mD \mathcal{f}_K(\Theta_{K, \rm GS}^*)\Vert_{\widehat{\mathcal{H}}^{1, \perp}_{\Psi_{0, K}}\to \left(\widehat{\mathcal{H}}^{1, \perp}_{\Psi_{0, K}}\right)^* }}{\gamma}\right) \inf_{\widetilde{\Phi}_K \in \widetilde{\mathcal{V}}_K} \Vert \Theta_{K, \rm GS}^* - \widetilde{\Phi}_K\Vert_{\widehat{\mathcal{H}}^1}.
	\end{align}
	
	Consider now Inequalities \eqref{eq:main_thm_7} and \eqref{eq:main_thm_8}. We have already argued (see the proof of \textbf{Claim Two} above) that the operator norm $\Vert \mD \mathcal{f}_K(\Theta_{K, \rm GS}^*)\Vert_{\widehat{\mathcal{H}}^{1, \perp}_{\Psi_{0, K}} \to \left(\widehat{\mathcal{H}}^{1, \perp}_{\Psi_{0, K}}\right)^*} $ is uniformly bounded above in $K$. Recalling therefore Lemma \ref{lem:approx} which implies that the $\widehat{\mathcal{H}}^1$-best approximation error of $\Theta_{K, \rm GS}^*$ in the approximation space $\widetilde{\mathcal{V}}_K$ goes to zero as $K \to \infty$ now yields the required result.
	
	\vspace{3mm}
	
	\noindent \textbf{Claim Four:} For every $K\geq K_0$ it holds that the Fr\'echet derivative $\mD \mathcal{G}_K(\Theta_{K, \rm GS}^*) \colon \widehat{\mathcal{H}}^{1, \perp}_{\Psi_{0, K}}  \rightarrow (\widehat{\mathcal{H}}^{1, \perp}_{\Psi_{0, K}} )^{*}$ is an isomorphism with continuity and inf-sup constants uniformly bounded in $K$.
	
	To demonstrate this final claim, we make use of Equation \eqref{eq:Frechet_G} which implies that for all $K\geq K_0$ it holds that
	\begin{align*}
		\forall \Phi, \Upsilon \in \widehat{\mathcal{H}}^{1, \perp}_{\Psi_{0, K}} \colon \quad \left\langle \Upsilon, \mD \mathcal{G}_K(\Theta_{K, \rm GS}^*)\Phi\right\rangle_{\widehat{\mathcal{H}}^{1, \perp}_{\Psi_{0, K}} \times \left(\widehat{\mathcal{H}}^{1, \perp}_{\Psi_{0, K}}\right)^*}=& \left\langle \Pi_K^{\rm R}\Upsilon, \mD \mathcal{f}_K(\Theta_{K, \rm GS}^*)\Phi\right\rangle_{\widehat{\mathcal{H}}^{1, \perp}_{\Psi_{0, K}} \times \left(\widehat{\mathcal{H}}^{1, \perp}_{\Psi_{0, K}}\right)^*}\\[0.25em]
  +& \mathcal{b}_K \left(\Phi, \Upsilon-\Pi_K^{\rm R}\Upsilon\right)\\[0.5em]
		=& \left\langle \Upsilon, \mD \mathcal{f}_K(\Theta_{K, \rm GS}^*)\Phi\right\rangle_{\widehat{\mathcal{H}}^{1, \perp}_{\Psi_{0, K}} \times \left(\widehat{\mathcal{H}}^{1, \perp}_{\Psi_{0, K}}\right)^*},
	\end{align*}
	where the second step follows from the definition of the bilinear form $\mathcal{b}_K$ given by Equation \eqref{eq:bilinear}. Thus, we have that $\mD \mathcal{G}_K(\Theta_{K, \rm GS}^*)  = \mD \mathcal{f}_k(\Theta_{K, \rm GS}^*) $ and the isomorphism property immediately follows from Theorem \ref{thm:CC_der_inv}.

	To demonstrate the remaining portion of \textbf{Claim Four}, note that we have already shown in the proof of \textbf{Claim Two} that the continuity constant of the mapping $\mD \mathcal{f}_K(\Theta_{K, \rm GS}^*) \colon \widehat{\mathcal{H}}^{1, \perp}_{\Psi_{0, K}}  \rightarrow (\widehat{\mathcal{H}}^{1, \perp}_{\Psi_{0, K}} )^{*}$ is uniformly bounded in $K\geq K_0$. Similar arguments allow us to conclude that the inf-sup constant of $\mD \mathcal{f}_K(\Theta_{K, \rm GS}^*)$ (given by Theorem \ref{thm:CC_der_inv}) is also uniformly bounded in $K\geq K_0$. For the sake of brevity and to avoid repeating tedious arguments, we omit these details.

	\vspace{3mm}
	
	Equipped with \textbf{Claims One} through \textbf{Four}, we are now ready to conclude our proof of the present theorem. We seek to apply the inverse function theorem for Banach spaces, namely, Theorem \ref{Caloz_Rappaz_2} to the non-linear mapping $\mathcal{G}_K \colon \widehat{\mathcal{H}}^{1, \perp}_{\Psi_{0, K}}  \rightarrow (\widehat{\mathcal{H}}^{1, \perp}_{\Psi_{0, K}} )^{*}$ for $K$ sufficiently large. To this end, let us define for each $K\geq K_0$ the quantities
	\begin{align*}
		\epsilon_K :=& \Vert \mathcal{G}_K(\Theta_{K, \rm GS}^*)\Vert_{\left(\widehat{\mathcal{H}}^{1, \perp}_{\Psi_{0, K}} \right)^{*}},\\[1em]
		\gamma_{\mathcal{G}}^*:=& \Vert \mD \mathcal{G}_K(\Theta_{K, \rm GS}^*)^{-1}\Vert_{\left(\widehat{\mathcal{H}}^{1, \perp}_{\Psi_{0, K}} \right)^{*}\to \widehat{\mathcal{H}}^{1, \perp}_{\Psi_{0, K}}},\\[1em]
		\forall\alpha \in [0, \delta_0)\colon \qquad \mL(\alpha):=& \sup_{\Theta \in \overline{\mathbb{B}_{\alpha}(\Theta_{K, \rm GS}^*)}} \Vert \mD \mathcal{G}_K(\Theta_{K, \rm GS}^*)-\mD \mathcal{G}_K(\Theta)\Vert_{\widehat{\mathcal{H}}^{1, \perp}_{\Psi_{0, K}} \to \left(\widehat{\mathcal{H}}^{1, \perp}_{\Psi_{0, K}} \right)^{*} }.
	\end{align*}
	
	We now observe that
	\begin{itemize}
		\item Thanks to \textbf{Claim One}, for all $K \geq {K_0}$, the mapping $\mathcal{G}_K \colon \widehat{\mathcal{H}}^{1, \perp}_{\Psi_{0, K}}  \rightarrow (\widehat{\mathcal{H}}^{1, \perp}_{\Psi_{0, K}} )^{*}$  is of regularity class $\mathscr{C}^1$. 
		
		\item Thanks to \textbf{Claim Two}, for all $K \geq {K_0}$ and $\alpha \in [0, \delta_0)$ with ${\delta_0} >0$ independent of $K$, the constant $\mL(\alpha)$ is bounded above, uniformly  in $K$.
		
		\item Thanks to \textbf{Claim Three}, the constant $\epsilon_K$ defined above satisfies $\lim_{K\to \infty} \epsilon_K =0.$
		
		\item Thanks to \textbf{Claim Four}, for all $K \geq {K_0}$, the Fr\'echet derivative $\mD \mathcal{G}_K \colon \widehat{\mathcal{H}}^{1, \perp}_{\Psi_{0, K}}  \rightarrow (\widehat{\mathcal{H}}^{1, \perp}_{\Psi_{0, K}} )^{*}$ is an isomorphism and the constant $\gamma^*_{\mathcal{G}}$ defined above is uniformly bounded above in $K$.
	\end{itemize}
	
	Consequently, there exists $\overline{K_0} \geq N$ such that for all $K \geq \overline{K_0}$ it holds that
	\begin{align*}
		2 \gamma_{\mathcal{G}}^* \mL(2\gamma_{\mathcal{G}}^* \epsilon_K) < 1,
	\end{align*}
	and the conclusions of Theorem \ref{Caloz_Rappaz_2} apply: for all $K \geq \overline{K_0}$ the closed ball $\overline{\mathbb{B}_{2\gamma_{\mathcal{G}}^*\epsilon_K}\left(\Theta_{K, \rm GS}^*\right)} \subset \widehat{\mathcal{H}}^{1, \perp}_{\Psi_{0, K}} $ contains a unique solution~$\Theta_{K}$ to the equation
	\begin{align*}
		\mathcal{G}_K(\Theta_{K})=0,
	\end{align*}
	the Fr\'echet derivative $\mD \mathcal{G}_K(\Theta_{K}) \colon \widehat{\mathcal{H}}^{1, \perp}_{\Psi_{0, K}} \rightarrow \left(\widehat{\mathcal{H}}^{1, \perp}_{\Psi_{0, K}} \right)^*$ is an isomorphism with
	\begin{align}\label{eq:main_thm_8*}
		\Vert \mD \mathcal{G}_K(\Theta_{K})^{-1}\Vert_{\left(\widehat{\mathcal{H}}^{1, \perp}_{\Psi_{0, K}} \right)^*\to \widehat{\mathcal{H}}^{1, \perp}_{\Psi_{0, K}}  } \leq 2 \gamma_{\mathcal{G}}^*,
	\end{align}
	and for all $\Theta \in \overline{\mathbb{B}_{2\gamma_{\mathcal{G}}^*\epsilon_K}(\Theta^*_{K})}\cap \widehat{\mathcal{H}}^{1, \perp}_{\Psi_{0, K}} $, we have the error estimate
	\begin{align}\label{eq:main_thm_9}
		\Vert \Theta_{K}-\Theta\Vert_{\widehat{\mathcal{H}}^1} \leq 2 \gamma_{\mathcal{G}}^* \Vert \mathcal{G}_K(\Theta)\Vert_{\left(\widehat{\mathcal{H}}^{1, \perp}_{\Psi_{0, K}} \right)^*}.
	\end{align}
	Taking now Inequality \eqref{eq:main_thm_9} and applying the earlier Inequalities \eqref{eq:main_thm_7} and \eqref{eq:main_thm_8} immediately yields the quasi-optimality result \eqref{eq:quasi_optimality}.
	
	It remains to establish the residual-based error estimate \eqref{eq:residual_error}. To this end, let us define for each $K \geq \overline{K_0}$ the quantities
	\begin{align*}
		\widetilde{\epsilon_K} :=& \Vert \mathcal{f}_K(\Theta_K)\Vert_{\left(\widehat{\mathcal{H}}^{1, \perp}_{\Psi_{0, K}} \right)^{*}},\\[1em]
		\gamma_{\mathcal{f}}^*:=& \Vert \mD \mathcal{f}_K(\Theta_{K})^{-1}\Vert_{\left(\widehat{\mathcal{H}}^{1, \perp}_{\Psi_{0, K}} \right)^{*}\to \widehat{\mathcal{H}}^{1, \perp}_{\Psi_{0, K}}},\\[1em]
		\forall\alpha \in [0, \delta_0)\colon \qquad \widetilde{\mL(\alpha)}:=& \sup_{\Theta \in \overline{\mathbb{B}_{\alpha}(\Theta_{K})}} \Vert \mD \mathcal{f}_K(\Theta_{K})-\mD \mathcal{f}_K(\Theta)\Vert_{\widehat{\mathcal{H}}^{1, \perp}_{\Psi_{0, K}} \to \left(\widehat{\mathcal{H}}^{1, \perp}_{\Psi_{0, K}} \right)^{*} }.
	\end{align*} 
	
	To proceed, let us first observe that the quasi-optimality result \eqref{eq:quasi_optimality} in combination with the approximability result given by Lemma \ref{lem:approx} together imply that
	\begin{align}\label{eq:main_thm_10}
		\lim_{K \to \infty} \Vert \Theta_K - \Theta_{K, \rm GS}^*\Vert_{\widehat{\mathcal{H}}^1}=0.
	\end{align}
	
	Next, we claim that we also have the consistency result
	\begin{align}\label{eq:thm_main_consist}
		\lim_{K \to \infty} \widetilde{\epsilon_K} = \lim_{K \to \infty} \Vert \mathcal{f}_K(\Theta_K)\Vert_{\left(\widehat{\mathcal{H}}^{1, \perp}_{\Psi_{0, K}} \right)^{*}}=0.
	\end{align}
	
	Indeed, recalling on the one hand that $\Theta_{K, \rm GS}^* \in \widehat{\mathcal{H}}^{1, \perp}_{\Psi_{0, K}}$ is a zero of the coupled cluster function $\mathcal{f}_K$, and on the other hand that $\mathcal{f}_K$ is a $\mathscr{C}^{\infty}$ mapping, we may appeal to the mean-value inequality for Banach spaces (see, e.g., \cite[Theorem 3.2.7]{MR2323436}) to deduce that for all $K\geq \overline{K_0}$ it holds that
	\begin{align*}
		\Vert \mathcal{f}_K(\Theta_K)-\mathcal{f}_K(\Theta_{K, \rm GS}^*)\Vert_{(\widehat{\mathcal{H}}^{1, \perp}_{\Psi_{0, K}} )^{*}} \leq & \sup_{t \in [0, 1]} \Vert \mD \mathcal{f}_K\left(\Theta_{K, \rm GS}^* + t( \Theta_K-\Theta_{K, \rm GS}^*)\right) \left(\Theta_K - \Theta_{K, \rm GS}^*\right)\Vert_{(\widehat{\mathcal{H}}^{1, \perp}_{\Psi_{0, K}} )^{*}}\\
		\leq & \sup_{t \in [0, 1]} \Vert \mD \mathcal{f}_K\left(\Theta_{K, \rm GS}^* + t( \Theta_K-\Theta_{K, \rm GS}^*)\right) \Vert_{\widehat{\mathcal{H}}^{1, \perp}_{\Psi_{0, K}}\to (\widehat{\mathcal{H}}^{1, \perp}_{\Psi_{0, K}} )^{*}} \\
		\cdot &\Vert \Theta_K - \Theta_{K, \rm GS}^*\Vert_{\widehat{\mathcal{H}}^{1, \perp}_{\Psi_{0, K}}}.
	\end{align*}
 
	Notice now that for any $t\in[0, 1]$ and all $K\geq \overline{K_0}$, the function $\Theta_{K, \rm GS}^* + t( \Theta_K-\Theta_{K, \rm GS}^*) \in \overline{\mathbb{B}_{\alpha}(\Theta_{K, \rm GS}^*)}$ for any $\alpha \in [0, \delta_0)$. We therefore deduce from \textbf{Claim Two} that the Fr\'echet derivative $\mD \mathcal{f}_K$ is locally Lipschitz continuous in a closed ball containing the set $\{\Theta_{K, \rm GS}^* + t( \Theta_K-\Theta_{K, \rm GS}^*) \colon~ t \in [0, 1]\}$ with Lipschitz constant uniformly bounded in $K$. The consistency result \eqref{eq:thm_main_consist} then immediately follows from Equation \eqref{eq:main_thm_10}.

	We now observe that
	\begin{itemize}
		\item For all $K \geq \overline{K_0}$, the mapping $\mathcal{f}_K \colon \widehat{\mathcal{H}}^{1, \perp}_{\Psi_{0, K}}  \rightarrow (\widehat{\mathcal{H}}^{1, \perp}_{\Psi_{0, K}} )^{*}$  is of regularity class $\mathscr{C}^1$ (see, e.g., \cite[Proposition 26]{Hassan_CC}. 
		
		\item Thanks to \textbf{Claim Two} and the $\widehat{\mathcal{H}}^1$-convergence of $\Theta_{K}$ to $\Theta_{K, \rm GS}^*$, there exists $\widetilde{\widetilde{K_0}} \geq \overline{K_0}$ and $\widetilde{\delta_0}>0$ sufficiently small and independent of $K$, such that for all  $\alpha \in [0, \delta_0)$ the constant $\widetilde{\mL(\alpha)}$ is uniformly bounded above in $K$.
		
		\item Thanks to Equation \eqref{eq:thm_main_consist}, the constant $\widetilde{\epsilon_K}$ defined above satisfies $\lim_{K\to \infty} \epsilon_K =0.$
		
		\item Thanks to Inequality \eqref{eq:main_thm_8*}, the constant $\gamma_{\mathcal{f}}^*$ defined above is uniformly bounded above in $K$.
	\end{itemize}
	
	We can therefore conclude that there exists $\widehat{K_0} \geq N$ such that for all $K \geq \widehat{K_0}$ it holds that
	\begin{align*}
		2 \gamma_{\mathcal{f}}^* \mL(2\gamma_{\mathcal{f}}^* \widetilde{\epsilon_K}) < 1.
	\end{align*}
	
	As a consequence, the conclusions of Theorem \ref{Caloz_Rappaz_2} are once again applicable. In particular, for all $K \geq \widehat{K_0}$ the closed ball $\overline{\mathbb{B}_{2\gamma_{\mathcal{f}}^*\widetilde{\epsilon_K}}\left(\Theta_{K}\right)} \subset \widehat{\mathcal{H}}^{1, \perp}_{\Psi_{0, K}} $ contains a unique solution~$\widetilde{\Theta_{K}}$ to the equation
	\begin{align*}
		\mathcal{f}_K(\widetilde{\Theta_{K}})=0,
	\end{align*}
	and we have the error estimate
	\begin{align*}
		\Vert \widetilde{\Theta_{K}}-\Theta_K\Vert_{\widehat{\mathcal{H}}^1} \leq 2 \gamma_{\mathcal{f}}^* \Vert \mathcal{f}_K(\Theta_K)\Vert_{(\widehat{\mathcal{H}}^{1, \perp}_{\Psi_{0, K}} )^*}.
	\end{align*}
	Recalling that $\Theta_{K, \rm GS}^*$ is the locally unique zero of the coupled cluster function $\mathcal{f}_K$ (see Theorem \ref{thm:CC_der_inv} and Lemma \ref{lem:approx}), we deduce that in fact $\widetilde{\Theta_K}= \Theta_{K, \rm GS}^*$. The required residual-based error estimate thus follows.

\end{proof}


 
	\section*{Acknowledgements} 
	
	The authors thank Yipeng Wang for useful discussions and for help with the exposition on the Hartree-Fock method in the appendix. The first author also warmly thanks Antoine Levitt for several helpful discussions on the essential spectra of $N$-body Schrödinger operators. This project has received funding from the European Research Council (ERC) under the European Union’s Horizon 2020 research and innovation program (Grant Agreement No. 810367). The first author also acknowledges support from the Deutsche Forschungsgemeinschaft (DFG) through the Emmy Noether Programme (Project No. 555300205). The majority of this work was done while the first author was employed as a post-doctoral researcher at the Laboratoire Jacques-Louis Lions (LJLL) at Sorbonne Université. 

    \bibliographystyle{abbrv.bst}
	\bibliography{refs.bib}

 \newpage
 
\appendix

\section{The Hartree-Fock Method}\label{sec:HF}~
	
	The Hartree-Fock method is a basic numerical algorithm for approximating the ground state of the electronic Hamiltonian $H \colon \widehat{\mathcal{H}}^1 \rightarrow \widehat{\mathcal{H}}^{-1}$ defined through Equation \eqref{eq:Hamiltonian}. The essential idea of the Hartree-Fock approximation is to replace the infinite-dimensional tensor space $\widehat{\mathcal{H}}^1$ of antisymmetric functions-- over which the minimum expectation value of the electronic Hamiltonian is computed-- with a more computationally tractable subset, in this case the set of \emph{single Slater determinants}. As in the main text of this article, we assume throughout this section that the total nuclear charge $Z= \sum_{\alpha=1}^M Z_{\alpha} > N-1$.

	\begin{definition}[Hartree-Fock Minimisation Set]\label{def:HF_space}~
		
		We define the Hartree-Fock minimisation set ${\mS}_{\rm HF}$ as the subset of $\widehat{\mathcal{H}}^1$ given by
		\begin{align*}
			{\mS}_{\rm HF}:= \Big\{\Phi \in  \widehat{\mathcal{H}}^1 \colon \text{ \rm there exist $\mL^2$-orthonormal } \{\phi_i\}_{i=1}^N \subset \mH^1(\R^3)  \\
   \text{ \rm with }\Phi(\bold{x}_1, \ldots, \bold{x}_N)=\frac{1}{\sqrt{N!}} \text{\rm det}\; \big(\phi_{i}(\bold{x}_j)\big)_{i, j=1}^N \Big\}.
		\end{align*}
	\end{definition}

	\vspace{3mm}
	
	\textbf{Hartree-Fock Approximation of Minimisation Problem \eqref{eq:Ground_State}}~
	
	Let the Hartree-Fock minimisation set ${\mS}_{\rm HF}$ be defined through Definition \ref{def:HF_space}. We seek the solution(s) $\mathcal{E}^*_{\rm HF}\in \mathbb{R}$ to the following minimisation problem
		\begin{align}
			\label{eq:HF}
			\mathcal{E}_{\rm HF}^*:= \min_{\Psi\in \mS_{\rm HF}} \langle\Psi, {H} \Psi \rangle_{\widehat{\mathcal{H}}^1 \times \widehat{\mathcal{H}}^{-1}}.
		\end{align}

 \vspace{0mm}
	Several remarks are in order.

 \vspace{3mm}
	First, in Equation \eqref{eq:HF}, the normalisation constraints have been removed since each element of the set ${\mS}_{\rm HF}$ is normalised by construction. Second, any function $\Psi^*_{\rm HF} \in \mS_{\rm HF}$ that achieves the minimum in Equation \eqref{eq:HF} is called a Hartree-Fock ground state and obviously satisfies
	\begin{align}
	\forall {\Psi} \in \mS_{\rm HF}\colon \quad	\langle {\Psi}, {H} \Psi^*_{\rm HF} \rangle_{\widehat{\mathcal{H}}^1 \times \widehat{\mathcal{H}}^{-1}}= \mathcal{E}_{\rm HF}^*\langle{\Psi}, \Psi^*_{\rm HF} \rangle_{\widehat{\mathcal{H}}^1 \times \widehat{\mathcal{H}}^{-1}}.
	\end{align}
	
	Third, concerning the well-posedness of the minimisation problem \eqref{eq:HF}, the existence of an infimum of the Hartree-Fock energy $\mathcal{E}^*_{\rm HF}$ is relatively straightforward to establish but the existence of a minimising Slater determinant is a difficult problem that was finally solved by Lieb and Simon \cite{Simon}. This result was generalised in a celebrated paper by P.L. Lions \cite{MR879032}. The \emph{uniqueness} of such minimisers is still an open question however. 
	
	Fourth, a useful simplification occurs in the expression of the Hamiltonian ${H}$ when it acts on an element of ${\mS}_{\rm HF}$ and is projected against another Slater determinant. Indeed, given $\Phi:= \frac{1}{\sqrt{N!}} \text{det}\; \big(\phi_{i}(\bold{x}_j)\big)_{i, j=1}^N \in \mS_{\rm HF}$ we have
	\begin{equation}\label{eq:Fock_energy}
		\begin{split}
			\langle\Phi, {H} \Phi \rangle_{\widehat{\mathcal{H}}^1 \times \widehat{\mathcal{H}}^{-1}} = -&\frac{1}{2} \sum_{j=1}^N \int_{\mathbb{R}^{3}}\vert \nabla\phi_j(\bold{x})\vert^2 \; d\bold{x} + \sum_{j =1}^{N} \sum_{\alpha =1}^{M} \int_{\mathbb{R}^{3}}\frac{-Z_{\alpha}}{\vert \bold{x}_{\alpha}- \bold{x}\vert }\vert \phi_j(\bold{x})\vert^2\; d\bold{x}\\
			+& \frac{1}{2}\sum_{j =1}^{N}\sum_{i =1}^{N} \int_{\mathbb{R}^{3}}\int_{\mathbb{R}^{3}}\frac{\vert\phi_j(\bold{x})\vert^2\vert\phi_i(\bold{y})\vert^2}{\vert \bold{x}- \bold{y}\vert } \; d\bold{x}d\bold{y}\\
   -&\frac{1}{2}\sum_{j =1}^{N}\sum_{i =1}^{N} \int_{\mathbb{R}^{3}}\int_{\mathbb{R}^{3}}\frac{\phi_i(\bold{x})\phi_j(\bold{x})\phi_i(\bold{y})\phi_j(\bold{y})}{\vert \bold{x}- \bold{y}\vert } \; d\bold{x}d\bold{y}.
		\end{split}
	\end{equation}
	
	Equation \eqref{eq:Fock_energy} suggests that we can introduce, in a natural manner, the so-called single-particle Hartree-Fock operator ${\rm F}_{\Phi} \colon \mH^1(\R^3) \rightarrow \mH^{-1}(\R^3)$.

	\begin{definition}[Single-Particle Hartree-Fock Operator ]\label{def:Fock_single}~
		
		For any $\Phi=\frac{1}{\sqrt{N!}} \text{\rm det}\; \big(\phi_{i}(\bold{x}_j)\big)_{i, j=1}^N  \in~ {\mS}_{\rm HF}\subset \widehat{\mathcal{H}}^1$, we define the single-particle Hartree-Fock (or mean-field) operator ${\rm F}_{\Phi} \colon \mH^1(\R^3)\rightarrow \mH^{-1}(\R^3)$ as the mapping with the property that 
		\begin{align*}
			\forall \psi \in \mH^1(\R^3) \colon \quad {\rm F}_{\Phi} \psi (\bold{x}) := &-\frac{1}{2} \Delta \psi(\bold{x})+ \sum_{\alpha =1}^{M} \frac{-Z_{\alpha}}{\vert \bold{x}_{\alpha}- \bold{x} \vert}\psi(\bold{x}) +  \sum_{j=1}^N\int_{\mathbb{R}^{3}}\frac{\vert\phi_j(\bold{y})\vert^2}{\vert \bold{x}- \bold{y}\vert }\; d\bold{y} \psi(\bold{x})\\
			&-\sum_{j =1}^{N} \int_{\mathbb{R}^{3}}\frac{\phi_j(\bold{x})\phi_j(\bold{y})}{\vert \bold{x}- \bold{y}\vert} \psi(\bold{y})\; d\bold{y} \quad \text{\rm for a.e. } \bold{x} \in \mathbb{R}^3.
		\end{align*}
	\end{definition}

    It is readily verified from Kato-Rellich theory (see, e.g., \cite[Chapter 13]{MR1361167}) that the single-particle Hartree-Fock operator (constructed from any determinant $\Phi \in \mS_{\rm HF}$) is self-adjoint on $\mL^2(\mathbb{R}^3)$ with domain $\mH^2(\R^3)$ and form domain $\mH^1(\R^3)$. Additionally, since the minimum of the Hartree-Fock energy exists, the single-particle Fock operator constructed from the Hartree-Fock determinant $\Psi_{\rm HF} \in \mS_{\rm HF}$ and modified with a suitable shift is a coercive operator\footnote{Similar to the electronic Hamiltonian ${H}$, the single-particle operator ${\rm F}_{\Phi}, \Phi \in \mS_{\rm HF}$ can be shown to satisfy an ellipticity estimate. This is simply a consequence of the Hardy inequality.}.
 
	The Euler-Lagrange equations for the Hartree-Fock minimisation problem \eqref{eq:Fock_energy}, i.e., the first-order optimality conditions then read as follows:

 \vspace{2mm}
	\textbf{Infinite-Dimensional Hartree-Fock Eigenvalue Problem}~
	
	For any determinant $\Phi \in \mS_{\rm HF}$, let the single-particle Hartree-Fock operator ${\rm F}_{\Phi} \colon \mH^1(\R^3) \rightarrow \mH^{-1}(\R^3)$ be defined through Definition \ref{def:Fock_single}. We seek $\Psi=\frac{1}{\sqrt{N!}} \text{\rm det}\; \big(\psi_{i}(\bold{x}_j)\big)_{i, j=1}^N  \in~{\mS}_{\rm HF}\subset \widehat{\mathcal{H}}^1$ and $\{\lambda_i\}_{i=1}^N \subset \mathbb{R}$ with the property that for all $i=1, \ldots, N$ it holds that
	\begin{equation}\label{eq:HF_eigenvalue}
		{\rm F}_{\Psi} \psi_{i} = \lambda_i \psi_i.
	\end{equation}
	
	Equation \eqref{eq:HF_eigenvalue} is a non-linear eigenvalue problem, the Hartree-Fock operator being dependent on the sought-after eigenfunctions $\{\psi_i\}_{i=1}^N$. Partial results about the analysis of this eigenvalue problem are available, and in particular, it is known that any minimiser of the Hartree-Fock minimisation problem \eqref{eq:Fock_energy} satisfies Equation \eqref{eq:HF_eigenvalue} with all~$\lambda_i < 0$ (see, e.g., \cite{MR879032}).

\vspace{2mm}
 \begin{Convention}[Ordering of Eigenvalues]~
 
Throughout the remainder of this article, we adopt the convention that the eigenvalues of any self-adjoint operator-- including, in particular, any single-particle Hartree-Fock operator ${\rm F}_{\Psi}, ~\Psi \in \mS_{\rm HF}$ or discretisations thereof-- are ordered \underline{ascendingly}. Thus, in the case of the eigenvalue problem \eqref{eq:HF_eigenvalue}, we have $\lambda_1 \leq \lambda_2 \leq \lambda_3 \leq\ldots$.
 \end{Convention}

	Before proceeding further, let us point out that the single-particle Hartree-Fock operator ${\rm F}_{\Phi} \colon \mH^1(\R^3) $ $\rightarrow \mH^{-1}(\R^3)$ can be used to build an $N$-particle Hartree-Fock operator that acts on the tensor space~$\widehat{\mathcal{H}}^1$. Indeed, we have the following definition. \newpage
	
	\begin{definition}[N-Particle Hartree-Fock Operator ]\label{def:Fock_N}~		
		
		For any $\Phi=\frac{1}{\sqrt{N!}} \text{\rm det}\; \big(\phi_{i}(\bold{x}_j)\big)_{i, j=1}^N  \in~ {\mS}_{\rm HF}\subset \widehat{\mathcal{H}}^1$, we define the $N$-particle Fock operator ${\mathcal{F}}_{\Phi} \colon \widehat{\mathcal{H}}^1\rightarrow \widehat{\mathcal{H}}^{-1}$ as the mapping with the property that for any $\Psi=\frac{1}{\sqrt{N!}} \text{\rm det} \big(\psi_{k_i}(\bold{x}_j)\big)_{i, j=1}^N   \in {\mS}_{\rm HF}$ it holds that
		\begin{align*}
			\big(\mathcal{F}_{\Phi}\Psi \big)(\bold{x}_1, \ldots, \bold{x}_N)  =\frac{1}{\sqrt{N!}}\sum_{j=1}^N \sum_{\pi \in {\mS}(N)} (-1)^{\rm sgn(\pi)}  \psi_{\pi (k_1)}(\bold{x}_1)\otimes \psi_{\pi(k_2)}(\bold{x}_2)\otimes \ldots \\
   \ldots \otimes{\rm F}_{\Phi} \psi_{\pi(k_j)}(\bold{x}_j)\otimes \ldots \otimes \psi_{\pi(k_N)}(\bold{x}_N),
		\end{align*}
		and whose action on arbitrary elements of $\widehat{\mathcal{H}}^1 $ is defined through linearity using the fact that any $\widehat{\mathcal{L}}^2$-orthonormal Slater basis $\mathcal{B}_{\wedge}$ of $\widehat{\mathcal{H}}^1$ is a subset of ${\mS}_{\rm HF}$.
	\end{definition}
	
	As expected, the $N$-particle Fock operator inherits many properties from the single-particle Fock operator. In particular, for any determinant $\Phi \in \mS_{\rm HF}$, the operator $\mathcal{F}_{\Phi}$ is self-adjoint on $\widehat{\mathcal{L}}^2$ with domain $\widehat{\mathcal{H}}^2$ and form domain $\widehat{\mathcal{H}}^1$. Moreover, the $N$-particle Fock operator constructed from the Hartree-Fock determinant $\Psi_{\rm HF} \in \mS_{\rm HF}$ and modified with a suitable shift is also a coercive operator.

	Consider once again the non-linear eigenvalue problem \eqref{eq:HF_eigenvalue}. The practical resolution of this problem requires the introduction of a  finite-dimensional subspace of $\mH^1(\R^3)$. Indeed, denoting by $\mX_K $ any $K$-dimensional subspace of $\mH^1(\R^3)$, we can define:
	
	\begin{definition}[Hartree-Fock Finite-Dimensional Minimisation Set]\label{def:HF_space_finite}~
		
		For any natural number $K \geq N$, we define the finite-dimensional set ${\mS}_{\rm HF}^K$ as the subset of $\widehat{\mathcal{H}}^1$ given by
		\begin{align*}
			{\mS}^K_{\rm HF}:= \Big\{\Phi \in  \widehat{\mathcal{H}}^1 \colon \text{ \rm there exist $\mL^2$-orthonormal } \{\phi_i\}_{i=1}^N \subset \mX_K\\  \text{ \rm with }\Phi(\bold{x}_1, \ldots, \bold{x}_N)=\frac{1}{\sqrt{N!}} \text{\rm det}\; \big(\phi_{i}(\bold{x}_j)\big)_{i, j=1}^N \Big\}.
		\end{align*}
	\end{definition}
	
	A finite-dimensional approximation of the non-linear eigenvalue problem \eqref{eq:HF_eigenvalue} is then given as follows.

	\textbf{Finite-Dimensional Hartree-Fock Eigenvalue Problem}~	
	
	For any determinant $\Phi \in \mS_{\rm HF}$, let the single-particle Fock operator ${\rm F}_{\Phi} \colon \mH^1(\R^3) \rightarrow \mH^{-1}(\R^3)$ be defined through Definition \ref{def:Fock_single}. We seek $\Psi=\frac{1}{\sqrt{N!}} \text{\rm det}\; \big(\psi_{i}(\bold{x}_j)\big)_{i, j=1}^N  \in~{\mS}^K_{\rm HF}\subset \widehat{\mathcal{H}}^1$ and $\{\lambda_i\}_{i=1}^N \subset \mathbb{R}$ with the property that for all $\phi \in \mX_K \subset  \mH^1(\R^3)$ and all $i=1, \ldots, N$ it holds that
	\begin{equation}\label{eq:HF_eigenvalue_approx}
		\left\langle {\rm F}_{\Psi} \psi_{i}, \phi\right\rangle_{\mH^{-1}(\R^3) \times \mH^{1}(\R^3)} = \lambda_i \left(\psi_i, \phi\right)_{\mL^2(\R^3)}.
	\end{equation}
	
	Let us remark here that since the single-particle Fock operator is self-adjoint on 	${\rm X}_K \subset \mH^1(\R^3) $, it will in fact have $K$ orthonormal eigenfunctions $\{\Psi_{i}\}_{i=1}^K \subset {\rm X}_K $. By convention, the $N$ solution eigenfunctions of Equation~\eqref{eq:HF_eigenvalue_approx} are chosen to be the ones with the lowest associated eigenvalues (the so-called Aufbau principle). Thus, the diagonalisation of the single-particle Fock-operator provides an $\mL^2$-orthonormal basis for ${\rm X}_K$, which can be denoted as $\{\psi_i\}_{i=1}^N \cup \{\psi_i\}_{i=N+1}^K$. A standard approach in electronic structure calculation is to take $\mathscr{R}=\text{span}\{\psi_i\}_{i=1}^N$ as an occupied space (recall Definition \ref{def:occ_vir}). We will adopt precisely this view in the forthcoming Appendix \ref{sec:spaces}.

Next, let us point out a fact is often exploited in computational quantum chemistry (see, e.g., Møller Plesset perturbation theory \cite[Chapter 14]{helgaker2014molecular}), namely that the $N$-particle Fock operator defined through Definition \ref{def:Fock_N} induces a decomposition of the electronic Hamiltonian ${H} \colon \widehat{\mathcal{H}}^1 \rightarrow \widehat{\mathcal{H}}^{-1}$. Indeed, we have the following lemma.

\begin{lemma}[Electronic Hamiltonian Decomposition and the Fluctuation Potential]\label{lem:Fluctuation}~

    For any $\Phi=\frac{1}{\sqrt{N!}} \text{\rm det}\; \big(\phi_{i}(\bold{x}_j)\big)_{i, j=1}^N  \in~ {\mS}_{\rm HF}\subset \widehat{\mathcal{H}}^1$, let the $N$-particle Fock operator ${\mathcal{F}}_{\Phi} \colon \widehat{\mathcal{H}}^1\rightarrow \widehat{\mathcal{H}}^{-1}$ be defined as in Definition \ref{def:Fock_N} and let the electronic Hamiltonian ${H} \colon \widehat{\mathcal{H}}^1 \rightarrow \widehat{\mathcal{H}}^{-1}$ be defined through Equation \eqref{eq:Hamiltonian}. Then we have the decomposition
    \begin{align*}
    {H} = {\mathcal{F}}_{\Phi}  + \mathcal{U}_{\Phi},
    \end{align*}
where $\mathcal{U}_{\Phi}:= \mathcal{H}- {\mathcal{F}}_{\Phi}$ is known as the {Fluctuation potential}, and is a bounded linear mapping from $\widehat{\mathcal{H}}^1$ to $\mathcal{\widehat{L}}^2$ with a continuity constant that depends only on $\Phi$ and $N$.
\end{lemma}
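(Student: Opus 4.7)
The plan is to proceed by direct computation: expand both $H$ and $\mathcal{F}_\Phi$ term by term and observe the cancellations, then estimate the surviving terms using Hardy-type inequalities. First I would unfold the definition of the $N$-particle Fock operator $\mathcal{F}_\Phi$ by applying it to an arbitrary Slater determinant in $\mathcal{B}_\wedge$ and extending by linearity; the formula in Definition \ref{def:Fock_N} makes $\mathcal{F}_\Phi$ act as $\sum_{j=1}^N \mathrm{F}_\Phi^{(j)}$, where the superscript $(j)$ denotes that the single-particle Fock operator is applied to the $j$-th variable. Reading off Definition \ref{def:Fock_single}, this gives
\begin{align*}
\mathcal{F}_\Phi = -\frac{1}{2}\sum_{j=1}^N \Delta_{\mathbf{x}_j} + \sum_{j=1}^N\sum_{\alpha=1}^M \frac{-Z_\alpha}{|\mathbf{x}_\alpha-\mathbf{x}_j|} + \sum_{j=1}^N \sum_{i=1}^N \bigl(J_i^{(j)} - K_i^{(j)}\bigr),
\end{align*}
where $J_i$ is multiplication by $V_i(\mathbf{x}):=\int |\phi_i(\mathbf{y})|^2/|\mathbf{x}-\mathbf{y}|\,d\mathbf{y}$ and $K_i$ is the integral operator with kernel $\phi_i(\mathbf{x})\phi_i(\mathbf{y})/|\mathbf{x}-\mathbf{y}|$. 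Subtracting this from the Hamiltonian in Equation \eqref{eq:Hamiltonian}, the kinetic and nuclear-attraction terms cancel identically, leaving
\begin{align*}
\mathcal{U}_\Phi = \sum_{1\leq i<j\leq N} \frac{1}{|\mathbf{x}_i-\mathbf{x}_j|} - \sum_{j=1}^N \sum_{i=1}^N \bigl(J_i^{(j)} - K_i^{(j)}\bigr).
\end{align*}

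Next I would estimate each family of terms as a map $\widehat{\mathcal{H}}^1 \to \widehat{\mathcal{L}}^2$. For the two-body Coulomb singularities, I would apply Hardy's inequality in the relative coordinate $\mathbf{x}_i-\mathbf{x}_j$ (with the remaining $3N-3$ variables frozen) and integrate, obtaining
\begin{align*}
\left\| \frac{1}{|\mathbf{x}_i-\mathbf{x}_j|} \Psi \right\|_{\mathcal{L}^2}^2 \leq 4 \left\| \nabla_{\mathbf{x}_i} \Psi \right\|_{\mathcal{L}^2}^2 \leq 4 \|\Psi\|_{\mathcal{H}^1}^2,
\end{align*}
so the first sum is bounded from $\mathcal{H}^1$ to $\mathcal{L}^2$ with a constant depending only on $N$. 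For the mean-field Coulomb contributions, a Cauchy–Schwarz splitting of $V_i$ followed again by Hardy's inequality gives $\|V_i\|_{L^\infty(\mathbb{R}^3)} \leq 2 \|\nabla \phi_i\|_{L^2} \|\phi_i\|_{L^2} \leq 2\|\phi_i\|_{H^1}^2$, so each $J_i^{(j)}$ acts as a bounded multiplication on $\mathcal{L}^2$ with the stated dependence on $\Phi$.

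The only slightly delicate piece is the exchange operator $K_i^{(j)}$. I would treat it as an integral operator in the $j$-th variable with a Schur-type estimate: for fixed remaining coordinates, bound
\begin{align*}
\left| K_i \psi(\mathbf{x})\right| \leq |\phi_i(\mathbf{x})|\int \frac{|\phi_i(\mathbf{y})||\psi(\mathbf{y})|}{|\mathbf{x}-\mathbf{y}|}\, d\mathbf{y},
\end{align*}
and apply Cauchy–Schwarz together with Hardy's inequality on the inner integral to obtain an $L^2(\mathbb{R}^3)\to L^2(\mathbb{R}^3)$ bound of the form $\|K_i\psi\|_{L^2} \leq C \|\phi_i\|_{H^1}^2 \|\psi\|_{L^2}$. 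Lifting to the $N$-particle tensor space by a Fubini argument then gives $K_i^{(j)} \colon \mathcal{L}^2 \to \mathcal{L}^2$ boundedly, hence a fortiori from $\mathcal{H}^1 \to \mathcal{L}^2$. Summing the at most $N + 2N^2$ contributions yields the claimed $\widehat{\mathcal{H}}^1 \to \widehat{\mathcal{L}}^2$ continuity with constant depending only on $N$ and on $\max_i\|\phi_i\|_{H^1}$ (i.e.\ on $\Phi$). Finally, boundedness on the antisymmetric subspaces follows for free since $\mathbb{P}^{\mathrm{as}}$ is an orthogonal projection commuting with permutation-symmetric operators. I do not expect any real obstacle here; the exchange term is the only piece requiring more than one line of estimation, and there the Hardy/Cauchy–Schwarz combination is standard.
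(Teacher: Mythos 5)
Your proposal is correct and follows essentially the same route as the paper: you observe the cancellation of the kinetic and nuclear-attraction terms, arrive at the same three-term splitting of $\mathcal{U}_\Phi$ (two-body Coulomb, direct, exchange), and estimate each piece with Hardy-type inequalities combined with Cauchy--Schwarz, including the Schur-type bound on the exchange operator. The only cosmetic difference is that the paper carries out the estimate as a bilinear-form bound $(\mathcal{U}_\Phi\Psi_H,\Psi_L)_{\widehat{\mathcal{L}}^2}$ rather than a direct norm bound, which is an equivalent presentation.
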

\begin{proof}
    The fact that $\mathcal{U}_{\Phi}:= \mathcal{H}- {\mathcal{F}}_{\Phi}$ is a bounded linear mapping from $\widehat{\mathcal{H}}^1$ to $\mathcal{\widehat{L}}^2$ is essentially a consequence of the following Hardy-type inequalities whose proof can be found in \cite[Lemma 1]{HY}: For all $u \in \mathscr{C}^{\infty}_{\rm comp}(\mathbb{R}^3)$ and $v \in \mathscr{C}^{\infty}_{\rm comp}(\mathbb{R}^6)$ it holds that
    \begin{align}\label{eq:Hardy_1}
        \int_{\mathbb{R}^3} \frac{u^2(\bold{x})}{\vert \bold{x}\vert^2} \; d\bold{x} &\leq 4 \int_{\mathbb{R}^3}\vert \nabla u(\bold{x})\vert^2\; d\bold{x},\\[1em] \label{eq:Hardy_2}
        \int_{\mathbb{R}^6} \frac{v^2(\bold{x}, \bold{y})}{\vert \bold{x}-\bold{y}\vert^2} \; d\bold{x}d\bold{y} &\leq 2 \int_{\mathbb{R}^6}\vert \nabla_{\bold{x}} v(\bold{x}, \bold{y})\vert^2 + \vert \nabla_{\bold{y}} v(\bold{x}, \bold{y})\vert^2\; d\bold{x}d\bold{y}.
    \end{align}
    
    Equipped with the Hardy-type inequalities \eqref{eq:Hardy_1} and \eqref{eq:Hardy_2}, we will now show that there exists a constant $C_{\Phi, N}>0$ such that for all $\Psi_H \in \widehat{\mathcal{H}}^1$ and $\Psi_L \in \widehat{\mathcal{L}}^2$ it holds that
\begin{equation}\label{eq:fluc_L2}
    \left(\mathcal{U}_{\Phi}\Psi_H, \Psi_L\right)_{\widehat{\mathcal{L}}^2} \leq C_{\Phi,N} \;\Vert\Psi_H \Vert_{\widehat{\mathcal{H}}^1}\;\Vert\Psi_L \Vert_{\widehat{\mathcal{L}}^2}
\end{equation}

To this end, let us recall Equation \eqref{eq:Hamiltonian} and Definitions~\ref{def:Fock_single}~and~\ref{def:Fock_N} to deduce that for any $\Psi_H\in\widehat{\mathcal{H}}^1$, we have that
    \begin{align}\nonumber
        \mathcal{U}_{\Phi}\Psi_H =& \mathcal{H}\Psi_H - {\mathcal{F}}_{\Phi}\Psi_H\\ \label{eq:new_fluc}
        =& \underbrace{\sum_{i =1}^{N} \sum_{j =1}^{i-1}  \frac{1}{\vert \bold{x}_i - \bold{x}_j\vert}\Psi_H(\bold{x}_1, \ldots, \bold{x}_N)}_{:=\mathcal{U}_1\Psi_H}- \underbrace{  \sum_{i=1}^N\sum_{j=1}^N\int_{\mathbb{R}^{3}}\frac{\vert\phi_j(\bold{y})\vert^2}{\vert \bold{x}_i- \bold{y}\vert }\; d\bold{y} \Psi_H(\bold{x}_1, \ldots, \bold{x}_N)}_{:=\mathcal{U}_2\Psi_H}\\
		+&\underbrace{\sum_{i=1}^N\sum_{j =1}^{N} \int_{\mathbb{R}^{3}}\frac{\phi_j(\bold{x}_i)\phi_j(\bold{y})}{\vert \bold{x}_i- \bold{y}\vert} \Psi_H(\bold{x}_1,\cdots,\bold{x}_{i-1},\bold{y},\bold{x}_{i+1},\cdots,\bold{x}_N)\; d\bold{y}}_{:=\mathcal{U}_3\Psi_H} \quad \text{for a.e. } (\bold{x}_1, \ldots, \bold{x}_N) \in \mathbb{R}^{3N}.  \nonumber
    \end{align}

Thus, the fluctuation potential $\mathcal{U}_{\Phi}$ can be written as the sum of three terms, which we denote $\mathcal{U}_{\ell}, ~ \ell\in \{1, 2, 3\}$. We will now prove that for each $\ell \in \{1, 2, 3\}$, there exists a constant $K^{\ell}_{N, \Phi}>0$ such that for all $\Psi_H \in \widehat{\mathcal{H}}^1$ and~$\Psi_L \in \widehat{\mathcal{L}}^2$ it holds that
\begin{equation}\label{eq:fluc_parts_L2}
    \left(\mathcal{U_{\ell}}\Psi_H, \Psi_L\right)_{\widehat{\mathcal{L}}^2} \leq K^{\ell}_{N, \Phi}\;\Vert\Psi_H\Vert_{\widehat{\mathcal{H}}^1}\;\Vert\Psi_L\Vert_{\widehat{\mathcal{L}}^2}.
\end{equation}

    Let us begin by considering the operator $\mathcal{U}_1$. Observe that for any $1\leq j< i\leq N$, all $\Psi_H \in \widehat{\mathcal{H}}^1$ and $\Psi_L \in \widehat{\mathcal{L}}^2$, and a.e. $(\bold{x}_1, \ldots, \bold{x}_N) \in \mathbb{R}^{3N}$ it holds that
    \begin{align*}
      &\int_{\mathbb{R}^6}\frac{\Psi_H(\bold{x}_1,\cdots,\bold{x}_N)}{\vert \bold{x}_i - \bold{x}_j\vert}\Psi_L(\bold{x}_1,\cdots,\bold{x}_N)\;d\bold{x}_j d\bold{x}_i\\[0.5em]
        \leq& \Big(\int_{\mathbb{R}^6}\frac{\vert\Psi_H(\bold{x}_1,\cdots,\bold{x}_N)\vert^2}{\vert \bold{x}_i - \bold{x}_j\vert^2}\;d\bold{x}_j d\bold{x}_i\Big)^{\frac{1}{2}}\Big(\int_{\mathbb{R}^6}\vert\Psi_L(\bold{x}_1,\cdots,\bold{x}_N)\vert^2\;d\bold{x}_j d\bold{x}_i\Big)^{\frac{1}{2}}\\[0.5em]
\leq&\sqrt{2}\Big(\int_{\mathbb{R}^6}\vert\nabla_{\bold{x}_j}\Psi_H(\bold{x}_1,\cdots,\bold{x}_N)\vert^2+\vert\nabla_{\bold{x}_i}\Psi_H(\bold{x}_1,\cdots,\bold{x}_N)\vert^2\;d\bold{x}_j\bold{x}_i\Big)^{\frac{1}{2}}\Big(\int_{\mathbb{R}^6}\vert\Psi_L(\bold{x}_1,\cdots,\bold{x}_N)\vert^2\;d\bold{x}_j\bold{x}_i\Big)^{\frac{1}{2}},
    \end{align*}
    where the last step follows from the Hardy Inequality \eqref{eq:Hardy_2}. Using now Fubini's theorem and the Cauchy-Schwarz inequality together with the shorthand notation \[d\bold{x}^{ij}_r:=d\bold{x}_1\cdots d\bold{x}_{j-1} d\bold{x}_{j+1}\cdots d\bold{x}_{i-1}d\bold{x}_{i+1}\cdots d\bold{x}_N,\] we can further deduce that for any $1\leq j< i\leq N$, all $\Psi_H \in \widehat{\mathcal{H}}^1$ and $\Psi_L \in \widehat{\mathcal{L}}^2$ it holds that
    \begin{align*}
      &\int_{\mathbb{R}^{3N-6}}\int_{\mathbb{R}^6}\frac{\Psi_H(\bold{x}_1,\cdots,\bold{x}_N)}{\vert \bold{x}_i - \bold{x}_j\vert}\Psi_L(\bold{x}_1,\cdots,\bold{x}_N)\;d\bold{x}_j d\bold{x}_i d\bold{x}^{ij}_r\\
        \leq&\sqrt{2}\int_{\mathbb{R}^{3N-6}}\left(\int_{\mathbb{R}^6}\vert\nabla_{\bold{x}_j}\Psi_H(\bold{x}_1,\cdots,\bold{x}_N)\vert^2+\vert\nabla_{\bold{x}_i}\Psi_H(\bold{x}_1,\cdots,\bold{x}_N)\vert^2\;d\bold{x}_j d\bold{x}_i\right)^{\frac{1}{2}} \\
        \hphantom{a} &\hspace{0.5mm}\hphantom{\sqrt{2}sahsl}\cdot \left(\int_{\mathbb{R}^6}\vert\Psi_L(\bold{x}_1,\cdots,\bold{x}_N)\vert^2\;d\bold{x}_j d\bold{x}_i\right)^{\frac{1}{2}}d\bold{x}^{ij}_r\\
 \leq&\sqrt{2}\left(\Vert\nabla_{\bold{x}_j}\Psi_H\Vert^2_{\widehat{\mathcal{L}}^2}+\Vert\nabla_{\bold{x}_i}\Psi_H\Vert^2_{\widehat{\mathcal{L}}^2}\right)^{\frac{1}{2}}\Vert\Psi_L\Vert_{\widehat{\mathcal{L}}^2},
    \end{align*}
    with the last step following from a Cauchy-Schwarz inequality applied to the outer integral.
    
Recalling now the definition of operator $\mathcal{U}_1$, we can conclude from a basic combinatorial argument that for all $\Psi_H \in \widehat{\mathcal{H}}^1$ and $\Psi_L \in \widehat{\mathcal{L}}^2$ it holds that
    \begin{equation}\label{eq:U_1}
        \begin{aligned}
\left(\mathcal{U_{1}}\Psi_H, \Psi_L\right)_{\widehat{\mathcal{L}}^2}\leq&\sqrt{2}\Vert\Psi_L\Vert_{\widehat{\mathcal{L}}^2}\sum_{i=1}^N\sum_{j=1}^{i-1}\left(\Vert\nabla_{\bold{x}_j}\Psi_H\Vert^2_{\widehat{\mathcal{L}}^2}+\Vert\nabla_{\bold{x}_i}\Psi_H\Vert^2_{\widehat{\mathcal{L}}^2}\right)^{\frac{1}{2}}\\[0.25em]
            \leq&\sqrt{N}(N-1)\Vert\nabla\Psi_H\Vert_{\widehat{\mathcal{L}}^2}\;\Vert\Psi_L\Vert_{\widehat{\mathcal{L}}^2},
        \end{aligned}
        \end{equation}
which shows that the bound \eqref{eq:fluc_parts_L2} indeed holds for the operator $\mathcal{U}_1$.

Next, we focus on the term $\mathcal{U}_2$. Once again, we observe that for any $i\leq j \leq N$ and a.e. $\bold{x}_i\in \mathbb{R}^3$ it holds that
    \begin{align*}
        \int_{\mathbb{R}^{3}}\frac{\vert\phi_j(\bold{y})\vert^2}{\vert \bold{x}_i- \bold{y}\vert }\; d\bold{y}\leq \left( \int_{\mathbb{R}^{3}}\vert\phi_j(\bold{y})\vert^2\; d\bold{y}\right)^{\frac{1}{2}}\left( \int_{\mathbb{R}^{3}}\frac{\vert\phi_j(\bold{y})\vert^2}{\vert \bold{x}_i- \bold{y}\vert^2 }\; d\bold{y}\right)^{\frac{1}{2}} &\leq\left( \int_{\mathbb{R}^{3}}\frac{\vert\phi_j(\bold{y})\vert^2}{\vert \bold{x}_i- \bold{y}\vert^2 }\; d\bold{y}\right)^{\frac{1}{2}} \\
        &\leq 2\left( \int_{\mathbb{R}^{3}}\vert\nabla\phi_j(\bold{y})\vert^2\; d\bold{y}\right)^{\frac{1}{2}},
    \end{align*}
    where the second inequality results from the fact that $\Vert \phi_j \Vert_{\mL^2}=1$ for all $j \in \{1, \ldots, N\}$ by assumption, and the third inequality follows from the Hardy inequality \eqref{eq:Hardy_1}. We therefore deduce that
    \begin{align*}
        \sum_{i=1}^N\sum_{j=1}^N\int_{\mathbb{R}^{3}}\frac{\vert\phi_j(\bold{y})\vert^2}{\vert \bold{x}_i- \bold{y}\vert }\; d\bold{y}\leq 2N\;\sum_{j=1}^N\Vert\nabla \phi_j\Vert_{\mL^2},
    \end{align*}
    and consequently, for all $\Psi_H \in \widehat{\mathcal{H}}^1$ and~$\Psi_L \in \widehat{\mathcal{L}}^2$ it holds that
    \begin{equation}\label{eq:U_2}
        \begin{aligned}
         \left(\mathcal{U_{2}}\Psi_H, \Psi_L\right)_{\widehat{\mathcal{L}}^2}\leq &2N\; \sum_{j=1}^N\Vert\nabla\phi_j\Vert_{\mL^2}\int_{\mathbb{R}^{3N}}\big\vert\Psi_H(\bold{x}_1,\cdots,\bold{x}_N)\Psi_L(\bold{x}_1,\cdots,\bold{x}_N)\big\vert\;d\bold{x}_1,\cdots,d\bold{x}_N \\
\leq&2N\;\sum_{j=1}^N\Vert\nabla\phi_j\Vert_{\mL^2}\Vert\Psi_H\Vert_{\widehat{\mathcal{L}}^2}\Vert\Psi_L\Vert_{\widehat{\mathcal{L}}^2}.
        \end{aligned}
    \end{equation}
    We have thus shown that the bound \eqref{eq:fluc_parts_L2} also holds for the operator $\mathcal{U}_2$.
   
   It remains to consider the final term $\mathcal{U}_3$ that appears in the decomposition of the fluctuation potential. To this end, we once again observe that for any $1\leq i,j\leq N$, all $\Psi_H \in \widehat{\mathcal{H}}^1$, and a.e. $(\bold{x}_1, \ldots, \bold{x}_N) \in \mathbb{R}^{3N}$ it holds that
    \begin{align*}
        &\int_{\mathbb{R}^{3}}\frac{\phi_j(\bold{x}_i)\phi_j(\bold{y})}{\vert \bold{x}_i- \bold{y}\vert} \Psi_H(\bold{x}_1,\cdots,\bold{x}_{i-1},\bold{y},\bold{x}_{i+1},\cdots,\bold{x}_N)\; d\bold{y}\\[0.5em]
        \leq&\vert\phi_j(\bold{x}_i)\vert\;\left(\int_{\mathbb{R}^{3}}\frac{\vert\phi_j(\bold{y})\vert^2}{\vert \bold{x}_i- \bold{y}\vert^2}\;d\bold{y}\right)^{\frac{1}{2}}\;\left(\int_{\mathbb{R}^{3}}\vert\Psi_H(\bold{x}_1,\cdots,\bold{x}_{i-1},\bold{y},\bold{x}_{i+1},\cdots,\bold{x}_N)\vert^2\;d\bold{y}\right)^{\frac{1}{2}}\\[0.5em]
\leq&2\vert\phi_j(\bold{x}_i)\vert\;\Vert\nabla\phi_j\Vert_{\mL^2}\left(\int_{\mathbb{R}^{3}}\vert\Psi_H(\bold{x}_1,\cdots,\bold{x}_{i-1},\bold{y},\bold{x}_{i+1},\cdots,\bold{x}_N)\vert^2\;d\bold{y}\right)^{\frac{1}{2}},
    \end{align*}
    where the last step follows again from the Hardy inequality \eqref{eq:Hardy_1}. 
    
    Consequently, for any $1\leq i,j\leq N$, all $\Psi_H \in \widehat{\mathcal{H}}^1$ and $\Psi_L \in \widehat{\mathcal{L}}^2$, and a.e. $(\bold{x}_1, \ldots, \bold{x}_{i-1}, \bold{x}_{i+1}, \ldots, \bold{x}_N) \in \mathbb{R}^{3N-3}$, we can use Fubini's theorem and the Cauchy-Schwarz inequality to deduce that
    \begin{align*}
        &\int_{\mathbb{R}^{3}}\int_{\mathbb{R}^{3}}\frac{\phi_j(\bold{x}_i)\phi_j(\bold{y})}{\vert \bold{x}_i- \bold{y}\vert} \Psi_H(\bold{x}_1,\cdots,\bold{x}_{i-1},\bold{y},\bold{x}_{i+1},\cdots,\bold{x}_N)\; d\bold{y}\Psi_L(\bold{x}_1,\cdots,\bold{x}_N)\;d\bold{x}_i\\[0.5em]    \leq&\int_{\mathbb{R}^{3}}2\vert\phi_j(\bold{x}_i)\vert\;\Vert\nabla\phi_j\Vert_{\mL^2}\left(\int_{\mathbb{R}^{3}}\vert\Psi_H(\bold{x}_1,\cdots,\bold{x}_{i-1},\bold{y},\bold{x}_{i+1},\cdots,\bold{x}_N)\vert^2\;d\bold{y}\right)^{\frac{1}{2}}\vert\Psi_L(\bold{x}_1,\cdots,\bold{x}_N)\vert\;d\bold{x}_i\\[0.5em]
        \leq&2\Vert\nabla\phi_j\Vert_{\mL^2}\left(\int_{\mathbb{R}^{3}}\vert\Psi_H(\bold{x}_1,\cdots,\bold{x}_{i-1},\bold{y},\bold{x}_{i+1},\cdots,\bold{x}_N)\vert^2\;d\bold{y}\right)^{\frac{1}{2}}\\[0.5em]
        &\hspace{1.3cm}\cdot\left(\int_{\mathbb{R}^{3}}\vert\phi_j(\bold{x}_i)\vert^2\;d\bold{x}_i\right)^{\frac{1}{2}}\left(\int_{\mathbb{R}^{3}}\vert\Psi_L(\bold{x}_1,\cdots,\bold{x}_N)\vert^2\;d\bold{x}_i\right)^{\frac{1}{2}}\\[0.5em]
        =&2\Vert\nabla\phi_j\Vert_{\mL^2}\left(\int_{\mathbb{R}^{3}}\vert\Psi_H(\bold{x}_1,\cdots,\bold{x}_{i-1},\bold{y},\bold{x}_{i+1},\cdots,\bold{x}_N)\vert^2\;d\bold{y}\right)^{\frac{1}{2}}\left(\int_{\mathbb{R}^{3}}\vert\Psi_L(\bold{x}_1,\cdots,\bold{x}_N)\vert^2\;d\bold{x}_i\right)^{\frac{1}{2}},
    \end{align*}
    where the last simplification results from the fact that $\Vert \phi_j \Vert_{\mL^2}=1$ for all $j \in \{1, \ldots, N\}$ by assumption. Using again the shorthand notation \[d\bold{x}^{i}_r:=d\bold{x}_1\cdots d\bold{x}_{i-1}d\bold{x}_{i+1}\cdots d\bold{x}_N,\] together with Fubini's theorem and the Cauchy-Schwarz inequality, we can conclude that for any $1\leq j< i\leq N$, all $\Psi_H \in \widehat{\mathcal{H}}^1$ and $\Psi_L \in \widehat{\mathcal{L}}^2$ it holds that
    \begin{align*}
        &\int_{\mathbb{R}^{3(N-1)}}\int_{\mathbb{R}^{3}}\int_{\mathbb{R}^{3}}\frac{\phi_j(\bold{x}_i)\phi_j(\bold{y})}{\vert \bold{x}_i- \bold{y}\vert} \Psi_H(\bold{x}_1,\cdots,\bold{x}_{i-1},\bold{y},\bold{x}_{i+1},\cdots,\bold{x}_N)\; d\bold{y}\Psi_L(\bold{x}_1,\cdots,\bold{x}_N)\;d\bold{x}_i d\bold{x}^i_r\\[0.5em]
        \leq&2\Vert\nabla\phi_j\Vert_{\mL^2}\int_{\mathbb{R}^{3(N-1)}}\left(\int_{\mathbb{R}^{3}}\vert\Psi_H(\bold{x}_1,\cdots,\bold{x}_{i-1},\bold{y},\bold{x}_{i+1},\cdots,\bold{x}_N)\vert^2\;d\bold{y}\right)^{\frac{1}{2}}\\[0.5em]
        &\cdot \hspace{2.6cm}\left(\int_{\mathbb{R}^{3}}\vert\Psi_L(\bold{x}_1,\cdots,\bold{x}_N)\vert^2\;d\bold{x}_i\right)^{\frac{1}{2}}d\bold{x}^i_r\\
        \leq&2\Vert\nabla\phi_j\Vert_{\mL^2}\left(\int_{\mathbb{R}^{3(N-1)}}\int_{\mathbb{R}^{3}}\vert\Psi_H(\bold{x}_1,\cdots,\bold{x}_N)\vert^2\;d\bold{x}_id\bold{x}^i_r\right)^{\frac{1}{2}}\left(\int_{\mathbb{R}^{3(N-1)}}\int_{\mathbb{R}^{3}}\vert\Psi_L(\bold{x}_1,\cdots,\bold{x}_N)\vert^2\;d\bold{x}_id\bold{x}^i_r\right)^{\frac{1}{2}}\\[0.5em]
 =&2\Vert\nabla\phi_j\Vert_{\mL^2}\;\Vert\Psi_H\Vert_{\widehat{\mathcal{L}}^2}\;\Vert\Psi_L\Vert_{\widehat{\mathcal{L}}^2}.
    \end{align*}
   
   Recalling now the definition of operator $\mathcal{U}_3$, we see that for all $\Psi_H \in \widehat{\mathcal{H}}^1$ and $\Psi_L \in \widehat{\mathcal{L}}^2$ it holds that
    \begin{equation}\label{eq:U_3}
 \left(\mathcal{U_{3}}\Psi_H, \Psi_L\right)_{\widehat{\mathcal{L}}^2}\leq 2N\;\sum_{j=1}^N\Vert\nabla\phi_j\Vert_{\mL^2}\;\Vert\Psi_H\Vert_{\widehat{\mathcal{L}}^2}\;\Vert\Psi_L\Vert_{\widehat{\mathcal{L}}^2},
    \end{equation}
which demonstrates that the bound \eqref{eq:fluc_parts_L2} also holds for the operator $\mathcal{U}_3$. 

Combining now Inequalities \eqref{eq:U_1}, \eqref{eq:U_2} and \eqref{eq:U_3} yields the continuity of the fluctuation potential $\mathcal{U}_{\Phi}$ as a mapping from $\widehat{\mathcal{H}}^1$ to $\widehat{\mathcal{L}}^2$ with a continuity constant that depends only on $N$ and $\sum_{j=1}^N\Vert\nabla\phi_j\Vert_{\mL^2}$. Let us remark here that since $\Phi = \frac{1}{\sqrt{N!}} \text{\rm det}\; \big(\phi_{i}(\bold{x}_j)\big)_{i, j=1}^N$, it is easy to show that $\sum_{j=1}^N\Vert\nabla\phi_j\Vert_{\mL^2} \lesssim ~\Vert \Phi \Vert_{\widehat{\mathcal{H}}^1}$.    
\end{proof}

\section{Approximation Spaces Satisfying Structure Assumptions B.I and B.II}\label{sec:spaces}~

The goal of this section is to state some prototypical examples of coupled cluster approximation spaces that satisfy \textbf{Structure Assumptions B.I and B.II}. Throughout this section, we assume the settings of Sections~\ref{sec:3}~and~\ref{sec:4} and we recall the Hartree-Fock method introduced in Appendix \ref{sec:HF}.

We begin by taking an $\mL^2(\R^3)$-orthonormal basis of $\mH^1(\R^3)$ which we denote as $\{\phi_{j}\}_{j \in \mathbb{N}}$, and we define, for every $K\geq N$ the set $\mathcal{B}_K:= \left\{\phi_j \right\}_{j=1}^K$ and the space $\mX_K := \text{span } \mathcal{B}_K$.  

Equipped with the sequence of spaces $\{\mX_K\}_{K\geq N}$ we define, for each $K\geq N$, the Hartree-Fock finite-dimensional minimisation set $\mS^K_{\rm HF}$ according to Definition \ref{def:HF_space_finite}, and we consider the following finite-dimensional Hartree-Fock eigenvalue problem (c.f., Equation \eqref{eq:HF_eigenvalue_approx}):

Find ${\Psi_K}=\frac{1}{\sqrt{N!}} \text{\rm det}\; \big({\psi}^K_{i}(\bold{x}_j)\big)_{i, j=1}^N  \in~{\mS}^K_{\rm HF}\subset \widehat{\mathcal{H}}^1$ and $\{\lambda^K_i\}_{i=1}^N \subset \mathbb{R}$ with the property that for all $\phi \in \mX_K$ and all $j=1, \ldots, N$ it holds that
	\begin{equation}\label{eq:HF_eigenvalue_approx*}
		\left\langle {\rm F}_{{\Psi}_K} {\psi}^K_{j}, \phi\right\rangle_{\mH^{-1}(\R^3) \times \mH^{1}(\R^3)} = \lambda^K_j \left({\psi}^K_j, \phi\right)_{\mL^2(\R^3)},
	\end{equation}
where $\mF_{{\Psi}_K}\colon \mH^1(\mathbb{R}^3)\rightarrow \mH^{-1}(\mathbb{R})$ denotes the single-particle Fock operator defined through Definition~\ref{def:Fock_single}.

We emphasise that, as described in Appendix \ref{sec:HF}, for each $K\geq N$, the eigenfunctions $\{{\psi}^K_j\}_{j=1}^K$ of the restricted single-particle Fock operator $\mF_{\Psi_K}\colon \mX_K \rightarrow \mX_K^*$ form an $\mL^2$-orthonormal basis of $\mX_K$, and the~$N$ solution eigenfunctions of Equation \eqref{eq:HF_eigenvalue_approx*} are chosen according to the Aufbau principle, i.e., as the ones with the lowest eigenvalues.

Next, we introduce, for every $K\geq N$, the index set $\mathcal{J}_{K}^N \subset \{1, \ldots, K\}^N$ given by
\begin{align*}
	\mathcal{J}_{K}^N := \Big\{\alpha = (\alpha_1, \alpha_2, \ldots, \alpha_N)\in \{1, \ldots, K\}^N \colon \alpha_1 < \alpha_2 < \ldots < \alpha_N\Big\}.
\end{align*}

We now have the following definition.
\begin{definition}[Finite Dimensional $N$-Particle Basis]\label{def:n_basis}~
	
	We define the $\widehat{\mathcal{L}}^2$-orthonormal, ${K}\choose{N}$-dimensional $N$-particle basis $ \mathcal{B}_K^{N} \subset \widehat{\mathcal{H}}^1$ as
	\begin{align}\label{eq:n_particle}
		\mathcal{B}^{N}_K := \left\{ \Psi_{\alpha}(\bold{x}_1, \bold{x}_2, \ldots, \bold{x}_N)=\frac{1}{\sqrt{N!}}\text{\rm det} \big({\psi}^K_{\alpha_i}(\bold{x}_j)\big)_{i, j=1}^N \colon \hspace{1mm} \alpha=(\alpha_1, \alpha_2, \ldots, \alpha_N) \in \mathcal{J}_{K}^N \right\}.
	\end{align} 
 
	Additionally, we define the subspace spanned by this basis set as $\mathcal{Z}_K := \text{\rm span}\; \mathcal{B}^N_K$. 
\end{definition}

The so-called Full-CC discretisations based on canonical Hartree-Fock orbitals now consist of defining, for each $K\geq N$, 
\begin{align}\nonumber
    \Psi_{0, K}:=& \frac{1}{\sqrt{N!}} \text{\rm det}\; \big({\psi}^K_{j}(\bold{x}_i)\big)_{i, j=1}^N  \quad \text{ with } \{\psi^K_j\}_{j=1}^N ~\text{denoting the Aufbau solutions to Equation \eqref{eq:HF_eigenvalue_approx*}},\\[0.5em] \label{eq:full-cc}
    \widetilde{\mathcal{V}}_K:=& \{\Psi_{0, K}\}^{\perp}\cap \mathcal{Z}_K, \qquad \text{and}\qquad \mathcal{V}_K := \widetilde{\mathcal{V}}_K \oplus \text{span}\; \{\Psi_{0, K}\}.
\end{align}
A direct calculation shows (c.f., \cite[Remark 36]{Hassan_CC}) that the Full-CC approximation spaces $\{\mathcal{V}_K\}_{K\geq N}$ defined in the above manner indeed satisfy \textbf{Structure Assumption B.I}. Let us remark here that the above construction of Full-CC approximation spaces satisfies \textbf{Structure Assumption B.I}, even if the canonical Hartree-Fock orbitals $\{\psi^K_{j}\}_{j=1}^K$ are rotated through a unitary transformation-- as long as the unitary transformation preserves the $\mL^2(\mathbb{R}^3)$ orthogonality of the set of so-called occupied orbitals $\{\psi^K_{j}\}_{j=1}^N$ and the set of so-called virtual orbitals $\{\psi^K_{j}\}_{j={N+1}}^K$.

We now turn our attention to coupled cluster approximation spaces that satisfy \textbf{Structure Assumption B.II}. To this end, we recall again the sequence of $N$-particle basis sets $\{\mathcal{B}_K^N\}_{K\geq N}$ and the sequence of finite-dimensional spaces $\{\mathcal{Z}_K\}_{K\geq N}$ defined through Definition \ref{def:n_basis}. Next, we define a so-called excitation index set and appropriate excitation operators to characterise the Slater determinants in basis set $\mathcal{B}_K^N$

\begin{definition}[Excitation Index Sets For Finite Bases]\label{def:Excitation_Index_finite}~
	
	For each $K\geq N$ and each $Q \in \{1, \ldots, N\}$ we define the index set $\mathcal{I}_Q^K$ as
	\begin{align*}
		\mathcal{I}_Q^K := \left\{ {{a_1, \ldots, a_Q}\choose{\ell_1, \ldots, \ell_Q}} \colon \ell_1 < \ldots< \ell_Q \in \{1, \ldots, N\} \text{ and } a_1< \ldots < a_Q  \in \{N+1, \ldots, K\} \right\}.
	\end{align*}
\end{definition}

\begin{definition}[Excitation Operators for Finite Bases]\label{def:Excitation_Operator_finite}~
	
 Let $K\geq N$, let $Q \in \{1, \ldots, N\}$, let $\mu \in \mathcal{I}_Q^K$ be of the form
	\begin{align*}
		\mu={{a_1, \ldots, a_Q}\choose{\ell_1, \ldots, \ell_Q}} \colon \ell_1 < \ldots< \ell_Q \in \{1, \ldots, N\} \text{ and } a_1< \ldots < a_Q  \in \{N+1, \ldots, K\},
	\end{align*}
and let the $N$-particle basis set $\mathcal{B}_K^N$ and finite-dimensional space $\mathcal{Z}_{K}$ be defined as in Definition \ref{def:n_basis}.

We define the excitation operator $\mathcal{X}_{K, \mu} \colon \mathcal{Z}_{K} \rightarrow \mathcal{Z}_{K}$ through its action on the $N$-particle basis set $\mathcal{B}_K^N$: For $\Psi_{\nu}(\bold{x}_1, \ldots, \bold{x}_N) =  \frac{1}{\sqrt{N!}} \text{\rm det}\; \big(\psi^K_{\nu_j}(\bold{x}_i)\big)_{ i, j=1}^N \in \mathcal{B}_K^N$, we set 
	\begin{align*}
		\mathcal{X}_{K, \mu} \Psi_{\nu} = \begin{cases}
			0  &\quad \text{ if } \{\ell_1, \ldots, \ell_Q\} \not \subset \{\nu_1, \ldots, \nu_N\},\\
			0   & \quad \text{ if } \exists  a_{m} \in  \{a_1, \ldots, a_Q\}  \text{ such that } a_m \in \{\nu_1, \ldots, \nu_N\},\\
			\Psi_{\nu, a} \in \mathcal{B}_{\wedge} & \quad \text{ otherwise},
		\end{cases}
	\end{align*}
	where the determinant $\Psi_{\nu, a} $ is constructed from $\Psi_{\nu}$ by replacing all functions $\psi^K_{\ell_1}, \ldots, \psi^K_{\ell_Q}$ used to construct $\Psi_{\nu}$ with functions $\psi^K_{a_1},\ldots \psi^K_{a_Q} $ respectively.
\end{definition}

Equipped with Definitions \ref{def:Excitation_Index_finite}~ and \ref{def:Excitation_Operator_finite}~, we are now ready to define the class of coupled cluster equations that can reasonably be expected to satisfy \textbf{Structure Assumption B.II}: The so-called truncated-CC discretisations of excitation rank $Q$, based on canonical Hartree-Fock orbitals consist of defining, for each $K\geq N$ and some given $Q \in \{1, \ldots, N\}$ 
\begin{align}\nonumber
    \Psi_{0, K}:=& \frac{1}{\sqrt{N!}} \text{\rm det}\; \big({\psi}^K_{j}(\bold{x}_i)\big)_{i, j=1}^N  \quad \text{ with } \{\psi_j^K\}_{j=1}^N ~\text{denoting the Aufbau solutions to Equation \eqref{eq:HF_eigenvalue_approx*}},\\[0.5em] \label{eq:truncated-cc}
    \widetilde{\mathcal{V}}_K:=& \text{span}\;\big\{\Psi_{\nu} \in \mathcal{B}_K^N\colon ~ \exists j\in \{1, \ldots, Q\}, ~\mu \in \mathcal{I}_j^K \text{ such that } \mathcal{X}_{K, \mu} \Psi_{0, K} =\Psi_{\nu}\big\},\\[0.5em] \nonumber
    \mathcal{V}_K := &\widetilde{\mathcal{V}}_K \oplus \text{span}\{\Psi_{0, K}\}, \qquad \mathcal{W}_K:= \mathcal{Z}_K, \quad \mathcal{F}_K := \begin{cases} \mathcal{F}_{\Psi_{0, K}} \quad &\text{on}~ \mathcal{H}^{1, \perp}_{\Psi_{0, K}}\\[0.5em]
    (\sum_{j=1}^N \lambda^K_j+1){\rm I} \quad &\text{on} ~\text{span}\{\Psi_{0, K}\}~\end{cases}, \quad \text{and} \\
    \Lambda_0^{(K)} :=& \sum_{j=1}^N \lambda^K_j,\nonumber
\end{align}
where $\mathcal{F}_{\Psi_{0, K}}\colon \widehat{\mathcal{H}}^1 \rightarrow \widehat{\mathcal{H}}^{-1}$ is the $N$-particle Hartree-Fock operator constructed from the reference determinant $\Psi_{0, K}$ as defined in Definition \ref{def:Fock_N} and $\{\lambda^K_j\}_{j=1}^N$ denote the $N$ lowest eigenvalues of the finite-dimensional Hartree-Fock eigenvalue problem \eqref{eq:HF_eigenvalue_approx*} in the space $\mX_K$. Let us remark, by way of examples, that the classical CCSD method corresponds to taking $Q=2$, the CCSDT method corresponds to taking $Q=3$ and so on.

As before, a direct calculation confirms that the truncated-CC approximation spaces $\{\mathcal{V}_K\}_{K\geq N}$ defined in the above manner satisfy the first two conditions of \textbf{Structure Assumption B.II}, and this is the case even if the canonical Hartree-Fock orbitals $\{\psi^K_{j}\}_{j=1}^K$ are rotated through a unitary transformation-- as long as the unitary transformation preserves the $\mL^2(\mathbb{R}^3)$ orthogonality of the set of so-called occupied orbitals $\{\psi^K_{j}\}_{j=1}^N$ and the set of so-called virtual orbitals $\{\psi^K_{j}\}_{j={N=1}}^K$.

Coming now to the third condition of \textbf{Structure Assumption B.II}, we see that for every $K\geq N$, the mean-field operator $\mathcal{F}_K$ is defined as $\mathcal{F}_K:= \mathcal{F}_{\Psi_{0, K}}$ on $\mathcal{H}^{1, \perp}_{\Psi_{0, K}}$. Recall now that we have the decomposition of the electronic Hamiltonian $H:= \mathcal{F}_{\Psi_{0, K}} + \mathcal{U}_{\Psi_{0, K}}$, and we have shown in Lemma~\ref{lem:Fluctuation} that $\mathcal{U}_{\Psi_{0, K}} \colon \widehat{\mathcal{H}}^{1} \rightarrow \widehat{\mathcal{L}}^2$ is a bounded operator with continuity constant depending only on $N$ and $\Psi_{0, K}$. Consequently, under the assumption that the $N$ discrete Aufbau eigenfunctions and corresponding Aufbau eigenvalues of the sequence of finite-dimensional Hartree-Fock eigenvalue problems \eqref{eq:HF_eigenvalue_approx*} converge in the complete basis set limit $K \to \infty$, we have uniform boundedness of the continuity and coercivity constants of the sequence of mean-field operators $\{\mathcal{F}_K\}_{K\geq N}$. Thus the third condition of \textbf{Structure Assumption B.II} will be satisfied. 

Let us incidentally remark that while the convergence of the discrete Aufbau Hartree-Fock eigenpairs in the complete basis set limit is a reasonable assumption, to the best of our knowledge, no formal proof of this fact or precise conditions under which it holds, have been stated in the literature. Note that this convergence assumption will also guarantee that the prior \textbf{Assumption A.II} concerning the uniform boundedness of the reference determinants $\{\Psi_{0, K}\}_{K\geq N}$ in $\widehat{\mathcal{H}}^1$.

Finally, let us consider the fourth and final condition of \textbf{Structure Assumption B.II}. We have already demonstrated in Lemma \ref{lem:Fluctuation} that $\mathcal{U}_{\Psi_{0, K}} \colon \widehat{\mathcal{H}}^{1} \rightarrow \widehat{\mathcal{L}}^2$ is a bounded operator with continuity constant depending only on $N$ and $\Psi_{0, K}$, and thus, under the same assumption that the discrete Aufbau Hartree-Fock eigenpairs converge in the complete basis set limit $K\to \infty$, we have uniform boundedness of the continuity constants of the sequence of operators $\{\mathcal{U}_K\}_{K\geq N}$ with each $\mathcal{U}_K:= \mathcal{U}_{\Psi_{0, K}}$. It remains to study  the additional smallness assumption that we impose on the continuity constant of each $\mathcal{U}_K$, namely (see also, Inequality \eqref{eq:lemma_inf-sup_13}),
\begin{align}\label{eq:smallness}
     \left \Vert \mathbb{P}^{\perp}_{\mathcal{F}_K}\widetilde{\mathcal{U}}_K \mathbb{P}_{\mathcal{F}_K}\right\Vert_{\mathcal{F}_K \to \widehat{\mathcal{L}}^2}< \frac{1}{2}\frac{\sqrt{\Lambda_{\min}^{\mathcal{F}_K}}\;\Gamma^{*}_{\rm GS}}{\beta_K(\Theta_{K, \rm GS}^*)},
\end{align}
where
\begin{align*}
  \Lambda_{\min}^{\mathcal{F}_K}:=& \min_{\substack{\Phi_K \in \mathcal{W}_K\\ \Phi_K \in \mathcal{V}_K^{\perp}}} \frac{\langle \Phi_K, (\mathcal{F}_K -\Lambda_0^{(K)}) \Phi_K\rangle_{\widehat{\mathcal{H}}^1 \times \widehat{\mathcal{H}}^{-1}} }{\Vert \Phi_K \Vert^2_{\widehat{\mathcal{L}}^2}},\\[0.5em]
  \Gamma^{*}_{\rm GS}:= &\min_{\Phi_K \in \{\Psi_{{\rm GS}, K}^*\}^{\perp}\cap\mathcal{W}_K}  \frac{\langle \Phi_K, (H-\mathcal{E}_{\rm GS^*}) \Phi_K\rangle_{\widehat{\mathcal{H}}^1 \times \widehat{\mathcal{H}}^{-1}}}{\Vert \Phi_K \Vert_{\mathcal{F}_K}^2}, \text{and}\\[1em]
  \beta_K(\Theta_{K, \rm GS}^*):=& \left \Vert \mathbb{P}^{\perp}_{\mathcal{F}_K}{e^{\mathcal{T}(\Theta^{\Pi}_{K, \rm GS})}}\mathbb{P}_{\mathcal{F}_K}{e^{-\mathcal{T}(\Theta^{\Pi}_{K, \rm GS})}} \mathbb{P}_{\mathcal{F}_K}\right\Vert_{\mathcal{F}_K \to \mathcal{F}_K}.
\end{align*}

As stated earlier in Remark \ref{rem:B.II(3)}, the validity of Inequality \eqref{eq:smallness} is, in general, problem dependent and not a universal property that holds for arbitrary molecules. Indeed, it can readily be seen that the constant $\Gamma^{*}_{\rm GS}$ depends on the spectral gap of the electronic Hamiltonian $H$ that describes the molecule under study. Similarly, the constant $\beta_K(\Theta_{K, \rm GS}^*)$ can be shown to be $\mathcal{O}\big(\Vert\mathcal{T}(\Theta^*_{K, \rm GS})\Vert_{\widehat{\mathcal{H}}^1 \to \widehat{\mathcal{H}}^1}\big)$ for $\Vert\mathcal{T}(\Theta^*_{K, \rm GS})\Vert_{\widehat{\mathcal{H}}^1 \to \widehat{\mathcal{H}}^1} < 1$, and therefore reflects how well the reference determinant $\Psi_{0, K}$ approximates the sought-after ground state eigenfunction $\Psi_{{\rm GS}, K}^*$ of the electronic Hamiltonian $H$. Finally, as is well known, the single particle Hartree-Fock operator defined through Definition \ref{def:Fock_single} possesses an essential spectrum consisting of $[0, \infty)$ and therefore the constant $\Lambda_{\min}^{\mathcal{F}_K}$, will \underline{not} diverge to $+\infty$ in the complete basis set limit $K\to \infty$.

Having said this, in the specific case of the truncated-CC approximation spaces constructed as in Equation \eqref{eq:truncated-cc}, the constant $\Lambda_{\min}^{\mathcal{F}_K}$ can reasonably be expected to be fairly large in comparison to the operator norm $\big \Vert \mathbb{P}^{\perp}_{\mathcal{F}_K}\widetilde{\mathcal{U}}_K \mathbb{P}_{\mathcal{F}_K}\big\Vert_{\mathcal{F}_K \to \widehat{\mathcal{L}}^2}$, at least for large excitation truncation ranks $Q$. Indeed, it follows directly from the definition of $\Lambda_{\min}^{\mathcal{F}_K}$, the mean-field operator $\mathcal{F}_K$ and the excitation rank-truncated approximation space $\widetilde{\mathcal{V}}_K$ that for any $K\geq N+Q+1$, it holds that
\begin{align}\label{eq:Final_2}
    \Lambda_{\min}^{\mathcal{F}_K}:=& \min_{\substack{\Phi_K \in \mathcal{W}_K\\ \Phi_K \in \mathcal{V}_K^{\perp}}} \frac{\langle \Phi_K, (\mathcal{F}_K-\Lambda_0^{(K)}) \Phi_K\rangle_{\widehat{\mathcal{H}}^1 \times \widehat{\mathcal{H}}^{-1}} }{\Vert \Phi_K \Vert^2_{\widehat{\mathcal{L}}^2}} = \sum_{j=1}^{N-Q-1}\lambda_j^K+  \sum_{j=N+1}^{N+Q+1}\lambda_j^K  -\sum_{j=1}^{N} \lambda_j^K,
\end{align}
 where $\{\lambda^K_j\}_{j=1}^{N+Q+1}$ denote the $N+Q+1$ lowest eigenvalues of the finite-dimensional Hartree-Fock eigenvalue problem \eqref{eq:HF_eigenvalue_approx*} in the space $\mX_K$.

Equation \eqref{eq:Final_2} is simply a consequence of the fact the space $\mathcal{W}_K\cap \mathcal{V}_K^{\perp}$ is spanned by Slater determinants from $\mathcal{B}_K^N$ that can be written as excitations of the reference determinant $\Psi_{0, K}$ of rank at least $Q+1$. Since the Slater determinants in $\mathcal{B}_K^N$ are, additionally, discrete eigenfunctions of the $N$-particle Hartree-Fock operator $\mathcal{F}_{\Psi_{0, K}}$, and we have adopted the convention of ordering all eigenvalues in ascending order, it is relatively simple to see that the Slater determinant that achieves the minimum in the above equation must be given by
\begin{align*}
     \Psi_{K, \rm min}= \psi^K_1 \wedge \psi^K_2 \wedge \ldots \psi^K_{N-Q-1} \wedge \ldots \psi^K_{N+1} \wedge \psi^K_{N+2} \wedge \ldots\wedge \psi^K_{N+Q+1},
 \end{align*}
which immediately leads to Equation \eqref{eq:Final_2}. Notice that we can equivalently write Equation \eqref{eq:Final_2} as
\begin{align*}
\Lambda_{\min}^{\mathcal{F}_K}= \sum_{j=N+1}^{N+Q+1} \lambda_j^K -\sum_{j=N-Q}^N \lambda_j^K =\sum_{j=1}^{Q+1} \varepsilon^K_j,
 \end{align*}
where each $\varepsilon^K_j:= \lambda_{N+j}^K- \lambda_{N-j+1}^K $ is the $j^{\rm th}$ discrete excitation energy associated with the $N$-particle Hartree-Fock operator $\mathcal{F}_{\Psi_{0, K}}$. Note that if the discrete Hartree-Fock eigenvalues $\{\lambda_j^K\}_{j=1}^{2N}$ converge to some limiting eigenvalues $\{\lambda_j^0\}_{j=1}^{2N}$, then we can, of course, replace Estimate \eqref{eq:Final_2} with a $K$-independent bound.


We end this section with some numerical tests that explore the validity of Inequality \eqref{eq:smallness} in different regimes. We begin by considering the most common excitation rank-truncated coupled cluster discretisations, namely, the CCSD (truncation at rank two) and CCSDT (truncation at rank three) discretisations with canonical Hartree-Fock orbitals. Our results for a collection of small molecules at equilibrium geometries using the minimal STO-6G basis sets are displayed in Tables \ref{table:CCSD} and \ref{table:CCSDT}. For reference, we include the theoretical continuous inf-sup constants corresponding to Theorem \ref{thm:CC_der_inv}. We also include the numerically computed estimate of the discrete inf-sup constant given by Equation~\eqref{eq:inf-sup}, as well as the corresponding theoretical estimate resulting from our analysis (see the proof of Lemma~\ref{lem:inf-sup}), which is given by 
\begin{align*}
    \gamma_{\rm inf-sup}:= \dfrac{\Gamma_{\rm GS}^{\mathcal{F}_K} - \left \Vert \mathbb{P}^{\perp}_{\mathcal{F}_K}\widetilde{\mathcal{U}}_K \mathbb{P}_{\mathcal{F}_K}\right\Vert_{\mathcal{F}_K \to \widehat{\mathcal{L}}^2}\dfrac{\beta_K(\Theta_{K, \rm GS}^*)}{\sqrt{\Lambda_{\min}^{\mathcal{F}_K}}}- \Vert \left(H-\mathcal{E}^*_{\rm GS}\right)e^{\mathcal{T}(\Theta^{\Pi}_{K, \rm GS})}\Psi_{0, K}\Vert_{\mathcal{F}_K^{-1}}}{\left \Vert \mathbb{P}^{\perp}_{\mathcal{F}_K}{e^{-\mathcal{T}(\Theta^{\Pi}_{K, \rm GS})}}\right\Vert_{\mathcal{F}_K \to \mathcal{F}_K}\left \Vert e^{\mathcal{T}(\Theta^{\Pi}_{K, \rm GS})^{\dagger}}\right\Vert_{\mathcal{F}_K \to \mathcal{F}_K}}.
\end{align*}

\begin{table}[ht!]
	\centering
	\begin{tabular}{||c| c| c| c| c| c||} 
		\hline \hline
		\shortstack{Molecule\\ \hphantom{Molecule}\\ \hphantom{Molecule} \\ \hphantom{Molecule}} & \shortstack{$\left \Vert \mathbb{P}^{\perp}_{\mathcal{F}_K}\widetilde{\mathcal{U}}_K \mathbb{P}_{\mathcal{F}_K}\right\Vert_{\mathcal{F}_K \to \widehat{\mathcal{L}}^2}$\\ \hphantom{$\left \Vert \mathbb{P}^{\perp}_{\mathcal{F}_K}\widetilde{\mathcal{U}}_K \mathbb{P}_{\mathcal{F}_K}\right\Vert_{\mathcal{F}_K \to \widehat{\mathcal{L}}^2}$}} &  \shortstack{$\frac{\sqrt{\Lambda_{\min}^{\mathcal{F}_K}}\;\Gamma^{*}_{\rm GS}}{\beta_K(\Theta_{K, \rm GS}^*)}$\\ \hphantom{$\frac{\sqrt{\Lambda_{\min}^{\mathcal{F}_K}}\;\Gamma^{*}_{\rm GS}}{\beta_K(\Theta_{K, \rm GS}^*)}$}} &   \shortstack{Continuous\\ inf-sup\\ constant\\ $\Lambda^*/\beta$} & \shortstack{Discrete\\ inf-sup\\ constant\\ $\gamma_{\rm inf-sup}$} & \shortstack{$\Vert \mD\mathcal{f}_K^{-1}(\Theta^{\Pi}_{K, \rm GS})\Vert^{-1}_{\mathcal{F}_K^{-1} \to \mathcal{F}_K}$\\ with $\mD\mathcal{f}_K(\Theta^{\Pi}_{K, \rm GS})$\\ viewed as a mapping\\ from $\widetilde{\mathcal{V}}_K$ to $\widetilde{\mathcal{V}}_K^*$}\\ [0.5ex]  
		\hline\hline
		{\rm BeH$_2$} & 0.2508 &  1.9807 &0.2568 & 0.2532&0.3592\\ 
		{\rm BH$_3$} & 0.3056  & 1.5447 &  0.2081&0.2064&0.3254\\
		\rm{HF} & 0.5010   &2.4073 &0.2529&0.2016&0.2993\\
		\rm{H$_2$O} & 0.3067  &  2.2724& 0.2789&0.2652&0.3646\\
		\rm{LiH} & 0.1878   & 2.4044 &0.2164&0.1953&0.2630\\
		\rm{NH$_3$} & 0.3721   & 2.0420  &0.2784&0.2732&0.4302\\ 
		\hline\hline
	\end{tabular}
	\vspace{2mm}
	\caption{Numerical results for the \textbf{CCSD} scheme (excitation rank-truncation of order \textbf{two}). The calculations were performed in STO-6G basis sets with the exception of the HF and LiH molecules for which 6-31G basis sets were used. Since the exact coupled cluster ground state zero $\Theta^*_{K, \rm GS} \in \widehat{\mathcal{H}}^{1, \perp}_{\Psi_{0, K}}$ is unavailable, the Full-CC ground state zero in the corresponding basis set (STO-6G or 6-31G) was taken as a reference.}\label{table:CCSD}
\end{table}

\begin{table}[ht!]
	\centering
	\begin{tabular}{||c| c| c| c| c| c||} 
		\hline \hline
		\shortstack{Molecule\\ \hphantom{Molecule}\\ \hphantom{Molecule}\\ \hphantom{Molecule}} &  \shortstack{$\left \Vert \mathbb{P}^{\perp}_{\mathcal{F}_K}\widetilde{\mathcal{U}}_K \mathbb{P}_{\mathcal{F}_K}\right\Vert_{\mathcal{F}_K \to \widehat{\mathcal{L}}^2}$\\ \hphantom{$\left \Vert \mathbb{P}^{\perp}_{\mathcal{F}_K}\widetilde{\mathcal{U}}_K \mathbb{P}_{\mathcal{F}_K}\right\Vert_{\mathcal{F}_K \to \widehat{\mathcal{L}}^2}$}}&  \shortstack{$\frac{\sqrt{\Lambda_{\min}^{\mathcal{F}_K}}\;\Gamma^{*}_{\rm GS}}{\beta_K(\Theta_{K, \rm GS}^*)}$\\ \hphantom{$\frac{\sqrt{\Lambda_{\min}^{\mathcal{F}_K}}\;\Gamma^{*}_{\rm GS}}{\beta_K(\Theta_{K, \rm GS}^*)}$}} &  \shortstack{Continuous\\ inf-sup\\ constant\\ $\Lambda^*/\beta$}  & \shortstack{Discrete\\ inf-sup\\ constant\\ $\gamma_{\rm inf-sup}$} &\shortstack{$\Vert \mD\mathcal{f}_K^{-1}(\Theta^{\Pi}_{K, \rm GS})\Vert^{-1}_{\mathcal{F}_K^{-1} \to \mathcal{F}_K}$\\ with $\mD\mathcal{f}_K(\Theta^{\Pi}_{K, \rm GS})$\\ viewed as a mapping\\ from $\widetilde{\mathcal{V}}_K$ to $\widetilde{\mathcal{V}}_K^*$}\\ [0.5ex] 
		\hline\hline
		{\rm BeH$_2$} & 0.1835 &  2.2865 &0.2568 &0.2321&0.3403\\ 
		{\rm BH$_3$} & 0.2581   & 2.1659&  0.2081&  0.1752&0.3081\\
		\rm{HF} & 0.4903   &3.5897&0.2529&0.2187&0.2995\\
		\rm{H$_2$O} & 0.2431   & 3.0995&0.2789&0.2504&0.3592\\
		\rm{LiH} & 0.1629  &  3.3790 &0.2164&0.2038&0.2628\\
		\rm{NH$_3$} &  0.3038   & 2.6085  &0.2784&0.2338&0.4147\\ 
		\hline\hline
	\end{tabular}
	\vspace{2mm}
	\caption{Numerical results for the \textbf{CCSDT} scheme (excitation rank-truncation of order \textbf{three}). As before, the calculations were performed in STO-6G basis sets with the exception of the HF and LiH molecules for which 6-31G basis sets were used, and the Full-CC ground state zero in the corresponding basis set (STO-6G or 6-31G) was taken as a reference.}\label{table:CCSDT}
\end{table}

This first set of molecules constitutes an example of `well-behaved' molecules, i.e., molecules for which the basic CCSD scheme work well, attaining in all cases the so-called chemical accuracy threshold of an error less than $10^{-3}$ Hartree. As a more stringent test case, we consider the nitrogen dimer and the carbon monoxide molecule at equilibrium geometry, these being examples of molecules for which the CCSD scheme is unable to achieve a chemically accurate solution\footnote{In fact, the CCSD scheme applied to these two molecules is an order of magnitude less accurate compared to the first set of molecules.}. The corresponding results, using once again minimal STO-6G basis sets, are displayed in Table \ref{table:3}. In the case of the carbon monoxide molecule, we see that Inequality \eqref{eq:smallness} is not satisfied for the CCSD and CCSDT discretisations. On the other hand, the higher excitation rank-truncation discretisations indeed satisfy Inequality \eqref{eq:smallness}.

	\begin{table}[ht!]
			\centering
			\begin{tabular}{|l||c||c||} 
				\hline 
				Molecule &N$_2$ & CO \\ [0.5ex] 
				\hline\hline
				CCSD Energy Error (milli-Hartree)& $4.0$ & $8.2$\\  \hline \hline
				\shortstack{Discrete inf-sup constant $\gamma_{\rm inf-sup}$\\at excitation truncation-rank two}& 		0.0004  &		{\color{red}-0.1074} \\ \hline \hline 
				\shortstack{Discrete inf-sup constant $\gamma_{\rm inf-sup}$\\at excitation truncation-rank three}& 		0.0402   & 		{{\color{red}-0.0426}}\\ \hline \hline 
				\shortstack{Discrete inf-sup constant $\gamma_{\rm inf-sup}$\\at excitation truncation-rank four}& 		 0.0908  & 		0.0225\\  \hline \hline 
				\shortstack{Discrete inf-sup constant $\gamma_{\rm inf-sup}$\\at excitation truncation-rank five}& 		{0.1364}   & 		0.0666\\ \hline \hline 
				Continuous inf-sup constant  $\Lambda^*/\beta$& 		{0.1614}   & 		{0.1255} \\[1ex] 
				\hline
			\end{tabular}
	\caption{Numerical results for the nitrogen dimer and the carbon monoxide molecule at equilibrium geometries. The calculations were performed in STO-6G basis sets, and as before, the Full-CC ground state zero in the STO-6G basis set was taken as a reference.}\label{table:3}
\end{table}

\end{document}